\newtheorem*{rep@theorem}{\rep@title}
\newcommand{\newreptheorem}[2]{%
\newenvironment{rep#1}[1]{%
 \def\rep@title{#2 \ref{##1}}%
 \begin{rep@theorem}}%
 {\end{rep@theorem}}}
\newtheorem{theorem}{Theorem}
\newtheorem{proposition}[theorem]{Proposition}
\newtheorem{definition}[theorem]{Definition}
\newtheorem{corollary}[theorem]{Corollary}
\newtheorem{lemma}[theorem]{Lemma}
\newtheorem{remark}{Remark}
\title{A characterization of 
entropy dimensions of minimal $\Z^3$-SFT}
\author{Silvère Gangloff, Mathieu Sablik}
\newcommand{\todo}[1]{\textcolor{red}{To do:} \textcolor{blue}{#1}}
\def\N{\mathbb N}
\def\R{\mathbb R}
\def\Z{\mathbb Z}
\def\Q{\mathbb Q}
\def\C{\mathbb C}
\def\K{\mathbb K}
\def\D{\mathbb D}
\def\ds{\displaystyle}
\newcommand{\Lang}{\mathcal{L}}
\newcommand{\id}{\operatorname{Id} }
\newcommand{\tr}{\operatorname{Tr} }
\newcommand{\A}{\mathcal{A}}
\newcommand{\s}{\sigma}
\newcommand{\MT}{\mathcal{M}}
\renewcommand{\vec}[1]{\textbf{#1}}
\newcommand{\U}{\mathbb{U}}
\newcommand{\define}[1]{\textbf{#1}}
\newcommand{\supp}[1]{\textrm{supp}\left(#1\right)}
\newcommand{\blacksq}{
\begin{tikzpicture}
\fill[Black] (0,0) rectangle (0.3,0.3) ;
\draw (0,0)--(0.3,0) ;
\draw (0,0)--(0,0.3) ;
\draw (0,0.3)--(0.3,0.3) ; 
\draw (0.3,0)--(0.3,0.3) ;
\end{tikzpicture}}
\newcommand{\greensq}{
\begin{tikzpicture}
\fill[YellowGreen] (0,0) rectangle (0.3,0.3) ;
\draw (0,0)--(0.3,0) ;
\draw (0,0)--(0,0.3) ;
\draw (0,0.3)--(0.3,0.3) ; 
\draw (0.3,0)--(0.3,0.3) ;
\end{tikzpicture}}
\newcommand{\redsq}{
\begin{tikzpicture}
\fill[Red] (0,0) rectangle (0.3,0.3) ;
\draw (0,0)--(0.3,0) ;
\draw (0,0)--(0,0.3) ;
\draw (0,0.3)--(0.3,0.3) ; 
\draw (0.3,0)--(0.3,0.3) ;
\end{tikzpicture}}
\newcommand{\purplesq}{
\begin{tikzpicture}
\fill[Purple] (0,0) rectangle (0.3,0.3) ;
\draw (0,0)--(0.3,0) ;
\draw (0,0)--(0,0.3) ;
\draw (0,0.3)--(0.3,0.3) ; 
\draw (0.3,0)--(0.3,0.3) ;
\end{tikzpicture}}
\newcommand{\orangesq}{
\begin{tikzpicture}
\fill[Orange] (0,0) rectangle (0.3,0.3) ;
\draw (0,0)--(0.3,0) ;
\draw (0,0)--(0,0.3) ;
\draw (0,0.3)--(0.3,0.3) ; 
\draw (0.3,0)--(0.3,0.3) ;
\end{tikzpicture}}
\newcommand{\blanksq}{
\begin{tikzpicture}
\draw (0,0)--(0.3,0) ;
\draw (0,0)--(0,0.3) ;
\draw (0,0.3)--(0.3,0.3) ; 
\draw (0.3,0)--(0.3,0.3) ;
\end{tikzpicture}}
\newcommand{\robionegauche}[2]{
\draw (#1,#2) rectangle (#1+2,#2+2);
\draw [-latex] (#1+2,#2+1) -- (#1+0,#2+1);
\draw [-latex] (#1+1,#2+0) -- (#1+1,#2+1); 
\draw [-latex] (#1+1,#2+2) -- (#1+1,#2+1);}
\newcommand{\robionedroite}[2]{
\draw (#1,#2) rectangle (#1+2,#2+2);
\draw [-latex] (#1+0,#2+1) -- (#1+2,#2+1);
\draw [-latex] (#1+1,#2+0) -- (#1+1,#2+1); 
\draw [-latex] (#1+1,#2+2) -- (#1+1,#2+1);}
\newcommand{\robionebas}[2]{
\draw (#1,#2) rectangle (#1+2,#2+2);
\draw [-latex] (#1+1,#2+0) -- (#1+1,#2+2);
\draw [-latex] (#1+0,#2+1) -- (#1+1,#2+1); 
\draw [-latex] (#1+2,#2+1) -- (#1+1,#2+1);}
\newcommand{\robionehaut}[2]{
\draw (#1,#2) rectangle (#1+2,#2+2);
\draw [-latex] (#1+1,#2+2) -- (#1+1,#2+0);
\draw [-latex] (#1+0,#2+1) -- (#1+1,#2+1); 
\draw [-latex] (#1+2,#2+1) -- (#1+1,#2+1);}
\newcommand{\robitwobas}[2]{
\draw (#1,#2) rectangle (#1+2,#2+2);
\draw [-latex] (#1+1,#2+2) -- (#1+1,#2+0) ;
\draw [-latex] (#1+0,#2+1) -- (#1+1,#2+1) ; 
\draw [-latex] (#1+0,#2+0.5) -- (#1+1,#2+0.5) ; 
\draw [-latex] (#1+2,#2+1) -- (#1+1,#2+1) ;
\draw [-latex] (#1+2,#2+0.5) -- (#1+1,#2+0.5) ;}
\newcommand{\robitwohaut}[2]{
\draw (#1,#2) rectangle (#1+2,#2+2);
\draw [-latex] (#1+1,#2+0) -- (#1+1,#2+2) ;
\draw [-latex] (#1+0,#2+1) -- (#1+1,#2+1) ; 
\draw [-latex] (#1+0,#2+1.5) -- (#1+1,#2+1.5) ; 
\draw [-latex] (#1+2,#2+1) -- (#1+1,#2+1) ;
\draw [-latex] (#1+2,#2+1.5) -- (#1+1,#2+1.5) ;}
\newcommand{\robitwodroite}[2]{
\draw (#1,#2) rectangle (#1+2,#2+2);
\draw [-latex] (#1+0,#2+1) -- (#1+2,#2+1) ;
\draw [-latex] (#1+1,#2+0) -- (#1+1,#2+1) ; 
\draw [-latex] (#1+1.5,#2+0) -- (#1+1.5,#2+1) ; 
\draw [-latex] (#1+1,#2+2) -- (#1+1,#2+1) ;
\draw [-latex] (#1+1.5,#2+2) -- (#1+1.5,#2+1) ;}
\newcommand{\robitwogauche}[2]{
\draw (#1,#2) rectangle (#1+2,#2+2);
\draw [-latex] (#1+2,#2+1) -- (#1+0,#2+1) ;
\draw [-latex] (#1+1,#2+0) -- (#1+1,#2+1) ; 
\draw [-latex] (#1+0.5,#2+0) -- (#1+0.5,#2+1) ; 
\draw [-latex] (#1+1,#2+2) -- (#1+1,#2+1) ;
\draw [-latex] (#1+0.5,#2+2) -- (#1+0.5,#2+1) ;}
\newcommand{\robithreebas}[2]{
\draw (#1,#2) rectangle (#1+2,#2+2) ;
\draw [-latex] (#1+1,#2+2) -- (#1+1,#2+0) ;
\draw [-latex] (#1+0.5,#2+2) -- (#1+0.5,#2+0) ; 
\draw [-latex] (#1+0,#2+1) -- (#1+0.5,#2+1) ; 
\draw [-latex] (#1+2,#2+1) -- (#1+1,#2+1) ;}
\newcommand{\robithreehaut}[2]{
\draw (#1,#2) rectangle (#1+2,#2+2) ;
\draw [-latex] (#1+1,#2+0) -- (#1+1,#2+2) ;
\draw [-latex] (#1+0.5,#2+0) -- (#1+0.5,#2+2) ; 
\draw [-latex] (#1+0,#2+1) -- (#1+0.5,#2+1) ; 
\draw [-latex] (#1+2,#2+1) -- (#1+1,#2+1) ;}
\newcommand{\robithreegauche}[2]{
\draw (#1,#2) rectangle (#1+2,#2+2) ;
\draw [-latex] (#1+2,#2+1) -- (#1+0,#2+1) ;
\draw [-latex] (#1+2,#2+0.5) -- (#1+0,#2+0.5) ; 
\draw [-latex] (#1+1,#2+0) -- (#1+1,#2+0.5) ; 
\draw [-latex] (#1+1,#2+2) -- (#1+1,#2+1) ;}
\newcommand{\robithreedroite}[2]{
\draw (#1,#2) rectangle (#1+2,#2+2) ;
\draw [-latex] (#1+0,#2+1) -- (#1+2,#2+1) ;
\draw [-latex] (#1+0,#2+0.5) -- (#1+2,#2+0.5) ; 
\draw [-latex] (#1+1,#2+0) -- (#1+1,#2+0.5) ; 
\draw [-latex] (#1+1,#2+2) -- (#1+1,#2+1) ;}
\newcommand{\robifourbas}[2]{
\draw (#1,#2) rectangle (#1+2,#2+2) ;
\draw [-latex] (#1+1,#2+2) -- (#1+1,#2+0) ;
\draw [-latex] (#1+1.5,#2+2) -- (#1+1.5,#2+0) ; 
\draw [-latex] (#1+0,#2+1) -- (#1+1,#2+1) ; 
\draw [-latex] (#1+2,#2+1) -- (#1+1.5,#2+1) ;}
\newcommand{\robifourgauche}[2]{
\draw (#1,#2) rectangle (#1+2,#2+2) ;
\draw [-latex] (#1+2,#2+1) -- (#1+0,#2+1) ;
\draw [-latex] (#1+2,#2+1.5) -- (#1+0,#2+1.5) ; 
\draw [-latex] (#1+1,#2+0) -- (#1+1,#2+1) ; 
\draw [-latex] (#1+1,#2+2) -- (#1+1,#2+1.5) ;}
\newcommand{\robifourhaut}[2]{
\draw (#1,#2) rectangle (#1+2,#2+2) ;
\draw [-latex] (#1+1,#2+0) -- (#1+1,#2+2) ;
\draw [-latex] (#1+1.5,#2+0) -- (#1+1.5,#2+2) ; 
\draw [-latex] (#1+0,#2+1) -- (#1+1,#2+1) ; 
\draw [-latex] (#1+2,#2+1) -- (#1+1.5,#2+1) ;}
\newcommand{\robifourdroite}[2]{
\draw (#1,#2) rectangle (#1+2,#2+2) ;
\draw [-latex] (#1+0,#2+1) -- (#1+2,#2+1) ;
\draw [-latex] (#1+0,#2+1.5) -- (#1+2,#2+1.5) ; 
\draw [-latex] (#1+1,#2+0) -- (#1+1,#2+1) ; 
\draw [-latex] (#1+1,#2+2) -- (#1+1,#2+1.5) ;}
\newcommand{\robifivebas}[2]{
\draw (#1,#2) rectangle (#1+2,#2+2) ;
\draw [-latex] (#1+1,#2+2) -- (#1+1,#2+0) ; 
\draw [-latex] (#1+0,#2+1) -- (#1+1,#2+1) ; 
\draw [-latex] (#1+2,#2+1) -- (#1+1,#2+1) ;
\draw [-latex] (#1+0,#2+0.5) -- (#1+1,#2+0.5) ; 
\draw [-latex] (#1+2,#2+0.5) -- (#1+1,#2+0.5) ;}
\newcommand{\robifivegauche}[2]{
\draw (#1,#2) rectangle (#1+2,#2+2) ;
\draw [-latex] (#1+2,#2+1) -- (#1+0,#2+1) ; 
\draw [-latex] (#1+1,#2+0) -- (#1+1,#2+1) ; 
\draw [-latex] (#1+1,#2+2) -- (#1+1,#2+1) ;
\draw [-latex] (#1+0.5,#2+0) -- (#1+0.5,#2+1) ; 
\draw [-latex] (#1+0.5,#2+2) -- (#1+0.5,#2+1) ;}
\newcommand{\robifivehaut}[2]{
\draw (#1,#2) rectangle (#1+2,#2+2) ;
\draw [-latex] (#1+1,#2+0) -- (#1+1,#2+2) ; 
\draw [-latex] (#1+0,#2+1) -- (#1+1,#2+1) ; 
\draw [-latex] (#1+2,#2+1) -- (#1+1,#2+1) ;
\draw [-latex] (#1+0,#2+1.5) -- (#1+1,#2+1.5) ; 
\draw [-latex] (#1+2,#2+1.5) -- (#1+1,#2+1.5) ;}
\newcommand{\robifivedroite}[2]{
\draw (#1,#2) rectangle (#1+2,#2+2) ;
\draw [-latex] (#1+0,#2+1) -- (#1+2,#2+1) ; 
\draw [-latex] (#1+1,#2+0) -- (#1+1,#2+1) ; 
\draw [-latex] (#1+1,#2+2) -- (#1+1,#2+1) ;
\draw [-latex] (#1+1.5,#2+0) -- (#1+1.5,#2+1) ; 
\draw [-latex] (#1+1.5,#2+2) -- (#1+1.5,#2+1) ;}
\newcommand{\robisixbas}[2]{
\draw (#1,#2) rectangle (#1+2,#2+2) ;
\draw [-latex] (#1+1,#2+2) -- (#1+1,#2+0) ;
\draw [-latex] (#1+0.5,#2+2) -- (#1+0.5,#2+0) ; 
\draw [-latex] (#1+0,#2+1) -- (#1+0.5,#2+1) ; 
\draw [-latex] (#1+2,#2+1) -- (#1+1,#2+1) ;
\draw [-latex] (#1+0,#2+0.5) -- (#1+0.5,#2+0.5) ; 
\draw [-latex] (#1+2,#2+0.5) -- (#1+1,#2+0.5) ;}
\newcommand{\robisixgauche}[2]{
\draw (#1,#2) rectangle (#1+2,#2+2) ;
\draw [-latex] (#1+2,#2+1) -- (#1+0,#2+1) ;
\draw [-latex] (#1+2,#2+0.5) -- (#1+0,#2+0.5) ; 
\draw [-latex] (#1+1,#2+0) -- (#1+1,#2+0.5) ; 
\draw [-latex] (#1+1,#2+2) -- (#1+1,#2+1) ;
\draw [-latex] (#1+0.5,#2+0) -- (#1+0.5,#2+0.5) ; 
\draw [-latex] (#1+0.5,#2+2) -- (#1+0.5,#2+1) ;}
\newcommand{\robisixhaut}[2]{
\draw (#1,#2) rectangle (#1+2,#2+2) ;
\draw [-latex] (#1+1,#2+0) -- (#1+1,#2+2) ;
\draw [-latex] (#1+0.5,#2+0) -- (#1+0.5,#2+2) ; 
\draw [-latex] (#1+0,#2+1) -- (#1+0.5,#2+1) ; 
\draw [-latex] (#1+2,#2+1) -- (#1+1,#2+1) ;
\draw [-latex] (#1+0,#2+1.5) -- (#1+0.5,#2+1.5) ; 
\draw [-latex] (#1+2,#2+1.5) -- (#1+1,#2+1.5) ;}
\newcommand{\robisixdroite}[2]{
\draw (#1,#2) rectangle (#1+2,#2+2) ;
\draw [-latex] (#1+0,#2+1) -- (#1+2,#2+1) ;
\draw [-latex] (#1+0,#2+0.5) -- (#1+2,#2+0.5) ; 
\draw [-latex] (#1+1,#2+0) -- (#1+1,#2+0.5) ; 
\draw [-latex] (#1+1,#2+2) -- (#1+1,#2+1) ;
\draw [-latex] (#1+1.5,#2+0) -- (#1+1.5,#2+0.5) ; 
\draw [-latex] (#1+1.5,#2+2) -- (#1+1.5,#2+1) ;}
\newcommand{\robisevenbas}[2]{
\draw (#1,#2) rectangle (#1+2,#2+2) ;
\draw [-latex] (#1+1,#2+2) -- (#1+1,#2+0) ;
\draw [-latex] (#1+1.5,#2+2) -- (#1+1.5,#2+0) ; 
\draw [-latex] (#1+0,#2+1) -- (#1+1,#2+1) ; 
\draw [-latex] (#1+2,#2+1) -- (#1+1.5,#2+1) ;
\draw [-latex] (#1+0,#2+0.5) -- (#1+1,#2+0.5) ; 
\draw [-latex] (#1+2,#2+0.5) -- (#1+1.5,#2+0.5) ;}
\newcommand{\robisevengauche}[2]{
\draw (#1,#2) rectangle (#1+2,#2+2) ;
\draw [-latex] (#1+2,#2+1) -- (#1+0,#2+1) ;
\draw [-latex] (#1+2,#2+1.5) -- (#1+0,#2+1.5) ; 
\draw [-latex] (#1+1,#2+0) -- (#1+1,#2+1) ; 
\draw [-latex] (#1+1,#2+2) -- (#1+1,#2+1.5) ;
\draw [-latex] (#1+0.5,#2+0) -- (#1+0.5,#2+1) ; 
\draw [-latex] (#1+0.5,#2+2) -- (#1+0.5,#2+1.5) ;}
\newcommand{\robisevenhaut}[2]{
\draw (#1,#2) rectangle (#1+2,#2+2) ;
\draw [-latex] (#1+1,#2+0) -- (#1+1,#2+2) ;
\draw [-latex] (#1+1.5,#2+0) -- (#1+1.5,#2+2) ; 
\draw [-latex] (#1+0,#2+1) -- (#1+1,#2+1) ; 
\draw [-latex] (#1+2,#2+1) -- (#1+1.5,#2+1) ;
\draw [-latex] (#1+0,#2+1.5) -- (#1+1,#2+1.5) ; 
\draw [-latex] (#1+2,#2+1.5) -- (#1+1.5,#2+1.5) ;}
\newcommand{\robisevendroite}[2]{
\draw (#1,#2) rectangle (#1+2,#2+2) ;
\draw [-latex] (#1+0,#2+1) -- (#1+2,#2+1) ;
\draw [-latex] (#1+0,#2+1.5) -- (#1+2,#2+1.5) ; 
\draw [-latex] (#1+1,#2+0) -- (#1+1,#2+1) ; 
\draw [-latex] (#1+1,#2+2) -- (#1+1,#2+1.5) ;
\draw [-latex] (#1+1.5,#2+0) -- (#1+1.5,#2+1) ; 
\draw [-latex] (#1+1.5,#2+2) -- (#1+1.5,#2+1.5) ;}
\newcommand{\robibluebasgauche}[2]{
\fill[blue!40] (#1+0.5,#2+0.5) rectangle (#1+1,#2+2) ;
\fill[blue!40] (#1+0.5,#2+0.5) rectangle (#1+2,#2+1) ;
\node[scale=0.75] at (#1+1.5,#2+1.5) {0};
\draw (#1,#2) rectangle (#1+2,#2+2) ;
\draw [-latex] (#1+0.5,#2+0.5) -- (#1+0.5,#2+2) ; 
\draw [-latex] (#1+0.5,#2+0.5) -- (#1+2,#2+0.5) ;
\draw [-latex] (#1+1,#2+1) -- (#1+1,#2+2) ; 
\draw [-latex] (#1+1,#2+1) -- (#1+2,#2+1) ; 
\draw [-latex] (#1+0.5,#2+1) -- (#1+0,#2+1) ; 
\draw [-latex] (#1+1,#2+0.5) -- (#1+1,#2+0) ;}
\newcommand{\robibluebasdroite}[2]{
\fill[blue!40] (#1+1.5,#2+0.5) rectangle (#1+1,#2+2) ;
\fill[blue!40] (#1+1.5,#2+0.5) rectangle (#1+0,#2+1) ;
\draw (#1,#2) rectangle (#1+2,#2+2) ;
\node[scale=0.75] at (#1+0.5,#2+1.5) {0};
\draw [-latex] (#1+1.5,#2+0.5) -- (#1+1.5,#2+2) ; 
\draw [-latex] (#1+1.5,#2+0.5) -- (#1+0,#2+0.5) ;
\draw [-latex] (#1+1,#2+1) -- (#1+1,#2+2) ; 
\draw [-latex] (#1+1,#2+1) -- (#1+0,#2+1) ; 
\draw [-latex] (#1+1.5,#2+1) -- (#1+2,#2+1) ; 
\draw [-latex] (#1+1,#2+0.5) -- (#1+1,#2+0) ;}
\newcommand{\robibluehautgauche}[2]{
\fill[blue!40] (#1+2,#2+1) rectangle (#1+0.5,#2+1.5) ;
\fill[blue!40] (#1+1,#2+0) rectangle (#1+0.5,#2+1.5) ;
\draw (#1,#2) rectangle (#1+2,#2+2) ;
\node[scale=0.75] at (#1+1.5,#2+0.5) {0};
\draw [-latex] (#1+0.5,#2+1.5) -- (#1+0.5,#2+0) ; 
\draw [-latex] (#1+0.5,#2+1.5) -- (#1+2,#2+1.5) ;
\draw [-latex] (#1+1,#2+1) -- (#1+1,#2+0) ; 
\draw [-latex] (#1+1,#2+1) -- (#1+2,#2+1) ; 
\draw [-latex] (#1+0.5,#2+1) -- (#1+0,#2+1) ; 
\draw [-latex] (#1+1,#2+1.5) -- (#1+1,#2+2) ;}
\newcommand{\robibluehautdroite}[2]{
\fill[blue!40] (#1+0,#2+1) rectangle (#1+1.5,#2+1.5) ;
\fill[blue!40] (#1+1,#2+0) rectangle (#1+1.5,#2+1.5) ;
\draw (#1,#2) rectangle (#1+2,#2+2) ;
\node[scale=0.75] at (#1+0.5,#2+0.5) {0};
\draw [-latex] (#1+1.5,#2+1.5) -- (#1+1.5,#2+0) ; 
\draw [-latex] (#1+1.5,#2+1.5) -- (#1+0,#2+1.5) ;
\draw [-latex] (#1+1,#2+1) -- (#1+1,#2+0) ; 
\draw [-latex] (#1+1,#2+1) -- (#1+0,#2+1) ; 
\draw [-latex] (#1+1.5,#2+1) -- (#1+2,#2+1) ; 
\draw [-latex] (#1+1,#2+1.5) -- (#1+1,#2+2) ;}
\newcommand{\robibluebastgauche}[2]{
\fill[blue!40] (#1+0.5,#2+0.5) rectangle (#1+1,#2+2) ;
\fill[blue!40] (#1+0.5,#2+0.5) rectangle (#1+2,#2+1) ;
\draw (#1,#2) rectangle (#1+2,#2+2) ;
\draw [-latex] (#1+0.5,#2+0.5) -- (#1+0.5,#2+2) ; 
\draw [-latex] (#1+0.5,#2+0.5) -- (#1+2,#2+0.5) ;
\draw [-latex] (#1+1,#2+1) -- (#1+1,#2+2) ; 
\draw [-latex] (#1+1,#2+1) -- (#1+2,#2+1) ; 
\draw [-latex] (#1+0.5,#2+1) -- (#1+0,#2+1) ; 
\draw [-latex] (#1+1,#2+0.5) -- (#1+1,#2+0) ;}
\newcommand{\robibluebastdroite}[2]{
\fill[blue!40] (#1+1.5,#2+0.5) rectangle (#1+1,#2+2) ;
\fill[blue!40] (#1+1.5,#2+0.5) rectangle (#1+0,#2+1) ;
\draw (#1,#2) rectangle (#1+2,#2+2) ;
\draw [-latex] (#1+1.5,#2+0.5) -- (#1+1.5,#2+2) ; 
\draw [-latex] (#1+1.5,#2+0.5) -- (#1+0,#2+0.5) ;
\draw [-latex] (#1+1,#2+1) -- (#1+1,#2+2) ; 
\draw [-latex] (#1+1,#2+1) -- (#1+0,#2+1) ; 
\draw [-latex] (#1+1.5,#2+1) -- (#1+2,#2+1) ; 
\draw [-latex] (#1+1,#2+0.5) -- (#1+1,#2+0) ;}
\newcommand{\robibluehauttgauche}[2]{
\fill[blue!40] (#1+2,#2+1) rectangle (#1+0.5,#2+1.5) ;
\fill[blue!40] (#1+1,#2+0) rectangle (#1+0.5,#2+1.5) ;
\draw (#1,#2) rectangle (#1+2,#2+2) ;
\draw [-latex] (#1+0.5,#2+1.5) -- (#1+0.5,#2+0) ; 
\draw [-latex] (#1+0.5,#2+1.5) -- (#1+2,#2+1.5) ;
\draw [-latex] (#1+1,#2+1) -- (#1+1,#2+0) ; 
\draw [-latex] (#1+1,#2+1) -- (#1+2,#2+1) ; 
\draw [-latex] (#1+0.5,#2+1) -- (#1+0,#2+1) ; 
\draw [-latex] (#1+1,#2+1.5) -- (#1+1,#2+2) ;}
\newcommand{\robibluehauttdroite}[2]{
\fill[blue!40] (#1+0,#2+1) rectangle (#1+1.5,#2+1.5) ;
\fill[blue!40] (#1+1,#2+0) rectangle (#1+1.5,#2+1.5) ;
\draw (#1,#2) rectangle (#1+2,#2+2) ;
\draw [-latex] (#1+1.5,#2+1.5) -- (#1+1.5,#2+0) ; 
\draw [-latex] (#1+1.5,#2+1.5) -- (#1+0,#2+1.5) ;
\draw [-latex] (#1+1,#2+1) -- (#1+1,#2+0) ; 
\draw [-latex] (#1+1,#2+1) -- (#1+0,#2+1) ; 
\draw [-latex] (#1+1.5,#2+1) -- (#1+2,#2+1) ; 
\draw [-latex] (#1+1,#2+1.5) -- (#1+1,#2+2) ;}
\newcommand{\robibluebasgauchek}[2]{
\fill[blue!40] (#1+0.5,#2+0.5) rectangle (#1+1,#2+2) ;
\fill[blue!40] (#1+0.5,#2+0.5) rectangle (#1+2,#2+1) ;
\draw (#1,#2) rectangle (#1+2,#2+2) ;
\draw [-latex] (#1+0.5,#2+0.5) -- (#1+0.5,#2+2) ; 
\draw [-latex] (#1+0.5,#2+0.5) -- (#1+2,#2+0.5) ;
\draw [-latex] (#1+1,#2+1) -- (#1+1,#2+2) ; 
\draw [-latex] (#1+1,#2+1) -- (#1+2,#2+1) ; 
\draw [-latex] (#1+0.5,#2+1) -- (#1+0,#2+1) ; 
\draw [-latex] (#1+1,#2+0.5) -- (#1+1,#2+0) ;}
\newcommand{\robibluebasdroitek}[2]{
\fill[blue!40] (#1+1.5,#2+0.5) rectangle (#1+1,#2+2) ;
\fill[blue!40] (#1+1.5,#2+0.5) rectangle (#1+0,#2+1) ;
\draw (#1,#2) rectangle (#1+2,#2+2) ;
\draw [-latex] (#1+1.5,#2+0.5) -- (#1+1.5,#2+2) ; 
\draw [-latex] (#1+1.5,#2+0.5) -- (#1+0,#2+0.5) ;
\draw [-latex] (#1+1,#2+1) -- (#1+1,#2+2) ; 
\draw [-latex] (#1+1,#2+1) -- (#1+0,#2+1) ; 
\draw [-latex] (#1+1.5,#2+1) -- (#1+2,#2+1) ; 
\draw [-latex] (#1+1,#2+0.5) -- (#1+1,#2+0) ;}
\newcommand{\robibluehautgauchek}[2]{
\fill[blue!40] (#1+2,#2+1) rectangle (#1+0.5,#2+1.5) ;
\fill[blue!40] (#1+1,#2+0) rectangle (#1+0.5,#2+1.5) ;
\draw (#1,#2) rectangle (#1+2,#2+2) ;
\draw [-latex] (#1+0.5,#2+1.5) -- (#1+0.5,#2+0) ; 
\draw [-latex] (#1+0.5,#2+1.5) -- (#1+2,#2+1.5) ;
\draw [-latex] (#1+1,#2+1) -- (#1+1,#2+0) ; 
\draw [-latex] (#1+1,#2+1) -- (#1+2,#2+1) ; 
\draw [-latex] (#1+0.5,#2+1) -- (#1+0,#2+1) ; 
\draw [-latex] (#1+1,#2+1.5) -- (#1+1,#2+2) ;}
\newcommand{\robibluehautdroitek}[2]{
\fill[blue!40] (#1+0,#2+1) rectangle (#1+1.5,#2+1.5) ;
\fill[blue!40] (#1+1,#2+0) rectangle (#1+1.5,#2+1.5) ;
\draw (#1,#2) rectangle (#1+2,#2+2) ;
\draw [-latex] (#1+1.5,#2+1.5) -- (#1+1.5,#2+0) ; 
\draw [-latex] (#1+1.5,#2+1.5) -- (#1+0,#2+1.5) ;
\draw [-latex] (#1+1,#2+1) -- (#1+1,#2+0) ; 
\draw [-latex] (#1+1,#2+1) -- (#1+0,#2+1) ; 
\draw [-latex] (#1+1.5,#2+1) -- (#1+2,#2+1) ; 
\draw [-latex] (#1+1,#2+1.5) -- (#1+1,#2+2) ;}
\newcommand{\robiredbasgauche}[2]{
\fill[red!40] (#1+0.5,#2+0.5) rectangle (#1+1,#2+2) ;
\fill[red!40] (#1+0.5,#2+0.5) rectangle (#1+2,#2+1) ;
\draw (#1,#2) rectangle (#1+2,#2+2) ;
\draw [-latex] (#1+0.5,#2+0.5) -- (#1+0.5,#2+2) ; 
\draw [-latex] (#1+0.5,#2+0.5) -- (#1+2,#2+0.5) ;
\draw [-latex] (#1+1,#2+1) -- (#1+1,#2+2) ; 
\draw [-latex] (#1+1,#2+1) -- (#1+2,#2+1) ; 
\draw [-latex] (#1+0.5,#2+1) -- (#1+0,#2+1) ; 
\draw [-latex] (#1+1,#2+0.5) -- (#1+1,#2+0) ;}
\newcommand{\robiredbasdroite}[2]{
\fill[red!40] (#1+1.5,#2+0.5) rectangle (#1+1,#2+2) ;
\fill[red!40] (#1+1.5,#2+0.5) rectangle (#1+0,#2+1) ;
\draw (#1,#2) rectangle (#1+2,#2+2) ;
\draw [-latex] (#1+1.5,#2+0.5) -- (#1+1.5,#2+2) ; 
\draw [-latex] (#1+1.5,#2+0.5) -- (#1+0,#2+0.5) ;
\draw [-latex] (#1+1,#2+1) -- (#1+1,#2+2) ; 
\draw [-latex] (#1+1,#2+1) -- (#1+0,#2+1) ; 
\draw [-latex] (#1+1.5,#2+1) -- (#1+2,#2+1) ; 
\draw [-latex] (#1+1,#2+0.5) -- (#1+1,#2+0) ;}
\newcommand{\robiredhautgauche}[2]{
\fill[red!40] (#1+2,#2+1) rectangle (#1+0.5,#2+1.5) ;
\fill[red!40] (#1+1,#2+0) rectangle (#1+0.5,#2+1.5) ;
\draw (#1,#2) rectangle (#1+2,#2+2) ;
\draw [-latex] (#1+0.5,#2+1.5) -- (#1+0.5,#2+0) ; 
\draw [-latex] (#1+0.5,#2+1.5) -- (#1+2,#2+1.5) ;
\draw [-latex] (#1+1,#2+1) -- (#1+1,#2+0) ; 
\draw [-latex] (#1+1,#2+1) -- (#1+2,#2+1) ; 
\draw [-latex] (#1+0.5,#2+1) -- (#1+0,#2+1) ; 
\draw [-latex] (#1+1,#2+1.5) -- (#1+1,#2+2) ;}
\newcommand{\robiredhautdroite}[2]{
\fill[red!40] (#1+0,#2+1) rectangle (#1+1.5,#2+1.5) ;
\fill[red!40] (#1+1,#2+0) rectangle (#1+1.5,#2+1.5) ;
\draw (#1,#2) rectangle (#1+2,#2+2) ;
\draw [-latex] (#1+1.5,#2+1.5) -- (#1+1.5,#2+0) ; 
\draw [-latex] (#1+1.5,#2+1.5) -- (#1+0,#2+1.5) ;
\draw [-latex] (#1+1,#2+1) -- (#1+1,#2+0) ; 
\draw [-latex] (#1+1,#2+1) -- (#1+0,#2+1) ; 
\draw [-latex] (#1+1.5,#2+1) -- (#1+2,#2+1) ; 
\draw [-latex] (#1+1,#2+1.5) -- (#1+1,#2+2) ;}
\newcommand{\robigraybasgauche}[2]{
\fill[gray!40] (#1+0.5, #2+0.5) rectangle (#1+1, #2+2) ;
\fill[gray!40] (#1+0.5, #2+0.5) rectangle (#1+2, #2+1) ;
\draw [-latex] (#1+0.5,#2+0.5) -- (#1+0.5,#2+2) ; 
\draw [-latex] (#1+0.5,#2+0.5) -- (#1+2,#2+0.5) ;
\draw [-latex] (#1+1,#2+1) -- (#1+1,#2+2) ; 
\draw [-latex] (#1+1,#2+1) -- (#1+2,#2+1) ; 
\draw [-latex] (#1+0.5,#2+1) -- (#1+0,#2+1) ; 
\draw [-latex] (#1+1,#2+0.5) -- (#1+1,#2+0) ;}
\newcommand{\robigraybasdroite}[2]{
\fill[gray!40] (#1+1.5,#2+0.5) rectangle (#1+1,#2+2) ;
\fill[gray!40] (#1+1.5,#2+0.5) rectangle (#1+0,#2+1) ;
\draw [-latex] (#1+1.5,#2+0.5) -- (#1+1.5,#2+2) ; 
\draw [-latex] (#1+1.5,#2+0.5) -- (#1+0,#2+0.5) ;
\draw [-latex] (#1+1,#2+1) -- (#1+1,#2+2) ; 
\draw [-latex] (#1+1,#2+1) -- (#1+0,#2+1) ; 
\draw [-latex] (#1+1.5,#2+1) -- (#1+2,#2+1) ; 
\draw [-latex] (#1+1,#2+0.5) -- (#1+1,#2+0) ;}
\newcommand{\robigrayhautgauche}[2]{
\fill[gray!40] (#1+2,#2+1) rectangle (#1+0.5,#2+1.5) ;
\fill[gray!40] (#1+1,#2+0) rectangle (#1+0.5,#2+1.5) ;
\draw [-latex] (#1+0.5,#2+1.5) -- (#1+0.5,#2+0) ; 
\draw [-latex] (#1+0.5,#2+1.5) -- (#1+2,#2+1.5) ;
\draw [-latex] (#1+1,#2+1) -- (#1+1,#2+0) ; 
\draw [-latex] (#1+1,#2+1) -- (#1+2,#2+1) ; 
\draw [-latex] (#1+0.5,#2+1) -- (#1+0,#2+1) ; 
\draw [-latex] (#1+1,#2+1.5) -- (#1+1,#2+2) ;}
\newcommand{\robigrayhautdroite}[2]{
\fill[gray!40] (#1+0,#2+1) rectangle (#1+1.5,#2+1.5) ;
\fill[gray!40] (#1+1,#2+0) rectangle (#1+1.5,#2+1.5) ;
\draw [-latex] (#1+1.5,#2+1.5) -- (#1+1.5,#2+0) ; 
\draw [-latex] (#1+1.5,#2+1.5) -- (#1+0,#2+1.5) ;
\draw [-latex] (#1+1,#2+1) -- (#1+1,#2+0) ; 
\draw [-latex] (#1+1,#2+1) -- (#1+0,#2+1) ; 
\draw [-latex] (#1+1.5,#2+1) -- (#1+2,#2+1) ; 
\draw [-latex] (#1+1,#2+1.5) -- (#1+1,#2+2) ;}
\newcommand{\supertiletwobasgauche}[2]{
\robibluebasgauche{#1+0}{#2+0};
\robibluebasgauche{#1+8}{#2+0};
\robibluebasgauche{#1+0}{#2+8};
\robibluebasgauche{#1+8}{#2+8};
\robiredbasgauche{#1+2}{#2+2};
\robiredbasgauche{#1+6}{#2+6};
\robibluehautgauche{#1+0}{#2+4};
\robibluehautgauche{#1+8}{#2+4};
\robibluehautgauche{#1+0}{#2+12};
\robibluehautgauche{#1+8}{#2+12};
\robiredhautgauche{#1+2}{#2+10};
\robibluebasdroite{#1+4}{#2+0};
\robibluebasdroite{#1+12}{#2+0};
\robibluebasdroite{#1+4}{#2+8};
\robibluebasdroite{#1+12}{#2+8};
\robiredbasdroite{#1+10}{#2+2};
\robibluehautdroite{#1+4}{#2+4};
\robibluehautdroite{#1+12}{#2+4};
\robibluehautdroite{#1+4}{#2+12};
\robibluehautdroite{#1+12}{#2+12};
\robiredhautdroite{#1+10}{#2+10};
\robitwobas{#1+2}{#2+0}
\robitwobas{#1+10}{#2+0}
\robitwobas{#1+6}{#2+2}
\robitwogauche{#1+0}{#2+2}
\robitwogauche{#1+2}{#2+6}
\robitwogauche{#1+0}{#2+10}
\robitwodroite{#1+12}{#2+2}
\robitwodroite{#1+12}{#2+10}
\robitwohaut{#1+2}{#2+12}
\robitwohaut{#1+10}{#2+12}
\robionehaut{#1+6}{#2+0}
\robionehaut{#1+6}{#2+4}
\robionegauche{#1+0}{#2+6}
\robionegauche{#1+4}{#2+6}
\robisixbas{#1+2}{#2+8}
\robisixdroite{#1+4}{#2+2}
\robisixhaut{#1+2}{#2+4}
\robisevendroite{#1+4}{#2+10}
\robithreehaut{#1+6}{#2+8}
\robisixhaut{#1+6}{#2+10}
\robithreehaut{#1+6}{#2+12}
\robisixgauche{#1+8}{#2+2}
\robisevengauche{#1+8}{#2+10}
\robisevenbas{#1+10}{#2+8}
\robisevenhaut{#1+10}{#2+4}
\robisixdroite{#1+10}{#2+6}
\robithreedroite{#1+8}{#2+6}
\robithreedroite{#1+12}{#2+6}}
\newcommand{\supertiletwobasdroite}[2]{
\robibluebasgauche{#1+0}{#2+0};
\robibluebasgauche{#1+8}{#2+0};
\robibluebasgauche{#1+0}{#2+8};
\robibluebasgauche{#1+8}{#2+8};
\robiredbasgauche{#1+2}{#2+2};
\robiredbasdroite{#1+6}{#2+6};
\robibluehautgauche{#1+0}{#2+4};
\robibluehautgauche{#1+8}{#2+4};
\robibluehautgauche{#1+0}{#2+12};
\robibluehautgauche{#1+8}{#2+12};
\robiredhautgauche{#1+2}{#2+10};
\robibluebasdroite{#1+4}{#2+0};
\robibluebasdroite{#1+12}{#2+0};
\robibluebasdroite{#1+4}{#2+8};
\robibluebasdroite{#1+12}{#2+8};
\robiredbasdroite{#1+10}{#2+2};
\robibluehautdroite{#1+4}{#2+4};
\robibluehautdroite{#1+12}{#2+4};
\robibluehautdroite{#1+4}{#2+12};
\robibluehautdroite{#1+12}{#2+12};
\robiredhautdroite{#1+10}{#2+10};
\robitwobas{#1+2}{#2+0}
\robitwobas{#1+10}{#2+0}
\robitwobas{#1+6}{#2+2}
\robitwogauche{#1+0}{#2+2}
\robitwodroite{#1+10}{#2+6}
\robitwogauche{#1+0}{#2+10}
\robitwodroite{#1+12}{#2+2}
\robitwodroite{#1+12}{#2+10}
\robitwohaut{#1+2}{#2+12}
\robitwohaut{#1+10}{#2+12}
\robionehaut{#1+6}{#2+0}
\robionehaut{#1+6}{#2+4}
\robionedroite{#1+12}{#2+6}
\robionedroite{#1+8}{#2+6}
\robisixbas{#1+2}{#2+8}
\robisixdroite{#1+4}{#2+2}
\robisixhaut{#1+2}{#2+4}
\robisevendroite{#1+4}{#2+10}
\robifourhaut{#1+6}{#2+8}
\robisevenhaut{#1+6}{#2+10}
\robifourhaut{#1+6}{#2+12}
\robisixgauche{#1+8}{#2+2}
\robisevengauche{#1+8}{#2+10}
\robisevenbas{#1+10}{#2+8}
\robisevenhaut{#1+10}{#2+4}
\robisixgauche{#1+2}{#2+6}
\robithreegauche{#1+4}{#2+6}
\robithreegauche{#1+0}{#2+6}}
\newcommand{\supertiletwohautdroite}[2]{
\robibluebasgauche{#1+0}{#2+0};
\robibluebasgauche{#1+8}{#2+0};
\robibluebasgauche{#1+0}{#2+8};
\robibluebasgauche{#1+8}{#2+8};
\robiredbasgauche{#1+2}{#2+2};
\robiredhautdroite{#1+6}{#2+6};
\robibluehautgauche{#1+0}{#2+4};
\robibluehautgauche{#1+8}{#2+4};
\robibluehautgauche{#1+0}{#2+12};
\robibluehautgauche{#1+8}{#2+12};
\robiredhautgauche{#1+2}{#2+10};
\robibluebasdroite{#1+4}{#2+0};
\robibluebasdroite{#1+12}{#2+0};
\robibluebasdroite{#1+4}{#2+8};
\robibluebasdroite{#1+12}{#2+8};
\robiredbasdroite{#1+10}{#2+2};
\robibluehautdroite{#1+4}{#2+4};
\robibluehautdroite{#1+12}{#2+4};
\robibluehautdroite{#1+4}{#2+12};
\robibluehautdroite{#1+12}{#2+12};
\robiredhautdroite{#1+10}{#2+10};
\robitwobas{#1+2}{#2+0}
\robitwobas{#1+10}{#2+0}
\robitwohaut{#1+6}{#2+10}
\robitwogauche{#1+0}{#2+2}
\robitwodroite{#1+10}{#2+6}
\robitwogauche{#1+0}{#2+10}
\robitwodroite{#1+12}{#2+2}
\robitwodroite{#1+12}{#2+10}
\robitwohaut{#1+2}{#2+12}
\robitwohaut{#1+10}{#2+12}
\robionebas{#1+6}{#2+12}
\robionebas{#1+6}{#2+8}
\robionedroite{#1+12}{#2+6}
\robionedroite{#1+8}{#2+6}
\robisixbas{#1+2}{#2+8}
\robisixdroite{#1+4}{#2+2}
\robisixhaut{#1+2}{#2+4}
\robisevendroite{#1+4}{#2+10}
\robifourbas{#1+6}{#2+4}
\robisevenbas{#1+6}{#2+2}
\robifourbas{#1+6}{#2+0}
\robisixgauche{#1+8}{#2+2}
\robisevengauche{#1+8}{#2+10}
\robisevenbas{#1+10}{#2+8}
\robisevenhaut{#1+10}{#2+4}
\robisevengauche{#1+2}{#2+6}
\robifourgauche{#1+4}{#2+6}
\robifourgauche{#1+0}{#2+6}}
\newcommand{\supertiletwohautgauche}[2]{
\robibluebasgauche{#1+0}{#2+0};
\robibluebasgauche{#1+8}{#2+0};
\robibluebasgauche{#1+0}{#2+8};
\robibluebasgauche{#1+8}{#2+8};
\robiredbasgauche{#1+2}{#2+2};
\robiredhautgauche{#1+6}{#2+6};
\robibluehautgauche{#1+0}{#2+4};
\robibluehautgauche{#1+8}{#2+4};
\robibluehautgauche{#1+0}{#2+12};
\robibluehautgauche{#1+8}{#2+12};
\robiredhautgauche{#1+2}{#2+10};
\robibluebasdroite{#1+4}{#2+0};
\robibluebasdroite{#1+12}{#2+0};
\robibluebasdroite{#1+4}{#2+8};
\robibluebasdroite{#1+12}{#2+8};
\robiredbasdroite{#1+10}{#2+2};
\robibluehautdroite{#1+4}{#2+4};
\robibluehautdroite{#1+12}{#2+4};
\robibluehautdroite{#1+4}{#2+12};
\robibluehautdroite{#1+12}{#2+12};
\robiredhautdroite{#1+10}{#2+10};
\robitwobas{#1+2}{#2+0}
\robitwobas{#1+10}{#2+0}
\robitwohaut{#1+6}{#2+10}
\robitwogauche{#1+0}{#2+2}
\robitwogauche{#1+2}{#2+6}
\robitwogauche{#1+0}{#2+10}
\robitwodroite{#1+12}{#2+2}
\robitwodroite{#1+12}{#2+10}
\robitwohaut{#1+2}{#2+12}
\robitwohaut{#1+10}{#2+12}
\robionebas{#1+6}{#2+12}
\robionebas{#1+6}{#2+8}
\robionegauche{#1+0}{#2+6}
\robionegauche{#1+4}{#2+6}
\robisixbas{#1+2}{#2+8}
\robisixdroite{#1+4}{#2+2}
\robisixhaut{#1+2}{#2+4}
\robisevendroite{#1+4}{#2+10}
\robithreebas{#1+6}{#2+4}
\robisixbas{#1+6}{#2+2}
\robithreebas{#1+6}{#2+0}
\robisixgauche{#1+8}{#2+2}
\robisevengauche{#1+8}{#2+10}
\robisevenbas{#1+10}{#2+8}
\robisevenhaut{#1+10}{#2+4}
\robisevendroite{#1+10}{#2+6}
\robifourdroite{#1+8}{#2+6}
\robifourdroite{#1+12}{#2+6}}
\newcommand{\supertilesynchrotwo}[2]{
\fill[gray!90] (#1-2,#2-2) rectangle (#1+32,#2+32);
\fill[gray!40] (#1+0,#2+0) rectangle (#1+30,#2+30);
\fill[red!40] (#1+0,#2+0) rectangle (#1+14,#2+14);
\fill[orange!40] (#1+16,#2+16) rectangle (#1+30,#2+30);
\fill[purple!40] (#1+0,#2+16) rectangle (#1+14,#2+30);
\fill[yellow!40] (#1+16,#2+0) rectangle (#1+30,#2+14); 
\fill[gray!90] (#1+2,#2+2) rectangle (#1+12,#2+12);
\fill[gray!90] (#1+2,#2+18) rectangle (#1+12,#2+28);
\fill[gray!90] (#1+18,#2+18) rectangle (#1+28,#2+28);
\fill[gray!90] (#1+18,#2+2) rectangle (#1+28,#2+12);
\fill[gray!40] (#1+4,#2+20) rectangle (#1+10,#2+26);
\fill[gray!40] (#1+20,#2+20) rectangle (#1+26,#2+26);
\fill[gray!40] (#1+20,#2+4) rectangle (#1+26,#2+10);
\fill[gray!40] (#1+4,#2+4) rectangle (#1+10,#2+10);
\fill[red!40] (#1+4,#2+4) rectangle (#1+6,#2+6);
\fill[yellow!40] (#1+8,#2+4) rectangle (#1+10,#2+6);
\fill[purple!40] (#1+4,#2+8) rectangle (#1+6,#2+10);
\fill[orange!40] (#1+8,#2+8) rectangle (#1+10,#2+10);
\fill[red!40] (#1+20,#2+4) rectangle (#1+22,#2+6);
\fill[yellow!40] (#1+24,#2+4) rectangle (#1+26,#2+6);
\fill[purple!40] (#1+20,#2+8) rectangle (#1+22,#2+10);
\fill[orange!40] (#1+24,#2+8) rectangle (#1+26,#2+10);
\fill[red!40] (#1+20,#2+20) rectangle (#1+22,#2+22);
\fill[yellow!40] (#1+24,#2+20) rectangle (#1+26,#2+22);
\fill[purple!40] (#1+20,#2+24) rectangle (#1+22,#2+26);
\fill[orange!40] (#1+24,#2+24) rectangle (#1+26,#2+26);
\fill[red!40] (#1+4,#2+20) rectangle (#1+6,#2+22);
\fill[yellow!40] (#1+8,#2+20) rectangle (#1+10,#2+22);
\fill[purple!40] (#1+4,#2+24) rectangle (#1+6,#2+26);
\fill[orange!40] (#1+8,#2+24) rectangle (#1+10,#2+26);}
\newcommand{\supertilesynchroone}[2]{
\fill[gray!90] (#1+0,#2+0) rectangle (#1+10,#2+10);
\fill[gray!40] (#1+2,#2+2) rectangle (#1+8,#2+8);
\fill[red!40] (#1+2,#2+2) rectangle (#1+4,#2+4);
\fill[yellow!40] (#1+6,#2+2) rectangle (#1+8,#2+4);
\fill[purple!40] (#1+2,#2+6) rectangle (#1+4,#2+8);
\fill[orange!40] (#1+6,#2+6) rectangle (#1+8,#2+8);
}
\newcommand{\supertilesynchrothree}{
\fill[gray!90] (-2,-2) rectangle (2*64,2*64);
\fill[gray!40] (0,0) rectangle (2*63,2*63);
\fill[red!40] (0,0) rectangle (2*31,2*31);
\fill[orange!40] (2*32,2*32) rectangle (2*63,2*63);
\fill[yellow!40] (2*32,2*31) rectangle (2*63,0);
\fill[purple!40] (2*31,2*32) rectangle (0,2*63);
\supertilesynchrotwo{16}{16}
\supertilesynchrotwo{16}{16+4*16}
\supertilesynchrotwo{16+4*16}{16+4*16}
\supertilesynchrotwo{16+4*16}{16}
\supertilesynchroone{66}{66}
\supertilesynchroone{-4*16+66}{66}
\supertilesynchroone{-3*16+66}{66}
\supertilesynchroone{-2*16+66}{66}
\supertilesynchroone{-1*16+66}{66}
\supertilesynchroone{16+66}{66}
\supertilesynchroone{2*16+66}{66}
\supertilesynchroone{3*16+66}{66}
\supertilesynchroone{-4*16+66}{16+66}
\supertilesynchroone{-3*16+66}{16+66}
\supertilesynchroone{-2*16+66}{16+66}
\supertilesynchroone{-1*16+66}{16+66}
\supertilesynchroone{16+66}{16+66}
\supertilesynchroone{66}{16+66}
\supertilesynchroone{2*16+66}{16+66}
\supertilesynchroone{3*16+66}{16+66}
\supertilesynchroone{-4*16+66}{2*16+66}
\supertilesynchroone{-3*16+66}{2*16+66}
\supertilesynchroone{-2*16+66}{2*16+66}
\supertilesynchroone{-1*16+66}{2*16+66}
\supertilesynchroone{66}{2*16+66}
\supertilesynchroone{16+66}{2*16+66}
\supertilesynchroone{2*16+66}{2*16+66}
\supertilesynchroone{3*16+66}{2*16+66}

\supertilesynchroone{-4*16+66}{3*16+66}
\supertilesynchroone{-3*16+66}{3*16+66}
\supertilesynchroone{-2*16+66}{3*16+66}
\supertilesynchroone{-1*16+66}{3*16+66}
\supertilesynchroone{66}{3*16+66}
\supertilesynchroone{16+66}{3*16+66}
\supertilesynchroone{2*16+66}{3*16+66}
\supertilesynchroone{3*16+66}{3*16+66}

\supertilesynchroone{-4*16+66}{-1*16+66}
\supertilesynchroone{-3*16+66}{-1*16+66}
\supertilesynchroone{-2*16+66}{-1*16+66}
\supertilesynchroone{-1*16+66}{-1*16+66}
\supertilesynchroone{66}{-1*16+66}
\supertilesynchroone{16+66}{-1*16+66}
\supertilesynchroone{2*16+66}{-1*16+66}
\supertilesynchroone{3*16+66}{-1*16+66}

\supertilesynchroone{-4*16+66}{-2*16+66}
\supertilesynchroone{-3*16+66}{-2*16+66}
\supertilesynchroone{-2*16+66}{-2*16+66}
\supertilesynchroone{-1*16+66}{-2*16+66}
\supertilesynchroone{66}{-2*16+66}
\supertilesynchroone{16+66}{-2*16+66}
\supertilesynchroone{2*16+66}{-2*16+66}
\supertilesynchroone{3*16+66}{-2*16+66}

\supertilesynchroone{-4*16+66}{-3*16+66}
\supertilesynchroone{-3*16+66}{-3*16+66}
\supertilesynchroone{-2*16+66}{-3*16+66}
\supertilesynchroone{-1*16+66}{-3*16+66}
\supertilesynchroone{66}{-3*16+66}
\supertilesynchroone{16+66}{-3*16+66}
\supertilesynchroone{2*16+66}{-3*16+66}
\supertilesynchroone{3*16+66}{-3*16+66}

\supertilesynchroone{-4*16+66}{-4*16+66}
\supertilesynchroone{-3*16+66}{-4*16+66}
\supertilesynchroone{-2*16+66}{-4*16+66}
\supertilesynchroone{-1*16+66}{-4*16+66}
\supertilesynchroone{66}{-4*16+66}
\supertilesynchroone{16+66}{-4*16+66}
\supertilesynchroone{2*16+66}{-4*16+66}
\supertilesynchroone{3*16+66}{-4*16+66}
}
\renewcommand{\headrulewidth}{0pt}
\renewcommand{\footrulewidth}{0pt}
\newcommand{\IFF}[2]
{
\left[ #1 , #2 \right]
}
\begin{document}

\maketitle

\begin{abstract} We prove in this text that
the possible entropy dimensions of 
minimal $\Z^3$-SFT 
are the $\Delta_2$-computable 
 numbers in $[0,2]$,
using Goldbach's theorem on Fermat numbers.
\end{abstract}

\section{Introduction}

Multidimensional subshifts of finite type (SFT) are discrete dynamical 
systems described as the action of the shift 
on a compact set of symbol displays on an infinite 
regular grid. This set is defined 
by a finite set of local rules on the symbols. 
It has been established that 
their dynamicals 
are related to computability. The most emblematic 
result in this sense is the characterization 
of the possible values of the entropy of 
these systems, by M. Hochman and T. Meyerovitch~\cite{Hochman-Meyerovitch-2010}, as the 
set of $\Pi_1$-computable numbers. The $\Pi_1$-computability means that the number can be approximated 
from above by a sequence of rational numbers produced 
by an algorithm. This result 
was followed for instance by the characterization 
of the entropy dimensions~\cite{Meyerovitch2011}
of the possible sets of 
periods~\cite{Jeandel-Vanier-2014}, with
similar recursion-theoretic criteria. These results 
follow a common outline, which consists in 
implementing Turing machines in hierarchical structures
of particular SFT.

A recent trend is to see the effect 
of dynamical restrictions on these characterization 
results. It was already proved in~\cite{Hochman-Meyerovitch-2010} that the values of entropy of strongly irreducible 
bidimensional subshifts are all computable numbers. 
This means that there is an algorithm which on input $n$ 
outputs an approximation up to $\frac{1}{n}$.
Moreover, the authors proved that this can 
be done in time $e^{O(n^2)}$.
Then R. Pavlov and M. Schraudner provided a construction 
that allows to realize a subclass of this class. 

Other types of restrictions have been studied, 
for instance minimality.
For instance, in~\cite{HV14}, the authors proved that minimal subshifts can exhibit 
complex computational behaviors, by exhibiting minimal $\Z^d$-subshifts
that have complex Turing spectrum.
In~\cite{JLK17}, the authors provided a construction that allows the realization of all the
non-negative real numbers smaller than $2$ as 
entropy dimension 
of minimal $\Z^2$-subshifts (which are not though subshifts of finite type). 

The restriction of minimality acts in a different 
way on SFT. Notably, the entropy of 
minimal SFT is zero. As a consequence, this 
invariant is not pertinent for the study 
of these systems. Moreover, one 
could think that this prevents embedding 
Turing computations in these subshifts, 
since entropy is often assimilated with complexity. 

We prove in this text that this is not the case. 
First, this is possible to embed Turing computation 
in minimal SFT. Second, we show that the entropy 
dimension is a more pertinent invariant for 
these subshifts.

To our knowledge, the 
only other constructions of minimal SFT are 
due to B. Durand and A. Romashchenko~\cite{DR17}. 
They use a fixed-point construction in which 
are implemented computing machines that check 
if no forbidden pattern in a recursively enumerable 
set appear in bi-infinite words on the alphabet $\{0,1\}$.
Since in these constructions the 
computation areas of the machines are sparse, 
the degenerated behaviors are simple.
Controling the growth of the computing 
units, they could attribute the function 
of some particular sub-units in order to 
simulate these behaviors in any computing unit.
The minimality of the architecture 
used for the control on the apparition of 
the forbidden words is ensured by this simulation. 

The idea of simulation is present in both 
constructions. However, the construction 
of \cite{DR17} relies on a very rigid architecture. 
We propose here a more flexible way to 
ensure the minimality, although complex to 
formulate. This allows 
the simulation of a consequent set 
of patterns.

In this text, we prove the following theorem: 

\begin{theorem}
\label{thm.dim.entropique.intro}
The possible entropy dimension of minimal $\Z^3$-SFT are 
the $\Delta_2$-computable numbers in $[0,2]$.
\end{theorem}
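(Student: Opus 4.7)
My plan is to prove the characterization by establishing both inclusions separately.

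For the realization of each $\Delta_2$-computable $\alpha \in [0,2]$, I would construct a minimal $\Z^3$-SFT with entropy dimension $\alpha$ by adapting the fixed-point construction of Durand--Romashchenko~\cite{DR17} to dimension three, replacing their rigid synchronization by the more flexible minimality mechanism announced in the introduction. The SFT has a hierarchical self-similar structure in which each level-$k$ super-tile is a cube of side $N_k$ formed of $O(1)$ level-$(k-1)$ sub-super-tiles surrounding computation zones. One coordinate axis is reserved as the time direction of the embedded Turing machines; the remaining two directions host, inside each super-tile, a controlled number $F_k$ of \emph{free bits} whose values are unconstrained by the local rules. A Turing machine embedded at level $k$ takes a $\Delta_2$-approximation of $\alpha$ as input and produces the target value of $F_k$ so that the number of legal patterns in a cube of side $n$ grows as $2^{n^{\alpha+o(1)}}$, yielding entropy dimension exactly $\alpha$.

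The main obstacle is enforcing minimality while leaving the density of free bits variable, and this is where I would invoke Goldbach's theorem on Fermat numbers. Following the spirit of \cite{DR17}, every admissible local pattern must be simulated, inside every sufficiently large super-tile, by some designated sub-unit, so that the absence of a pattern is already visible locally. I would implement the distribution of such simulations among the sub-units via the Fermat numbers $F_n = 2^{2^n}+1$: their pairwise coprimality, which is Goldbach's theorem on these numbers, guarantees that the arithmetic progressions of sub-units reserved for different simulations never collide, and thus allows each admissible pattern to recur with a bounded, $\alpha$-independent gap while the density of free bits still tracks $\alpha$. Verifying that this arithmetic allocation survives the self-similar rescaling without either breaking minimality or perturbing the target growth exponent will be the combinatorial core of the construction.

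For the upper bound, the inequality $\alpha \leq 2$ would follow from a dimension-reduction argument using minimality: in a minimal $\Z^3$-SFT every globally admissible pattern recurs with bounded gaps, hence the complexity of a cube of side $n$ is controlled by the complexity of a two-dimensional slice of bounded thickness, which grows at most like a polynomial of degree $2$ in the exponent. The $\Delta_2$-computability would then follow from the observation that for a minimal SFT the language of globally admissible patterns is decidable relative to the halting oracle: it is $\Pi_1$-computable from the defining local rules, and minimality makes the complement $\Sigma_1$-computable, since any pattern that fails to appear somewhere in a sufficiently large window is globally forbidden. The entropy dimension is then obtained as an effective limit of a computable sequence of rationals using this oracle, which is exactly $\Delta_2$-computability.
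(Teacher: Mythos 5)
Your outline matches the paper's at the top level (a realization construction plus an obstruction), and you have correctly located the role of Goldbach's theorem: pairwise coprimality of the Fermat numbers $2^{2^n}+1$ is exactly what the paper uses for minimality, although there it appears as the periods of hierarchical and linear \emph{counters} attached to cells of the 3D Robinson structure (so that, by the Chinese remainder theorem, every combination of counter values -- hence every pattern -- is reached by translating from cell to cell), not as moduli for allocating pattern simulations among sub-units. The paper also deliberately avoids the Durand--Romashchenko fixed-point architecture that you take as your starting point, replacing it by three superimposed rigidified Robinson layers; that divergence is a matter of route, not of correctness.

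The genuine gap is in your upper bound. You claim that minimality alone (bounded-gap recurrence) lets you control the complexity of an $n$-cube by that of a two-dimensional slice. This cannot be right: the paper cites minimal $\Z^2$-subshifts (not of finite type) realizing every entropy dimension up to $2$, so minimality by itself gives no bound below $d$. The obstruction $\overline{D_h}(X)\le d-1$ genuinely needs the finite-type hypothesis, and the mechanism is different from what you describe: using the rank $r$ of the SFT one builds, by a recursive completion $\varphi_n$ of nested annuli $\mathbb{O}_n$ of thickness $r$, a \emph{single} configuration $x$ in which the content of every dyadic box is a canonical function of its boundary annulus, so that $x$ admits at most $|\mathcal{A}|^{O(n^{d-1})}$ patterns of side $n$; minimality is then used only to transfer this bound from $x$ to the whole subshift. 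Without that SFT-based filling step your slice argument has nothing to stand on. A second, smaller gap is in the computability half: deciding the language relative to the halting oracle makes $n\mapsto N_n(X)$ a $\Delta_2$ sequence, and the limit of $\log_2\log_2 N_n(X)/\log_2 n$ over a $\Delta_2$ sequence is a priori only $\Delta_3$. To land in $\Delta_2$ you should instead count \emph{locally} admissible patterns, which is a genuinely computable sequence whose normalized double logarithm has the same limit; the entropy dimension is then the limit of a computable sequence of rationals, which is the definition of $\Delta_2$-computability.
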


Thus, there is a difference with the set of numbers that are 
the entropy of $\Z^3$-SFT: 
the $\Delta_2$-computable numbers in $[0,3]$. 

The construction used to prove the realization
part of this characterization
is an adaptation of the construction 
presented in~\cite{Meyerovitch2011}. 
This construction is not minimal for the 
reason that pathological 
behaviors of the Turing machines can appear, and 
that it uses 
a display of random bits that breaks the 
minimality property.

As in our previous article~\cite{GS17}, 
there is an attractive analogy between counters 
used in the construction and DNA. This 
comes from the 
separation into coding part and non-coding one. 
This analogy suggests that the non-coding part 
is implied in global properties of a living system.

The structure layer used in this construction 
is a $3d$ version of the Robinson subshift constructed 
with three copies of a rigid 
version of this subshift. 
It exhibits 
similar hierarchical structures as 
the two-dimensional version. 
Although we don't prove how here, 
this structure allows, by adding colors, 
some structures used in~\cite{Hochman-2009} 
and \cite{Pavlov-Schraudner-2014} 
in order to construct 
$\Z^3$-SFTs to be recovered.

Let us also remark that some 
of the mechanisms described in the 
construction presented in this text can be reformulated using 
substitutions and S-adic systems. However, we don't use Mozes theorem (proved in~\cite{Mozes1989})
or the result proved in~\cite{aubrun-sablik-14}. 
(this theorem 
states that multidimensional S-adic systems 
are sofic) for two reasons. 
The first one is that some 
of the substitution 
mechanisms are localized in restricted 
parts in each configuration. 
When the mechanism is global, there 
is still an obstacle for the use 
of these theorems:
the need of more precise properties on the SFT 
than stated in these theorems.
One of these properties is the repetition, in each configuration of the SFT, of patterns 
whose size some power of two, with period equal as well to some power of two. 
We instead use directly similar techniques as~\cite{Mozes1989} in the construction.

This text is organized as follows: In section~\ref{sec.obstruction.min}, we prove 
that the entropy dimensions of a minimal SFT are smaller than $d-1$. Then in section~\ref{sec.realization.min}, 
we prove the realization part of the characterization.

\section{Definitions}

In this section, we recall some definitions of 
subshifts, entropy dimension, 
$\Delta_2$-computable numbers. 

\subsection{Subshifts dynamical systems}

Let $\mathcal{A}$ be some finite set, called
\textbf{alphabet}. Let $d \ge 1$ be an integer.
The set 
$\mathcal{A}^{\Z^d}$ is a topological 
space with the product of the discrete 
topology on $\mathcal{A}$. Its 
elements are called \textbf{configurations}. 
We denote $(\vec{e}^1 , ... , \vec{e}^d)$
the canonical sequence of generators 
of $\Z^d$. Let us 
denote $\sigma$ the action 
of $\Z^d$ on this space defined 
by the following equality for all $\vec{u} \in 
\Z^d$ and $x$ element of the space:
\[\left(\sigma^{\vec{u}}.x\right)_{\vec{v}} 
= x_{\vec{v}+\vec{u}}.\]
A compact subset 
$X$ of this space is called a \textbf{subshift} 
when this subset is stable 
under the action of the shift. This means 
that for all $\vec{u} \in \Z^d$: 
\[\sigma^{\vec{u}}.X \subset X.\]

Consider some finite subset $\mathbb{U}$ 
of $\Z^d$. An element $p$ 
of $\mathcal{A}^{\mathbb{U}}$ 
is called a \textbf{pattern} on \textbf{support}
$\mathbb{U}$. This pattern \textbf{appears} in 
a configuration $x$ when there 
exists a translate $\mathbb{V}$ of $\mathbb{U}$ 
such that $x_{\mathbb{V}}=p$.
It appears in a subshift $X$ when it appears 
in a configuration of $X$. 
The set of patterns of $X$ that appear in 
it is called the \textbf{language} of $X$. 
The number of patterns on 
support $\llbracket 1,n \rrbracket ^d$ that 
appear in $X$ 
is denoted $N_n (X)$.

We say that a subshift $X$ is 
\textbf{minimal} when any pattern 
in its language appears in any of its configurations.

A subshift $X$ defined by forbidding patterns 
in some finite set $\mathcal{F}$ 
to appear in the configurations, 
formally: 
\[X= \bigcap_{\mathbb{U}
\subset \Z^2} \left\{ x \in \mathcal{A}^{\Z^2} : 
x_{\mathbb{U}} \notin \mathcal{F} \right\}\]
is called 
a subshift of \textbf{finite type} (SFT).

\subsection{Computability notions}

\begin{definition}
A real number $x$ is said to be $\Delta_2$-computable 
when there exists a Turing machine 
which given as input an integer $n$ 
outputs a rational number $r_n$ 
such that $x = \lim_n r_n$.
\end{definition}

\begin{definition}
A sequence $(a_n)_n \in \{0,1\}^{\N}$ is said $\Pi_1$-computable if there exists 
a Turing machine that, taking as input a 
couple of integers $(n,i)$, outputs some 
$\epsilon_{n,i} \in \{0,1\}$ such 
that for all $n$, $a_n = \inf_i \epsilon_{n,i}$.
\end{definition}

The following lemma establishes a link between $\Delta_2$-computable 
real numbers and $\Pi_1$-computable sequences.

\begin{lemma} \label{lem.density} A real number $z \in [0,2]$ 
is $\Delta_2$-computable if and only if 
there exists some $\Pi_1$-computable sequence $(a_j)_j$
such that 
\[z=\lim_{n \rightarrow \infty} \frac{2}{n} \sum_{j=1}^{n} a_j\]
\end{lemma}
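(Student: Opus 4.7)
The plan is to prove both directions using the canonical non-increasing computable approximations $b_j^{(t)} := \min_{i \leq t}\epsilon_{j,i}$, which satisfy $b_j^{(t)} \searrow a_j$ as $t \to \infty$.

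For the direction $(\Leftarrow)$, suppose $(a_j)_j$ is $\Pi_1$-computable and the Cesàro limit exists and equals $z$. I must produce a computable sequence of rationals converging to $z$. Set $s_n^{(t)} := \frac{2}{n}\sum_{j=1}^{n} b_j^{(t)}$: this is uniformly computable in $(n,t)$, non-increasing in $t$, and takes values in the finite set $\{2m/n : 0 \leq m \leq n\}$. Hence for each fixed $n$ it stabilizes at the value $s_n := \frac{2}{n}\sum_{j=1}^n a_j$, and by hypothesis $s_n \to z$. I define $r_k$ by a bounded diagonal search: given $k$, enumerate pairs $(n,t)$ with $n \geq k$ and output $s_n^{(t)}$ for the first pair such that $s_{n'}^{(t)} = s_{n'}^{(t+k)}$ for every $n' \in \{n, n+1, \ldots, n+k\}$ (local stabilization in $t$) and such that $|s_{n'}^{(t)} - s_n^{(t)}| < 1/k$ for every such $n'$ (clustering in $n$). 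Existence of such a pair for every $k$ follows by choosing $n_0$ large enough that $|s_{n'} - z| < 1/(3k)$ for $n' \geq n_0$, and then $t$ large enough that all relevant $s_{n'}^{(t)}$ have reached their stabilized value $s_{n'}$; the clustering and stability conditions are then automatic. The clustering condition forces $|r_k - z| = O(1/k)$, yielding $r_k \to z$.

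For the direction $(\Rightarrow)$, suppose $z \in [0,2]$ is $\Delta_2$-computable with computable approximation $q_k \to z$, $q_k \in \Q \cap [0,2]$. I construct an explicit computable sequence $(a_j) \in \{0,1\}^{\N}$ (hence $\Pi_1$-computable, taking $\epsilon_{j,i} := a_j$) whose Cesàro averages converge to $z$. Partition $\N$ into consecutive blocks $B_k$ with $|B_k| = 2^k$. Within $B_k$, place $1$'s at evenly spaced positions so that the local density is $q_k/2$ (for instance, set $a_j = 1$ precisely when $\lfloor (j - \min B_k + 1) q_k / 2 \rfloor > \lfloor (j - \min B_k) q_k / 2\rfloor$). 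Geometric block growth forces $\sum_{j \in B_k} a_j \sim q_k \cdot 2^{k-1}$ to dominate the total $\sum_{j \leq \max B_k} a_j$, so the cumulative Cesàro ratio at the end of $B_k$ equals $q_k/2 + O(2^{-k})$; uniform spacing inside $B_k$ handles the intermediate $n$, and we conclude $\frac{2}{n}\sum_{j \leq n}a_j \to z$.

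The main obstacle is in $(\Leftarrow)$: the stabilization times of the $b_j^{(t)}$ are neither computable nor uniform in $j$, so the naive diagonal $r_k = s_k^{(k)}$ need not converge to $z$. The search criterion must therefore exploit both of the two monotone structures available — the value-set finiteness that forces each $s_n^{(t)}$ to stabilize at $s_n$, and the convergence $s_n \to z$ — in order to produce a computably selectable $(n,t)$ for which $s_n^{(t)}$ is provably near $z$.
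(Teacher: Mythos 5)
The paper itself offers no proof of this lemma (it is quoted from Lemma~4.1 of Meyerovitch's paper), so your argument has to stand on its own. Your $(\Rightarrow)$ direction is essentially right: a computable $\{0,1\}$-sequence is in particular $\Pi_1$, and the geometric-block construction does work, although the cumulative ratio at the end of $B_k$ is a geometric average $\sum_{m\ge 2} q_{k+2-m}2^{-m}$ of the recent targets rather than $q_k/2+O(2^{-k})$ as you assert; since $q_k\to z$ this still tends to $z/2$, so the direction survives.

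The $(\Leftarrow)$ direction has a genuine gap, precisely at the point you flag as the main obstacle: your acceptance test is a necessary condition for having reached the true values, but not a sufficient one, and the sentence ``the clustering condition forces $|r_k-z|=O(1/k)$'' is asserted without justification and is false. Your existence argument shows that \emph{some} pair $(n,t)$ passes the test, but the algorithm returns the \emph{first} such pair, which can be a premature one at which nothing has stabilized. Concretely, take $a_j=0$ for all $j$ (so $z=0$), presented by $\epsilon_{j,i}=1$ for $i<N_j$ and $\epsilon_{j,i}=0$ for $i\ge N_j$, with $N_j:=2^{2^j}$. Then $s_{n'}^{(t)}=2-2g(t)/n'$ where $g(t)=\#\{j: N_j\le t\}=O(\log\log t)$. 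Since consecutive $N_j$'s are separated by more than $k$ once $j\approx\log_2\log_2 k$, there is a $t^*=O(k)$ with $g$ constant on $[t^*,t^*+k]$, and every pair $(n,t^*)$ with $n\ge k^2$ passes both of your tests with value $2-2g(t^*)/n=2-o(1)$; by contrast, any pair with value below $1$ requires $t\ge N_{\lceil k/2\rceil}=2^{2^{k/2}}$ and is reached astronomically later by any dovetailing. Hence $r_k\to 2\neq 0=z$. The defect cannot be repaired by tuning the window: every computable functional of the $\epsilon$'s sees only the over-approximations $s_{n'}^{(t')}\ge s_{n'}$, and on any computably bounded search region these can all be locked at a common value far above $z$, so mutual agreement certifies nothing. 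Using only the $\liminf$ side, as any such one-sided search implicitly does, shows merely that $z$ is a supremum of uniformly upper-semicomputable numbers, which is strictly weaker than $\Delta_2$; a correct proof must exploit the existence of the two-sided limit in an essentially different way, and this is exactly the nontrivial content of Meyerovitch's Lemma~4.1 that your argument does not yet capture.
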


This lemma is stated as part of 
Lemma 4.1 in~\cite{Meyerovitch2011}.

\subsection{Entropy dimensions}

The \textbf{upper entropy dimension}  
of $X$ is the number
\[\overline{D}_h (X)
= \limsup_{n} \frac{\log_2 \circ \log_2 (N_n (X))}{\log_2 (n)}.
\]
The \textbf{lower entropy dimension} of $X$
is: 
\[\underline{D}_h (X)
= \liminf_{n} \frac{\log_2 \circ \log_2 (N_n (X))}{\log_2 (n)}.
\]
When these numbers are equal, 
they are referred to as the 
\textbf{entropy dimension} of $X$, and denoted $D_h (X)$.

\begin{proposition}[\cite{Meyerovitch2011}]
\label{proposition.conjugacy.invariant}
The upper and lower entropy dimensions are topological invariants, 
as well as the existence and value of the entropy dimension.
\end{proposition}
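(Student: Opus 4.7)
The plan is to invoke the Curtis--Hedlund--Lyndon theorem: any topological conjugacy $\phi : X \to Y$ between $\Z^d$-subshifts is a sliding block code, so there is a radius $r_1 \in \N$ and a local rule $\Phi$ such that $\phi(x)_{\vec{v}}$ depends only on $x_{\vec{v}+\llbracket -r_1,r_1 \rrbracket^d}$. Since $\phi^{-1}$ is also a conjugacy, it too is a block code, of some radius $r_2$. Set $R = \max(r_1,r_2)$.

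The key step is a two-sided pattern-counting comparison. From the local rule for $\phi$, any pattern of $X$ on $\llbracket 1,n \rrbracket^d$ determines a pattern of $Y$ on the shrunken cube $\llbracket 1+r_1, n-r_1 \rrbracket^d$, and (by passing through preimages under $\phi$) every $Y$-pattern on a cube of side $n-2r_1$ occurring in $Y$ is realized this way. This gives $N_{n-2R}(Y) \le N_n(X)$. The symmetric argument applied to $\phi^{-1}$ gives $N_{n-2R}(X) \le N_n(Y)$. After reindexing, one obtains the sandwich
\[ N_{n-2R}(X) \;\le\; N_n(Y) \;\le\; N_{n+2R}(X). \]

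Next, I would take $\log_2 \circ \log_2$ throughout and divide by $\log_2 n$, exploiting the elementary asymptotic $\log_2(n \pm 2R)/\log_2 n \to 1$ as $n \to \infty$ to show that the outer sequences have the same $\limsup$ and $\liminf$ as $\frac{\log_2 \log_2 N_n(X)}{\log_2 n}$. Squeezing, the sequences $\frac{\log_2 \log_2 N_n(X)}{\log_2 n}$ and $\frac{\log_2 \log_2 N_n(Y)}{\log_2 n}$ share the same $\limsup$ and $\liminf$, establishing $\overline{D}_h(X) = \overline{D}_h(Y)$ and $\underline{D}_h(X) = \underline{D}_h(Y)$. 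The existence and value of $D_h$ when defined follow immediately.

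The argument presents no real obstacle: the only subtle point is the reindexing from $N_{n \pm 2R}$ to $N_n$ inside the double logarithm, which is handled by the asymptotic $\log_2(n+c) \sim \log_2 n$. The only nontrivial external ingredient is the Curtis--Hedlund--Lyndon theorem, which is classical.
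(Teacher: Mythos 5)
Your argument is correct and is essentially the standard proof of this fact, which the paper itself does not reproduce (it only cites \cite{Meyerovitch2011}): a conjugacy and its inverse are sliding block codes by Curtis--Hedlund--Lyndon, so pattern counts satisfy the sandwich $N_{n-2R}(X)\le N_n(Y)\le N_{n+2R}(X)$, and the normalization $\log_2\circ\log_2(\cdot)/\log_2(n)$ is insensitive to replacing $n$ by $n\pm 2R$ since the ratios $\log_2(n\pm 2R)/\log_2(n)$ tend to $1$ and the quantities involved are bounded (as $N_n(X)\le|\A|^{n^d}$). The only point worth flagging, and it is harmless, is the degenerate case $N_n=1$ where the double logarithm is not defined; this is excluded for any infinite subshift.
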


\begin{theorem}[\cite{Meyerovitch2011}]
\label{thm.meyerovitch}
The possible values of the entropy dimension 
for $\Z^d$-SFT 
are the $\Delta_2$-computable 
numbers in $[0,d]$.
\end{theorem}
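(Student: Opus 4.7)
The plan is to prove both directions of the characterization: that the entropy dimension of any $\Z^d$-SFT is a $\Delta_2$-computable number in $[0,d]$, and conversely that every such number is realized by some $\Z^d$-SFT.

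For the obstruction direction, the upper bound $\overline{D}_h(X) \leq d$ is immediate from $N_n(X) \leq |\mathcal{A}|^{n^d}$, which yields $\log_2 \log_2 N_n(X) \leq d \log_2 n + O(1)$. The substantive point is $\Delta_2$-computability. Given a $\Z^d$-SFT $X$ specified by a finite set of forbidden patterns, $N_n(X)$ itself is in general not computable, but it can be approximated from above in a computable way: for each $n$ and $k$, let $M_n^{(k)}$ denote the number of patterns on $\llbracket 1,n \rrbracket^d$ that extend to a pattern on $\llbracket 1-k, n+k \rrbracket^d$ in which no forbidden pattern occurs. Each $M_n^{(k)}$ is computable, the sequence $(M_n^{(k)})_k$ is non-increasing in $k$, and by a compactness argument $M_n^{(k)} \searrow N_n(X)$ as $k \to \infty$. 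Consequently $\log_2 \log_2 N_n(X)$ is $\Pi_1$-computable. Because $\overline{D}_h(X)$ and $\underline{D}_h(X)$ are respectively a $\limsup$ and a $\liminf$ of this $\Pi_1$-computable sequence divided by $\log_2 n$, a standard diagonal argument extracts a computable sequence of rationals converging to $D_h(X)$, proving $\Delta_2$-computability.

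For the realization direction, fix a $\Delta_2$-computable $z \in [0,d]$. Using the $[0,d]$-analogue of Lemma \ref{lem.density}, produce a $\Pi_1$-computable sequence $(a_j)_j \in \{0,1\}^\N$ such that $z = \lim_n \frac{d}{n} \sum_{j=1}^n a_j$, witnessed by a Turing machine $T$ with $a_j = \inf_i \epsilon_{j,i}$. The SFT $X$ is built as a product of three layers on an overlaid alphabet. The first is a hierarchical self-similar structure (a $\Z^d$ Robinson-type SFT) forcing each configuration to decompose into nested blocks of side $2^k$ at each level $k$, with a distinguished sequence of "slots" inside each level-$k$ block indexed $j = 1, \dots, 2^k$. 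The second layer runs, inside each level-$k$ block, a simulation of $T$ on inputs $(j,i)$ for $j \leq 2^k$ and $i$ bounded by a slowly growing function of $k$; whenever the simulation witnesses $\epsilon_{j,i} = 0$, a veto signal for slot $j$ is propagated by local rules. The third layer carries, inside each slot of size roughly $2^k \times \cdots \times 2^k$ (the exact geometry tuned so that slot volumes sum to $\approx d \cdot 2^{kd}$ across all slots), a field of free symbols constrained only by the veto: vetoed slots are forced constant. Calibrating the slot geometry so that the total free volume in $\llbracket 1, 2^k \rrbracket^d$ is asymptotic to $\frac{d}{2^k} \sum_{j=1}^{2^k} a_j \cdot 2^{kd}$ yields $\log_2 N_{2^k}(X) \sim 2^{k z}$, hence $D_h(X) = z$.

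The main obstacle is the realization direction: one must simultaneously guarantee that (i) the hierarchical layer has zero entropy dimension by itself so that the combinatorics of free symbols dominates, (ii) the Turing simulations use subpolynomially many cells per block so that their contribution to the pattern count is negligible, and (iii) degenerate "infinite hierarchy" configurations and boundary effects do not inflate $N_n(X)$ beyond the intended rate. These three points are precisely the technical heart of the fixed-point construction in \cite{Meyerovitch2011}, and they are handled by carefully synchronizing the Robinson-type hierarchy with the computation areas, as in the hierarchical constructions of \cite{Hochman-Meyerovitch-2010}.
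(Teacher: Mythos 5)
Note first that the paper does not actually prove this theorem: it is imported from \cite{Meyerovitch2011}, and Section~\ref{sec.construction.abstract} only recalls the principles of the realization construction, so your proposal is being compared against that sketch and the cited source.

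The realization direction of your proposal has a genuine quantitative gap. You calibrate the slot geometry so that the total free volume in $\llbracket 1,2^k\rrbracket^d$ is asymptotic to $\frac{d}{2^k}\sum_{j=1}^{2^k}a_j\cdot 2^{kd}$; since $\frac{d}{2^k}\sum_{j=1}^{2^k}a_j\to z$, this is $\sim z\cdot 2^{kd}$, a constant positive fraction of the block. A positive density of unconstrained symbols gives $\log_2 N_{2^k}(X)\asymp 2^{kd}$, hence positive topological entropy and entropy dimension equal to $d$ for every $z>0$; the asymptotic $\log_2 N_{2^k}(X)\sim 2^{kz}$ that you assert does not follow from this calibration. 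To realize entropy dimension $z$, the number of free sites in a block of side $L$ must grow like $L^{z}$, i.e.\ occupy a vanishing fraction $L^{z-d}$ of the volume with a prescribed fractal scaling. That is exactly what the mechanism recalled in the paper achieves: the hierarchy bit of an order-$n$ cell spawns purple cells at the level below with multiplicity $4^{a_n}$ (in dimension $2$), so the number of random-bit positions in an order-$n$ cell is $\prod_{k\le n}4^{a_k}=4^{\sum_{k\le n}a_k}\approx (2^{n})^{z}$. It is this multiplicative, level-indexed branching --- not a linear modulation of the density obtained by vetoing slots of comparable size --- that produces the exponent $z$, and your construction as described would have to be replaced by such a branching scheme.

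There is also a gap in the obstruction direction. From the upper approximations $M_n^{(k)}\searrow N_n(X)$ you correctly get that $\log_2\log_2 N_n(X)/\log_2 n$ is a uniformly $\Pi_1$-computable sequence of reals, but the limit of such a sequence, when it exists, is in general only $\Sigma_2$ (writing it as $\sup_N\inf_{n\ge N}x_n$ exhibits it as the supremum of an increasing sequence of $\Pi_1$ reals), and $\Sigma_2$ strictly contains $\Delta_2$; no diagonal argument extracts a computable sequence of rationals converging to the limit from one-sided approximations alone. Proving $\Delta_2$-computability of $D_h(X)$ requires exploiting additional structure of the sequence $N_n(X)$, and this is part of what \cite{Meyerovitch2011} actually establishes (the paper states the corresponding obstruction for $\Z^3$ as a proposition without proof).
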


We state the following proposition 
in order to show that 
the entropy dimension 
is very different from the 
entropy. This type 
of properties has an impact 
on the techniques used in 
order to realize numbers 
as topological invariants 
of subshifts.

\begin{proposition}
\label{proposition.sup.entropy.dimension}
Let $X$ and $Z$ be two 
$\Z^d$-subshifts having 
entropy dimension. The subshift 
$X \times Z$ has 
entropy dimension:
\[D_h (X \times Z) = 
\max (D_h (X),D_h (Z)).\]
\end{proposition}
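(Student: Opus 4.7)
The plan is to reduce everything to the elementary identity $N_n(X\times Z)=N_n(X)\cdot N_n(Z)$, which holds because a pattern on support $\llbracket 1,n\rrbracket^{d}$ in $X\times Z$ is exactly a pair consisting of a pattern of $X$ and a pattern of $Z$ on the same support. Setting $a_n:=\log_2 N_n(X)$ and $b_n:=\log_2 N_n(Z)$, this identity gives $\log_2 N_n(X\times Z)=a_n+b_n$, so the quantity to analyse is
\[\frac{\log_2(a_n+b_n)}{\log_2 n}.\]

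The key elementary observation is the two-sided bound $\max(a_n,b_n)\le a_n+b_n\le 2\max(a_n,b_n)$, valid whenever $a_n,b_n\ge 0$. Taking $\log_2$ yields
\[\log_2\max(a_n,b_n)\;\le\;\log_2(a_n+b_n)\;\le\;\log_2\max(a_n,b_n)+1,\]
and dividing by $\log_2 n$ one gets the difference between the upper and lower expressions is at most $1/\log_2 n$, which tends to $0$. Hence the limit of $\log_2(a_n+b_n)/\log_2 n$ coincides with the limit of $\log_2\max(a_n,b_n)/\log_2 n=\max\bigl(\log_2 a_n,\log_2 b_n\bigr)/\log_2 n$, which is exactly $\max\bigl(\tfrac{\log_2 a_n}{\log_2 n},\tfrac{\log_2 b_n}{\log_2 n}\bigr)$. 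Since $X$ and $Z$ both admit an entropy dimension, the sequences $\tfrac{\log_2 a_n}{\log_2 n}$ and $\tfrac{\log_2 b_n}{\log_2 n}$ converge to $D_h(X)$ and $D_h(Z)$ respectively, and the maximum of two convergent real sequences converges to the maximum of the limits. This gives the claimed equality and simultaneously shows the limit exists, so $X\times Z$ does admit an entropy dimension.

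There are two small technical points to address. First, the degenerate case in which $a_n$ or $b_n$ eventually equals $0$ (or $1$, making $\log_2\log_2$ problematic): one checks that if, say, $a_n$ stays bounded, then $\log_2(a_n+b_n)-\log_2 b_n\to 0$ when $b_n\to\infty$, and if both stay bounded then the numerator is bounded so the ratio tends to $0=\max(0,0)$, consistent with the statement under the standard convention that trivial subshifts have entropy dimension $0$. Second, one should note that the same argument carries through verbatim for $\limsup$ and $\liminf$, which gives the analogous inequality $\overline{D}_h(X\times Z)=\max(\overline{D}_h(X),\overline{D}_h(Z))$ (and similarly for $\underline{D}_h$), so the hypothesis that both entropy dimensions exist is used only to conclude that $D_h(X\times Z)$ itself exists. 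The only mildly delicate step is the passage from the additive bound on $a_n+b_n$ to the $\log\log$ scaling, and this is handled by the additive constant $1$ being swallowed by the division by $\log_2 n\to\infty$.
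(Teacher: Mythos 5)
Your proof is correct and follows essentially the same route as the paper: both rest on the identity $N_n(X\times Z)=N_n(X)\,N_n(Z)$ and the two-sided bound $\max(a_n,b_n)\le a_n+b_n\le 2\max(a_n,b_n)$ for $a_n=\log_2 N_n(X)$, $b_n=\log_2 N_n(Z)$, with the additive constant absorbed by dividing by $\log_2 n$. Your extra remarks on the degenerate bounded case and on the $\limsup$/$\liminf$ versions are sound refinements of the same argument, not a different approach.
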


\begin{proof}
We have the following for 
all $n \ge 1$:
\[N_n (X \times Z) = 
N_n (X) \times N_n (Z).\]
As a consequence, 
\[\log_2 (N_n (X \times Z)) 
= \log_2 (N_n (X))+ 
\log_2 (N_n (Z)).\]
Thus we have 
\[
\frac{\max (\log_2 \circ \log_2 (N_n (X)), \log_2 \circ \log_2 (N_n(Z)))}{\log_2 (n)} \le \frac{\log_2 \circ \log_2 (N_n (X \times Z))}{\log_2 (n)}\]
and 
\[\frac{\log_2 \circ \log_2 (N_n (X \times Z))}{\log_2 (n)}
\le  \frac{ \log_2 ( 2 \log_2 ( 
\max(N_n (X),N_n(Z)))}{\log_2 (n)}
\]
From this follows that 
\[\frac{\log_2 \circ \log_2 (N_n (X \times Z))}{\log_2 (n)} \le 
\frac{ \log_2 (2) + \max (\log_2 \circ \log_2 (N_n (X)), \log_2 \circ \log_2 (N_n(Z)))}{\log_2 (n)}.\]
As a consequence of the previous 
inequalities, 
we have, taking $n \rightarrow +\infty$,
\[D_h (X\times Z) = 
\max (D_h (X),D_h(Z)).\]
\end{proof}

\section{Robinson subshift - a rigid version}

The Robinson subshift was constructed by R. Robinson~\cite{R71} in order to prove 
undecidability results. It has been used by in other constructions 
of subshifts of finite type as a structure layer~\cite{Pavlov-Schraudner-2014}.

In this section, we present a version of this subshift which is 
adapted to constructions under the 
dynamical constraints 
that we consider. In order to understand this 
section, it is preferable to read before the description 
of the Robinson subshift done in~\cite{R71}.
Some results are well known and we don't give a proof.
We refer instead to the initial article of R. Robinson. 

Let us denote $X_{adR}$ this subshift, which is constructed as the product of two layers.
We present the first layer in Section~\ref{sec.valued.robinson}, and then 
describe some hierarchical structures appearing in this layer in Section~\ref{sec.hierarchical.structures}.
In Section~\ref{sec.alignment.positioning}, we describe the second layer. This layer allows 
adding rigidity to 
the first layer, in order 
to enforce dynamical properties.

\subsection{\label{sec.valued.robinson} Robinson layer}

The first layer has the following {\textit{symbols}}, and their transformation by 
rotations by $\frac{\pi}{2}$, $\pi$ or $\frac{3\pi}{2}$: 

\[
\]

The symbols $i$ and $j$ can have 
value $0,1$ and 
are attached respectively to vertical and 
horizontal arrows. In the text, we refer to this value as 
the value of the \textbf{$0,1$-counter}. In 
order to simplify the representations, these 
values will often be omitted on the figures. \bigskip

In the text we will often designate as \textbf{corners} the two last symbols.
The other ones are called \textbf{arrows symbols} and are 
specified by the number of arrows in the symbol. 
For instance a six arrows symbols are the images by rotation of the fifth and sixth symbols. \bigskip

The {\textit{rules}} 
are the following ones: \begin{enumerate} \item the outgoing arrows and incoming ones 
correspond for two adjacent symbols. For instance, 
the pattern 
\[
,\]
the marks $i$ and $j$ are different.
\end{enumerate} \bigskip

The Figure~\ref{figure.order2supertile} shows some pattern in the language of 
this layer. The subshift on this alphabet 
and generated by these rules is denoted $X_R$: this 
is the Robinson subshift.\bigskip

The main aspect of this subshift is 
the following property: 

\begin{definition}
A $\Z^d$-subshift $X$ is said aperiodic when 
for all configuration $x$ in the subshift, 
and $\vec{u} \in \Z^d \backslash (0,0)$, 
\[{\sigma}^{\vec{u}} (x) \neq x.\]
\end{definition}

\begin{theorem}[\cite{R71}]
The subshift $X_R$ is non-empty and aperiodic.
\end{theorem}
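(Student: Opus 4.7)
The plan is to establish both parts through the classical hierarchical supertile argument of Robinson~\cite{R71}, adapted to the alphabet described above.

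For non-emptiness, I build configurations inductively via a family of square patterns I will call \emph{supertiles of order $n$}, of side-length $2^{n+1}-1$. An order-$0$ supertile is essentially a single bumpy corner together with one of four possible orientations, surrounded by the arrow symbols whose directions are forced by that corner. Given four order-$n$ supertiles equipped with orientations, I assemble them into an order-$(n+1)$ supertile as follows: place the four at the four quadrants of a $(2^{n+2}-1)\times(2^{n+2}-1)$ block, fill the central cross with the five- and six-arrow symbols that match the outgoing arrows of the four sub-supertiles, and insert a red bumpy corner at the very center whose orientation agrees with the outer one. Rule~(1) on adjacent arrows, rule~(2) on blue positions, and rules~(3)--(4) on counter transmission and parity of marks all propagate inductively. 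A König / compactness argument applied to the sequence of order-$n$ supertiles centered at the origin then produces an element of $X_R$, so $X_R \neq \varnothing$.

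For aperiodicity I use the converse direction: the local rules force a canonical hierarchical decomposition of every $x \in X_R$. Rule~(2) first pins down the blue corners to a sublattice $\vec{u}_0 + 2\Z \times 2\Z$. The constraints on adjacent arrow symbols then force, between any four adjacent blue corners, a unique pattern containing a red bumpy corner at its center with a well-defined orientation. These red corners behave exactly as the level-$2$ analogue of blue corners, and by induction on $n$ the centers of the order-$n$ bumpy corners form a sublattice of the form $\vec{u}_n + 2^n \Z \times 2^n \Z$. If $\sigma^{\vec{u}}(x) = x$ for some nonzero $\vec{u} \in \Z^2$, one chooses $n$ large enough that $\vec{u} \notin 2^n \Z \times 2^n \Z$; then $\sigma^{\vec{u}}$ sends the level-$n$ sublattice to a disjoint translate, contradicting periodicity. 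Hence $X_R$ is aperiodic.

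The main obstacle is the inductive step showing that the local rules rigidly force the nested supertile structure, and in particular that between four adjacent blue corners only a bumpy corner of one fully determined orientation can appear. This reduces to a finite but delicate case analysis on how the horizontal and vertical arrow signals and the counter marks propagate through five- and six-arrow symbols; it is exactly the argument carried out in~\cite{R71}, which I would import verbatim rather than redo here. Once this forcing is established, the non-emptiness construction and the sublattice obstruction for periods both follow cleanly.
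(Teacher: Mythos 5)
Your outline is the standard Robinson argument — inductive supertile construction plus compactness for non-emptiness, and the forced hierarchical structure yielding unbounded powers of $2$ dividing any period for aperiodicity — which is precisely the proof the paper itself defers to by citing \cite{R71} without reproducing it. Since you likewise import the delicate forcing lemma from \cite{R71} rather than redoing it, your proposal matches the paper's treatment in substance and level of detail.
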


In the following, we state some properties of 
this subshift. The proofs of these properties 
can also be found in \cite{R71}.

\subsection{\label{sec.hierarchical.structures} 
Hierarchical structures}

In this section we describe some observable hierarchical 
structures in the elements of the Robinson subshift.

Let us recall that for 
all $d \ge 1$ and $k \ge 1$, 
we denote $\mathbb{U}^{(d)}_{k}$
the set $\llbracket 0,k-1 \rrbracket^d$.

\subsubsection{Finite supertiles}

\begin{figure}[h!]
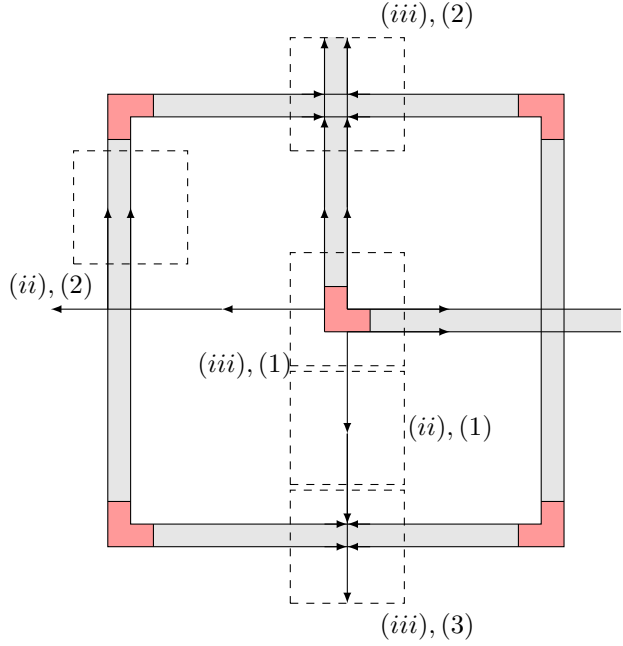

\[
\]
\caption{\label{fig.infinite.supertiles} Correspondence 
between infinite supertiles and sub-patterns of order $n$ supertiles. The whole picture 
represents a schema of some finite order supertile.}
\end{figure}

Let $x$ be a configuration in the first layer and consider 
the equivalence relation $\sim_x$ on $\Z^2$ defined by 
$\vec{i} \sim_x \vec{j}$ if there is a finite
supertile in $x$ which contains $\vec{i}$ and $\vec{j}$.
An \define{infinite order} supertile is an infinite 
pattern over an equivalence 
class of this relation. Each configuration is amongst the 
following types (with types corresponding with 
types numbers on Figure~\ref{fig.infinite.supertiles}): 
\begin{itemize}
\item[(i)] A unique infinite order supertile which covers $\Z^2$.
\item[(ii)] Two infinite order supertiles separated by a line or a column with only three-arrows symbols
(1) or only four arrows symbols (2).  
In such a configuration, the order $n$
finite supertiles appearing in the two 
infinite supertiles 
are not necessary aligned, whereas this 
is the case in a type (i) or (iii) configuration.
\item[(iii)] Four infinite order supertiles, separated by a cross, whose center is superimposed with:
\begin{itemize}
\item a red symbol, and arms are filled with arrows symbols induced by the red one. (1)
\item a six arrows symbol, and arms are filled with double arrow symbols 
induced by this one. (2)
\item a five arrow symbol, and arms are filled with double arrow symbols and simple 
arrow symbols induced 
by this one. (3)
\end{itemize}
\end{itemize} 

Informally, the types of infinite 
supertiles correspond to configurations 
that are limits (for type $(ii)$ infinite supertiles 
this will be true after alignment 
[Section~\ref{sec.alignment.positioning}]) of 
a sequence of configurations centered on 
particular sub-patterns of 
finite supertiles of order $n$. 
This correspondence is illustrated on 
Figure~\ref{fig.infinite.supertiles}.
We notice this fact so that it helps 
to understand how 
patterns in configurations having multiple 
infinite supertiles are sub-patterns of 
finite supertiles. \bigskip

We say that a 
pattern $p$ on support $\mathbb{U}$ appears 
periodically in the horizontal (resp. vertical) direction 
in a configuration $x$ of a subshift $X$
when there exists some $T>0$ and 
$\vec{u}_0 \in \Z^2$ 
such that 
for all $k \in \Z$, 
\[x_{\vec{u}_0+\mathbb{U}+kT(1,0)}=p\]
(resp. $x_{\vec{u}_0+\mathbb{U}+kT(0,1)}=p$).
The number $T$ is called the period of this periodic appearance.

\begin{lemma}[\cite{R71}] 
\label{lem.repetition.supertiles}
For all $n$ and $m$ integers 
such that $n \ge m$, any order $m$ supertile 
appears periodically, horizontally and vertically, in 
any supertile of order $n \ge m$ with period $2^{m+2}$.
This is also true inside any infinite supertile. 
\end{lemma}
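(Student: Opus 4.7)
My plan is induction on $n \ge m+1$, proving the following strengthening: for every pair of types $xy, zw \in \{sw, se, nw, ne\}$, the order $m$ supertile $St_{zw}(m)$ appears inside $St_{xy}(n)$ at every point of the grid
\[
\vec{v}_{zw} + 2^{m+2}\cdot \{0, 1, \ldots, 2^{n-m-1}-1\}^2,
\]
where $\vec{v}_{sw} = (0,0)$, $\vec{v}_{se} = (2^{m+1},0)$, $\vec{v}_{nw} = (0,2^{m+1})$, and $\vec{v}_{ne} = (2^{m+1},2^{m+1})$. Fixing one of the two coordinates in this grid immediately exhibits horizontal and vertical periodic appearances with period $2^{m+2}$, as required. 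The edge case $n=m$ contains at most one occurrence of each order $m$ type and is trivially periodic. The key observation driving the inductive claim is that the recursive construction places the four sub-supertiles of $St_{xy}(n+1)$ at positions $(0,0), (2^{n+1},0), (0,2^{n+1}), (2^{n+1},2^{n+1})$ regardless of the outer type $xy$, so the claim can be stated in a type-universal form.

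The base case $n = m+1$ is immediate from the recursive definition, which by construction places each $St_{zw}(m)$ at $\vec{v}_{zw}$ inside $St_{xy}(m+1)$. For the inductive step, write $St_{xy}(n+1)$ as the union of four sub-supertiles of order $n$ at offsets $(i \cdot 2^{n+1}, j \cdot 2^{n+1})$ with $(i,j) \in \{0,1\}^2$. The inductive hypothesis, applied uniformly to each sub-supertile, locates $St_{zw}(m)$ at $\vec{v}_{zw} + 2^{m+2}(a,b)$ for $(a,b) \in \{0, \ldots, 2^{n-m-1}-1\}^2$. Because $n \ge m+1$, the factor $2^{n+1} = 2^{m+2} \cdot 2^{n-m-1}$ is itself a multiple of $2^{m+2}$, so translating the local grid in each sub-supertile by the corresponding offset produces exactly the enlarged grid $\vec{v}_{zw} + 2^{m+2}\{0, \ldots, 2^{n-m}-1\}^2$ inside $St_{xy}(n+1)$, closing the induction.

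For an infinite supertile, I would appeal to the preceding proposition: every finite supertile sits inside a finite supertile of the next order, and iterating, any infinite supertile is (after fixing a suitable origin) a nested union of finite supertiles of all orders. Because the phase offsets $\vec{v}_{zw}$ in the finite claim are type-independent and carry through each inductive step, the finite grids of occurrences of $St_{zw}(m)$ glue consistently into a doubly $2^{m+2}$-periodic family of occurrences throughout the entire infinite supertile. The main subtle point in the whole argument is precisely this bookkeeping of phases -- verifying that the placement of sub-supertiles within $St_{xy}(n+1)$ really is universal in the outer type $xy$, so that the inductive phase offsets $\vec{v}_{zw}$ never drift with the type. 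Once this is secured, the period $2^{m+2}$ emerges directly from the divisibility $2^{m+2} \mid 2^{n+1}$ valid for all $n \ge m+1$.
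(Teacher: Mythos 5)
The paper does not actually prove this lemma; it is stated with a citation to Robinson's original article, so there is no in-text argument to compare against. Your induction is correct and self-contained: the recursive definition does place the four order-$n$ sub-supertiles of $St_{xy}(n+1)$ at $(0,0)$, $(2^{n+1},0)$, $(0,2^{n+1})$, $(2^{n+1},2^{n+1})$ with orientations $sw,se,nw,ne$ matching the quadrant and independent of the outer type $xy$ (only the central red corner carries the type), which is exactly the type-universality your phase offsets $\vec{v}_{zw}$ rely on. The divisibility $2^{m+2}\mid 2^{n+1}$ for $n\ge m+1$ then makes the four translated grids tile $\vec{v}_{zw}+2^{m+2}\{0,\dots,2^{n-m}-1\}^2$ as you claim, and the same divisibility shows the grids of successive orders nest consistently, so the passage to infinite supertiles (as increasing unions of finite ones) goes through; for infinite supertiles whose support is a quarter- or half-plane the periodicity is of course only within the support, which is what the lemma intends. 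One small point worth making explicit if you write this up: your argument exhibits a $2^{m+2}$-periodic family of occurrences of $St_{zw}(m)$, which is all the lemma asserts, but it says nothing about whether other occurrences exist off this grid --- that stronger uniqueness statement is used implicitly elsewhere in the paper (e.g.\ in the alignment argument) and would require the aperiodicity machinery of Robinson, not just this induction.
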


\subsubsection{Petals}

For a configuration 
$x$ of the Robinson subshift some finite subset of $\Z^2$ 
which has the following properties is called 
a \textbf{petal}.

\begin{itemize}
\item this set is minimal with respect to the inclusion, 
\item it contains some 
symbol with more than three arrows, 
\item  if a position is in the petal, 
the next position in the direction, or the opposite one, 
of the double arrows, is also in it,
\item and in the case of a six arrows 
symbol, the previous 
property is true only for one couple of arrows.
\end{itemize}

These sets are represented on 
the figures as squares joining 
four corners when these corners 
have the right orientations.
\bigskip

Petals containing blue symbols are called 
order $0$ petals. Each one intersect 
a unique greater order petal.
The other ones intersect 
four smaller petals and a greater 
one: if the intermediate petal is of order $n \ge 1$, 
then the four smaller are of order $n-1$ and the greatest 
one is of order $n+1$. Hence they 
form a hierarchy, and 
we refer to this in the text as 
the \textbf{petal hierarchy} (or hierarchy).

We usually call 
the petals valued with $1$ 
\textbf{support petals}, 
and the other ones 
are called 
\textbf{transmission petals}.

\begin{lemma}[\cite{R71}]
For all $n$, an order $n$ petal has size $2^{n+1}+1$.
\end{lemma}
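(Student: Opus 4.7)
The plan is to prove the formula by induction on $n$, with the key recursion $\mathrm{size}(n) = 2 \cdot \mathrm{size}(n-1) - 1$, which combined with $\mathrm{size}(0) = 3$ yields $\mathrm{size}(n) = 2^{n+1}+1$.

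For the base case $n = 0$, I would directly inspect the structure of the Robinson subshift. An order $0$ petal contains a blue corner symbol, which is a six-arrow symbol carrying one couple of double arrows in each of two perpendicular directions. Applying the petal closure rule to one couple (as specified by the definition for six-arrow symbols), one follows the double arrows to the next cell, then continues through the intermediate one- and two-arrow symbols until reaching another blue corner. By the alignment rules of the subshift, the only way to close such a loop is by placing four blue corners at the four corners of a $3 \times 3$ block of cells. The resulting petal then has size $3 = 2^{0+1}+1$.

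For the inductive step, assume $\mathrm{size}(k) = 2^{k+1}+1$ for all $k < n$. By the petal hierarchy recalled just before the statement, an order $n$ petal intersects exactly four order $n-1$ petals, one at each of its four corners. Using the recursive construction of the supertiles $St_\bullet(\cdot)$, one locates these four smaller petals precisely: in an order $n+1$ supertile, the four order-$n$ sub-supertiles have their central red corners at positions separated by exactly $2^{n+1}$ cells, and these four red corners are simultaneously the corners of the order $n$ petal and the centers of four order $n-1$ petals. Equivalently, the four order $n-1$ petals occupy the four quadrants of the order $n$ petal and share exactly the central cell (the next-order red corner), so their total span is $2(2^n+1) - 1 = 2^{n+1}+1$.

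The main obstacle is the geometric bookkeeping in the inductive step, namely justifying that the four order $n-1$ petals share exactly one common cell (no more, no less). This reduces to a careful reading of the supertile construction: one uses the explicit positions $(2^{m+1}-1, 2^{m+1}-1)$ of the central red corner of $St_\bullet(m+1)$, together with the arrow-alignment rules, to track where the high-arrow symbols that anchor the petals sit. Once this localization is done, the arithmetic $\mathrm{size}(n) = 2\cdot\mathrm{size}(n-1) - 1$ follows immediately and closes the induction.
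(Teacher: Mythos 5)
The paper does not prove this lemma (it is quoted from Robinson's article), so I can only judge your argument on its own terms. Your overall skeleton is sound: induction, the correct closed form, and the correct identification that everything reduces to locating the central red corners of the order $n$ supertiles via the recursive construction. The base case is essentially right (the period-$2$ rule for blue symbols forces adjacent blue corners of compatible orientation to sit at distance $2$, giving a span of $3$ cells).

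The inductive step, however, rests on a false geometric picture. You claim that the four order $n-1$ petals met by an order $n$ petal ``occupy the four quadrants of the order $n$ petal and share exactly the central cell,'' and you derive the recursion as $2(2^n+1)-1$. In fact these four petals are pairwise \emph{disjoint}: their centers (the central red corners of four order-$n$ supertiles) are at mutual distance $2^{n+1}$, while each order $n-1$ petal has half-width only $2^{n-1}$, so between two adjacent ones there is a gap of $2^n-1$ cells (check $n=2$: the order $1$ petals inside an order $3$ supertile span columns $1$--$5$ and $9$--$13$, and the cell at the center of the order $2$ petal, namely the red corner of the order $3$ supertile, lies in none of them). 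The correct statement is the one you mention in passing: the \emph{corners} of the order $n$ petal coincide with the \emph{centers} of the four order $n-1$ petals, i.e.\ with the red corners at positions $(2^n-1+\epsilon_1 2^{n+1},\,2^n-1+\epsilon_2 2^{n+1})$, $\epsilon_i\in\{0,1\}$, inside an order $n+1$ supertile. Adjacent such corners are at distance $2^{n+1}$, so the petal spans $2^{n+1}+1$ cells directly --- no appeal to the inductive hypothesis on petal sizes is needed, only to the recursive placement of supertiles (or to the repetition lemma). Your formula $2\cdot\mathrm{size}(n-1)-1$ gives the right number only because it coincidentally equals $2(\mathrm{size}(n-1)-1)+1$; the ``minus one for the shared central cell'' reading is not a valid justification. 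Replace the quadrant/shared-cell argument by the explicit distance computation and the proof closes.
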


We call order $n$ \textbf{two dimensional cell} the part 
of $\Z^2$ which is enclosed in an order 
$2n+1$ petal, for $n \ge 0$.
We also sometimes refer 
to the order $2n+1$ petals 
as the cells borders.

In particular, order $n \ge 0$ 
two-dimensional cells 
have size $4^{n+1}+1$ and 
repeat periodically with 
period $4^{n+2}$, vertically 
and horizontally, 
in every cell or supertile 
having greater order. 

See an illustration on 
Figure~\ref{figure.order2supertile}.

\subsection{ \label{sec.alignment.positioning} 
Alignment positioning
} 

If a configuration of the first layer has two infinite order 
supertiles, then the two sides of the column or line which separates them are 
non dependent. The two infinite order supertiles of this configuration 
can be shifted vertically (resp. horizontally) one from each 
other, while the configuration obtained stays 
an element of the subshift.
This is an obstacle to dynamical properties such as 
minimality or transitivity, 
since a pattern which crosses the separating line can not 
appear in the other configurations.
In this section, we describe additional layers 
that allow aligning all the supertiles 
having the same order and 
eliminate this phenomenon. \bigskip

Here is a description of the second layer: \bigskip

\textit{Symbols:} $nw, ne, sw, se$, 
and a blank symbol. \bigskip

The {\textit{rules}} are 
the following ones: 

\begin{itemize}
\item \textbf{Localization:} 
the symbols $nw$, $ne$ ,$sw$ and $se$
are superimposed only on three arrows and five arrows 
symbols in the Robinson layer.
\item \textbf{Induction 
of the orientation:} on a position with 
a three arrows symbol such 
that the long arrow 
originate in a corner
is superimposed a symbol 
corresponding to the orientation 
of the corner.
\item \textbf{Transmission rule:} 
on a three or five arrows symbol position, 
the symbol in this layer is 
transmitted to the position in the direction pointed 
by the long arrow when the Robinson symbol 
 is a three or five arrows symbol 
with long arrow pointing in the same direction. 
\item \textbf{Synchronization rule:} 
On the pattern 
\[\begin{tikzpicture}[scale=0.5]
\robionedroite{0}{0}
\robionebas{2}{0}
\robionegauche{4}{0}
\end{tikzpicture}\]
or 
\[\begin{tikzpicture}[scale=0.5]
\robionedroite{0}{0}
\robifourhaut{2}{0}
\robionegauche{4}{0}
\end{tikzpicture}\]
in the Robinson layer, if 
the symbol on the left side is $ne$
(resp. $se$), 
then the symbol on the right side is $nw$
(resp. $sw$).
On the images by rotation of these patterns, 
we impose similar rules.
\item \textbf{Coherence rule:} 
the other couples of symbols are forbidden 
on these patterns.
\end{itemize}
 \bigskip

\textit{Global behavior:} the 
symbols 
$ne,nw,sw,se$ designate 
orientations: 
north east, north west, south 
west and south east. We will 
re-use this symbolisation 
in the following. The 
localization rule implies 
that these symbols 
are superimposed on and only on 
straight paths connecting 
the corners of adjacent order $n$ 
cells for some integer $n$.

The effect of transmission and synchronization rules 
is stated by the following lemma: 

\begin{lemma}
In any configuration $x$ of the subshift $X_{adR}$, 
any order $n$ supertile appears 
periodically in the whole configuration, 
with period $2^{n+2}$, horizontally and vertically.
\end{lemma}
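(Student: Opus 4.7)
The plan is to proceed by case analysis on the type (i), (ii), or (iii) of the configuration $x$ in its first (Robinson) layer, following the trichotomy of Section~\ref{sec.hierarchical.structures}. In type (i), $\Z^2$ is covered by a single infinite order supertile, so Lemma~\ref{lem.repetition.supertiles} applied inside this unique infinite supertile immediately yields horizontal and vertical periodicity of period $2^{n+2}$ for every order $n$ supertile that appears in $x$. In types (ii) and (iii), Lemma~\ref{lem.repetition.supertiles} already gives this periodicity inside each infinite supertile, so the remaining task is to show that adjacent infinite supertiles are mutually aligned, so that their periodic patterns glue together into a single $(2^{n+2} \Z) \times (2^{n+2} \Z)$-periodic appearance of every order $n$ supertile in $x$.

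I would concentrate on the representative type (ii) case with a vertical separating column; the horizontal case is symmetric and type (iii) essentially amounts to two crossed separating lines through the central symbol. The argument has two parts. First, on both sides of the column, every corner of every order $m$ cell is the origin, via the induction of the orientation rule of the second layer, of a label in $\{ne, nw, se, sw\}$, and the transmission rule propagates this label along the entire horizontal arm of three-arrows or five-arrows symbols issuing from that corner. Hence every position of the separating column that lies on such a horizontal arm receives a label from each side. Second, each such position sits at the center of one of the horizontal synchronization patterns listed in Section~\ref{sec.alignment.positioning}, where the coherence and synchronization rules admit only the pairings $(ne,nw)$ and $(se,sw)$. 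This forces the rows at which $ne$ corners appear on the left to coincide with the rows at which $nw$ corners appear on the right, and similarly for the south corners. Repeated at each order $m$, this compatibility pins the vertical alignment of the order $m$ cells across the separating column.

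Combining the internal vertical period $2^{m+2}$ on each side with this cross-column alignment yields the global vertical period $2^{n+2}$ for every order $n$ supertile of $x$; horizontal periodicity is obtained by the same reasoning applied to a horizontal separating line (or to the horizontal arm of a type (iii) cross). The main obstacle I foresee is to justify carefully that the synchronization is enforced simultaneously at every order $m$, since arms emitted by corners of cells of many different orders coexist along the separating column. This is resolved by noting that at each height the Robinson layer uniquely determines the order of the corners whose horizontal arms reach that row, so the pairing argument applies independently for each order without interference. A secondary technical point, in type (iii), is to check that the central red, five-arrows, or six-arrows symbol is itself compatible with the alignment forced by the two arms crossing at it, which follows from the localization rule together with the same synchronization constraints.
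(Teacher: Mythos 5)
Your overall strategy is the same as the paper's: reduce to the type (ii) case, use the orientation/synchronization layer to force alignment of the two infinite supertiles across the separating column, and then invoke Lemma~\ref{lem.repetition.supertiles} inside each infinite supertile. However, the one step that carries all the content --- why the synchronization rule actually forces the order-$n$ structures on the two sides to be aligned --- is asserted rather than proved, and your proposed way of handling it points in the wrong direction.

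Concretely, writing $z_n$ (resp. $z'_n$) for the offset modulo $2^{n+2}$ of the heights at which the order-$n$ arms carrying $se$ (resp. $sw$) reach the columns at $x=-1$ (resp. $x=+1$), the claim to prove is $z_n=z'_n$ for all $n$. The synchronization rule only constrains a given height $y$ if \emph{both} adjacent columns carry an orientation symbol there, and if the two sides are shifted the order-$n$ arms of the left need not meet order-$n$ arms of the right at any height at all; so ``the pairing argument applies independently for each order without interference'' does not yield the conclusion --- for a fixed order, a misalignment can be entirely invisible. The paper's proof works for the opposite reason: it uses the covering property $\{z_n+k2^{n+2} : (n,k)\in\N\times\Z\}=\Z$ to show that a misalignment at order $n$ forces a \emph{collision between two different orders}, i.e.\ some height $z_n+k2^{n+2}=z'_m+k'2^{m+2}$ with $m<n$; then the left column carries $ne$ at the height $2^{n+1}$ above this (order-$n$ structure), while the right column carries $sw$ there because $2^{n+1}$ is a multiple of $2^{m+2}$ (order-$m$ structure), and this pair is forbidden by the coherence rule. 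So the cross-order interference you try to argue away is exactly the mechanism that produces the contradiction. To repair your proof you would need to (a) justify that every height of the columns $x=\pm1$ carries an orientation symbol of some order, and (b) run the collision argument between distinct orders rather than order by order.
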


\begin{proof}
\begin{itemize}
\item This property is true in an infinite supertile: 
this is the statement of Lemma~\ref{lem.repetition.supertiles}.
Hence the statement is true in a type $(i)$ configuration. 
This is also true in a type $(iii)$ configuration, 
since the infinite supertiles are aligned, 
and that the positions where the order $n$ supertiles appear 
are the same in any infinite supertile. This statement uses 
the property that 
an order $n$ supertile forces the 
presence of an order $n+1$ 
one. 
\item Consider a configuration of the subshift $X_{adR}$ 
which is of type $(ii)$. Let 
us assume that the separating line is vertical, the 
other case being similar. In order to simplify 
the exposition we assume that this column intersects 
$(0,0)$.

\begin{enumerate}

\item \textbf{Positions of the supertiles along 
the infinite line:}

From Lemma~\ref{lem.repetition.supertiles}, 
there exists a sequence of numbers $ 0 \le z_n < 2^{n+2}-1$ 
and $ 0 \le z'_n < 2^{n+2}-1$ 
such that for all $k \in \Z$,
the orientation symbol on positions 
$(-1,z_n + k.2^{n+2})$ (in 
the column on the left of the 
separating one) is $se$ and the 
orientation symbols on positions 
$(1,z'_n + k.2^{n+2})$ is $sw$. The symbol 
on positions $(-1,z_n + k.2^{n+2}+2^{n+1})$ 
is then $ne$ and is $nw$ on 
positions 
on positions $(1,z'_n + k.2^{n+2}+2^{n+1})$: 
this comes from the fact that an order $n$ petal 
has size $2^{n+1}+1$. 

Let us prove 
that for all $n$, $z_n = z'_n$. 
This means that the supertiles 
of order $n$ on the 
two sides of the separating line 
are aligned.

\item \textbf{Periodicity of these positions:}

Since for all $n$, there is a space of
$2^{n}$ columns 
between the rightmost or 
leftmost order $n$ supertile in 
a greater order supertile and the border 
of this supertile (by a recurrence argument), 
this means that the space between 
the rightmost order $n$ supertiles of the left 
infinite supertile and the leftmost order $n$ supertile 
of the right infinite supertile 
is $2^{n+1}+1$. Since two adjacent of these supertiles 
have opposite orientations, this implies 
that each supertile appears periodically 
in the horizontal 
direction (and hence both horizontal 
and vertical directions) 
with period $2^{n+2}$. \bigskip

\begin{figure}[ht]
\[\begin{tikzpicture}[scale=0.4]
\fill[gray!20] (0,0) rectangle (1,20);
\draw[dashed] (0,0) rectangle (1,20);  
\draw[-latex] (0.5,-1) -- (0.5,21);
\draw (0.25,1) -- (0.75,1);
\node at (7.5,1) {$z'_m + k' 2^{m+2}$};

\draw[-latex] (-4,1) -- (-4,17); 
\node at (-5,9) {$2^{n+1}$};

\robiredbasgauche{2}{1}
\robiredbasdroite{-3}{1}

\robiredhautgauche{2}{5}
\robiredbasgauche{2}{9}
\robiredhautgauche{2}{13}
\robiredbasgauche{2}{17}

\robiredhautdroite{-3}{17}
\end{tikzpicture} \]
\caption{\label{fig.decalage.toeplitz} Schema
of the proof. The separating line 
is colored gray.}
\end{figure}
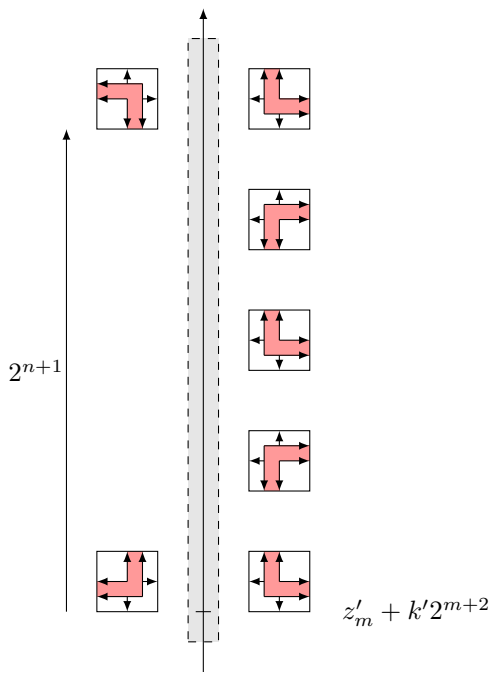

\item \textbf{The orientation symbols 
force alignment:}

Assume that there exists 
some $n$ such that $z_n \neq z'_n$.
Since $\{z_n + k.2^{n+2}, \ (n,k) \in \N \times \Z\} = \Z$,
this implies that there exist some $m \neq n$ and 
some $k,k'$ such that 
\[z_n + k.2^{n+2} = z'_m +k'.2^{m+2}.\] 
One can assume without loss of generality that 
$m<n$, exchanging $m$ and $n$ if necessary.
Then the position 
$(-1,z_n+k.2^{n+2}+2^{n+1})$ has
orientation symbol equal to $ne$.
As a consequence, the position 
$(1,z'_m +k'.2^{m+2} + 2^{n-m-1}.2^{m+2})$ 
has the same symbol. However, by definition,
this position has symbol $se$: there is 
a contradiction. This situation is illustrated on 
Figure~\ref{fig.decalage.toeplitz}. 

\end{enumerate}
\end{itemize}
\end{proof}

\subsection{Completing blocks}

Let $\chi : \N^{*} \rightarrow \N^{*}$ such 
that for all $n \ge 1$, 
\[\chi(n) = \Bigl\lceil \log_2 (n) \Bigr\rceil + 4.\]
Let us also denote $\chi'$ the function 
such that for all $n \ge 1$,
\[\chi'(n) =  \Bigl\lceil\frac{\Bigl\lceil \log_2 (n) 
\Bigr\rceil}{2} \Bigr\rceil + 2.\]

The following lemma will be extensively used 
in the following of this text, in order to prove 
dynamical properties of the constructed subshifts: 

\begin{lemma} \label{prop.complete.rob} 
For all $n \ge 1$, any $n$-block in the language of 
$X_{adR}$ is sub-pattern of some order $\chi(n)$ 
supertile, and is sub-pattern of some order $\chi'(n)$ 
order cell.
\end{lemma}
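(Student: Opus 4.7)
Given an $n$-block $p$ in the language of $X_{adR}$, set $K = \lceil \log_2 n \rceil$, so that $\chi(n) = K+4$ and $\chi'(n) = \lceil K/2 \rceil + 2$. An order $\chi(n)$ supertile has side $2^{K+5}-1 \ge 32n-1$, and an order $\chi'(n)$ cell has side at least $64n+1$; in each case the target object is much larger than $p$, which leaves a wide margin to work with.

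The plan is to exhibit a configuration $y \in X_{adR}$ in which $p$ appears and is entirely contained in an order $\chi(n)$ supertile (respectively, $\chi'(n)$ cell); then $p$ can be read off as a sub-pattern of that supertile (cell). First, since $p$ is in the language, $p$ appears in some $x \in X_{adR}$ at a position $\vec{u}$. By the alignment lemma of Section~\ref{sec.alignment.positioning}, order $\chi(n)$ supertiles appear in $x$ periodically with period $2^{\chi(n)+2}$, forming a global periodic grid; however $p$ at $\vec{u}$ may straddle the central cross of some order $\chi(n)+1$ supertile of $x$, in which case no order $\chi(n)$ supertile of $x$ contains $p$. To handle this case, I would pass to another configuration $y \in X_{adR}$ obtained by adjusting the hierarchy offset of $x$ at levels above $K$, while preserving the content of $x$ on a neighborhood of $\vec{u}$; in such $y$, by suitable choice of the offset, $p$ sits in the bulk of some order $\chi(n)$ supertile $S$ and hence is a sub-pattern of $S$.

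The key observation justifying this flexibility is that the internal structure of $p$ — its arrows, corners, and orientation symbols — constrains the hierarchy only at levels $\le K$: a feature at level $j > K$ has scale at least $2^{j+2} > 4n$, so $p$ cannot fully ``see'' more than one such feature, and cannot pin down its precise placement. At the level $\chi(n) = K+4$ the hierarchy is therefore free to be shifted, and with a margin of $32n-1$ versus $n$ there is enough room to position an order $\chi(n)$ supertile around $p$ in one of its four orientations. The same argument, substituting cells for supertiles and the size bound $4^{\chi'(n)+1}+1 \ge 64n+1$ for the supertile size bound, yields the cell statement.

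The main technical obstacle is verifying this last point rigorously: one must show that the combination of the Robinson rules with the alignment and transmission rules of Section~\ref{sec.alignment.positioning} genuinely leaves enough freedom at levels above $K$ to realise the desired high-level offset inside $X_{adR}$. This is typically handled by a case analysis on the structural features (petals, corners, arrow configurations) actually present in $p$, together with the classification of infinite supertiles recalled in Section~\ref{sec.hierarchical.structures}; the constants $4$ in $\chi$ and $2$ in $\chi'$ are precisely what is needed so that a uniform construction works in every case.
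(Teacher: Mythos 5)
There is a genuine gap: the step you label ``the main technical obstacle'' and defer to ``a case analysis on the structural features actually present in $p$'' is not a technical verification left over after the main idea --- it \emph{is} the proof of the lemma. The paper's argument consists precisely of that analysis: complete $p$ first to a square block of side $2^{\lceil\log_2 n\rceil+1}-1$, observe by the periodic-repetition property that this block meets at most four supertiles of order $K=\lceil\log_2 n\rceil$, extend it to the union of those four supertiles together with the cross separating them, enumerate the finitely many possibilities for this extended pattern (determined by the four orientations and the symbol at the centre of the cross), and check that each possibility occurs as a sub-pattern of an order $K+4$ supertile --- some cases sit inside an order $K+1$ supertile, others require going up to $K+4$, which is exactly where the constant $4$ in $\chi$ comes from. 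Your write-up reaches the point of identifying the relevant difficulty (the block may straddle the separating cross) and then stops.

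Moreover, the mechanism you propose to resolve it is not sound as stated. You cannot take a fixed configuration $x$ and ``adjust the hierarchy offset at levels above $K$ while preserving the content of $x$ on a neighborhood of $\vec u$'': configurations of the Robinson subshift admit no local modifications, and in the rigid version $X_{adR}$ the alignment layer forces all supertiles of a given order to be globally aligned, so the offsets at every level are determined throughout the configuration. The correct move is to extend the \emph{pattern} and exhibit the extension inside a larger supertile, not to modify the configuration. Your supporting claim --- that the structure of $p$ constrains the hierarchy only at levels $\le K$ because it cannot ``see'' a feature of scale $>4n$ --- is also false in exactly the problematic case: when $p$ straddles the cross between order-$K$ supertiles, the symbols on that cross (a red corner of high order, a five- or six-arrow symbol, the orientation marks of the alignment layer) are features of arbitrarily high levels of the hierarchy and do constrain where $p$ can sit inside them. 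The content of the lemma is that despite these constraints, order $K+4$ always suffices; that has to be checked case by case, as the paper does.
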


\begin{proof}

\begin{enumerate}
\item \textbf{Completing into 
an order $2^{\lceil \log_2 (n) \rceil +1}-1$ block:}

Consider some $n$-block $p$ that appears 
in some configuration $x$ 
of the SFT $X_{adR}$. We can complete it into 
a $2^{\lceil \log_2 (n) \rceil +1}-1$ block, 
since $2^{\lceil \log_2 (n) \rceil +1} - 1 
\ge 2n-1 \ge n$ for all $n\ge 1$.

\item \textbf{Intersection with four order 
$\lceil \log_2 (n) \rceil$ supertiles:}

From the periodic appearance property of the order 
$\lceil \log_2 (n) \rceil$ supertiles 
in each configuration, this last block 
intersects at most four
supertiles having this order. Let us complete $p$ 
into the block whose support is the union 
of the supports of the supertiles and the 
cross separating these. 

\item \textbf{Possible patterns after this completion
according to the center symbol:}

Since this pattern 
is determined by the symbol at the center of 
the cross and the orientations of the supertiles, 
the possibilities for this pattern 
are listed on Figure~\ref{fig.orientation.supertiles0}, 
Figure~\ref{fig.orientation.supertiles1} and 
Figure~\ref{fig.orientation.supertiles2}. Indeed, 
when the orientations of the supertiles are 
like on Figure~\ref{fig.orientation.supertiles0}, 
each of the supertiles forcing the 
presence of an order $\lfloor \log_2 (n) \rfloor+1$
supertile, the center is a red corner.
When the orientations of the supertiles are 
like on Figure~\ref{fig.orientation.supertiles1},
the center of the block can not be superimposed with 
a red corner since the two west supertiles 
force an order $\lfloor \log_2 (n) \rfloor+1$ 
supertile, as well as the two east supertiles. 
This forces a non-corner symbol on the position 
considered.

For type $4,5,9,10$ patterns, there are 
two possibilities: the values of the two arms 
of the central cross are equal or not. 
Hence the notation $4,4'$, where $4'$ 
designates the case where the two values are different.

One completes the alignment layer on $p$ 
according to the restriction of the configuration
$x$.

On these patterns, the value of symbols on the 
cross is opposed to the value of the 
symbols on the crosses of the four supertiles composing 
it.

\item \textbf{Localization of these patterns 
as part of a greater cell:}

The way to complete the obtained pattern 
is described as follows: 

\begin{enumerate}
\item When the pattern is the one 
on Figure~\ref{fig.orientation.supertiles0}, this is an order 
$\lfloor \log_2 (n) \rfloor+1$ supertile 
and the statement is proved. Indeed,  
any order $\lceil \log_2 (n) \rceil+1$ supertile
is 
a sub-pattern of any order 
$\lfloor \log_2 (n) \rfloor+4$ one.

\item One can see the patterns on
Figure~\ref{fig.orientation.supertiles1} and 
Figure~\ref{fig.orientation.supertiles2} 
in an order 
$\lfloor \log_2 (n) \rfloor+1$, 
$\lfloor \log_2 (n) \rfloor+2$, 
$\lfloor \log_2 (n) \rfloor+3$, or
$\lfloor \log_2 (n) \rfloor +4$ supertile,
depending on how was completed the initial pattern 
thus far (this correspondance is shown 
on Figure~\ref{fig.completing.supertile}), 
hence a sub-pattern of an order 
$\lfloor \log_2 (n) \rfloor+4$
supertile.
\end{enumerate}

The orientation of the greater order 
supertiles implied 
in this completion are chosen according 
to the symbols of the alignment layer. 
This layer is then completed.

\item This
implies that 
any $n$-block is the sub-pattern 
of an order $2 ( \Bigl \lceil 
\frac{1}{2} \lceil \log_2 (n) \rceil \Bigr \rceil +2)
+1$ supertile, which is included into 
an order $\Bigl \lceil 
\frac{1}{2} \lceil \log_2 (n) 
\rceil \Bigr \rceil +2$
cell.
\end{enumerate}
\end{proof}

\begin{proposition}
The subshift $X_{adR}$ 
is a minimal SFT.
\end{proposition}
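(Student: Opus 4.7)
The plan is to verify two things: that $X_{adR}$ is an SFT, and that it is minimal. The first is immediate because $X_{adR}$ is defined by superposing two layers, each of which is specified by finitely many nearest-neighbor constraints (the Robinson rules 1--4 on the first layer, and the Localization, Induction, Transmission, Synchronization and Coherence rules on the second), so the product remains an SFT. All the work is therefore in proving minimality.

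For minimality, I want to show that every pattern in the language appears in every configuration. Let $p$ be a pattern of size $n$ in the language of $X_{adR}$. By Lemma~\ref{prop.complete.rob}, $p$ extends to a sub-pattern of some order $\chi(n)$ supertile $S$ (with its alignment layer filled in); this $S$ has one of four orientations, call it $o \in \{sw,se,nw,ne\}$. The goal is then to exhibit an occurrence of $S$ (and hence of $p$) inside an arbitrary configuration $x \in X_{adR}$.

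Given any $x$, I first argue that $x$ contains order $\chi(n)+1$ supertiles: whichever type $(i)$, $(ii)$ or $(iii)$ the configuration is, every infinite supertile of $x$ must contain finite supertiles of all orders, by iterating the proposition of Section~\ref{sec.hierarchical.structures} (each finite supertile is part of one of the next order). By the construction of supertiles in Section~\ref{sec.hierarchical.structures}, an order $\chi(n)+1$ supertile contains as sub-supertiles all four orientations of order $\chi(n)$ supertiles, so in particular it contains a copy of $S$ (orientation $o$). Thus $S$, and with it $p$, appears in $x$. This establishes minimality.

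The only delicate point is that $p$ carries both Robinson and alignment information, so the completion into $S$ must be consistent on both layers. This is precisely what the proof of Lemma~\ref{prop.complete.rob} guarantees (the alignment layer is completed according to the restriction of $x$ and the orientations of the enveloping supertiles), so no additional work is required here. One could alternatively phrase the last step using the alignment lemma directly, which asserts that every order $m$ supertile appearing in $x$ reappears periodically with period $2^{m+2}$: combined with the observation that all four orientations occur in each order $\chi(n)+1$ block, this again places a copy of $S$ inside the region where we wish to find $p$.
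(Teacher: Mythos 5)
Your proof is correct and follows essentially the same route as the paper's: reduce to Lemma~\ref{prop.complete.rob} (every pattern, alignment layer included, completes into a supertile/cell) and then observe that supertiles of every order and orientation occur in every configuration. The only difference is cosmetic: the paper asserts the second step by saying ``any cell appears in all the configurations'' (implicitly via the periodic-appearance lemma of the alignment layer), while you derive it directly from the facts that every configuration contains supertiles of all orders and that an order $k+1$ supertile contains all four orientations of order $k$ supertiles.
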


\begin{remark}
Let use note that the existence of a minimal $\Z^2$-SFT is already known. 
For instance, see~\cite{Ballierthesis} (Theorem 1.35). 
\end{remark}

\begin{proof}
Any cell appears in all the configurations of this 
subshift. Since any pattern is sub-pattern of 
a cell, it then appears in all the configurations. 
Hence $X_{adR}$ is a minimal SFT.
\end{proof}

\section{ \label{sec.obstruction.min} Obstruction}

In this section we prove that the entropy dimensions of a minimal SFT are 
constrained as follows: 

\begin{proposition} 
\label{proposition.obstruction.minimality} Let $d$ be 
some positive integer. 
Let $X$ be a minimal $\Z^d$-SFT.
Then $\overline{D_h}(X) \le d-1$.
\end{proposition}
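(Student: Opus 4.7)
The plan is to establish the complexity bound $\log_2 N_n(X) = O(n^{d-1})$, which directly gives $\overline{D_h}(X) \le d-1$ from the definition, since then $\log_2 \log_2 N_n(X) \le (d-1)\log_2 n + O(1)$. The key input from minimality is uniform recurrence: denoting $r$ the window size of $X$, since the set of $r$-patterns in the language of $X$ is finite and each such pattern is uniformly recurrent (because $X$ is minimal), there is a common constant $N_0 \ge r$ such that every $r$-pattern of the language appears in every translate of $\mathbb{U}^{(d)}_{N_0}$ inside every configuration of $X$.

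The core of the argument is then to exploit uniform recurrence together with the SFT property to show that each $n$-block is determined, up to $2^{O(n^{d-1})}$ possibilities, by a $(d-1)$-dimensional slice of data. Concretely, I would fix a marker pattern $q$ of size $r$ in the language, note that in any configuration of $X$ the positions of $q$ form a syndetic set with gaps at most $N_0$, and parse a given $n$-block via a canonical choice of such markers (for instance, the lexicographically smallest occurrence of $q$ in each cell of a fixed $N_0$-grid). Within each cell the content is constrained to a bounded set of possibilities by the SFT window; the non-trivial freedom is concentrated on the cell interfaces, which form a $(d-1)$-dimensional structure of total size $O(n^{d-1})$. Summing the choices yields $N_n(X) \le 2^{O(n^{d-1})}$, and taking logarithms twice gives the desired bound.

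The main obstacle is making the dimension reduction above precise: for a general $\Z^d$-SFT the boundary data does not determine the interior, so a direct encoding argument fails without sharply exploiting the rigidity provided by minimality, namely that each $N_0$-cube must realize the full list of admissible $r$-patterns. A careful combinatorial argument would proceed either by propagating the SFT constraints from a fixed $(d-1)$-dimensional slab while tracking a bounded number of choices at each layer, or inductively on $d$ using the fact that factors of minimal $\Z^d$-systems onto $\Z^{d-1}$-systems are again minimal, combined with the observation from Proposition~\ref{proposition.sup.entropy.dimension} about how entropy dimension interacts with product structure.
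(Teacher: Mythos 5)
There is a genuine gap, and you have in fact located it yourself: your encoding requires that, inside each cell of the $N_0$-grid, the content be ``constrained to a bounded set of possibilities'' given the interface data, so that all the freedom lives on a $(d-1)$-dimensional structure. Neither the SFT condition nor minimality delivers this. Uniform recurrence (every $r$-pattern occurs in every $N_0$-cube) says nothing about how many ways a cube can be filled in compatibly with its boundary; a priori that number can be exponential in $n^d$, and the marker/parsing device does not reduce it. Your two suggested repairs do not close the hole either: propagating constraints slab by slab ``while tracking a bounded number of choices at each layer'' is exactly the unproved determinism restated, and passing to a $\Z^{d-1}$-factor loses control of the complexity in the collapsed direction, while Proposition~\ref{proposition.sup.entropy.dimension} concerns products, not factors, and cannot substitute for it.

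The paper's proof turns the logic around in a way that avoids this issue entirely. One does \emph{not} try to show that every $n$-block of $X$ is determined by boundary data (which is false in general). Instead one builds a \emph{single} configuration $x\in X$ with that property by fiat: fixing, for each scale $n$, a choice function $\psi_n$ that extends any admissible pattern on the annulus $\mathbb{O}_n$ of the cube $\llbracket 1,r_n\rrbracket^d$ (with $r_n=2^{n+2}r$) to the separating cross $\mathbb{C}_n$, and recursing into the $2^d$ sub-cubes, one obtains patterns whose interiors are functions of their outer annuli; a compactness argument yields a limit configuration $x$ in which every $r_k$-block is determined by at most $2^d$ annulus patterns, so $N_{r_k}(x)\le |\A|^{2^d\cdot 2d\cdot r\cdot r_k^{d-1}}$. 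Minimality is then invoked exactly once, at the end: the language of $X$ equals the language of $x$, so $N_{r_k}(X)$ satisfies the same bound and $\overline{D_h}(X)\le d-1$. The lesson is that minimality is not used to rigidify individual blocks but to transfer a complexity bound from one deliberately degenerate configuration to the whole subshift; your argument would need this reversal to go through.
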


\begin{remark}
The proof of this proposition was 
communicated to us by P. Guillon.
\end{remark}

\begin{proof} \textbf{Idea:} \textit{the 
idea of the proof is to construct an 
element of the subshift having low complexity. This 
construction using only the fact that the subshift is 
of finite type. Since the subshift is minimal, 
the complexity of the subshift is equal to 
the complexity 
of this element. We deduce the upper bound 
on the entropy dimension.} \bigskip

Let $X$ be some SFT on alphabet $\A$.
Let $r>0$ be the rank of the SFT. 

\begin{enumerate}
\item \textbf{Definition of the 
annulus and cross supports:} 

Let us denote, for all $n \ge 0$, $r_n = 2^{n+2} r$, 
and define
\[\mathbb{O}_{n} = \llbracket 1,r_n\rrbracket^d \backslash 
\llbracket 1+r,r_n-r+1\rrbracket^d,\]
\[\mathbb{C}_{n} = \bigcup_{1 \le k \le d} \llbracket 1,r_n\rrbracket ^{k-1} 
\times \llbracket r_{n-1}-r+1, r_{n-1}+r\llbracket \times 
\llbracket 1,r_n\rrbracket ^{d-k},\] 
when $n \ge 1$, and $\mathbb{C}_0$ is the set $\llbracket r+1,2r\rrbracket ^d$.
Informally, we call $\mathbb{O}_n$ the outside of set 
$\llbracket 1, r_n \rrbracket ^d$, and the 
inside is the complementary 
of $\mathbb{O}_n$ in this set.
\bigskip

\begin{figure}[h!]
\[\begin{tikzpicture}[scale=0.3]
\begin{scope}
\node[scale=1.5] at (-7,5) {$\mathbb{O}_n$};
\fill[gray!20] (-3,-3) rectangle (13,13);
\fill[gray!65] (0,0) rectangle (10,10); 
\fill[gray!20] (1,1) rectangle (9,9);
\draw (0,0) rectangle (10,10);
\draw (1,1) rectangle (9,9); 
\draw[dashed] (-3,-3) grid (13,13);
\draw (4,0) -- (4,10); 
\draw (6,0) -- (6,10);
\draw (0,4) -- (10,4);
\draw (0,6) -- (10,6);
\end{scope}

\begin{scope}[xshift=23cm]
\node[scale=1.5] at (-7,5) {$\mathbb{C}_n$};
\fill[gray!20] (-3,-3) rectangle (13,13);
\fill[gray!65] (0,0) rectangle (10,10); 
\fill[gray!20] (6,6) rectangle (10,10); 
\fill[gray!20] (0,0) rectangle (4,4);
\fill[gray!20] (6,0) rectangle (10,4);
\fill[gray!20] (0,6) rectangle (4,10);
\draw (0,0) rectangle (10,10);
\draw (1,1) rectangle (9,9); 
\draw[dashed] (-3,-3) grid (13,13);
\draw (4,0) -- (4,10); 
\draw (6,0) -- (6,10);
\draw (0,4) -- (10,4);
\draw (0,6) -- (10,6);
\draw[latex-latex] (4,-2) -- (6,-2);
\draw[latex-latex] (12,0) -- (12,10);
\node at (14,5) {$r_{n}$};
\node at (5,-4) {$2r$};
\end{scope}
\end{tikzpicture}\]
\caption{\label{fig.sets.couronnes} Illustration of the definition of the sets $\mathbb{O}_n$ and
$\mathbb{C}_n$ (respectively dark gray 
set on the left and on the right).}
\end{figure}
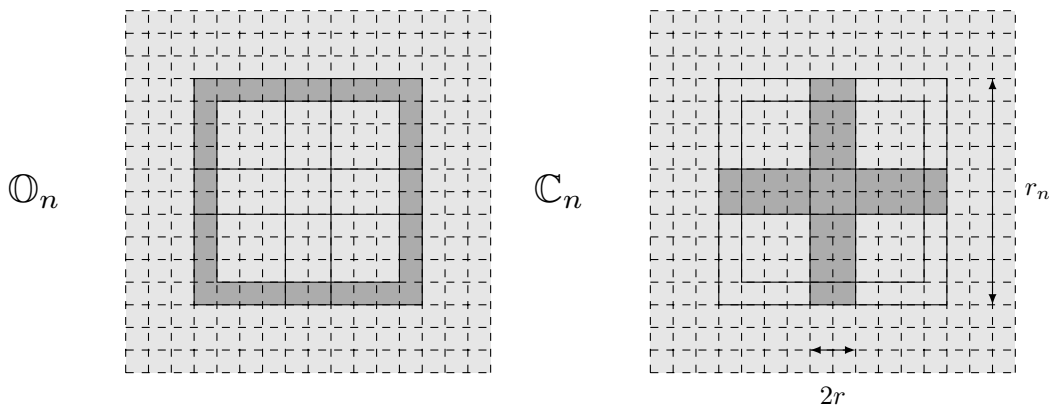

\item \textbf{An association between patterns 
on the annuli and patterns on the crosses:}

Let $\psi_n : \mathcal{L}_{\mathbb{O}_n} (X) 
\rightarrow \mathcal{L}_{\mathbb{C}_n} (X)$ be a function which to some pattern $p$ associates 
a possible completion of $p$ on 
$\mathbb{O}_n \bigcup \mathbb{C}_n$. \bigskip

Let us construct recursively 
a function 
$\varphi_n : \mathcal{L}_{\mathbb{O}_n} (X)
\rightarrow {\mathcal{L}}_{\llbracket 1, r_n
\rrbracket ^d} (X)$ as follows (See an illustration on Figure~\ref{fig.varphi}).
Consider $p$ a pattern in $\mathcal{L}_{\mathbb{O}_n} (X)$. 

\begin{enumerate}
\item Extend it with the pattern $\psi_n (p)$.
\item \begin{itemize}
\item if $n \ge 1$, consider the restrictions of the obtained pattern 
on the sets consisting in a product of $d$ elements of 
$\left\{ \llbracket 1, r_{n-1} \rrbracket , \llbracket r_{n-1} +1 , r_n\rrbracket \right\}$. 
These sets are translate of $\mathbb{O}_{n-1}$. Apply the function 
$\varphi_{n-1}$ then 
the completion on these copies of $\mathbb{O}_{n-1}$. 
\item if $n=0$, then the construction is done.
\end{itemize}
\end{enumerate} 

\begin{figure}[h!]
\[\begin{tikzpicture}[scale=0.3]
\begin{scope}
\node[scale=1.5] at (-7,5) {$1.$};
\fill[gray!20] (-3,-3) rectangle (13,13);
\fill[gray!65] (0,0) rectangle (10,10); 
\fill[gray!20] (1,1) rectangle (9,9);
\draw (0,0) rectangle (10,10);
\draw (1,1) rectangle (9,9); 
\draw[dashed] (-3,-3) grid (13,13);
\draw (4,0) -- (4,10); 
\draw (6,0) -- (6,10);
\draw (0,4) -- (10,4);
\draw (0,6) -- (10,6);
\draw[-latex] (-5,1) -- (0,1) ;
\node at (-6,1) {$p$};
\end{scope}

\begin{scope}[xshift=23cm]
\node[scale=1.5] at (-7,5) {$2.$};
\fill[gray!20] (-3,-3) rectangle (13,13);
\fill[gray!65] (0,0) rectangle (10,10); 
\fill[gray!20] (6,6) rectangle (9,9); 
\fill[gray!20] (1,1) rectangle (4,4);
\fill[gray!20] (6,1) rectangle (9,4);
\fill[gray!20] (1,6) rectangle (4,9);
\draw (0,0) rectangle (10,10);
\draw (1,1) rectangle (9,9); 
\draw[dashed] (-3,-3) grid (13,13);
\draw (4,0) -- (4,10); 
\draw (6,0) -- (6,10);
\draw (0,4) -- (10,4);
\draw (0,6) -- (10,6);
\draw[-latex] (15,3) -- (6,3);
\node at (16.5,3) {$\psi_n (p)$};
\end{scope}

\begin{scope}[xshift=11.5cm, yshift=-19cm]
\node[scale=1.5] at (-7,5) {$3.$};
\fill[gray!20] (-3,-3) rectangle (13,13);
\fill[gray!65] (0,0) rectangle (10,10); 
\fill[gray!20] (6,6) rectangle (9,9); 
\fill[gray!20] (1,1) rectangle (4,4);
\fill[gray!20] (6,1) rectangle (9,4);
\fill[gray!20] (1,6) rectangle (4,9);
\draw (0,0) rectangle (10,10);
\draw (1,1) rectangle (9,9); 
\draw[dashed] (-3,-3) grid (13,13);
\draw (4,0) -- (4,10); 
\draw (6,0) -- (6,10);
\draw (0,4) -- (10,4);
\draw (0,6) -- (10,6);

\node at (20,5) {\begin{tikzpicture}[scale=0.3]
\draw[color=red,line width=0.5mm] (0,0) rectangle (1,1);
\end{tikzpicture}: copies of $\mathbb{O}_{n-1}$};

\draw[color=red,line width=0.5mm,step=5] 
(0,0) grid (10,10);
\end{scope}
\end{tikzpicture}\]
\caption{\label{fig.varphi} Illustration of the construction of 
the functions $\varphi_n$.}
\end{figure}
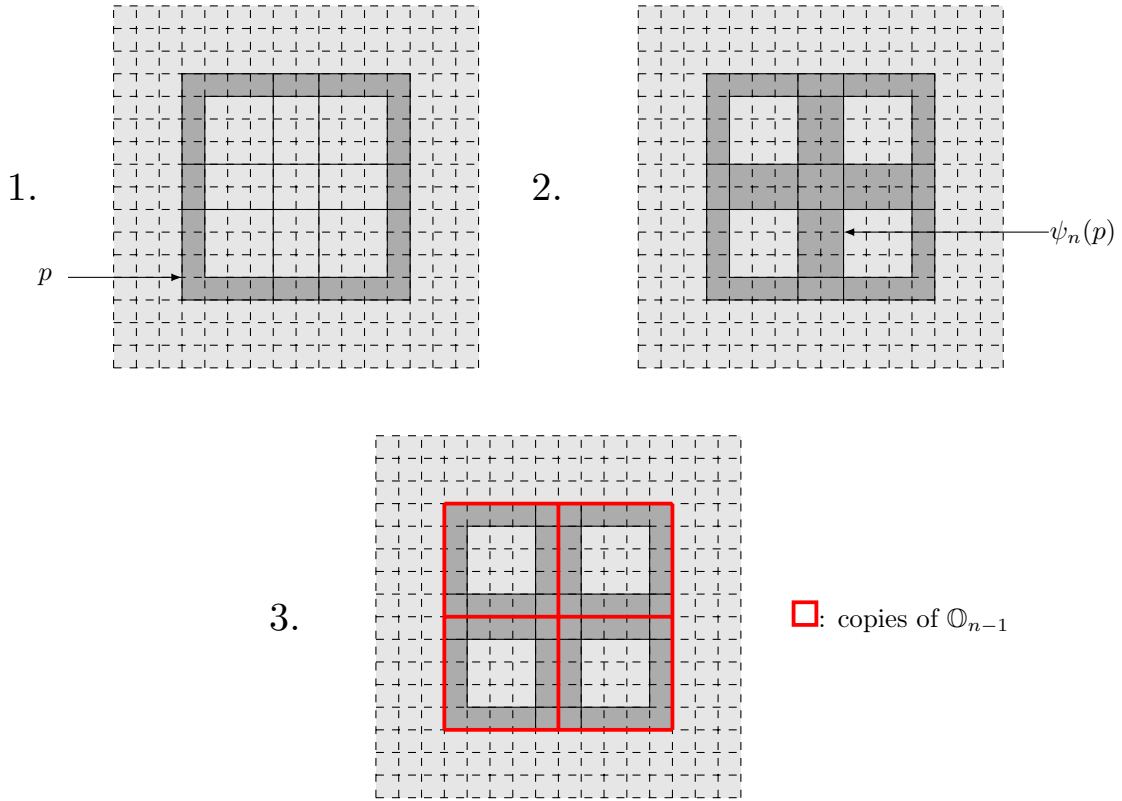

\item \textbf{Construction of the configuration:}

Consider the a sequence of patterns 
$(p_n)_n$ such that for all $n$, 
$p_n$ has support
$\mathbb{O}_n$, and $(x_n)_n$ a sequence of configurations whose restriction on 
\[\Bigl \llbracket 1-\frac{r_n}{2} , 
\frac{r_n}{2} \Bigr \rrbracket^d\]
is equal to $\varphi_n (p_n)$. 
There exists such a sequence, since 
the restriction of this pattern on 
$\mathbb{O}_n$ is globally admissible. \bigskip 

By compactness, we can extract 
some infinite sub-sequence of 
$(x_n)_n$ that converges towards 
some configuration $x$ of $X$.

\item \textbf{Complexity of the configuration $x$:}

Let us fix some $k \ge 1$ and let us 
give an upper bound
of the number of $r_k$-block that appear in $x$.
Consider $p$ some $r_k$-block that appear in $x$, 
in position $\vec{u}$. There exists some $n \ge k$ 
such that $\llbracket 1-\frac{r_n}{2} , \frac{r_n}{2} \rrbracket^d$ 
contains $\vec{u}+\mathbb{C}_k$, and the configuration $x_n$ coincides 
with $x$ on $\vec{u}+\mathbb{C}_k$. As we can decompose 
$\llbracket 1-\frac{r_n}{2} , \frac{r_n}{2} \rrbracket^d$ into copies 
of $\llbracket 1-\frac{r_k}{2} , \frac{r_k}{2} \rrbracket^d$ 
on which the patterns have their inside 
determined by their outside, the pattern $p$ is a sub-pattern 
of a concatenation of $2^{d}$ such blocks (See an illustration 
of Figure~\ref{fig.inclusion.blocks}). 

\begin{figure}[h!]
\label{fig.inclusion.blocks}
\[\begin{tikzpicture}[scale=0.3]
\fill[gray!65] (0,0) rectangle (16,16);
\fill[gray!20] (0.25,0.25) rectangle (15.75,15.75);
\fill[gray!65] (10,10) rectangle (14,14);
\fill[gray!20] (10.25,10.25) rectangle (11.75,11.75);
\fill[gray!20] (12.25,12.25) rectangle (13.75,13.75);
\fill[gray!20] (12.25,10.25) rectangle (13.75,11.75);
\fill[gray!20] (10.25,12.25) rectangle (11.75,13.75);
\draw[dashed, step=2] (0,0) grid (16,16);
\draw[-latex] (19,4) -- (16,4);
\draw[-latex] (19,11) -- (14,11); 
\draw[color=red, line width=0.5mm] (11,11) rectangle (13,13);
\draw[-latex] (11,18) -- (11,13); 
\node at (11,19) {$p$};
\node at (26,11) {Copies of $\llbracket 1-\frac{r_k}{2} , \frac{r_k}{2} \rrbracket^d$ };
\node at (21,4) {$\mathbb{C}_n$};
\end{tikzpicture}\]
\end{figure}

Hence 
the number of $r_k$ blocks in $x$ is smaller than 
the number of elements of $((\A)^{\mathbb{O}_k})^{2^d}$, which is smaller than 
\[|\A| ^{2^d |\mathbb{O}_k|} \le 
|\A|^{2^d * 2d * r (r_k)^{d-1}},\]
because $\mathbb{O}_k$ is the union of $2d$
cuboids having dimensions $r, r_k , ... , r_k$ (as illustrated on 
Figure~\ref{fig.dec.cuboids}).

\begin{figure}[ht]
\[\begin{tikzpicture}[scale=0.3]
\fill[gray!20] (-3,-3) rectangle (13,13);
\fill[gray!65] (0,0) rectangle (10,10); 
\fill[gray!20] (1,1) rectangle (9,9);
\draw (0,0) rectangle (10,10);
\draw (1,1) rectangle (9,9);
\draw[color=red,line width=0.5mm] (0,0) rectangle (1,10); 
\draw[color=red,line width=0.5mm] (0,0) rectangle (10,1);
\draw[color=red,line width=0.5mm] (10,10) rectangle (9,0);
\draw[color=red,line width=0.5mm] (10,10) rectangle (0,9);
\end{tikzpicture}\]
\caption{\label{fig.dec.cuboids} The set $\mathbb{O}_k$ can be decomposed 
into $2d$ cuboids. On this picture, $d=2$.}
\end{figure}
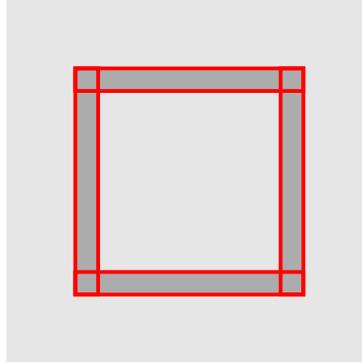

Since $X$ is minimal, any globally admissible pattern 
appears in $x$. As a consequence,
\[N_{r_k} (X) \le |\A|^{2^d.2d . r . (r_k)^{d-1}},\]
which means that $\overline{D_h} (X) \le d-1$.

\end{enumerate}

\end{proof}

There is also a computational obstruction: 

\begin{proposition}
Let $X$ be a $\Z^3$-SFT having entropy dimension. 
Then $D_h (X)$ is a $\Delta_2$-computable number.
\end{proposition}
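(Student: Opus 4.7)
The plan is to observe that this proposition is a direct specialization of Meyerovitch's characterization (Theorem~\ref{thm.meyerovitch}) to the case $d=3$. That theorem asserts, among other things, that every entropy dimension realized by a $\Z^d$-SFT is a $\Delta_2$-computable number in $[0,d]$; taking $d=3$ yields the statement at once. So the shortest proof is to cite \cite{Meyerovitch2011} and be done with it. Since the authors are clearly separating the dimensional obstruction ($\overline{D_h}(X)\le d-1$ from Proposition~\ref{proposition.obstruction.minimality}) from the computational one, it is appropriate to state it as a distinct proposition even though it is not new.

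For the reader who wants to see the mechanism, I would indicate the two ingredients behind Meyerovitch's argument. First, for any SFT $X$ of rank $r$, the language $\mathcal{L}(X)$ is $\Pi_1$: a pattern $p$ of size $n$ fails to be globally admissible exactly when, for some $k$, it admits no locally admissible extension to the box of size $n+k$, and this is semi-decidable. One therefore has a uniformly computable family $(N_n^k)_{n,k}$ of integers, nonincreasing in $k$, with $\lim_{k\to\infty}N_n^k=N_n(X)$; passing to logarithms, the reals
\[
d_n \;=\; \frac{\log_2\log_2 N_n(X)}{\log_2 n}
\]
are uniformly $\Pi_1$-computable from above.

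Second, one extracts from these approximations a computable rational sequence $(r_m)$ with $r_m\to D_h(X)$, which by the limit-lemma characterization is exactly $\Delta_2$-computability of $D_h(X)$. This is the delicate step, and the main obstacle, because neither the inner convergence $d_n^k\downarrow d_n$ nor the outer convergence $d_n\to D_h(X)$ comes with a computable rate, so a naive diagonal $r_m=d_m^m$ need not converge to the right value. Meyerovitch's way around this, recalled as Lemma~\ref{lem.density} in the present paper, reexpresses $\Delta_2$-computable numbers in $[0,d]$ as densities $\lim_n \tfrac{d}{n}\sum_{j=1}^n a_j$ of $\Pi_1$-computable sequences $(a_j)\in\{0,1\}^{\N}$; by discretizing the $\Pi_1$-upper approximations of $d_n$ against a scale of rational thresholds, one produces such a $\Pi_1$-sequence whose density is $D_h(X)$ whenever the entropy dimension exists. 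Specializing to $d=3$ gives the proposition.
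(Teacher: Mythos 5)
Your proposal is correct and matches the paper's treatment: the paper states this proposition without proof, as an immediate consequence of Theorem~\ref{thm.meyerovitch} (Meyerovitch's characterization of entropy dimensions of $\Z^d$-SFT as the $\Delta_2$-computable numbers in $[0,d]$) specialized to $d=3$. Your additional sketch of the underlying mechanism, including the correct observation that the lack of computable convergence rates is the real obstacle and that Lemma~\ref{lem.density} is how it is circumvented, goes beyond what the paper records but is consistent with it.
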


\begin{corollary}
Let $X$ be some minimal $\Z^3$-SFT admitting an entropy dimension. 
Then its entropy dimension is a $\Delta_2$-computable 
number in $[0,2]$. 
\end{corollary}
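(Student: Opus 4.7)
The corollary is a direct synthesis of the two propositions just stated, so my plan is simply to combine them. Since $X$ admits an entropy dimension by hypothesis, one has $D_h(X) = \overline{D_h}(X) = \underline{D_h}(X)$, so any upper bound on the upper entropy dimension transfers automatically to $D_h(X)$ itself.

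First I would apply Proposition~\ref{proposition.obstruction.minimality} with $d=3$: since $X$ is a minimal $\Z^3$-SFT, it gives $\overline{D_h}(X) \le 3-1 = 2$, hence $D_h(X) \le 2$. The trivial lower bound $D_h(X) \ge 0$ follows from the definition as a $\limsup$ of $\log_2 \circ \log_2(N_n(X)) / \log_2(n)$: a minimal SFT is non-empty so $N_n(X) \ge 1$, and in the degenerate case where $N_n(X) \in \{0,1\}$ the convention gives entropy dimension $0$. Thus $D_h(X) \in [0,2]$.

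Second, I would invoke the preceding proposition, which states that the entropy dimension of any $\Z^3$-SFT, when it exists, is a $\Delta_2$-computable real number; applied to $X$, this yields that $D_h(X)$ is $\Delta_2$-computable. Put together with the previous step, this is exactly the statement of the corollary.

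There is no genuine obstacle: the substantive content has already been carried out, namely the compactness-based self-completion argument in the proof of Proposition~\ref{proposition.obstruction.minimality} (which manufactures a low-complexity configuration whose language must, by minimality, exhaust the language of $X$) and the algorithmic approximation argument behind the $\Delta_2$-computability of $D_h$ for $\Z^3$-SFT. The corollary is stated separately only because it isolates the obstruction side of the characterization announced in Theorem~\ref{thm.dim.entropique.intro}, to be matched by the realization construction developed in Section~\ref{sec.realization.min}.
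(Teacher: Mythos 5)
Your proposal is correct and is exactly the combination the paper intends: the corollary is stated without proof precisely because it follows immediately from Proposition~\ref{proposition.obstruction.minimality} with $d=3$ (giving $D_h(X)\le 2$, with $D_h(X)\ge 0$ trivial) together with the preceding proposition on $\Delta_2$-computability. Nothing further is needed.
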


\section{\label{sec.realization.min} Realization}

In this section, we prove the following theorem:

\begin{reptheorem}{thm.dim.entropique.intro}[\cite{GS17ED}]
Every $\Delta_2$-computable number 
in $[0,2]$ is the entropy dimension 
of a minimal $\Z^3$-SFT.
\end{reptheorem}

\subsection{\label{sec.construction.abstract} Abstract of the construction}

In this section, we provide an abstract 
of the proof of Theorem~\ref{thm.dim.entropique.intro}.
We first present the principles of Meyerovitch's
construction for the proof of 
Theorem~\ref{thm.meyerovitch}, 
since we use these principles in 
our construction.

\subsubsection{Principles 
of the proof of Theorem~\ref{thm.meyerovitch}}

Let $z \in ]0,2]$ a $\Delta_2$-computable 
number ($0$ is easily realized 
as the entropy dimension of a minimal SFT). 
Let us describe 
how to construct a $\Z^2$-SFT whose 
entropy dimension is $z$.
From Lemma~\ref{lem.density}, 
there exists some $\Pi_1$-computable sequence
$(a_n)_{n} \in \{0,1\}^{\N}$
such that 
$$z = 2 \lim_n \frac{\sum_{k=0}^n a_k}{n+1}.$$
This construction relies on the propagation 
of a signal through the petal hierarchy, 
which is transformed through the intersections 
of petals. The signal has two possible colors 
$
\]
\caption{\label{fig.abstract.meyerovitch.transformation}
Illustration of the signal 
transformation in Meyerovitch's construction.}
\end{figure}

On the corners of the order $0$ petals 
are superimposed random bits 
in $\{0,1\}$.
The bits $f_n$, $n \ge 0$ are imposed to 
be $a_n$
by a Turing machine using a similar construction 
as in~\cite{Hochman-Meyerovitch-2010}.
This architecture is set up 
so that it does not contribute 
to the entropy dimension, by using thin 
computation areas. Thus only 
the random bits do contribute to the 
entropy dimension.
The number of corners in an order zero 
petal which is inside an order $n$ cell whose 
border is colored $\begin{tikzpicture}[scale=0.3]
\fill[purple] (0,0) rectangle (1,1);
\draw (0,0) rectangle (1,1);
\end{tikzpicture}$
is equal to \[4^{\sum_{k=0}^n a_k}.\] 
This comes from iterating 
the transformation rule. 
This number is $0$ when the border
of the cell is colored 
$\begin{tikzpicture}[scale=0.3]
\fill[gray!90] (0,0) rectangle (1,1);
\draw (0,0) rectangle (1,1);
\end{tikzpicture}$.

As a consequence, the 
number of possible sets of random 
bits over an order $n$ cell is approximately
$$2^{4^{\sum_{k=0}^n a_k}}.$$

Moreover, 
the number 
of possibilities for the 
random bits 
over a block of the structure 
having the same size as 
an order $n$ cell is 
equal as the number of 
possibilities for the 
random bits on such a cell.

Since the entropy dimension 
is generated by random bits, 
it follows that
the entropy dimension is equal to 
$$\lim_n \frac{\log_2 (\log_2 (2^{4^{\sum_{k=0}^n a_k}}))}{\log(2^{n+1})} = 2 \lim_n \frac{\sum_{k=0}^n a_k}{n+1} =z,$$

since the size of an order $n$ supertile 
is approximately $2^{n+1}$.

\subsubsection{Obstacles to the 
minimality and their solutions}

The obstacles to the minimality property in
this construction are due to: 

\begin{enumerate}
\item the definition of the hierarchy bits, since 
there is some configuration whose 
hierarchy bits are all $\begin{tikzpicture}[scale=0.3]
\fill[gray!90] (0,0) rectangle (1,1);
\draw (0,0) rectangle (1,1);
\end{tikzpicture}$ (hence 
the hierarchy bit 
$\begin{tikzpicture}[scale=0.3]
\fill[purple] (0,0) rectangle (1,1);
\draw (0,0) rectangle (1,1);
\end{tikzpicture}$ does not appear);
\item the definition of the random 
bits, which can be all $0$ in a configuration, 
and all $1$ in another one;
\item the machines computations: 
in the space time diagram of a machine 
in an infinite computation area, 
there can appear some parts that never 
appear in a finite space-time diagram.
\end{enumerate}

In this text we propose a construction, 
abstracted in the following section, 
that overcomes as follows these difficulties 
so that the SFT is minimal: 

\begin{enumerate}
\item The trick used to overcome the difficulties 
relative to hierarchy bits is to modify 
the transformation rules defining 
these bits. This modification 
is done so that for 
a sparse set of levels, no matter the color 
of this level, at least one petal under 
in the hierarchy is colored 
$\begin{tikzpicture}[scale=0.3]
\fill[purple] (0,0) rectangle (1,1);
\draw (0,0) rectangle (1,1);
\end{tikzpicture}$. The set 
is chosen sparse 
enough so that this modification 
does not contribute to the entropy dimension.

\item We use a counter, 
called hierarchical counter, 
in order to 
alternate the possible sets 
of random bits.  
For the incrementation, 
we group the random bits into sets 
forming independent counters.
That is why we need a third dimension 
in order to realize the numbers 
in $[0,2]$, since 
the display of random bits 
is bidimensional. 
The three-dimensional subshifts
that we construct use structures that 
appear in a three-dimensional version 
of the Robinson subshift.

\item 
The difficulties coming from the computations 
are solved by simulating any finite space-time
diagram with any initial tape.
In order to have the minimality property, 
we alternate these space-time diagrams in 
any configuration using counters
called linear counters. They 
code 
for the initial tape 
of the machine. 
Moreover, a machine 
detecting an error sends an error signal to 
its initial tape. This error signal 
is taken into account if and only if 
the machine was well initialized.
\item The two types 
of counters have co-prime 
periods for different 
levels, in 
order to ensure the minimality. 
Moreover, they are incremented 
in orthogonal directions. 
\end{enumerate}

On Figure~\ref{fig.schema.global} 
is some simplified schema of the construction. 

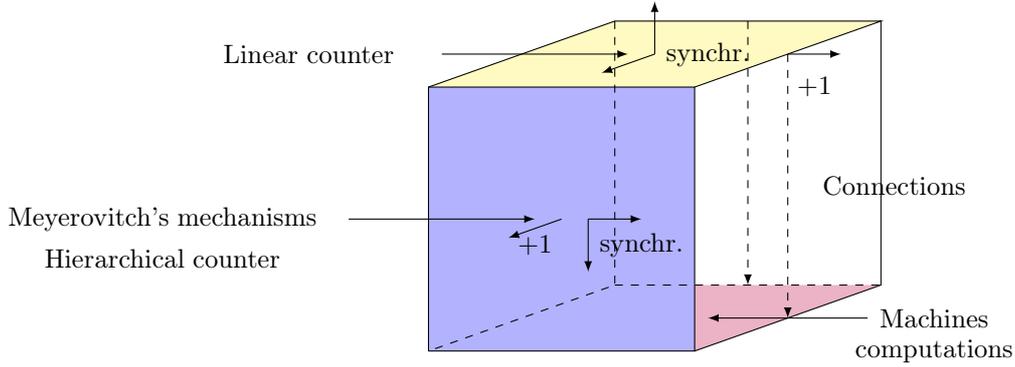
\begin{figure}[ht]
\[\begin{tikzpicture}[scale=0.35]

\fill[purple!30] (0,0) -- (7,2.5) -- (17,2.5) 
-- (10,0) -- (0,0) ;

\fill[blue!30] (0,0) -- (0,10) -- (10,10) 
-- (10,0) -- (0,0);

\fill[yellow!30] (0,10) -- (10,10) -- 
(17,12.5) -- (7,12.5) -- (0,10);

\draw[-latex] (-3,5) -- (4,5);
\node at (-10,5) {Meyerovitch's mechanisms};
\node at (-10,3.5) {Hierarchical counter};

\draw[-latex] (5,5) -- (3,5*12/14);
\node at (4,4) {$+1$};

\draw[dashed,-latex] (12,12.5) -- (12,2.5);

\draw[-latex] (6,5) -- (8,5);
\draw[-latex] (6,5) -- (6,3);
\node at (8,4) {synchr.};

\draw[-latex] (0.5,11.25) -- (7.5,11.25);
\node at (-4.5,11.25) {Linear counter};
\draw[-latex] (8.5,11.25) -- (8.5,13.25);
\node at (10.5,11.25) {synchr.};

\draw[dashed,-latex] (13.5,11.25) -- (13.5,1.25);
\node at (17.5,6.25) {Connections};

\draw[-latex,] (8.5,11.25) -- 
(6.5,11.25 - 10/14);
\draw[-latex] (13.5,11.25) -- 
(15.5,11.25);
\node at (14.5,10) {$+1$};

\draw[-latex] (16.5,1.25) -- (10.5,1.25);

\node at (19,1.25) {Machines};
\node at (19,0) {computations};

\draw (0,0) -- (10,0) -- (10,10) -- 
(0,10) -- (0,0);
\draw (0,10) -- (7,12.5) -- (17,12.5) 
-- (10,10); 
\draw (17,12.5) -- (17,2.5) -- (10,0);
\draw[dashed] (7,12.5) -- (7,2.5) -- (0,0); 
\draw[dashed] (7,2.5) -- (17,2.5);
\end{tikzpicture}\]
\caption{
\label{fig.schema.global} 
Simplified schema of the construction
presented in this text. The cube 
represents a three-dimensional 
version of the cells observable in the 
Robinson subshift.}
\end{figure}

The main arguments in the proof 
of the minimality property of this subshift 
are the following ones: 

\begin{enumerate}
\item Any pattern $P$ can be completed 
into a pattern $P'$ 
over a three-dimensional cell with 
controlled size. Hence it is sufficient 
to prove 
that any pattern over a three-dimensional cell 
appears in any configuration. 
Such a pattern
is characterized by the values of 
the counters of a sequence of cells 
included in its support, intersecting 
all the intermediate levels.
\item One can find back any sequence of 
values for the counters contained 
in the three-dimensional cell starting 
from any cell. This is done 
in two steps. First by jumping multiple times
from a cell to the adjacent one having 
the same order
in the direction of incrementation 
of the linear counter. Then in the direction 
of incrementation of the hierarchical counter. 
This is possible since the periods of the counters 
are co-prime.
\end{enumerate}
This is illustrated on Figure~\ref{fig.minimality.proof.intro}. 
On this figure, $t$ (resp. 
$t'$) is the function which, taking as input
the sequence of values of the counters of 
cells of intermediate level into a cell, 
outputs the sequence of the adjacent cell 
in the incrementation 
direction of the linear (resp. hierarchical) 
counter.

\begin{figure}[ht]
\[\begin{tikzpicture}[scale=0.15]
\begin{scope}
\draw (0,0) -- (0,5) -- (5,5) -- (5,0) -- (0,0) ;
\draw[dashed] (0,0) -- (3.5,1.25) -- (3.5,6.25) ;
\draw[dashed] (3.5,1.25) -- (8.5,1.25);
\draw (8.5,1.25) -- (8.5,6.25) ;
\draw (8.5,6.25) -- (5,5);
\draw (8.5,1.25) -- (5,0);
\draw (0,5) -- (3.5,6.25) -- (8.5,6.25);
\draw[-latex] (5,-1) -- (15.5,2.75);
\node at (11,0) {${t'}^{n_1}$}; 
\end{scope}

\begin{scope}[xshift=-20cm]
\draw (0,0) -- (0,5) -- (5,5) -- (5,0) -- (0,0) ;
\draw[dashed] (0,0) -- (3.5,1.25) -- (3.5,6.25) ;
\draw[dashed] (3.5,1.25) -- (8.5,1.25);
\draw (8.5,1.25) -- (8.5,6.25) ;
\draw (8.5,6.25) -- (5,5);
\draw (8.5,1.25) -- (5,0);
\draw (0,5) -- (3.5,6.25) -- (8.5,6.25);
\draw (5,-1) -- (7.5,-1);
\draw[dashed] (7.5,-1) -- (17.5,-1);
\draw[-latex] (17.5,-1) -- (20,-1);
\node at (15,-2) {$t^{n_0}$};
\end{scope}

\begin{scope}[xshift=-30cm]

\draw (0,0) -- (0,5) -- (5,5) -- (5,0) -- (0,0) ;
\draw[dashed] (0,0) -- (3.5,1.25) -- (3.5,6.25) ;
\draw[dashed] (3.5,1.25) -- (8.5,1.25);
\draw (8.5,1.25) -- (8.5,6.25) ;
\draw (8.5,6.25) -- (5,5);
\draw (8.5,1.25) -- (5,0);
\draw (0,5) -- (3.5,6.25) -- (8.5,6.25);
\node at (-1.5,-1.5) {$\vec{u}$};
\draw[-latex] (5,-1) -- (10,-1);
\node at (7.5,-2) {$t$};
\end{scope}

\begin{scope}[xshift=10.5cm,yshift=3.75cm]
\draw (0,0) -- (0,5) -- (5,5) -- (5,0) -- (0,0) ;
\draw[dashed] (0,0) -- (3.5,1.25) -- (3.5,6.25) ;
\draw[dashed] (3.5,1.25) -- (8.5,1.25);
\draw (8.5,1.25) -- (8.5,6.25) ;
\draw (8.5,6.25) -- (5,5);
\draw (8.5,1.25) -- (5,0);
\draw (0,5) -- (3.5,6.25) -- (8.5,6.25);
\node at (10,3) {P',};
\node[color=red] at (12,3) {P};
\draw[color=red] (1,1) rectangle (2,2);
\draw[color=red] (1,2) -- (1.7,2.3)-- (2.7,2.3) -- (2,2);
\draw[color=red] (2,1) -- (2.7,1.3) -- (2.7,2.3);
\end{scope}

\end{tikzpicture}\]
\caption{\label{fig.minimality.proof.intro} Schema of the 
proof for the minimality property of $X_z$.}
\end{figure}
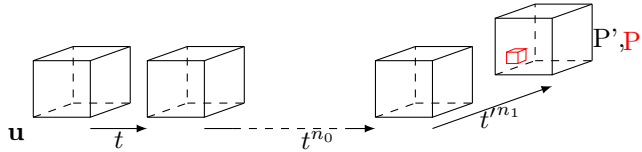

\subsubsection{Description of the layers}

Let $z$ some $\Delta_2$ number in $]0,2]$ and $p=2^m-1$ be some \textit{Mersenne} number,
for some $m$ such that $m/(2^{m}-1)<z/2$. Let us construct some minimal $\Z^3$-SFT $X$
which has entropy dimension $z$. Using Lemma~\ref{lem.density},
there exists some $\Pi_1$-computable sequence 
$(a_j)_j \in \{0,1\}^{\N}$ such that 
\[z = 2 \lim_{n \rightarrow \infty} \frac{1}{n+1} \sum_{j=0}^{n} a_j.\]
We assume, without loss 
of generality, that for all $k$, $a_{2^k} = 0$. Indeed, 
this does not change the limit 
\[\lim_{n \rightarrow \infty} \frac{1}{n+1} \sum_{j=0}^{n} a_j,\]
and the computability properties 
of the sequence $(a_n)_n$.
We also assume that for all $k \in \llbracket 0,p-1\rrbracket$, 
there exist infinitely many $i \in \N$ such that 
$a_{ip+k} =0$ and infinitely many $i$ such that $a_{ip+k}=1$.

Let us construct a minimal $\Z^3$-SFT $X_z$ which has entropy dimension $1/p+ z(1-1/2p)$. 
This subshift is presented as the 
superposition 
of various layers described as follows:

\begin{itemize}
\item \textit{Structure layer [Section~\ref{sec.structure2}]:}
this layer is a three-dimensional version of the Robinson subshift. We use three superimposed copies of the Robinson subshift, each one 
of these being constant respectively in the directions $\vec{e}^1, \vec{e}^2,\vec{e}^3$.
We analyze in the corresponding section the properties 
of this layer. These properties are similar 
to the ones of the two dimensional version, 
in terms of supertiles (finite and infinite), 
repetition of the supertiles, cells, and completion of the patterns 
into supertiles (ensuring the minimality
property).
Only the order $qp$, $q \ge 0$, 
three-dimensional cells
 will support functions. The 
possible functions are the following ones:

\begin{itemize}
\item to execute some Turing computations, in order to control 
the value of the frequency bits and 
grouping bits, introduced in the next sections.
\item to increment some counter, whose value consists 
in a sequence of symbols. 
There are two type of counters:  
\begin{enumerate}
\item 
\textit{Linear counters},
coding for the initial tape of the Turing machine 
the direction of propagation of its error signal,
the states of the machine heads entering 
on the sides of the area, and the activity 
of lines and columns of the area. When 
the column or line is unactive, this 
column is not used for computation. 
This is used so that any part of a space-time
diagram can be completed into 
a space-time diagram on a face 
of a three-dimensional cell. 
\item \textit{Hierarchical counters}, 
coding for random bits 
that are superimposed to the frequency 
bits, which generate the entropy dimension.
\end{enumerate}
\item to transfer the information contained in the linear counter 
to the bottom (initial tape) and top line (for the error signal 
propagation direction) of the machine face, 
as well as the sides of the face 
(corresponding the the machine heads entering 
on the sides of the area).
\end{itemize}
Each of these functions is ensured on 
a specific face of the cell.
The separation of the machine function 
and the counters functions is a necessity for ensuring the minimality
property. This appears clearly in the proof of 
Proposition~\ref{prop.completion}, 
which states that any pattern can be completed into a valid pattern 
over a (finite) three-dimensional cell.

\item{\textit{Functional areas layer}} [Section~\ref{sec.func.areas}]
This layer serves to construct functional areas on 
the faces of the three-dimensional cells and faces connecting 
their ridges. This is done 
in order to localize the possible 
positions of the counters and machine symbols, 
and to give 
a function to these positions (step of computation, vertical or horizontal 
information 
transfer), that we call functional positions. 
This uses a 
signaling process,  
similar to a substitution, 
through hierarchical structures 
that appear 
on the faces of the 
three-dimensional cells 
and faces connecting 
these cells. 

See Figure~\ref{fig.functional.areas.intro} 
for a schema of 
the functional areas over the faces of 
a three-dimensional cell.

\begin{figure}[ht]
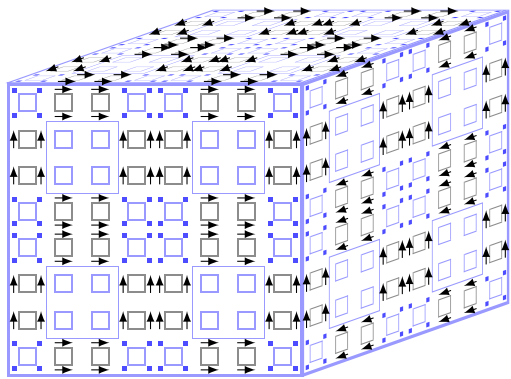

\[
\]
\caption{\label{fig.functional.areas.intro} 
Schema of the 
functional positions on the a face 
of a three-dimensional cell.}
\end{figure}

We then select 
sub-areas of these 
functional areas 
that will support 
computations of the machines
and counters. 
They are constructed 
sparse enough so that the values of the 
linear 
counter and the computations of the Turing machine 
do
not contribute to the entropy 
dimension of the subshift 
(they contribute to the 
complexity function, but 
not to the entropy dimension 
because of Proposition~\ref{proposition.sup.entropy.dimension}). Thus 
the entropy dimension is 
generated by the values of the 
hierarchical counter.
That is where 
the number $p$ is used: 
the selection process 
relies on the 
progressive 
selection of two 
dimensional cells on 
the faces according 
to a horizontal (resp. vertical) 
address in $\{0,1\}^p$ of these cells 
relatively to cells 
higher in the hierarchy. The 
principle 
of this address is similar to the coding 
of a Cantor set: $0$ means that the cell 
is on the left and $1$ on the right (resp. 
on the bottom and on the top for vertical 
addresses). As a consequence, 
this process allows the 
selection of columns (resp. lines)
of the functional areas 
of the faces, through the 
selection of order $0$ cells.

\item{\textit{Frequency bits layer}} [Section~\ref{sec.frqbits}]
The frequency bits are bits in $\{0,1\}$ 
and are 
attached to each order $qp$ 
two dimensional 
cell on their border, 
in each of the copies of the Robinson subshift.
Two cells having the same order 
have the same bit.
The machine can read these bits on its tape (we 
can make the selection process 
of functional areas such that we keep this access to the information), 
and check that the 
frequency bit of order $qp$ cells is $a_q$ for all $q \ge 0$. 

\item \textit{Grouping bits layer} 
[Section~\ref{sec.grouping.bits}]

The grouping bits are bits in $\{0,1\}$ that are
attached to order $qp$ two-dimensional cells 
on their border. The difference with  
frequency bits is their value. When $q=2^k$ 
for some $k$ the grouping bit is $1$. 
Else this bit is $0$. These bits 
are imposed by computations 
of the Turing machines. They are involved
in the mechanism 
that groups the random bits 
into hierarchical 
counter values.

\item {\textit{Linear counter layer}} [Section~\ref{sec.lin.counter}]
This layer supports the value of the linear counter, its incrementation and the 
transport of the value to the adjacent three dimensional cells 
having the same order and to the machine face.
The linear counter has values only on the three-dimensional cells 
that have order $qp$ for some $q \ge 0$ (and machines computations are present 
only on these cells).
The incrementation 
is done on the bottom line of the upper 
face of the three-dimensional cells
 
The value of the counter consists in a word on 
the alphabet 
\[\mathcal{A} \times \mathcal{Q}^3 \times \{\rightarrow,\leftarrow\} \times 
\{
\texttt{on},\texttt{off}\}^2 \times 
\mathcal{D},\] 
where $\mathcal{A}$ is 
the tape alphabet of the machine, 
$\mathcal{Q}$ is the 
state set. These two sets 
are chosen to have cardinality $2^{2^l}$ 
for some integer $l$. This is possible 
by completing the machine's alphabet 
and state set by elements that do not 
interact with the others, in 
order to not alter the 
work of the machine on well initialized tape.
The set 
$\mathcal{D}$ a finite set 
whose cardinality is chosen to be $2^{4.2^l-2}$, 
so that the counter alphabet has 
cardinality which is
\[2^{8.2^l}.\] 

The incrementation mechanism 
is the one of an adding machine acting on the
counter value, except for one step, 
when the counter value is maximal. 
When this happen, the action of the adding 
machine is suspended for one step.

As a consequence, since the number of columns 
in the selected sub-areas of the functional 
areas is a power of $2$ and that these numbers 
are different for two $qp$ order cells, 
the number of values of linear counters for 
two different of these levels 
are numbers $2^{2^{l_1}}$ and $2^{2^{l_2}}$, 
where $l_1 \neq l_2$. 
Since the counters are suspended for one 
step, the periods of the two counters 
are $2^{2^{l_1}}+1$ and $2^{2^{l_2}}+1$.
These are two different, and thus 
co-prime, Fermat numbers.

The symbols in 
the set $\{\rightarrow,\leftarrow\}$ codes for 
the direction of an error signal propagation, 
The symbols in $\{
\texttt{on},\texttt{off}\}$ 
tell which ones of the 
lines and columns are used 
in the area for computations. 
The error signal is triggered 
when the machine ends its 
computation in an error state. 
This signal is sent to of the ends 
of the initial tape (according 
to the propagation direction), 
in order to verify that the tape 
was well initialized. 
We forbid the coexistence of the error signal 
with a signal that certifies that the tape was well initialized.

\item {\textit{Machine layer}} 
[Section~\ref{sec.machine.dim.entropique}]
This layer supports the computations of the machines. These machines  
check that the $q$th frequency bit, shared 
by $qp$ cells, is equal to $a_q$. 
They also check the values of 
the grouping bits.
The initial tape of the machine corresponds 
to the projection 
of the linear counter value on 
$\mathcal{A} \times \mathcal{Q}$, 
where $\mathcal{A}$ is the tape alphabet 
and $\mathcal{Q}$ is 
the state alphabet.

The machines and the linear counters are implemented 
in opposite faces of the three-dimensional cells, 
in order to ensure the separation of the 
information. This principle allows the 
minimality property. The information 
of the linear counter is connected to 
the machine through signals that propagate
in the other faces of the cells.

\item \textit{Hierarchical counter layer [Section~\ref{sec.hier.counter}]:} 
In this layer some bits in $\{\begin{tikzpicture}[scale=0.25]
\fill[purple] (0,0) rectangle (1,1);
\draw (0,0) rectangle (1,1);
\end{tikzpicture}, \begin{tikzpicture}[scale=0.25]
\fill[gray!90] (0,0) rectangle (1,1);
\draw (0,0) rectangle (1,1);
\end{tikzpicture}\}$, called hierarchy bits, 
are superimposed to the two-dimensional cells in the copy of the Robinson subshift 
parallel to the hierarchical 
counter face of the three-dimensional cells.
These bits are determined by a signaling 
process through the hierarchical structures 
of any of the copies of the subshifts 
$X_{adR}$ in the structure layer.
This process relies on the frequency bits. On 
the border of the order $2^k p$ cells, 
the three hierarchy bits are imposed 
to be equal. 

On the blue corners having hierarchy 
bit equal to \begin{tikzpicture}[scale=0.25]
\fill[purple] (0,0) rectangle (1,1);
\draw (0,0) rectangle (1,1);
\end{tikzpicture} are superimposed 
some random bits in $\{0,1\}$.
These bits generate the entropy dimension. 
As a consequence, the machines 
have control over the entropy dimension through 
frequency bits.

For all $k$, the $k$th hierarchical counter value 
is the set of random bits 
on positions with blue corners 
that are in an order $2^k p$ order two-dimensional cell
and not in an order $2^j p$ cell, with $j<k$.

The value of the $k$th hierarchical counter 
is incremented on 
the hierarchical counter face of the 
three dimensional order $2^k p$ cells, 
The incrementation mechanism is 
similar as the one of linear counters, 
except that it uses 
discrete curves to represent the value 
of the counter as a finite sequence. 

This signaling process is done 
in such a way that the number of hierarchy bits 
on a face of an order $2^k p$ cell, $k \ge 0$, 
is strictly growing according to $k$.
Since this number is also a power of $2$, 
the periods of the hierarchical counters 
of two different levels are different Fermat numbers.

The direction of incrementation 
is chosen orthogonal 
to the incrementation direction of the 
linear counter. 
As a consequence, even if a linear 
counter has the same period
as a hierarchical counter, this 
has no influence on the minimality.
On the faces of the other 
three-dimensional faces, the values 
of the counters is not changed.
 
\item \textit{Synchronization layer
[Section~\ref{sec.synchr.dim.entropique}]:} this layer 
is used to synchronize the hierarchical 
counters of three-dimensional 
cells having the same order which are adjacent 
in the directions that are orthogonal to 
their incrementation direction. The linear 
counter is coded to have directly
this synchronization.
\end{itemize}

After that the $X_z$ is constructed 
for all $z$, the proof is as follows: 
take $x$ some $\Delta_2$-computable in $[0,2]$. 
If $x=0$, then any subshift having a unique symbol is minimal and has entropy dimension equal 
to $x$. When $x>0$, for all $m$ such that 
$1/(2^m-1) < x$, there exists some $z_m$ such that $1/p + z_m (1-1/2p) = x$.
We take $m$ such that 
\[\frac{x-1/p}{1-1/2p} > \frac{m}{2^m-1}.\]
For this $m$, $z_m$ is $\Delta_2$-computable, 
$X_{z_m}$ is minimal and 
has entropy dimension equal to $x$.

Let us make explicit the local rules that induce these global behaviors. 

\section{Details of the construction
of the subshifts \texorpdfstring{$X_{z}$}{Xz}:}

\subsection{\label{sec.structure2} Structure layer}

In this section, we describe the construction of 
the structure layer.

For this purpose, we construct a three dimensional 
equivalent of the subshift $X_{adR}$, and analyze this 
subshift from the point of view of supertile hierarchical structure, 
repetition of these supertiles, and infinite supertiles.

This subshift consists in the superposition of three copies of $X_{adR}$, 
respectively parallel to the vectors $(\vec{e}^1$ and $\vec{e}^2)$, $(\vec{e}^2$ and $\vec{e}^3)$
and $(\vec{e}^3$ and $\vec{e}^1)$.

\subsubsection{A minimal three-dimensional version of the Robinson subshift}

This subshift has {\textit{alphabet}} $\A_{adR} ^3$. 

\begin{itemize}
\item \textbf{Robinson rules in the two dimensional 
sections of $\Z^3$:} for $\vec{i} , \vec{j} \in \Z^3$ such that 
$\vec{j}-\vec{i} = \vec{e}^2$, or $\vec{e}^3$ 
(resp. $\vec{e}^1$ or $\vec{e}^3$, resp. $\vec{e}^1$ or $\vec{e}^2$), the first (resp. 
second, resp. third) coordinates of
the triples over these positions verify the rules of the subshift $X_{adR}$.
The orientation of the two-dimensional sections 
of $\Z^3$ where the rules of the subshift 
$X_{adR}$ are verified is given as follows (the 
orientation of the Robinson symbols 
depend on this orientation):
\begin{itemize}
\item for the first coordinate, the horizontal direction
 is $\vec{e}^2$ and 
the vertical one $\vec{e}^3$. This means that 
when looking in the direction opposite to the vector $\vec{e}^3$, 
and orienting $\vec{e}^1$ to the right and $\vec{e}^2$ upwards, we see the 
usual picture of a configuration of $X_{adR}$.
\item for the second one, the horizontal direction is  
$\vec{e}^3$ and 
the vertical one $\vec{e}^1$. 
\item for the last one, the horizontal direction is 
$\vec{e}^1$ and 
the vertical one $\vec{e}^2$. 
\end{itemize}
\item \textbf{Invariance in the orthogonal direction:} 
For $\vec{i}, \vec{j} \in \Z^3$ such that $\vec{j}-\vec{i} = \vec{e}^1$ (resp.  
$\vec{e}^2$, resp. $\vec{e}^3$) 
second (resp. first) coordinates of the couples over these positions are equal.
See Figure~\ref{fig.robinson.copies} for an illustration.
\item \textbf{Coincidence rules:} 
When on some position there are at least two corners symbols of the Robinson subshift, 
then the three symbols are corners having the same color (blue or red).
\item Moreover, the possible triples of blue 
corners are the following ones: 
\[1. \ \begin{tikzpicture}[scale=0.5] \robibluebastgauche{0}{0} ; \end{tikzpicture}, 
\begin{tikzpicture}[scale=0.5] \robibluehauttgauche{0}{0} ; \end{tikzpicture}, 
\begin{tikzpicture}[scale=0.5] \robibluebastdroite{0}{0} ; \end{tikzpicture}
 \ \ \ \ \ 2. \ \begin{tikzpicture}[scale=0.5] \robibluebastdroite{0}{0} ; \end{tikzpicture}, 
\begin{tikzpicture}[scale=0.5] \robibluehauttgauche{0}{0} ; \end{tikzpicture}, 
\begin{tikzpicture}[scale=0.5] \robibluehauttdroite{0}{0} ; \end{tikzpicture}
\ \ \ \ \ 3. \ \begin{tikzpicture}[scale=0.5] \robibluebastdroite{0}{0} ; \end{tikzpicture}, 
\begin{tikzpicture}[scale=0.5] \robibluebastgauche{0}{0} ; \end{tikzpicture}, 
\begin{tikzpicture}[scale=0.5] \robibluehauttgauche{0}{0} ; \end{tikzpicture}\]
\[4. \ \begin{tikzpicture}[scale=0.5] \robibluehauttdroite{0}{0} ; \end{tikzpicture}, 
\begin{tikzpicture}[scale=0.5] \robibluebastdroite{0}{0} ; \end{tikzpicture}, 
\begin{tikzpicture}[scale=0.5] \robibluebastgauche{0}{0} ; \end{tikzpicture}
\ \ \ \ \ 5. \ \begin{tikzpicture}[scale=0.5] \robibluehauttgauche{0}{0} ; \end{tikzpicture}, 
\begin{tikzpicture}[scale=0.5] \robibluebastdroite{0}{0} ; \end{tikzpicture}, 
\begin{tikzpicture}[scale=0.5] \robibluebastgauche{0}{0} ; \end{tikzpicture}
\ \ \ \ \ 6. \ \begin{tikzpicture}[scale=0.5] \robibluehauttgauche{0}{0} ; \end{tikzpicture}, 
\begin{tikzpicture}[scale=0.5] \robibluehauttdroite{0}{0} ; \end{tikzpicture}, 
\begin{tikzpicture}[scale=0.5] \robibluebastdroite{0}{0} ; \end{tikzpicture}\]
\[7. \ \begin{tikzpicture}[scale=0.5] \robibluehauttdroite{0}{0} ; \end{tikzpicture}, 
\begin{tikzpicture}[scale=0.5] \robibluehauttdroite{0}{0} ; \end{tikzpicture}, 
\begin{tikzpicture}[scale=0.5] \robibluehauttdroite{0}{0} ; \end{tikzpicture}
\ \ \ \ \ 8. \ \begin{tikzpicture}[scale=0.5] \robibluebastgauche{0}{0} ; \end{tikzpicture}, 
\begin{tikzpicture}[scale=0.5] \robibluebastgauche{0}{0} ; \end{tikzpicture}, 
\begin{tikzpicture}[scale=0.5] \robibluebastgauche{0}{0} ; \end{tikzpicture}
\]
These triples correspond to the corners of a cube as on Figure~\ref{fig.cube.corners}.
This cube corresponds to the support of 
apparition of a pattern whose restriction on each of the coordinates 
on the corresponding face is a two dimensional order $n$ cell. We call these cubes 
order $n$ \textbf{three dimensional cells}.
Notice that the restriction 
imposed by these rules is 
not trivial, since the number of allowed triples of blue 
corners is $8$ which is less than the 
total number of possibilities, 
which is $4^3$. There is an equivalent restriction on triples of red corners.
\item When on a position there is only one corner, then the couple of other symbols 
is amongst the following types: 
\begin{itemize}
\item (1) Two six arrows symbols or two five arrows symbols, pointing in 
the same direction, and orthogonal (in $\Z^3$) to the corner: this type of triples corresponds 
to the center of the faces of the cubes.
\item (2) Two four arrows symbols or two tree arrows symbols, 
pointing in the same direction and orthogonal (in $\Z^3$) to the corner.
This type corresponds to the edges 
of the cubes and to the edges of connecting 
their corners. 
\item (3) Two six arrows symbols or two five arrows symbols, pointing 
in the opposite directions of the arms of the corner. This type corresponds 
to the centers of the ridges. 
\item (4) Any couple of four or three arrows symbols that 
are orthogonal (in $\Z^3$), and parallel to the corner. This type corresponds 
to the \textbf{internal faces} of the cubes.
\end{itemize}

The intersection 
of the cell with the $\Z^2$-section of $\Z^3$ that cut a three dimensional cell in two equal parts is called 
an \textbf{internal face} of the cell.

\item The triples with only arrows symbols which are two by two orthogonal (in $\Z^3$) 
are forbidden (the other triples 
of arrows symbols correspond 
to the ridges of the internal faces, or 
the inside of these faces).
\item On any translate of 
the subset $\mathbb{U}^3_{2}$ of $\Z^3$, 
there is an admissible blue triple.
\end{itemize} \bigskip

\begin{figure}[ht]
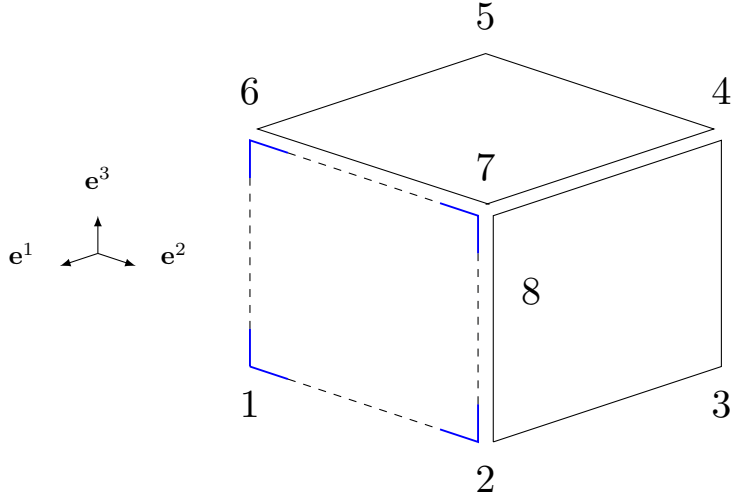

\[
\]
\caption{\label{fig.cube.corners} Illustration of the coincidence rules of 
the structure layer: the corners of the cube 
represent the possibilities of blue corner triples.}
\end{figure}

\subsubsection{\label{sec.hierarchical.struct} Hierarchical structures}

\begin{enumerate} \item 
\textbf{Finite supertiles:}

In this paragraph, the symbols $sw,se,nw,ne$ mean respectively south west, south east, north west 
and north east orientations of 
the corner symbols in 
the alphabet of the Robinson subshift. 

\begin{enumerate}
\item \textbf{Definition by projection 
on the faces:}

Any block on the language of this 
layer whose projection over a plane parallel to $\vec{e}^2$ and $\vec{e}^3$ (resp $\vec{e}^1$ and $\vec{e}^3$, 
resp. $\vec{e}^1$ and $\vec{e}^2$), 
considering only the first (resp. second, resp. third) coordinate of the triple, 
is an order $n$ two-dimensional 
supertile with orientation $t_1$ (resp. $t_2$, resp. $t_3$) is called a \define{three-dimensional supertile} 
with orientation $t \in \{sw,se,nw,ne\}^3$
and order $n$. 

\item \textbf{Recursive definition:}

\begin{figure}[ht]
{\renewcommand{\arraystretch}{1.2}
	\begin{center}
	\begin{tabular}{|c||c|}
    \hline Position 
		of the translate & Orientation of the supertile  \\
    \hline
\hline 

 (0,0,0) &
$(ne,ne,ne)$ \\ 
\hline
$(2^{n+1},0,0)$ & 
$(ne,se,nw)$ \\
\hline
$(0,2^{n+1},0)$ &
$(nw,ne,sw)$ \\
\hline 
$(2^{n+1},2^{n+1},0)$ & 
$(nw,se,sw)$ \\
\hline 
$(0,0,2^{n+1})$ & 
$(se,nw,ne)$ \\
\hline 
$(2^{n+1},0,2^{n+1})$ &  
$(ne,sw,nw)$\\
\hline 
$(0,2^{n+1},2^{n+1})$ & 
$(sw,nw,se)$\\ 
\hline
$(2^{n+1},2^{n+1},2^{n+1})$ & 
$(sw,sw,sw)$\\
\hline 

  \end{tabular} 
	\end{center}
  }
  \caption{\label{table.restrictions.supertiles}
	Correspondence table for recursive 
	definition of three-dimensional supertiles. 
	The table gives the orientation 
	of order the $n$ cubic supertile 
	superimposed on the $\vec{v}+
	\mathbb{U}^3_{2^{n+2}-1}$, 
	where $\vec{v}$ are the entries of the table.}
\end{figure}

The order $n+1$ three-dimensional supertile with 
orientation $t \in \{sw,se,nw,ne\}^3$ can 
be constructed from  
the order $n$ cubic supertiles as follows. 
The support of the order $n+1$ cubic supertile is $\mathbb{U}^3_{2^{n+2}-1}$. 
Figure~\ref{table.restrictions.supertiles} 
shows positions in the support. On each 
of the translate of $\mathbb{U}^3_{2^{n+1}-1}$
corresponding to these positions, we put the 
an order $n$ cubic supertile whose 
orientation is given by the table.

In order to complete the construction we put on
position $(2^{n+2},2^{n+2},2^{n+2})$ 
a triple of red corners with orientation $t$. 
The three planes separating the order 
$n$ supertiles are filled with 
three arrows symbols or four arrows 
symbols induced by the corners. 
On the three lines of these 
planes intersecting the center 
position $(2^{n+2},2^{n+2},2^{n+2})$
there are triples with a unique corner.
Because the corners of the center triple have 
compatible orientations, these triple 
are admissible. The reason is 
that the arrows 
symbols are orthogonal to the corner. 

One can see that in particular, the 
faces of the cubic 
supertiles constructed this 
way verify the recurrence relation used 
to construct the 
supertiles of the Robinson subshift.

\item \textbf{Admissibility:}

One can check that the order $1$ cubic supertiles are locally admissible patterns.
As a consequence, by a 
recurrence argument, \textbf{the three-dimensional
supertiles are locally admissible patterns}
of this subshift. This means that the supertiles 
do not admit any forbidden pattern 
as a sub-pattern). \end{enumerate}

\item \textbf{Three-dimensional cells:}

For all $n \ge 0$, we call order $n$ 
\textbf{three-dimensional cell} 
any subset of $\Z^3$ which is 
the support of a pattern 
whose projection over one of the coordinates on the corresponding face (the face 
which is parallel to the copy of the Robinson subshift) is an order $n$ two dimensional cell.
The order $n$ \textbf{three-dimensional cells} 
have size $4^n+1$: this property comes 
directly from the properties of the two dimensional cells.

\item \textbf{Infinite supertiles:} 

\begin{enumerate}
\item \textbf{Definition:}

Any order $n$ three-dimensional supertile forces the presence of an order $n+1$ three-dimensional supertile 
in the direction of its orientation. This 
comes from the properties of the rigid 
version of the Robinson subshift $X_{adR}$ 
listed in 
Section~\ref{sec.hierarchical.structures}. For 
any configuration $x$ of this structure layer, 
we denote $\sim_x$ the equivalence 
relation on $\Z^3$
defined by $\vec{i} \sim_x \vec{j}$ if there is a 
supertile in $x$ which contains 
$\vec{i}$ and $\vec{j}$.
This is indeed an equivalence relation, because 
two supertiles can not intersect 
but when one of 
these two supertiles 
is a sub-pattern of the other. 
This comes directly from the same fact 
verified by the Robinson subshift, 
by projection on the faces of the three-dimensional 
blocks.
An \define{infinite order} three-dimensional
supertile is an infinite pattern over an equivalence class of this relation. 

\item \textbf{Types of configurations 
according to the number of infinite supertiles:}

Each configuration is amongst the 
following types: 
\begin{itemize}
\item[(i)] A unique infinite order supertile which covers $\Z^3$.
\item[(ii)] Two infinite order supertiles separated by a plane.
In this case, on the plane, two of 
the coordinates of the triple 
are constant and equal to orthogonal three 
or four arrows symbols (this means 
that the long arrows of 
these symbols are orthogonal), 
whose arrows directions 
are in the plane. 
The last coordinates form a configuration 
of the Robinson subshift.
This is a degenerated non-centered face or 
internal face of a three-dimensional cell. 
See 
Figure~\ref{fig.internal.face} for an illustration, where the internal 
faces are colored gray, 
and the localization of non centered part of one 
of these faces specified by a red square.

\begin{figure}[ht]
\[\begin{tikzpicture}[scale=0.3] 
\fill[gray!20] (3.5,1.25) -- (13.5,1.25) -- (13.5,11.25) -- (3.5,11.25) -- (3.5,1.25);
\fill[gray!20] (0,5) -- (10,5) -- (17,7.5) -- (7,7.5) -- (0,5);
\fill[gray!20] (5,0) -- (5,10) -- (12,12.5) -- (12,2.5) -- (5,0);
\draw[line width =0.3mm,color=gray] (3.5,6.25) -- (13.5,6.25);
\draw[line width =0.3mm,color=gray] (5,5) -- (12,7.5);
\draw[line width =0.3mm,color=gray] (8.5,1.25) -- (8.5,11.25);
\draw (0,0) -- (10,0) -- (10,10) -- (0,10) -- (0,0);
\draw (0,10) -- (7,12.5) -- (17,12.5) -- (10,10); 
\draw (10,0) -- (17,2.5) -- (17,12.5);
\draw[dashed] (0,0) -- (7,2.5) -- (7,12.5);
\draw[dashed] (7,2.5) -- (17,2.5);
\draw[dashed] (3.5,1.25) -- (13.5,1.25) -- (13.5,11.25) -- (3.5,11.25) -- (3.5,1.25);
\draw (3.5,11.25) -- (13.5,11.25) -- (13.5,1.25);
\draw (0,5) -- (10,5) -- (17,7.5);
\draw (5,0) -- (5,10) -- (12,12.5);
\draw[dashed] (12,12.5) -- (12,2.5) -- (5,0);
\draw[dashed] (17,7.5) -- (7,7.5) -- (0,5);
\draw[dashed,color=red] (4,1.75) rectangle (6,3.75);
\end{tikzpicture}\]
\caption{\label{fig.internal.face} Illustration of the internal faces.}
\end{figure}
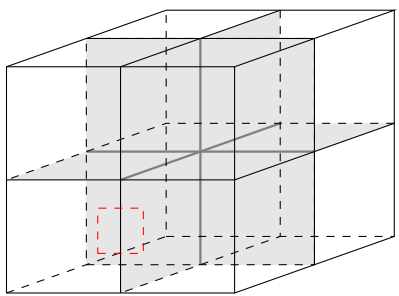

\item[(iii)] Four infinite order supertiles separated by two orthogonal planes. 
On the intersection of the two planes, one coordinate is 
constant and consists in some (red) corner. The other coordinates can be as follows: 
\begin{itemize} 
\item on the positions of the intersection, there are 
two arrows symbols 
with either three or four arrows. 
This corresponds to a degenerated face 
superimposed with a configuration of the Robinson with 
four infinite supertiles.
\item on some position of the intersection, there are two 
six arrows or two five arrows or two three arrows 
symbols pointing in the opposite direction of 
the arms of the corner (and the symbols of the other
positions are determined). 
This corresponds to a degenerated 
centered edge of a three-dimensional cell, 
illustrated by point $1$ on 
Figure~\ref{fig.infinite.supertiles2}
\item 
all the positions of the intersection have two four arrows symbols or two three arrows 
symbols pointing to the same 
direction, orthogonal to the corner. 
This case corresponds to a degenerated 
non-centered edge, illustrated by point 
$2$ on Figure~\ref{fig.infinite.supertiles2}.
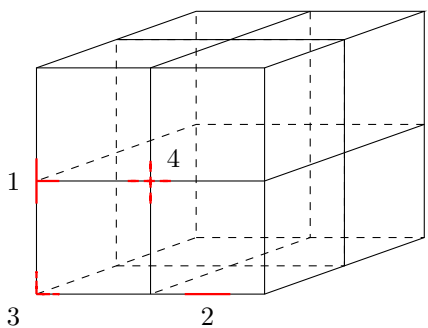
\begin{figure}[ht]
\[\begin{tikzpicture}[scale=0.3] 
\draw (0,0) -- (10,0) -- (10,10) -- (0,10) -- (0,0);
\draw (0,10) -- (7,12.5) -- (17,12.5) -- (10,10); 
\draw (10,0) -- (17,2.5) -- (17,12.5);
\draw[dashed] (0,0) -- (7,2.5) -- (7,12.5);
\draw[dashed] (7,2.5) -- (17,2.5);
\draw[dashed] (3.5,1.25) -- (13.5,1.25) -- (13.5,11.25) -- (3.5,11.25) -- (3.5,1.25);
\draw (3.5,11.25) -- (13.5,11.25) -- (13.5,1.25);
\draw (0,5) -- (10,5) -- (17,7.5);
\draw (5,0) -- (5,10) -- (12,12.5);
\draw[dashed] (12,12.5) -- (12,2.5) -- (5,0);
\draw[dashed] (17,7.5) -- (7,7.5) -- (0,5);
\draw[dashed,color=red,line width=0.3mm] (5,4) rectangle (5,6);
\draw[dashed,color=red,line width=0.3mm] (4,5) rectangle (6,5);
\node at (6,6) {$4$};
\draw[dashed,color=red,line width=0.3mm] (0,0) rectangle (0,1);
\draw[dashed,color=red,line width=0.3mm] (0,0) rectangle (1,0);
\node at (-1,-1) {$3$};
\draw[color=red,line width=0.3mm] (0,4) rectangle (0,6);
\draw[color=red,line width=0.3mm] (0,5) rectangle (1,5);
\node at (-1,5) {$1$};
\draw[color=red,line width=0.3mm] (6.5,0) -- (8.5,0);
\node at (7.5,-1) {$2$};
\end{tikzpicture}\]
\caption{\label{fig.infinite.supertiles2} Illustration of 
some locations corresponding to 
degenerated supertiles.}\end{figure}
\end{itemize}
\item[(iv)] Eight infinite order supertiles, separated by three orthogonal planes. 
The intersection of the three planes contains some corner symbol.
Then there are two cases: 
\begin{itemize}
\item on this intersection there is a triple of red corners with compatible orientations. 
This corresponds to a degenerated corner of a cube, illustrated by point 3 on 
Figure~\ref{fig.infinite.supertiles2}.
\item there is only one corner, and there are two six arrows symbols or two five arrows 
symbols pointing in the opposite direction of the arms of the corner. 
This case corresponds a degenerated centered face of a cube, illustrated by point 
4 on Figure~\ref{fig.infinite.supertiles2}.
\end{itemize}
\end{itemize} 

\item \textbf{Proof of the 
exhaustiveness of this classification:}

Let us prove that there is no other possibility.

\begin{enumerate}
\item \textbf{Evaluation of the 
space separating infinite supertiles:}

First, the set of 
positions that are outside 
any infinite supertile does not contain 
any translate of $\mathbb{U}^3_2$, because 
it would imply that it contains some 
triple of blue corners, and there would 
be an infinite supertile which does not 
intersects the others (impossible) or 
intersects non-trivially 
another infinite supertile 
(impossible, because they are equivalence classes). 

\item \textbf{Possible supports and 
combination of them:}

As for the Robinson subshift, 
the supports of infinite supertiles are some 
translates of $(\epsilon_1 \N) \times (\epsilon_2 \N)
\times (\epsilon_3 \N)$ with $\epsilon_i \in \{-1,1\}$ (1/8 of $\Z^3$), 
$(\Z) \times (\epsilon_1 \N)
\times (\epsilon_2 \N)$, $(\epsilon_1 \N) \times (\epsilon_2 \N)
\times (\Z)$, $(\epsilon_1 \N) \times (\Z)
\times (\epsilon_2 \N)$ with $\epsilon_i \in \{-1,1\}$ (1/4 of $\Z^3$), 
some $(\Z) \times (\Z)
\times (\epsilon \N)$, $(\Z) \times (\epsilon \N)
\times (\Z)$, $(\epsilon \N) \times (\Z)
\times (\Z)$ with $\epsilon \in \{-1,1\}$ (half $\Z^3$) or $\Z^3$.
Because the set of positions that are not in any infinite supertile does not contain 
any translate of $\mathbb{U}^3_2$, the possibilities correspond 
to the type $(i)$ to $(iv)$ types listed above. 

\item \textbf{Reduction of eight infinite 
supertiles configurations:}

In the case where there are eight supertiles separated by three planes (type $(iv)$), 
if there are three corners, then they have compatible orientations. 
If not, then there is one corner. Indeed, if there were no corner, the intersection 
of the three planes would be composed by three orthogonal arrows symbols. This 
is impossible (see the coincidence rules).
Then the triple has to be of type $(1)$ of the third coincidence rule.
Hence, if it was of type $(2)$ for instance, it would mean that, projecting 
on the copies of the Robinson that do not correspond to corner, the configurations 
have two infinite supertiles separated by an infinite line, and the plane 
generated by translating this line is included in the two orthogonal 
planes generated by translation of the separating cross of the first copy of 
the Robinson subshift. That there are only four infinite supertiles. 

\item \textbf{Four infinite supertiles:}

In the case of four infinite supertiles, if one position 
of the intersection has a corner, then 
this corner is present on 
the whole line, and the triples 
on this line are of type $(2),(3),(4)$ 
types of the coincidence rule, and to 
the type $(iii)$ of the above description. 

\item \textbf{Two infinite supertiles:}

When there are two infinite supertiles, 
by projecting on the copies of the Robinson, we get that two of the copies 
have two infinite supertiles with a separating line.
Moreover, the plane supports a configuration of the Robinson which has a unique infinite supertile, 
for if it was not the case, there would be two separating planes. 
\end{enumerate}
\end{enumerate}
\end{enumerate}

\subsubsection{Properties of this layer}

\begin{figure}[ht]
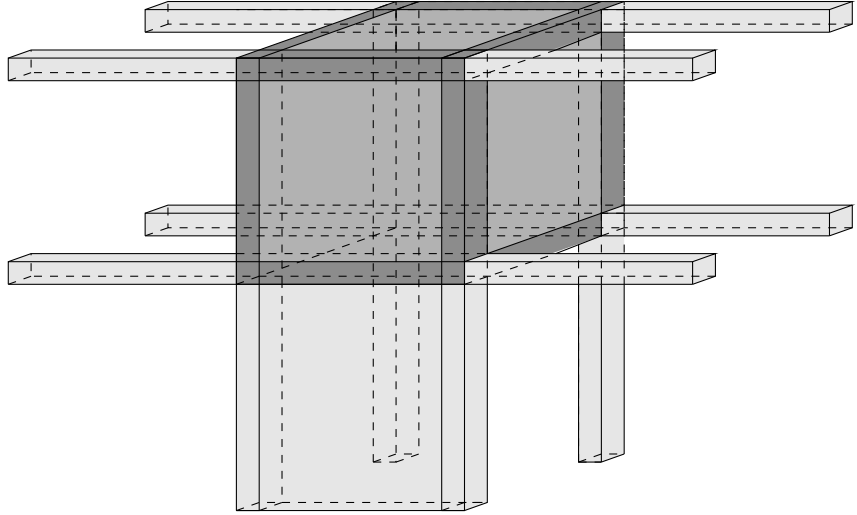

\[

\]
\caption{\label{fig.coloration} 
Partial 
representation of the surroundings of a 
three-dimensional cell - omitting 
the other ones.}
\end{figure}

The structure layer has the following properties: 

\begin{enumerate}
\item \textbf{Non-emptiness:}

A configuration: 

\begin{itemize}
\item 
whose projections 
on the copies of $X_{adR}$ are all 
of type (iii) and centered on a red corner
\item  
and such that the 
triple of these centers has compatible orientations
\end{itemize}
is an element of this subshift, 
hence it is \textbf{not empty}. 

\item \textbf{Repetition of the supertiles:}

The order $m$ three-dimensional supertile 
appear periodically in any order $n \ge m$ 
three-dimensional supertile with period $2^{m+2}$, 
horizontally and 
vertically. This comes directly from the similar property 
of the two-dimensional supertiles. 
This is also true inside an infinite supertile. 
Because we use the rigid 
version of the Robinson subshift, 
this is also true for the whole configuration, in any configuration of the subshift. 

\item \textbf{Completion result:}

\begin{proposition}
Any $n$-block in this layer can be completed into an order $k$ cubic supertile, 
with $k \ge \Bigl \lceil \log_2 (n) 
\Bigr \rceil + 4$. Moreover it can be complete
in an order $k$ three-dimensional cell, with 
\[k \ge \Bigl \lceil \frac{\lfloor 
\log_2 (n)}{2} \Bigr \rceil +2.\]
\end{proposition}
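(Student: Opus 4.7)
The plan is to reduce the problem to the two-dimensional case established in Lemma~\ref{prop.complete.rob}, exploiting the fact that the structure layer is the superposition of three copies of $X_{adR}$ coupled only through the coincidence rules at corner positions.

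Let $P$ be an $n$-block appearing in a configuration $x$ of the structure layer. First I would extend $P$ to a cubical block of side length $2^{\lceil \log_2 (n) \rceil +1} - 1$ (possible since this quantity is at least $n$). Project this block onto each of the three copies of $X_{adR}$. By the periodic repetition of order $\lceil \log_2 (n) \rceil$ two-dimensional supertiles inside any configuration of $X_{adR}$, each projection meets at most four such supertiles. Back in $\Z^3$, this means the enlarged block intersects at most $2^3 = 8$ three-dimensional supertiles of order $\lceil \log_2 (n) \rceil$, arranged as a $2\times 2 \times 2$ array separated by three orthogonal cross-planes. The first step is to enlarge $P$ further to cover the entire union of these eight supertile supports together with their separating cross-planes.

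The heart of the argument is to complete this enlarged pattern into an order $\lceil \log_2 (n) \rceil + 1$ three-dimensional supertile (or into a sub-pattern of an order $\lceil \log_2 (n) \rceil +4$ one when the restriction to $x$ forces a non-centered position). Following the two-dimensional argument, the completion is determined by the corner (or arrow) symbol at the center of the separating cross and by the orientations of the eight surrounding supertiles. The key observation is that the list of admissible triples of blue (resp.\ red) corners specified in the coincidence rules consists of exactly eight triples, corresponding bijectively to the eight corner positions of a three-dimensional supertile of the next order, as tabulated in Figure~\ref{table.restrictions.supertiles}. Consequently, applying Lemma~\ref{prop.complete.rob} to each of the three projections simultaneously is consistent: one chooses the completed orientations of the three two-dimensional projections so that at every corner position, the resulting orientation triple matches one of the eight admissible configurations. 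The orientations of the surrounding supertiles, the alignment layers, and the filling of the central cross-planes with three, four, five or six arrow symbols are then all propagated exactly as in the two-dimensional completion procedure, projected on each copy.

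The main obstacle I foresee is verifying this simultaneous compatibility: a priori the three independent applications of Lemma~\ref{prop.complete.rob} might yield orientations whose triple is not among the admissible ones. The argument that this never happens uses the fact that $x$ itself is an admissible configuration, so the orientations of the four (in each copy) neighboring supertiles to which the completion is attached are already determined by $x$, and the coincidence rules on the triples they form with the central corner restrict the 2D completions to a single compatible choice. Once the cubic completion is obtained, the statement about cells follows immediately by projecting and applying the cell part of Lemma~\ref{prop.complete.rob}: a three-dimensional cell is by definition a block whose projection on one coordinate is a two-dimensional cell, so the bound $k \ge \lceil \lceil \log_2 (n) \rceil / 2 \rceil + 2$ is inherited from the two-dimensional completion into a cell of that order.
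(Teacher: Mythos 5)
Your proposal is correct and is precisely the argument the paper intends: the authors explicitly omit the proof, stating only that it is ``similar to the two-dimensional version,'' and your reduction to Lemma~\ref{prop.complete.rob} applied to the three projections is the natural way to carry that out. The one point you add beyond the two-dimensional template --- checking that the three independently completed orientations assemble into one of the eight admissible corner triples of the coincidence rules, which correspond bijectively to the corners of the next-order cubic supertile --- is exactly the detail the paper leaves implicit, and your resolution of it (the orientations of the eight neighboring supertiles and of the central triple are already fixed by the ambient admissible configuration $x$, so only the localization inside a larger supertile involves a choice, made consistently via Figure~\ref{table.restrictions.supertiles}) is sound.
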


We don't write the proof of this proposition, 
since this is similar to the 
two-dimensional version of the subshift. Moreover, 
this version is also minimal:

\begin{corollary}
This three-dimensional version of 
the Robinson subshift is minimal.
\end{corollary}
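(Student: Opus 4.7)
The plan is to mirror exactly the argument given for the minimality of $X_{adR}$: reduce the minimality of the three-dimensional subshift to the completion proposition just stated, combined with the repetition property of supertiles.

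First, I would fix a configuration $x$ of the three-dimensional structure layer and an arbitrary globally admissible pattern $p$ in its language, say on a support contained in some $n$-block. By the completion proposition above, $p$ can be completed into a sub-pattern of an order $k$ three-dimensional cell $C$, where $k \ge \bigl\lceil \lfloor \log_2(n)\rfloor /2\bigr\rceil + 2$. Thus it suffices to show that every order $k$ three-dimensional cell appears in every configuration $x$.

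To show this, I would use the repetition property listed among the properties of the structure layer: any order $k$ three-dimensional cell is a sub-pattern of every sufficiently large supertile (being contained in some order $k'$ supertile for $k' \ge k$), and such cells appear periodically in any supertile of order at least $k$ with period $4^{k+1}$, in each of the three coordinate directions; moreover this periodic appearance persists inside infinite supertiles, and, thanks to the use of the rigid version $X_{adR}$ in each of the three copies, extends to the whole configuration regardless of its infinite-supertile type (i)--(iv) from Section~\ref{sec.hierarchical.struct}. Consequently, in any $x$ and any large enough box of $\Z^3$, at least one translate of $C$ occurs, so $p$ occurs in $x$.

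Combining the two steps gives that every pattern in the language of the three-dimensional structure layer appears in every configuration, which is exactly minimality. The main potential obstacle is checking that the periodic appearance of cells truly extends across the separating planes in configurations of types (ii)--(iv); but this is precisely what the rigidity of $X_{adR}$ (via the alignment positioning of Section~\ref{sec.alignment.positioning}) guarantees after projection on each of the three coordinates, so no new work beyond invoking the already-established two-dimensional statement is required.
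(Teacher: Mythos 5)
Your proposal is correct and follows essentially the same route as the paper: complete any pattern into a three-dimensional cell via the completion proposition, then use the periodic repetition of cells (extended to whole configurations by the rigidity of $X_{adR}$ in each of the three copies) to conclude that every cell, hence every pattern, appears in every configuration. The only slip is cosmetic: the paper states the repetition period of order $n$ cells as $4^{n+2}$ rather than $4^{n+1}$, which does not affect the argument.
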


\end{enumerate}

\subsubsection{Coloration}

We use colors in order to simplify 
the representations of the 
configurations of this layer. The positions 
in the edges of the cubes 
are characterized by having three petal symbols 
with $0,1$-counter equal to $1$ (recall 
that we call value of the $0,1$-counter the 
symbols in $\{0,1\}$ on corner symbols 
of the Robinson subshift),
and we represent this by the symbol 
$\begin{tikzpicture}[scale=0.3]
\fill[gray!90] (0,0) rectangle (1,1);
\draw (0,0) rectangle (1,1);
\end{tikzpicture}$. The faces positions 
have exactly 
two such symbols, and are represented by 
$\begin{tikzpicture}[scale=0.3]
\fill[gray!60] (0,0) rectangle (1,1);
\draw (0,0) rectangle (1,1);
\end{tikzpicture}$.
The other faces connecting the edges of the cubes are 
colored
with $\begin{tikzpicture}[scale=0.3]
\fill[gray!20] (0,0) rectangle (1,1);
\draw (0,0) rectangle (1,1);
\end{tikzpicture}$, and 
are characterized by having 
a unique petal symbol 
with $0,1$-counter value 
equal to $1$. See on Figure~\ref{fig.coloration} 
the representation of the surroundings 
of a three-dimensional cells with these 
colorations.

\subsection{\label{sec.func.areas} Functional areas}

In this section, we describe how to draw 
functional areas on these faces.
This means that we attribute local
functions to the positions of these faces 
realizing the global functions 
of the counters and machine computations.
These local functions are the execution 
of one step of computation (including the incrementation
step for the counter), and horizontal and 
vertical transmission of information.

Moreover, we use an addressing mechanism 
that allows the selection of sparse sub-areas 
of this functional areas, so that 
the global functions do not 
contribute to entropy dimension.

\textbf{\textit{In this section, each 
sublayer is presented as the
superimposition, 
on the colored faces, of symbols 
in a finite set $\mathcal{A}$.
As a consequence, the positions 
in the intersection of two faces 
are superimposed with a couple 
of these symbols, 
and the other positions 
in the faces with just one of 
these symbols. In order to keep 
the descriptions as simple 
as possible, each 
sublayer is presented as having 
alphabet $\mathcal{A}$, 
while the real alphabet has to include the 
elements of $\mathcal{A}^2$, 
and the real set of rules corresponds to 
this alphabet. These rules can 
be deduced easily from the descriptions 
above.}}

Moreover, in each sublayer, the non-blank symbols 
are superimposed on and only on petals of the 
copy of the Robinson parallel to this face - we will simply refer to these as 
petals of this face.

We recall that the petals 
having $0,1$-counter value equal to $0$
are called \textbf{transmission petals}, 
and 
the other ones are called \textbf{support petals}.

\subsubsection{\label{sec.orientation} Orientation in the hierarchy}

The purpose of this first sublayer is to give access, to 
the support petal of each colored face,
to the orientation of this petal relatively to the 
support petal just above in the hierarchy. \bigskip

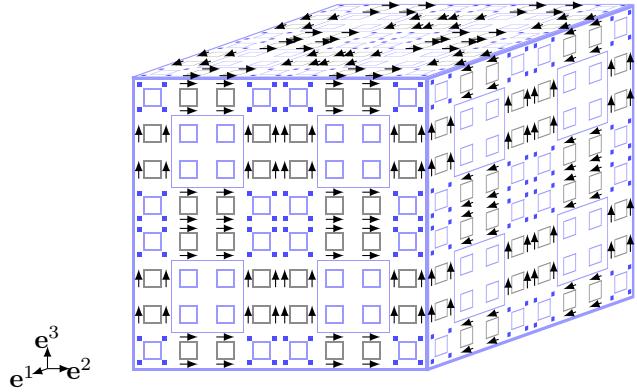
\begin{figure}[ht]
\[
};

\fill[red] (15.5,39.5) rectangle (17,41);
\fill[red] (23,47) rectangle (24.5,48.5);

\fill[red] (15.5+31.5,39.5) rectangle (17+31.5,41);
\fill[red] (23+16.5,47) rectangle (24.5+16.5,48.5);

\fill[red] (15.5,39.5-16.5) rectangle (17,41-16.5);
\fill[red] (15.5+31.5,39.5-16.5) rectangle (17+31.5,41-16.5);

\fill[red] (23,47-31.5) rectangle (24.5,48.5-31.5);
\fill[red] (23+16.5,47-31.5) rectangle (24.5+16.5,48.5-31.5);


\draw[-latex] (23.75,48.5) -- (23.75,51.5);
\draw[-latex] (23.75,47) -- (23.75,44.5);
\draw (23.75,53) -- (23.75,55.75);
\draw[-latex] (23.75,55.75) -- (12.5,55.75);
\draw[-latex] (15.5,40.25) -- (12.5,40.25);
\draw[-latex] (17,40.25) -- (19.5,40.25);

\draw (11,40.25) -- (8.25,40.25);
\draw[-latex] (8.25,40.25) -- (8.25,51.5);
\node at (16.5,59) {
 \right\},\] and a blank symbol. \bigskip

\noindent \textbf{\textit{Local rules:}}

\begin{itemize}
\item \textbf{Localization:} the non blank symbols are superimposed 
on and only on positions with petal symbols of the gray faces.
The symbol is transmitted through the petals, except on 
transformation positions, defined just below.
\item \textbf{Transformation positions:} the 
\textbf{transformation positions} are the 
positions where the transformation rule occurs 
(meaning that the signal is tranformed). 
These positions depend on the (sub)layer.
In this sublayer, these are the positions 
where a support petal intersects 
a transmission petal just 
under in the hierarchy. 
On these positions is superimposed a \textbf{couple 
of symbols}, in 
\[\left\{
.\]
These positions correspond to the 
intersection of an 
order $n+1$ support petal  
with the \textbf{north west} order $n$ 
transmission petal just under in the hierarchy. 
\textit{There are similar rules for the other orientations}.
\item \textbf{Border rule:} on the border of a 
gray face, the symbol is blank if not on a transmission 
position. On a transmission position, the
first symbol of the couple is blank.
\end{itemize}

\noindent \textbf{\textit{Global behavior:}} \bigskip

This layer supports a signal that propagates
through the petal 
hierarchy on the colored faces.
This signal is transmitted through 
the petals except on the intersections 
of a support petal and a 
transmission petal just above 
in the hierarchy. On these positions, 
the symbol transmitted by the 
signal is transformed into a symbol 
representing the orientation of the transmission 
petal with respect to the support petal. 

As a consequence, the support petals 
just under the transmission petal 
are colored with this orientation symbol.
See the schema on Figure~\ref{fig.orientation}.

\subsubsection{\label{sec.functional.areas.min} 
Functional areas}

\begin{figure}[ht]
\[
,\uparrow \right)\right\}^2$. \bigskip

\noindent \textbf{\textit{Local rules:}} \bigskip
 
\begin{itemize}
\item \textbf{Localization:} 
the non blank symbols are superimposed on
and only on petal positions of the colored 
faces. 
\item the couples of symbols 
are superimposed 
on six arrows symbols positions 
where the border of a cell 
intersects the petal just 
above in the hierarchy (\textbf{transformation 
positions}).
\item the symbols are transmitted through through 
the petals except on transformation 
positions.
\item {\bf{Transformation through hierarchy}}. 
On the transformation positions, 
if the first symbol is 
, \uparrow)$, then the second symbol is equal.
\end{itemize}
\end{enumerate}

For the other orientations 
of the support 
petal with respect 
to the transmission 
petal just above, 
the rules are obtained 
by rotation.

See Figure~\ref{fig.trans.comp.area} for an illustration of these rules.
\item \textbf{Border rule:} on the border of a face of a three-dimensional cell, 
the symbol is blue.
\item \textbf{Coherence rules:} these rules 
enable a retroaction of the colors on infinite petals 
over the order $0$ petals that are nearby, ensuring a coherence 
amongst the hierarchy (they are similar to the the property $Q$ 
in Mozes' construction).
\begin{enumerate}
\item when near a \textbf{corner} 
of a three-dimensional cell, on a pattern
\[\begin{tikzpicture}[scale=0.3]
\robibluehautgauchek{4}{0}
\robibluehautdroitek{0}{0}
\robibluebasdroitek{0}{4}
\robibluebasgauchek{4}{4}
\robiredbasgauche{2}{2}
\robithreehaut{2}{4}
\robithreedroite{4}{2}
\robionehaut{2}{0}
\robionegauche{0}{2}
\draw[step=2] (0,0) grid (6,6);
\end{tikzpicture}\]
in one of the copies of the Robinson subshift, the corresponding pattern in this sublayer 
has to be 
\[\begin{tikzpicture}[scale=0.3]
\fill[blue!40] (0,4) rectangle (2,6);
\fill[blue!40] (4,0) rectangle (6,2);
\fill[blue!40] (0,0) rectangle (2,2);
\fill[blue!40] (2,2) rectangle (6,6);
\draw[step=2] (0,0) grid (6,6);
\end{tikzpicture}.\]
There are similar rules for the other corners. This allows 
degenerated behaviors 
which do not happen around finite cells to be avoided. The other rules in this list 
have the same function. Their analysis is important to understand 
how it is possible to complete patterns of this subshift into 
three-dimensional cells.
\item near an \textbf{edge} of 
a three-dimensional cell, 
the symbols on the corresponding 
blue corners 
on the two faces connected by 
this edge have to match. 
Moreover, on the middle of an edge, 
the two blue corners inside each 
one of the two faces adjacent to the 
edge that are on the two sides of the middle, 
are colored blue. For instance, on the pattern
\[
.\]

When near the border of a cell not 
intersecting another petal, 
then the only restriction is to have 
arrows marks outside, in the direction of the 
border.
\end{enumerate}
\end{itemize} \bigskip

\noindent \textbf{
\textit{Global behavior:}} \bigskip

Like the first sublayer presented in 
Section~\ref{sec.orientation}, 
the global behavior of 
this sublayer consists in coding 
a substitution. This coding 
uses a signal 
that propagates through the petal 
hierarchy, on any colored face.

This process is similar as the one 
used by Robinson~\cite{R71} in order to 
create areas supporting 
the computations of Turing machines 
in a $\Z^2$-SFT. However, we localize 
it here on faces of cubes in a three-dimensional 
SFT. \bigskip

The result of this process is 
that order zero two 
dimensional cell borders of a 
colored face are colored with 
a symbol 
which represent a \textbf{function} of 
the blue corners positions - called 
\textbf{functional positions} - 
included in the 
zero order petal just under this petal 
in the petal hierarchy.
These symbols and functions are as follows: 

\begin{itemize}
\item \textbf{blue} if the set of columns and 
the set of lines in which it is included do not
intersect larger order two-dimensional 
cells. The associated function
is to \textbf{support a step of computation} 
(which can be just to transfer the 
information in the case when 
the face support a counter),  
and the corresponding 
positions are called \textbf{computation 
positions}. 
\item an \textbf{horizontal} (resp. 
vertical)
\textbf{arrow} directed to the right (resp. to 
the top)
when the set of columns (resp. lines) containing 
this petal intersects larger order two-dimensional cells but not the set of lines (resp. 
columns) containing it. The associated 
function is to \textbf{transfer information} 
in the direction of the arrow (this 
information can be trivial in the case 
when the face support a counter. This means  
that the symbol transmitted is the blank 
symbol), and the corresponding positions 
are called \textbf{information transfer 
positions}.
\item when 
the two sets intersect larger order 
cells, the petal is colored \textbf{light gray}.
These positions have no function.
\end{itemize}

See on Figure~\ref{fig.func.area2} 
a schema of the functional positions over 
the faces of an order 
$3$ three-dimensional cell.

\begin{figure}[ht]
\[
\]
\caption{\label{fig.func.area2} Schema of the 
functional positions on the a face
of an order $3$ three-dimensional cell. The 
arrows are oriented according to the
fixed orientations of the face.}
\end{figure}

The aim of the following sections, 
Section~\ref{sec.p.counter}, Section~\ref{sec.p.addressing}, 
and Section~\ref{sec.active.functional.areas} is 
to add symbols on the colored faces in order 
to specify thin sub-areas of the functional 
areas. These sub-areas will 
support the computations of 
the machines and the 
linear counters. They are 
thin enough so that the machines 
and counters do not contribute to 
the entropy dimension. This 
entropy dimension is thus generated 
by the values of the hierarchical counter.

The first sublayer of this set 
encodes a counter, called $p$-counter. 
It specifies, on each support petal, the 
arithmetical class of $n$ 
modulo $p$, where $n$ is the order of the corresponding 
two-dimensional cell. 
The second sublayer, based on this $p$-counter, 
gives an address of the columns (resp. lines) 
amongst the \textbf{computation columns} (resp. lines), 
defined by intersecting computation positions in 
the face. In the third sublayer, we describe 
a process which selects a thin 
subset of these columns (resp. lines) according 
to the address.

\subsubsection{\label{sec.p.counter} The $p$-counter}

We recall that $p=2^m-1$ is an integer, fixed at 
the beginning of the construction. \bigskip

\noindent \textbf{\textit{Symbols:}} \bigskip

Elements of $\Z/p\Z$ 
and of $\left(\Z/p\Z \right)^2$
and a blank symbol. \bigskip

\noindent \textbf{\textit{Local rules:}} \bigskip

\begin{itemize}
\item \textbf{Localization:} 
\begin{itemize} 
\item the non blank symbols are superimposed on and only 
on gray faces, on petals of the copy of the Robinson subshift parallel 
to this face. 
\item The blue petals are superimposed with $\overline{0}$.
\item the couples are superimposed on \textbf{transformation positions} 
defined as the intersection positions of a support petal
and the transmission petal just above in the hierarchy.
\item the other positions have a simple symbol and 
this symbol is transmitted through these positions.
\end{itemize}
\item \textbf{Transformation rule:} on transformation positions, 
if the first bit is $\overline{i}$, then the second bit is 
$\overline{i}+\overline{1}$.
\item \textbf{Coherence rule:} on the edges connecting two faces, the
two values of the $p$-counter are equal. On the corners, the three values 
are equal.
\end{itemize}

\noindent \textbf{\textit{Global behavior:}} \bigskip

Each of the support petals on the colored faces 
is attached with some element of $\Z/p\Z$. This 
symbol is transmitted 
through the petals except on transformation 
positions. These positions 
are defined to be the intersections 
of the support petals with the transmission petal 
just above in the hierarchy, where 
the transmission petal has value $\overline{k}+\overline{1}$
and $\overline{k}$ is the value of the support petal.
As the blue petals are marked with $\overline{0}$, this 
imposes that the border petals of the order $n$ 
two-dimensional cells on the faces are marked 
with $\overline{n}$.

\subsubsection{\label{sec.p.addressing} The $p$-addressing}

This sublayer has two subsublayers, 
one for vertical addresses, and another 
for horizontal addresses.

\begin{figure}[ht]
\[
$};
\node at (56,56) {$w1$};

\fill[red] (11,7.5) rectangle (12.5,9);
\fill[red] (7.5,11) rectangle (9,12.5);

\fill[red] (19.5,7.5) 
rectangle (21,9);
\fill[red] (23,11) 
rectangle (24.5,12.5);

\fill[red] (43,7.5) rectangle (44.5,9);
\fill[red] (39.5,11) rectangle (41,12.5);

\fill[red] (51.5,7.5) 
rectangle (53,9);
\fill[red] (55,11) 
rectangle (56.5,12.5);

\fill[red] (11,39.5) rectangle (12.5,41);
\fill[red] (7.5,43) rectangle (9,44.5);

\fill[red] (19.5,39.5) 
rectangle (21,41);
\fill[red] (23,43) 
rectangle (24.5,44.5);

\fill[red] (43,39.5) rectangle (44.5,41);
\fill[red] (39.5,43) rectangle (41,44.5);

\fill[red] (51.5,39.5) 
rectangle (53,41);
\fill[red] (55,43) 
rectangle (56.5,44.5);


\fill[red] (11,23) rectangle (12.5,24.5);
\fill[red] (7.5,19.5) rectangle (9,21);

\fill[red] (19.5,23) rectangle (21,24.5);
\fill[red] (23,19.5) rectangle (24.5,21);

\fill[red] (43,23) rectangle (44.5,24.5);
\fill[red] (39.5,19.5) rectangle (41,21);

\fill[red] (51.5,23) rectangle (53,24.5);
\fill[red] (55,19.5) rectangle (56.5,21);

\fill[red] (11,55) rectangle (12.5,56.5);
\fill[red] (7.5,51.5) rectangle (9,53);

\fill[red] (19.5,55) rectangle (21,56.5);
\fill[red] (23,51.5) rectangle (24.5,53);

\fill[red] (43,55) rectangle (44.5,56.5);
\fill[red] (39.5,51.5) rectangle (41,53);

\fill[red] (51.5,55) rectangle (53,56.5);
\fill[red] (55,51.5) rectangle (56.5,53);

\fill[red] (30.5,14.5) rectangle 
(33.5,17.5);

\fill[red] (46.5,30.5) rectangle 
(49.5,33.5);

\end{scope}

\begin{scope}[xshift=84cm]
\fill[gray!90] (16,16) rectangle (48,48); 
\fill[white] (16.5,16.5) rectangle (47.5,47.5);
\fill[gray!20] (32,0) rectangle (32.5,32.5);
\fill[gray!20] (32,32.5) rectangle (64,32);

\fill[gray!20] (8,8) rectangle (8.5,24); 
\fill[gray!20] (8,8) rectangle (24,8.5);
\fill[gray!20] (8,23.5) rectangle (24,24);  
\fill[gray!20] (23.5,8) rectangle (24,24);

\fill[gray!20] (40,8) rectangle (40.5,24); 
\fill[gray!20] (40,8) rectangle (56,8.5);
\fill[gray!20] (40,23.5) rectangle (56,24);  
\fill[gray!20] (55.5,8) rectangle (56,24);

\fill[gray!20] (8,40) rectangle (8.5,56); 
\fill[gray!20] (8,40) rectangle (24,40.5);
\fill[gray!20] (8,55.5) rectangle (24,56);  
\fill[gray!20] (23.5,40) rectangle (24,56);

\fill[gray!20] (40,40) rectangle (40.5,56); 
\fill[gray!20] (40,40) rectangle (56,40.5);
\fill[gray!20] (40,55.5) rectangle (56,56);  
\fill[gray!20] (55.5,40) rectangle (56,56);

\fill[gray!90] (4,4) rectangle (4.5,12); 
\fill[gray!90] (4,4) rectangle (12,4.5); 
\fill[gray!90] (4.5,11.5) rectangle (12,12); 
\fill[gray!90] (11.5,4.5) rectangle (12,12); 

\fill[gray!90] (4,20) rectangle (4.5,28); 
\fill[gray!90] (4,20) rectangle (12,20.5); 
\fill[gray!90] (4.5,27.5) rectangle (12,28); 
\fill[gray!90] (11.5,20.5) rectangle (12,28); 

\fill[gray!90] (4,36) rectangle (4.5,44); 
\fill[gray!90] (4,36) rectangle (12,36.5); 
\fill[gray!90] (4.5,43.5) rectangle (12,44); 
\fill[gray!90] (11.5,36.5) rectangle (12,44);

\fill[gray!90] (4,52) rectangle (4.5,60); 
\fill[gray!90] (4,52) rectangle (12,52.5); 
\fill[gray!90] (4.5,59.5) rectangle (12,60); 
\fill[gray!90] (11.5,52.5) rectangle (12,60);

\fill[gray!90] (20,4) rectangle (20.5,12); 
\fill[gray!90] (20,4) rectangle (28,4.5); 
\fill[gray!90] (20.5,11.5) rectangle (28,12); 
\fill[gray!90] (27.5,4.5) rectangle (28,12); 

\fill[gray!90] (20,20) rectangle (20.5,28); 
\fill[gray!90] (20,20) rectangle (28,20.5); 
\fill[gray!90] (20.5,27.5) rectangle (28,28); 
\fill[gray!90] (27.5,20.5) rectangle (28,28); 

\fill[gray!90] (20,36) rectangle (20.5,44); 
\fill[gray!90] (20,36) rectangle (28,36.5); 
\fill[gray!90] (20.5,43.5) rectangle (28,44); 
\fill[gray!90] (27.5,36.5) rectangle (28,44);

\fill[gray!90] (20,52) rectangle (20.5,60); 
\fill[gray!90] (20,52) rectangle (28,52.5); 
\fill[gray!90] (20.5,59.5) rectangle (28,60); 
\fill[gray!90] (27.5,52.5) rectangle (28,60);

\fill[gray!90] (36,4) rectangle (36.5,12); 
\fill[gray!90] (36,4) rectangle (44,4.5); 
\fill[gray!90] (36.5,11.5) rectangle (44,12); 
\fill[gray!90] (43.5,4.5) rectangle (44,12); 

\fill[gray!90] (36,20) rectangle (36.5,28); 
\fill[gray!90] (36,20) rectangle (44,20.5); 
\fill[gray!90] (36.5,27.5) rectangle (44,28); 
\fill[gray!90] (43.5,20.5) rectangle (44,28); 

\fill[gray!90] (36,36) rectangle (36.5,44); 
\fill[gray!90] (36,36) rectangle (44,36.5); 
\fill[gray!90] (36.5,43.5) rectangle (44,44); 
\fill[gray!90] (43.5,36.5) rectangle (44,44);

\fill[gray!90] (36,52) rectangle (36.5,60); 
\fill[gray!90] (36,52) rectangle (44,52.5); 
\fill[gray!90] (36.5,59.5) rectangle (44,60); 
\fill[gray!90] (43.5,52.5) rectangle (44,60);

\fill[gray!90] (52,4) rectangle (52.5,12); 
\fill[gray!90] (52,4) rectangle (60,4.5); 
\fill[gray!90] (52.5,11.5) rectangle (60,12); 
\fill[gray!90] (59.5,4.5) rectangle (60,12); 

\fill[gray!90] (52,20) rectangle (52.5,28); 
\fill[gray!90] (52,20) rectangle (60,20.5); 
\fill[gray!90] (52.5,27.5) rectangle (60,28); 
\fill[gray!90] (59.5,20.5) rectangle (60,28); 

\fill[gray!90] (52,36) rectangle (52.5,44); 
\fill[gray!90] (52,36) rectangle (60,36.5); 
\fill[gray!90] (52.5,43.5) rectangle (60,44); 
\fill[gray!90] (59.5,36.5) rectangle (60,44);

\fill[gray!90] (52,52) rectangle (52.5,60); 
\fill[gray!90] (52,52) rectangle (60,52.5); 
\fill[gray!90] (52.5,59.5) rectangle (60,60); 
\fill[gray!90] (59.5,52.5) rectangle (60,60);

\node at (8,8) {$0$};
\node at (24,8) {$
\begin{tikzpicture}[scale=0.3]
\fill[gray!90] (0,0) rectangle (1,1);
\draw (0,0) rectangle (1,1);
\end{tikzpicture}$};
\node at (8,24) {$0$};
\node at (24,24) {$0$};

\node at (8,40) {$0$};
\node at (24,40) {$0$};
\node at (8,56) {$0$};
\node at (24,56) {$
\begin{tikzpicture}[scale=0.3]
\fill[gray!90] (0,0) rectangle (1,1);
\draw (0,0) rectangle (1,1);
\end{tikzpicture}$};

\draw[-latex] (4,32) -- (16,32);
\node at (-8,32) {$|w|=p$};

\node at (40,8) {$
\begin{tikzpicture}[scale=0.3]
\fill[gray!90] (0,0) rectangle (1,1);
\draw (0,0) rectangle (1,1);
\end{tikzpicture}$};
\node at (56,8) {$1$};
\node at (40,24) {$1$};
\node at (56,24) {$1$};

\node at (40,40) {$1$};
\node at (56,40) {$1$};
\node at (40,56) {$
\begin{tikzpicture}[scale=0.3]
\fill[gray!90] (0,0) rectangle (1,1);
\draw (0,0) rectangle (1,1);
\end{tikzpicture}$};
\node at (56,56) {$1$};

\end{scope}

\begin{scope}[yshift=-70cm]
\fill[gray!90] (16,16) rectangle (48,48); 
\fill[white] (16.5,16.5) rectangle (47.5,47.5);
\fill[gray!20] (32,0) rectangle (32.5,32.5);
\fill[gray!20] (32,32.5) rectangle (64,32);

\fill[gray!20] (8,8) rectangle (8.5,24); 
\fill[gray!20] (8,8) rectangle (24,8.5);
\fill[gray!20] (8,23.5) rectangle (24,24);  
\fill[gray!20] (23.5,8) rectangle (24,24);

\fill[gray!20] (40,8) rectangle (40.5,24); 
\fill[gray!20] (40,8) rectangle (56,8.5);
\fill[gray!20] (40,23.5) rectangle (56,24);  
\fill[gray!20] (55.5,8) rectangle (56,24);

\fill[gray!20] (8,40) rectangle (8.5,56); 
\fill[gray!20] (8,40) rectangle (24,40.5);
\fill[gray!20] (8,55.5) rectangle (24,56);  
\fill[gray!20] (23.5,40) rectangle (24,56);

\fill[gray!20] (40,40) rectangle (40.5,56); 
\fill[gray!20] (40,40) rectangle (56,40.5);
\fill[gray!20] (40,55.5) rectangle (56,56);  
\fill[gray!20] (55.5,40) rectangle (56,56);

\fill[gray!90] (4,4) rectangle (4.5,12); 
\fill[gray!90] (4,4) rectangle (12,4.5); 
\fill[gray!90] (4.5,11.5) rectangle (12,12); 
\fill[gray!90] (11.5,4.5) rectangle (12,12); 

\fill[gray!90] (4,20) rectangle (4.5,28); 
\fill[gray!90] (4,20) rectangle (12,20.5); 
\fill[gray!90] (4.5,27.5) rectangle (12,28); 
\fill[gray!90] (11.5,20.5) rectangle (12,28); 

\fill[gray!90] (4,36) rectangle (4.5,44); 
\fill[gray!90] (4,36) rectangle (12,36.5); 
\fill[gray!90] (4.5,43.5) rectangle (12,44); 
\fill[gray!90] (11.5,36.5) rectangle (12,44);

\fill[gray!90] (4,52) rectangle (4.5,60); 
\fill[gray!90] (4,52) rectangle (12,52.5); 
\fill[gray!90] (4.5,59.5) rectangle (12,60); 
\fill[gray!90] (11.5,52.5) rectangle (12,60);

\fill[gray!90] (20,4) rectangle (20.5,12); 
\fill[gray!90] (20,4) rectangle (28,4.5); 
\fill[gray!90] (20.5,11.5) rectangle (28,12); 
\fill[gray!90] (27.5,4.5) rectangle (28,12); 

\fill[gray!90] (20,20) rectangle (20.5,28); 
\fill[gray!90] (20,20) rectangle (28,20.5); 
\fill[gray!90] (20.5,27.5) rectangle (28,28); 
\fill[gray!90] (27.5,20.5) rectangle (28,28); 

\fill[gray!90] (20,36) rectangle (20.5,44); 
\fill[gray!90] (20,36) rectangle (28,36.5); 
\fill[gray!90] (20.5,43.5) rectangle (28,44); 
\fill[gray!90] (27.5,36.5) rectangle (28,44);

\fill[gray!90] (20,52) rectangle (20.5,60); 
\fill[gray!90] (20,52) rectangle (28,52.5); 
\fill[gray!90] (20.5,59.5) rectangle (28,60); 
\fill[gray!90] (27.5,52.5) rectangle (28,60);

\fill[gray!90] (36,4) rectangle (36.5,12); 
\fill[gray!90] (36,4) rectangle (44,4.5); 
\fill[gray!90] (36.5,11.5) rectangle (44,12); 
\fill[gray!90] (43.5,4.5) rectangle (44,12); 

\fill[gray!90] (36,20) rectangle (36.5,28); 
\fill[gray!90] (36,20) rectangle (44,20.5); 
\fill[gray!90] (36.5,27.5) rectangle (44,28); 
\fill[gray!90] (43.5,20.5) rectangle (44,28); 

\fill[gray!90] (36,36) rectangle (36.5,44); 
\fill[gray!90] (36,36) rectangle (44,36.5); 
\fill[gray!90] (36.5,43.5) rectangle (44,44); 
\fill[gray!90] (43.5,36.5) rectangle (44,44);

\fill[gray!90] (36,52) rectangle (36.5,60); 
\fill[gray!90] (36,52) rectangle (44,52.5); 
\fill[gray!90] (36.5,59.5) rectangle (44,60); 
\fill[gray!90] (43.5,52.5) rectangle (44,60);

\fill[gray!90] (52,4) rectangle (52.5,12); 
\fill[gray!90] (52,4) rectangle (60,4.5); 
\fill[gray!90] (52.5,11.5) rectangle (60,12); 
\fill[gray!90] (59.5,4.5) rectangle (60,12); 

\fill[gray!90] (52,20) rectangle (52.5,28); 
\fill[gray!90] (52,20) rectangle (60,20.5); 
\fill[gray!90] (52.5,27.5) rectangle (60,28); 
\fill[gray!90] (59.5,20.5) rectangle (60,28); 

\fill[gray!90] (52,36) rectangle (52.5,44); 
\fill[gray!90] (52,36) rectangle (60,36.5); 
\fill[gray!90] (52.5,43.5) rectangle (60,44); 
\fill[gray!90] (59.5,36.5) rectangle (60,44);

\fill[gray!90] (52,52) rectangle (52.5,60); 
\fill[gray!90] (52,52) rectangle (60,52.5); 
\fill[gray!90] (52.5,59.5) rectangle (60,60); 
\fill[gray!90] (59.5,52.5) rectangle (60,60);

\node at (8,8) {$
$};

\end{scope}

\begin{scope}[xshift=84cm,yshift=-70cm]
\fill[gray!90] (16,16) rectangle (48,48); 
\fill[white] (16.5,16.5) rectangle (47.5,47.5);
\fill[gray!20] (32,0) rectangle (32.5,32.5);
\fill[gray!20] (32,32.5) rectangle (64,32);

\fill[gray!20] (8,8) rectangle (8.5,24); 
\fill[gray!20] (8,8) rectangle (24,8.5);
\fill[gray!20] (8,23.5) rectangle (24,24);  
\fill[gray!20] (23.5,8) rectangle (24,24);

\fill[gray!20] (40,8) rectangle (40.5,24); 
\fill[gray!20] (40,8) rectangle (56,8.5);
\fill[gray!20] (40,23.5) rectangle (56,24);  
\fill[gray!20] (55.5,8) rectangle (56,24);

\fill[gray!20] (8,40) rectangle (8.5,56); 
\fill[gray!20] (8,40) rectangle (24,40.5);
\fill[gray!20] (8,55.5) rectangle (24,56);  
\fill[gray!20] (23.5,40) rectangle (24,56);

\fill[gray!20] (40,40) rectangle (40.5,56); 
\fill[gray!20] (40,40) rectangle (56,40.5);
\fill[gray!20] (40,55.5) rectangle (56,56);  
\fill[gray!20] (55.5,40) rectangle (56,56);

\fill[gray!90] (4,4) rectangle (4.5,12); 
\fill[gray!90] (4,4) rectangle (12,4.5); 
\fill[gray!90] (4.5,11.5) rectangle (12,12); 
\fill[gray!90] (11.5,4.5) rectangle (12,12); 

\fill[gray!90] (4,20) rectangle (4.5,28); 
\fill[gray!90] (4,20) rectangle (12,20.5); 
\fill[gray!90] (4.5,27.5) rectangle (12,28); 
\fill[gray!90] (11.5,20.5) rectangle (12,28); 

\fill[gray!90] (4,36) rectangle (4.5,44); 
\fill[gray!90] (4,36) rectangle (12,36.5); 
\fill[gray!90] (4.5,43.5) rectangle (12,44); 
\fill[gray!90] (11.5,36.5) rectangle (12,44);

\fill[gray!90] (4,52) rectangle (4.5,60); 
\fill[gray!90] (4,52) rectangle (12,52.5); 
\fill[gray!90] (4.5,59.5) rectangle (12,60); 
\fill[gray!90] (11.5,52.5) rectangle (12,60);

\fill[gray!90] (20,4) rectangle (20.5,12); 
\fill[gray!90] (20,4) rectangle (28,4.5); 
\fill[gray!90] (20.5,11.5) rectangle (28,12); 
\fill[gray!90] (27.5,4.5) rectangle (28,12); 

\fill[gray!90] (20,20) rectangle (20.5,28); 
\fill[gray!90] (20,20) rectangle (28,20.5); 
\fill[gray!90] (20.5,27.5) rectangle (28,28); 
\fill[gray!90] (27.5,20.5) rectangle (28,28); 

\fill[gray!90] (20,36) rectangle (20.5,44); 
\fill[gray!90] (20,36) rectangle (28,36.5); 
\fill[gray!90] (20.5,43.5) rectangle (28,44); 
\fill[gray!90] (27.5,36.5) rectangle (28,44);

\fill[gray!90] (20,52) rectangle (20.5,60); 
\fill[gray!90] (20,52) rectangle (28,52.5); 
\fill[gray!90] (20.5,59.5) rectangle (28,60); 
\fill[gray!90] (27.5,52.5) rectangle (28,60);

\fill[gray!90] (36,4) rectangle (36.5,12); 
\fill[gray!90] (36,4) rectangle (44,4.5); 
\fill[gray!90] (36.5,11.5) rectangle (44,12); 
\fill[gray!90] (43.5,4.5) rectangle (44,12); 

\fill[gray!90] (36,20) rectangle (36.5,28); 
\fill[gray!90] (36,20) rectangle (44,20.5); 
\fill[gray!90] (36.5,27.5) rectangle (44,28); 
\fill[gray!90] (43.5,20.5) rectangle (44,28); 

\fill[gray!90] (36,36) rectangle (36.5,44); 
\fill[gray!90] (36,36) rectangle (44,36.5); 
\fill[gray!90] (36.5,43.5) rectangle (44,44); 
\fill[gray!90] (43.5,36.5) rectangle (44,44);

\fill[gray!90] (36,52) rectangle (36.5,60); 
\fill[gray!90] (36,52) rectangle (44,52.5); 
\fill[gray!90] (36.5,59.5) rectangle (44,60); 
\fill[gray!90] (43.5,52.5) rectangle (44,60);

\fill[gray!90] (52,4) rectangle (52.5,12); 
\fill[gray!90] (52,4) rectangle (60,4.5); 
\fill[gray!90] (52.5,11.5) rectangle (60,12); 
\fill[gray!90] (59.5,4.5) rectangle (60,12); 

\fill[gray!90] (52,20) rectangle (52.5,28); 
\fill[gray!90] (52,20) rectangle (60,20.5); 
\fill[gray!90] (52.5,27.5) rectangle (60,28); 
\fill[gray!90] (59.5,20.5) rectangle (60,28); 

\fill[gray!90] (52,36) rectangle (52.5,44); 
\fill[gray!90] (52,36) rectangle (60,36.5); 
\fill[gray!90] (52.5,43.5) rectangle (60,44); 
\fill[gray!90] (59.5,36.5) rectangle (60,44);

\fill[gray!90] (52,52) rectangle (52.5,60); 
\fill[gray!90] (52,52) rectangle (60,52.5); 
\fill[gray!90] (52.5,59.5) rectangle (60,60); 
\fill[gray!90] (59.5,52.5) rectangle (60,60);

\node at (8,8) {$
\begin{tikzpicture}[scale=0.3]
\fill[gray!90] (0,0) rectangle (1,1);
\draw (0,0) rectangle (1,1);
\end{tikzpicture}$};
\node at (24,8) {$
\begin{tikzpicture}[scale=0.3]
\fill[gray!90] (0,0) rectangle (1,1);
\draw (0,0) rectangle (1,1);
\end{tikzpicture}$};
\node at (8,24) {$
\begin{tikzpicture}[scale=0.3]
\fill[gray!90] (0,0) rectangle (1,1);
\draw (0,0) rectangle (1,1);
\end{tikzpicture}$};
\node at (24,24) {$0$};

\node at (8,40) {$
\begin{tikzpicture}[scale=0.3]
\fill[gray!90] (0,0) rectangle (1,1);
\draw (0,0) rectangle (1,1);
\end{tikzpicture}$};
\node at (24,40) {$0$};
\node at (8,56) {$
\begin{tikzpicture}[scale=0.3]
\fill[gray!90] (0,0) rectangle (1,1);
\draw (0,0) rectangle (1,1);
\end{tikzpicture}$};
\node at (24,56) {$
\begin{tikzpicture}[scale=0.3]
\fill[gray!90] (0,0) rectangle (1,1);
\draw (0,0) rectangle (1,1);
\end{tikzpicture}$};

\draw[-latex] (4,32) -- (16,32);
\node at (-8,32) {$
\begin{tikzpicture}[scale=0.3]
\fill[gray!90] (0,0) rectangle (1,1);
\draw (0,0) rectangle (1,1);
\end{tikzpicture}$};

\node at (40,8) {$
\begin{tikzpicture}[scale=0.3]
\fill[gray!90] (0,0) rectangle (1,1);
\draw (0,0) rectangle (1,1);
\end{tikzpicture}$};
\node at (56,8) {$
\begin{tikzpicture}[scale=0.3]
\fill[gray!90] (0,0) rectangle (1,1);
\draw (0,0) rectangle (1,1);
\end{tikzpicture}$};
\node at (40,24) {$1$};
\node at (56,24) {$
\begin{tikzpicture}[scale=0.3]
\fill[gray!90] (0,0) rectangle (1,1);
\draw (0,0) rectangle (1,1);
\end{tikzpicture}$};

\node at (40,40) {$1$};
\node at (56,40) {$
\begin{tikzpicture}[scale=0.3]
\fill[gray!90] (0,0) rectangle (1,1);
\draw (0,0) rectangle (1,1);
\end{tikzpicture}$};
\node at (40,56) {$
\begin{tikzpicture}[scale=0.3]
\fill[gray!90] (0,0) rectangle (1,1);
\draw (0,0) rectangle (1,1);
\end{tikzpicture}$};
\node at (56,56) {$
\begin{tikzpicture}[scale=0.3]
\fill[gray!90] (0,0) rectangle (1,1);
\draw (0,0) rectangle (1,1);
\end{tikzpicture}$};

\end{scope}

\end{tikzpicture}\]
\caption{\label{fig.trans.selec.comp.area} 
Schemata 
of the transformation rules of the vertical addressing.
On the two schemata on the left, the 
central petal has $p$-counter value not 
equal to $\overline{0}$. On the two 
schemata on the right, this value is 
$\overline{0}$. The transformation 
positions are symbolized by red squares.
The symbols inside the little petals is 
the symbol attached to them in this sublayer.}
\end{figure}
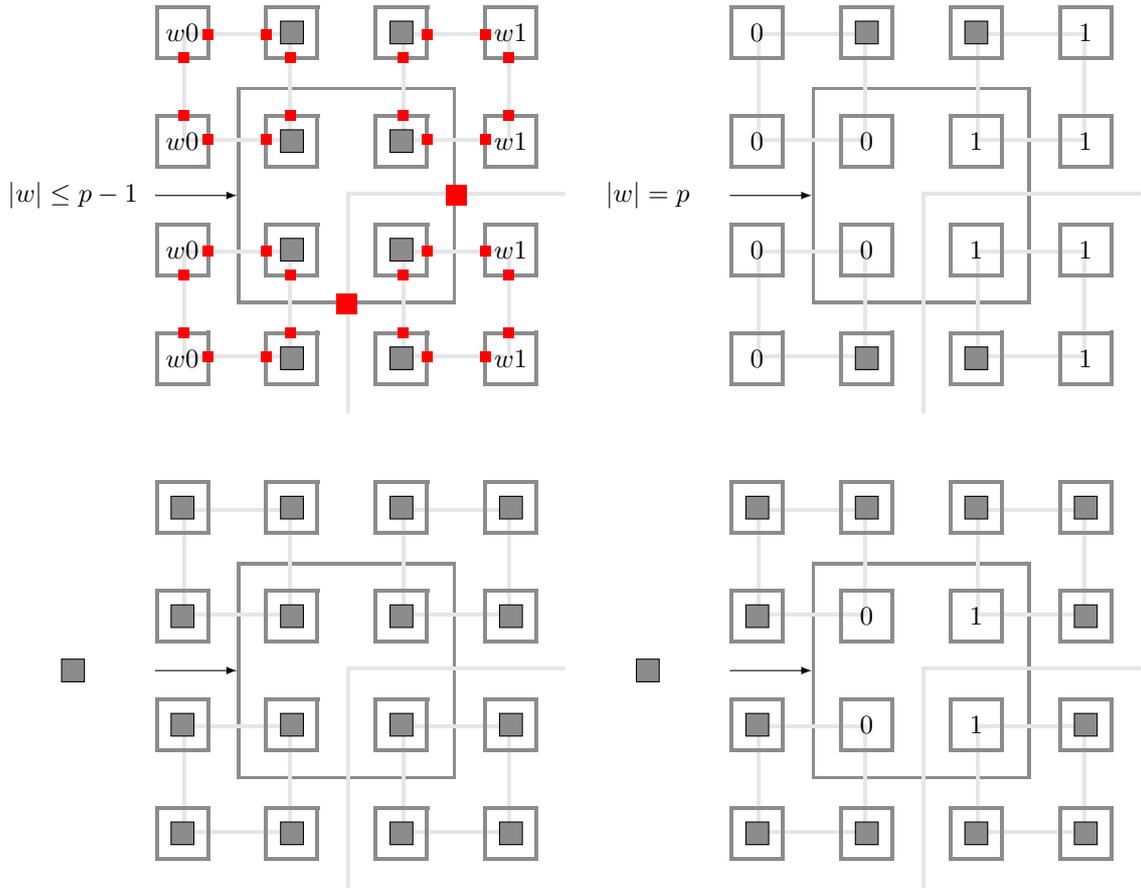

\paragraph{Vertical addressing:} \bigskip

\noindent \textbf{\textit{Symbols:}} \bigskip

The symbols of this subsublayer are the 
following ones. 

\begin{itemize}
\item Length $k$ words on the alphabet 
$\{0,1\}$, for $k=1 ... p$: these 
symbols are called the \textbf{vertical 
address} of support petals 
relatively to the next support
petal above in the hierarchy. 
\item The couples $(w,w')$ 
such that $w$ and $w'$ are words on 
alphabet $\{0,1\}$ with respective lengths 
$k$ and $k+1$ with $k < p$, and 
$(w,
\begin{tikzpicture}[scale=0.3]
\fill[gray!90] (0,0) rectangle (1,1);
\draw (0,0) rectangle (1,1);
\end{tikzpicture})$ (they represent 
possible \textbf{transformations} of simple symbols
when going through a support petal corresponding 
to a two-dimensional cell having order different 
from any $mp$, $m \ge 0$).
\item The couples $(w,0)$ and $(w,1)$, where $w$ 
is a length $p$ word on $\{0,1\}$ (\textbf{transformations} 
of simple symbols when going through 
a petal corresponding to order $mp$ cells).
\item The symbols $\begin{tikzpicture}[scale=0.3]
\fill[gray!90] (0,0) rectangle (1,1);
\draw (0,0) rectangle (1,1);
\end{tikzpicture}$, $\left(\begin{tikzpicture}[scale=0.3]
\fill[gray!90] (0,0) rectangle (1,1);
\draw (0,0) rectangle (1,1);
\end{tikzpicture},\begin{tikzpicture}[scale=0.3]
\fill[gray!90] (0,0) rectangle (1,1);
\draw (0,0) rectangle (1,1);
\end{tikzpicture}\right)$, $\left(w,\begin{tikzpicture}[scale=0.3]
\fill[gray!90] (0,0) rectangle (1,1);
\draw (0,0) rectangle (1,1);
\end{tikzpicture}\right)$, $\left(\begin{tikzpicture}[scale=0.3]
\fill[gray!90] (0,0) rectangle (1,1);
\draw (0,0) rectangle (1,1);
\end{tikzpicture},0\right)$ with 
$w$ a word on $\{0,1\}$ having 
length between $1$ and $p$, 
$\left(\begin{tikzpicture}[scale=0.3]
\fill[gray!90] (0,0) rectangle (1,1);
\draw (0,0) rectangle (1,1);
\end{tikzpicture},1\right)$,
\item A blank symbol. 
\end{itemize}

\noindent \textbf{\textit{Local rules:}} 

\begin{itemize}
\item \textbf{Localization:} 
\begin{itemize}
\item the non-blank symbols are 
located on the petals of the colored faces.
\item the symbols are transmitted through the petals, 
except on \textbf{transformation positions}.
These positions are defined 
as the intersection of a support petal 
and the transmission petal just above 
in the hierarchy.
\item the transformation positions 
are written with a couple, and the other 
petal positions with a simple symbol.
\item the non-transmission 
positions in counter $\overline{k}$ support petals 
with $k \le p-1$ are superimposed 
with a length $p-k$ word on $\{0,1\}$ or 
the symbol 
$};
\node at (40,56) {$w1$};
\node at (56,56) {$w1$};

\fill[red] (11,7.5) rectangle (12.5,9);
\fill[red] (7.5,11) rectangle (9,12.5);

\fill[red] (19.5,7.5) 
rectangle (21,9);
\fill[red] (23,11) 
rectangle (24.5,12.5);

\fill[red] (43,7.5) rectangle (44.5,9);
\fill[red] (39.5,11) rectangle (41,12.5);

\fill[red] (51.5,7.5) 
rectangle (53,9);
\fill[red] (55,11) 
rectangle (56.5,12.5);

\fill[red] (11,39.5) rectangle (12.5,41);
\fill[red] (7.5,43) rectangle (9,44.5);

\fill[red] (19.5,39.5) 
rectangle (21,41);
\fill[red] (23,43) 
rectangle (24.5,44.5);

\fill[red] (43,39.5) rectangle (44.5,41);
\fill[red] (39.5,43) rectangle (41,44.5);

\fill[red] (51.5,39.5) 
rectangle (53,41);
\fill[red] (55,43) 
rectangle (56.5,44.5);


\fill[red] (11,23) rectangle (12.5,24.5);
\fill[red] (7.5,19.5) rectangle (9,21);

\fill[red] (19.5,23) rectangle (21,24.5);
\fill[red] (23,19.5) rectangle (24.5,21);

\fill[red] (43,23) rectangle (44.5,24.5);
\fill[red] (39.5,19.5) rectangle (41,21);

\fill[red] (51.5,23) rectangle (53,24.5);
\fill[red] (55,19.5) rectangle (56.5,21);

\fill[red] (11,55) rectangle (12.5,56.5);
\fill[red] (7.5,51.5) rectangle (9,53);

\fill[red] (19.5,55) rectangle (21,56.5);
\fill[red] (23,51.5) rectangle (24.5,53);

\fill[red] (43,55) rectangle (44.5,56.5);
\fill[red] (39.5,51.5) rectangle (41,53);

\fill[red] (51.5,55) rectangle (53,56.5);
\fill[red] (55,51.5) rectangle (56.5,53);

\fill[red] (30.5,14.5) rectangle 
(33.5,17.5);

\fill[red] (46.5,30.5) rectangle 
(49.5,33.5);

\end{scope}

\begin{scope}[xshift=84cm]
\fill[gray!90] (16,16) rectangle (48,48); 
\fill[white] (16.5,16.5) rectangle (47.5,47.5);
\fill[gray!20] (32,0) rectangle (32.5,32.5);
\fill[gray!20] (32,32.5) rectangle (64,32);

\fill[gray!20] (8,8) rectangle (8.5,24); 
\fill[gray!20] (8,8) rectangle (24,8.5);
\fill[gray!20] (8,23.5) rectangle (24,24);  
\fill[gray!20] (23.5,8) rectangle (24,24);

\fill[gray!20] (40,8) rectangle (40.5,24); 
\fill[gray!20] (40,8) rectangle (56,8.5);
\fill[gray!20] (40,23.5) rectangle (56,24);  
\fill[gray!20] (55.5,8) rectangle (56,24);

\fill[gray!20] (8,40) rectangle (8.5,56); 
\fill[gray!20] (8,40) rectangle (24,40.5);
\fill[gray!20] (8,55.5) rectangle (24,56);  
\fill[gray!20] (23.5,40) rectangle (24,56);

\fill[gray!20] (40,40) rectangle (40.5,56); 
\fill[gray!20] (40,40) rectangle (56,40.5);
\fill[gray!20] (40,55.5) rectangle (56,56);  
\fill[gray!20] (55.5,40) rectangle (56,56);

\fill[gray!90] (4,4) rectangle (4.5,12); 
\fill[gray!90] (4,4) rectangle (12,4.5); 
\fill[gray!90] (4.5,11.5) rectangle (12,12); 
\fill[gray!90] (11.5,4.5) rectangle (12,12); 

\fill[gray!90] (4,20) rectangle (4.5,28); 
\fill[gray!90] (4,20) rectangle (12,20.5); 
\fill[gray!90] (4.5,27.5) rectangle (12,28); 
\fill[gray!90] (11.5,20.5) rectangle (12,28); 

\fill[gray!90] (4,36) rectangle (4.5,44); 
\fill[gray!90] (4,36) rectangle (12,36.5); 
\fill[gray!90] (4.5,43.5) rectangle (12,44); 
\fill[gray!90] (11.5,36.5) rectangle (12,44);

\fill[gray!90] (4,52) rectangle (4.5,60); 
\fill[gray!90] (4,52) rectangle (12,52.5); 
\fill[gray!90] (4.5,59.5) rectangle (12,60); 
\fill[gray!90] (11.5,52.5) rectangle (12,60);

\fill[gray!90] (20,4) rectangle (20.5,12); 
\fill[gray!90] (20,4) rectangle (28,4.5); 
\fill[gray!90] (20.5,11.5) rectangle (28,12); 
\fill[gray!90] (27.5,4.5) rectangle (28,12); 

\fill[gray!90] (20,20) rectangle (20.5,28); 
\fill[gray!90] (20,20) rectangle (28,20.5); 
\fill[gray!90] (20.5,27.5) rectangle (28,28); 
\fill[gray!90] (27.5,20.5) rectangle (28,28); 

\fill[gray!90] (20,36) rectangle (20.5,44); 
\fill[gray!90] (20,36) rectangle (28,36.5); 
\fill[gray!90] (20.5,43.5) rectangle (28,44); 
\fill[gray!90] (27.5,36.5) rectangle (28,44);

\fill[gray!90] (20,52) rectangle (20.5,60); 
\fill[gray!90] (20,52) rectangle (28,52.5); 
\fill[gray!90] (20.5,59.5) rectangle (28,60); 
\fill[gray!90] (27.5,52.5) rectangle (28,60);

\fill[gray!90] (36,4) rectangle (36.5,12); 
\fill[gray!90] (36,4) rectangle (44,4.5); 
\fill[gray!90] (36.5,11.5) rectangle (44,12); 
\fill[gray!90] (43.5,4.5) rectangle (44,12); 

\fill[gray!90] (36,20) rectangle (36.5,28); 
\fill[gray!90] (36,20) rectangle (44,20.5); 
\fill[gray!90] (36.5,27.5) rectangle (44,28); 
\fill[gray!90] (43.5,20.5) rectangle (44,28); 

\fill[gray!90] (36,36) rectangle (36.5,44); 
\fill[gray!90] (36,36) rectangle (44,36.5); 
\fill[gray!90] (36.5,43.5) rectangle (44,44); 
\fill[gray!90] (43.5,36.5) rectangle (44,44);

\fill[gray!90] (36,52) rectangle (36.5,60); 
\fill[gray!90] (36,52) rectangle (44,52.5); 
\fill[gray!90] (36.5,59.5) rectangle (44,60); 
\fill[gray!90] (43.5,52.5) rectangle (44,60);

\fill[gray!90] (52,4) rectangle (52.5,12); 
\fill[gray!90] (52,4) rectangle (60,4.5); 
\fill[gray!90] (52.5,11.5) rectangle (60,12); 
\fill[gray!90] (59.5,4.5) rectangle (60,12); 

\fill[gray!90] (52,20) rectangle (52.5,28); 
\fill[gray!90] (52,20) rectangle (60,20.5); 
\fill[gray!90] (52.5,27.5) rectangle (60,28); 
\fill[gray!90] (59.5,20.5) rectangle (60,28); 

\fill[gray!90] (52,36) rectangle (52.5,44); 
\fill[gray!90] (52,36) rectangle (60,36.5); 
\fill[gray!90] (52.5,43.5) rectangle (60,44); 
\fill[gray!90] (59.5,36.5) rectangle (60,44);

\fill[gray!90] (52,52) rectangle (52.5,60); 
\fill[gray!90] (52,52) rectangle (60,52.5); 
\fill[gray!90] (52.5,59.5) rectangle (60,60); 
\fill[gray!90] (59.5,52.5) rectangle (60,60);

\node at (8,8) {$0$};
\node at (24,8) {$0$};
\node at (8,24) {$
\begin{tikzpicture}[scale=0.3]
\fill[gray!90] (0,0) rectangle (1,1);
\draw (0,0) rectangle (1,1);
\end{tikzpicture}$};
\node at (24,24) {$0$};

\node at (8,40) {$
\begin{tikzpicture}[scale=0.3]
\fill[gray!90] (0,0) rectangle (1,1);
\draw (0,0) rectangle (1,1);
\end{tikzpicture}$};
\node at (24,40) {$1$};
\node at (8,56) {$1$};
\node at (24,56) {$1$};

\draw[-latex] (4,32) -- (16,32);
\node at (-8,32) {$w \in \{0,1\}^p$};

\node at (40,8) {$0$};
\node at (56,8) {$0$};
\node at (40,24) {$0$};
\node at (56,24) {$
\begin{tikzpicture}[scale=0.3]
\fill[gray!90] (0,0) rectangle (1,1);
\draw (0,0) rectangle (1,1);
\end{tikzpicture}$};

\node at (40,40) {$1$};
\node at (56,40) {$
\begin{tikzpicture}[scale=0.3]
\fill[gray!90] (0,0) rectangle (1,1);
\draw (0,0) rectangle (1,1);
\end{tikzpicture}$};
\node at (40,56) {$1$};
\node at (56,56) {$1$};

\end{scope}

\begin{scope}[yshift=-70cm]
\fill[gray!90] (16,16) rectangle (48,48); 
\fill[white] (16.5,16.5) rectangle (47.5,47.5);
\fill[gray!20] (32,0) rectangle (32.5,32.5);
\fill[gray!20] (32,32.5) rectangle (64,32);

\fill[gray!20] (8,8) rectangle (8.5,24); 
\fill[gray!20] (8,8) rectangle (24,8.5);
\fill[gray!20] (8,23.5) rectangle (24,24);  
\fill[gray!20] (23.5,8) rectangle (24,24);

\fill[gray!20] (40,8) rectangle (40.5,24); 
\fill[gray!20] (40,8) rectangle (56,8.5);
\fill[gray!20] (40,23.5) rectangle (56,24);  
\fill[gray!20] (55.5,8) rectangle (56,24);

\fill[gray!20] (8,40) rectangle (8.5,56); 
\fill[gray!20] (8,40) rectangle (24,40.5);
\fill[gray!20] (8,55.5) rectangle (24,56);  
\fill[gray!20] (23.5,40) rectangle (24,56);

\fill[gray!20] (40,40) rectangle (40.5,56); 
\fill[gray!20] (40,40) rectangle (56,40.5);
\fill[gray!20] (40,55.5) rectangle (56,56);  
\fill[gray!20] (55.5,40) rectangle (56,56);

\fill[gray!90] (4,4) rectangle (4.5,12); 
\fill[gray!90] (4,4) rectangle (12,4.5); 
\fill[gray!90] (4.5,11.5) rectangle (12,12); 
\fill[gray!90] (11.5,4.5) rectangle (12,12); 

\fill[gray!90] (4,20) rectangle (4.5,28); 
\fill[gray!90] (4,20) rectangle (12,20.5); 
\fill[gray!90] (4.5,27.5) rectangle (12,28); 
\fill[gray!90] (11.5,20.5) rectangle (12,28); 

\fill[gray!90] (4,36) rectangle (4.5,44); 
\fill[gray!90] (4,36) rectangle (12,36.5); 
\fill[gray!90] (4.5,43.5) rectangle (12,44); 
\fill[gray!90] (11.5,36.5) rectangle (12,44);

\fill[gray!90] (4,52) rectangle (4.5,60); 
\fill[gray!90] (4,52) rectangle (12,52.5); 
\fill[gray!90] (4.5,59.5) rectangle (12,60); 
\fill[gray!90] (11.5,52.5) rectangle (12,60);

\fill[gray!90] (20,4) rectangle (20.5,12); 
\fill[gray!90] (20,4) rectangle (28,4.5); 
\fill[gray!90] (20.5,11.5) rectangle (28,12); 
\fill[gray!90] (27.5,4.5) rectangle (28,12); 

\fill[gray!90] (20,20) rectangle (20.5,28); 
\fill[gray!90] (20,20) rectangle (28,20.5); 
\fill[gray!90] (20.5,27.5) rectangle (28,28); 
\fill[gray!90] (27.5,20.5) rectangle (28,28); 

\fill[gray!90] (20,36) rectangle (20.5,44); 
\fill[gray!90] (20,36) rectangle (28,36.5); 
\fill[gray!90] (20.5,43.5) rectangle (28,44); 
\fill[gray!90] (27.5,36.5) rectangle (28,44);

\fill[gray!90] (20,52) rectangle (20.5,60); 
\fill[gray!90] (20,52) rectangle (28,52.5); 
\fill[gray!90] (20.5,59.5) rectangle (28,60); 
\fill[gray!90] (27.5,52.5) rectangle (28,60);

\fill[gray!90] (36,4) rectangle (36.5,12); 
\fill[gray!90] (36,4) rectangle (44,4.5); 
\fill[gray!90] (36.5,11.5) rectangle (44,12); 
\fill[gray!90] (43.5,4.5) rectangle (44,12); 

\fill[gray!90] (36,20) rectangle (36.5,28); 
\fill[gray!90] (36,20) rectangle (44,20.5); 
\fill[gray!90] (36.5,27.5) rectangle (44,28); 
\fill[gray!90] (43.5,20.5) rectangle (44,28); 

\fill[gray!90] (36,36) rectangle (36.5,44); 
\fill[gray!90] (36,36) rectangle (44,36.5); 
\fill[gray!90] (36.5,43.5) rectangle (44,44); 
\fill[gray!90] (43.5,36.5) rectangle (44,44);

\fill[gray!90] (36,52) rectangle (36.5,60); 
\fill[gray!90] (36,52) rectangle (44,52.5); 
\fill[gray!90] (36.5,59.5) rectangle (44,60); 
\fill[gray!90] (43.5,52.5) rectangle (44,60);

\fill[gray!90] (52,4) rectangle (52.5,12); 
\fill[gray!90] (52,4) rectangle (60,4.5); 
\fill[gray!90] (52.5,11.5) rectangle (60,12); 
\fill[gray!90] (59.5,4.5) rectangle (60,12); 

\fill[gray!90] (52,20) rectangle (52.5,28); 
\fill[gray!90] (52,20) rectangle (60,20.5); 
\fill[gray!90] (52.5,27.5) rectangle (60,28); 
\fill[gray!90] (59.5,20.5) rectangle (60,28); 

\fill[gray!90] (52,36) rectangle (52.5,44); 
\fill[gray!90] (52,36) rectangle (60,36.5); 
\fill[gray!90] (52.5,43.5) rectangle (60,44); 
\fill[gray!90] (59.5,36.5) rectangle (60,44);

\fill[gray!90] (52,52) rectangle (52.5,60); 
\fill[gray!90] (52,52) rectangle (60,52.5); 
\fill[gray!90] (52.5,59.5) rectangle (60,60); 
\fill[gray!90] (59.5,52.5) rectangle (60,60);

\node at (8,8) {$
$};

\end{scope}

\begin{scope}[xshift=84cm,yshift=-70cm]
\fill[gray!90] (16,16) rectangle (48,48); 
\fill[white] (16.5,16.5) rectangle (47.5,47.5);
\fill[gray!20] (32,0) rectangle (32.5,32.5);
\fill[gray!20] (32,32.5) rectangle (64,32);

\fill[gray!20] (8,8) rectangle (8.5,24); 
\fill[gray!20] (8,8) rectangle (24,8.5);
\fill[gray!20] (8,23.5) rectangle (24,24);  
\fill[gray!20] (23.5,8) rectangle (24,24);

\fill[gray!20] (40,8) rectangle (40.5,24); 
\fill[gray!20] (40,8) rectangle (56,8.5);
\fill[gray!20] (40,23.5) rectangle (56,24);  
\fill[gray!20] (55.5,8) rectangle (56,24);

\fill[gray!20] (8,40) rectangle (8.5,56); 
\fill[gray!20] (8,40) rectangle (24,40.5);
\fill[gray!20] (8,55.5) rectangle (24,56);  
\fill[gray!20] (23.5,40) rectangle (24,56);

\fill[gray!20] (40,40) rectangle (40.5,56); 
\fill[gray!20] (40,40) rectangle (56,40.5);
\fill[gray!20] (40,55.5) rectangle (56,56);  
\fill[gray!20] (55.5,40) rectangle (56,56);

\fill[gray!90] (4,4) rectangle (4.5,12); 
\fill[gray!90] (4,4) rectangle (12,4.5); 
\fill[gray!90] (4.5,11.5) rectangle (12,12); 
\fill[gray!90] (11.5,4.5) rectangle (12,12); 

\fill[gray!90] (4,20) rectangle (4.5,28); 
\fill[gray!90] (4,20) rectangle (12,20.5); 
\fill[gray!90] (4.5,27.5) rectangle (12,28); 
\fill[gray!90] (11.5,20.5) rectangle (12,28); 

\fill[gray!90] (4,36) rectangle (4.5,44); 
\fill[gray!90] (4,36) rectangle (12,36.5); 
\fill[gray!90] (4.5,43.5) rectangle (12,44); 
\fill[gray!90] (11.5,36.5) rectangle (12,44);

\fill[gray!90] (4,52) rectangle (4.5,60); 
\fill[gray!90] (4,52) rectangle (12,52.5); 
\fill[gray!90] (4.5,59.5) rectangle (12,60); 
\fill[gray!90] (11.5,52.5) rectangle (12,60);

\fill[gray!90] (20,4) rectangle (20.5,12); 
\fill[gray!90] (20,4) rectangle (28,4.5); 
\fill[gray!90] (20.5,11.5) rectangle (28,12); 
\fill[gray!90] (27.5,4.5) rectangle (28,12); 

\fill[gray!90] (20,20) rectangle (20.5,28); 
\fill[gray!90] (20,20) rectangle (28,20.5); 
\fill[gray!90] (20.5,27.5) rectangle (28,28); 
\fill[gray!90] (27.5,20.5) rectangle (28,28); 

\fill[gray!90] (20,36) rectangle (20.5,44); 
\fill[gray!90] (20,36) rectangle (28,36.5); 
\fill[gray!90] (20.5,43.5) rectangle (28,44); 
\fill[gray!90] (27.5,36.5) rectangle (28,44);

\fill[gray!90] (20,52) rectangle (20.5,60); 
\fill[gray!90] (20,52) rectangle (28,52.5); 
\fill[gray!90] (20.5,59.5) rectangle (28,60); 
\fill[gray!90] (27.5,52.5) rectangle (28,60);

\fill[gray!90] (36,4) rectangle (36.5,12); 
\fill[gray!90] (36,4) rectangle (44,4.5); 
\fill[gray!90] (36.5,11.5) rectangle (44,12); 
\fill[gray!90] (43.5,4.5) rectangle (44,12); 

\fill[gray!90] (36,20) rectangle (36.5,28); 
\fill[gray!90] (36,20) rectangle (44,20.5); 
\fill[gray!90] (36.5,27.5) rectangle (44,28); 
\fill[gray!90] (43.5,20.5) rectangle (44,28); 

\fill[gray!90] (36,36) rectangle (36.5,44); 
\fill[gray!90] (36,36) rectangle (44,36.5); 
\fill[gray!90] (36.5,43.5) rectangle (44,44); 
\fill[gray!90] (43.5,36.5) rectangle (44,44);

\fill[gray!90] (36,52) rectangle (36.5,60); 
\fill[gray!90] (36,52) rectangle (44,52.5); 
\fill[gray!90] (36.5,59.5) rectangle (44,60); 
\fill[gray!90] (43.5,52.5) rectangle (44,60);

\fill[gray!90] (52,4) rectangle (52.5,12); 
\fill[gray!90] (52,4) rectangle (60,4.5); 
\fill[gray!90] (52.5,11.5) rectangle (60,12); 
\fill[gray!90] (59.5,4.5) rectangle (60,12); 

\fill[gray!90] (52,20) rectangle (52.5,28); 
\fill[gray!90] (52,20) rectangle (60,20.5); 
\fill[gray!90] (52.5,27.5) rectangle (60,28); 
\fill[gray!90] (59.5,20.5) rectangle (60,28); 

\fill[gray!90] (52,36) rectangle (52.5,44); 
\fill[gray!90] (52,36) rectangle (60,36.5); 
\fill[gray!90] (52.5,43.5) rectangle (60,44); 
\fill[gray!90] (59.5,36.5) rectangle (60,44);

\fill[gray!90] (52,52) rectangle (52.5,60); 
\fill[gray!90] (52,52) rectangle (60,52.5); 
\fill[gray!90] (52.5,59.5) rectangle (60,60); 
\fill[gray!90] (59.5,52.5) rectangle (60,60);

\node at (8,8) {$
\begin{tikzpicture}[scale=0.3]
\fill[gray!90] (0,0) rectangle (1,1);
\draw (0,0) rectangle (1,1);
\end{tikzpicture}$};
\node at (24,8) {$
\begin{tikzpicture}[scale=0.3]
\fill[gray!90] (0,0) rectangle (1,1);
\draw (0,0) rectangle (1,1);
\end{tikzpicture}$};
\node at (8,24) {$
\begin{tikzpicture}[scale=0.3]
\fill[gray!90] (0,0) rectangle (1,1);
\draw (0,0) rectangle (1,1);
\end{tikzpicture}$};
\node at (24,24) {$0$};

\node at (8,40) {$
\begin{tikzpicture}[scale=0.3]
\fill[gray!90] (0,0) rectangle (1,1);
\draw (0,0) rectangle (1,1);
\end{tikzpicture}$};
\node at (24,40) {$1$};
\node at (8,56) {$
\begin{tikzpicture}[scale=0.3]
\fill[gray!90] (0,0) rectangle (1,1);
\draw (0,0) rectangle (1,1);
\end{tikzpicture}$};
\node at (24,56) {$
\begin{tikzpicture}[scale=0.3]
\fill[gray!90] (0,0) rectangle (1,1);
\draw (0,0) rectangle (1,1);
\end{tikzpicture}$};

\draw[-latex] (4,32) -- (16,32);
\node at (-8,32) {$
\begin{tikzpicture}[scale=0.3]
\fill[gray!90] (0,0) rectangle (1,1);
\draw (0,0) rectangle (1,1);
\end{tikzpicture}$};

\node at (40,8) {$
\begin{tikzpicture}[scale=0.3]
\fill[gray!90] (0,0) rectangle (1,1);
\draw (0,0) rectangle (1,1);
\end{tikzpicture}$};
\node at (56,8) {$
\begin{tikzpicture}[scale=0.3]
\fill[gray!90] (0,0) rectangle (1,1);
\draw (0,0) rectangle (1,1);
\end{tikzpicture}$};
\node at (40,24) {$0$};
\node at (56,24) {$
\begin{tikzpicture}[scale=0.3]
\fill[gray!90] (0,0) rectangle (1,1);
\draw (0,0) rectangle (1,1);
\end{tikzpicture}$};

\node at (40,40) {$1$};
\node at (56,40) {$
\begin{tikzpicture}[scale=0.3]
\fill[gray!90] (0,0) rectangle (1,1);
\draw (0,0) rectangle (1,1);
\end{tikzpicture}$};
\node at (40,56) {$
\begin{tikzpicture}[scale=0.3]
\fill[gray!90] (0,0) rectangle (1,1);
\draw (0,0) rectangle (1,1);
\end{tikzpicture}$};
\node at (56,56) {$
\begin{tikzpicture}[scale=0.3]
\fill[gray!90] (0,0) rectangle (1,1);
\draw (0,0) rectangle (1,1);
\end{tikzpicture}$};

\end{scope}

\end{tikzpicture}\]
\caption{\label{fig.trans.selec2.comp.area} Schemata 
of the transmission rules of the horizontal addressing.
The central petals on the two schemata on 
the left have $p$-counter value not 
equal to $\overline{0}$. 
The one on the schemata on the right have 
$p$-counter value equal to $\overline{0}$.}
\end{figure}

\paragraph{Horizontal addressing:} \bigskip 

We add another subsublayer with the same symbols and similar 
rules that are abstracted as on the 
Figure~\ref{fig.trans.selec2.comp.area}. \bigskip

\noindent \textbf{\textit{Global behavior:}} \bigskip

In the \textit{vertical addressing} 
subsublayer, then on each colored face, the 
petals support the propagation of a signal 
which is transformed on the intersections 
of a support petal with the transmission 
petal just above in the hierarchy. 
Except on the transformation 
positions, the petals are colored with 
a non-empty word in $\{0,1\}$ 
or the symbol 
\begin{tikzpicture}[scale=0.3]
\fill[gray!90] (0,0) rectangle (1,1);
\draw (0,0) rectangle (1,1);
\end{tikzpicture}. 
Considering an order $n$ colored face, 
and three integers $k,m$ and $j$ 
such that $mp \le j < (m+1)p \le kp \le n$, 
the support petals that: 
\begin{enumerate}
\item are the border of an order $j$ two-dimensional 
cell, 
\item are included in an order $kp$ cell, 
\item such that the columns in the cells 
intersecting the order $j$ cell 
do not intersect any order $i$ cell 
with $ j< i \le kp$,
\end{enumerate}
are marked with a length $(m+1)p-j$ 
word on $\{0,1\}$ which codes for 
the column position of this petal 
in the order $kp$ cell 
with respect to the next order $(m+1)p$ 
cell in the hierarchy. 

All the other support petals are 
colored with \begin{tikzpicture}[scale=0.3]
\fill[gray!90] (0,0) rectangle (1,1);
\draw (0,0) rectangle (1,1);
\end{tikzpicture}.

The possible length $l$ 
addresses, with $1 \le l \le p$ are 
all the words in $\{0,1\}^l$, and 
are arranged 
in the alphabetic order, $0^l$ being 
the address of the leftmost petals and 
$1^l$ the address of the rightmost ones.

See an example of addresses arrangement 
on Figure~\ref{fig.sparse}.

As a consequence, in an order $kp$ two-dimensional 
cell, with $k \ge 0$, the columns containing 
order $0$ cells and not intersecting order $i$ cells 
such that $0<i\le kp$, are virtually 
addressed with a word in $\{0,1\}^{kp}$, obtained 
as the concatenation of $k$ length $p$ addresses. \bigskip

The global behavior of the horizontal addressing 
sublayer is similar, replacing columns 
by rows. \bigskip

The aim of the following section is to select 
a subset of these columns by selecting 
a subset of the length $p$ addresses.

\subsubsection{\label{sec.active.functional.areas} Active functional areas}

In this section, we describe how to select, for all the two dimensional cells 
on the colored faces, 
a subset of the functional areas of these cells 
which is sparse enough to not contribute the entropy dimension. This subset is 
although large enough so that 
the machines have access to all the data 
it has to check 
(namely, the frequency bits, and 
grouping bits, presented later).

Let $\Delta$ be the set of words $w$ in $\{0,1\}^p$ (amongst vertical or 
horizontal $p$-addresses) such that there exists $k \in \llbracket 0,p\rrbracket$ 
such that $w= 0^k 1^{p-k}$.
For instance, when $p=3$, $\Delta=\{000,011,001,111\}$. 
\bigskip

\begin{figure}[ht]
\[
\right\}^2$, and a blank symbol. \bigskip

\noindent \textbf{\textit{Local rules:}} 

\begin{itemize}
\item \textbf{Localization:} 
\begin{itemize}
\item the non-blank symbols are located on the petals 
on gray faces.
\item The \textbf{transformation positions} are 
the positions where a 
support petal intersects 
a transmission petal immediately under 
in the hierarchy. The transformation positions are marked with a couple of couples, and the other 
positions with a unique couple.
\end{itemize}
\item \textbf{Border rule:} on the border of the faces of the three-dimensional cells, 
the possible symbols are 
 are called \textbf{active lines}.

\begin{lemma} \label{lem.number.active}
\begin{enumerate}
\item On any order $n$ colored face, 
the number of active 
columns (resp. active lines) that are not 
in an order $jq < n$ two-dimensional cell
is $2.2^r .(p+1)^{q}$, where $n=qp+r$, with $r<p$. 
\item The number of active columns (resp. active lines) in 
an order $2k+2$ supertile (centered on an order $k$ two-dimensional cell) 
on a gray face is less than $2^{p+1} (p+1)^q$, where $k=qp+r$,
with $r<p$.
\end{enumerate}
\end{lemma}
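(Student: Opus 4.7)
My approach is to prove statement 1 by induction on $n$, using the recursive cross structure of the Robinson two-dimensional cells together with the transformation rules of the $p$-counter, the $p$-addressing, and the activation sublayer from Sections~\ref{sec.p.counter}--\ref{sec.active.functional.areas}. Statement 2 would then follow from statement 1 by a direct geometric counting argument.

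For the base case $n = 0$ (so $q = r = 0$), I would directly verify from the coherence rules of the activation sublayer that an order $0$ two-dimensional cell contains exactly $2$ active columns, matching $2 \cdot 2^0 \cdot (p+1)^0 = 2$. For the inductive step, writing $n = qp + r$ with $0 \le r < p$, I would decompose the order $n$ cell into four order $n-1$ sub-cells plus the central cross and track the effect of the transformation rules across cell boundaries. Two regimes occur, corresponding to whether the $p$-counter reaches $\overline{0}$ at the current level. When $n$ is not a multiple of $p$, the vertical $p$-addressing appends a fresh bit in $\{0,1\}$ to each length-$k$ word $w$ carried up from a sub-cell, doubling the count of columns sitting above the order $qp$ level; iterating this $r$ times produces the factor $2^r$. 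When $n = jp$, the transformation rule truncates length-$p$ words to a single bit, and the activation sublayer retains a column as purple only when the word lies in $\Delta$; since $|\Delta| = p+1$, this yields a factor $p+1$ per $p$-level, and after $q$ such multiples the factor $(p+1)^q$. Composing both regimes with the base-case factor of $2$ gives the claimed count $2 \cdot 2^r \cdot (p+1)^q$.

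For statement 2, I would argue geometrically. An order $2k+2$ supertile has linear size $2^{2k+3} - 1$, whereas order $k$ two-dimensional cells, enclosed in order $2k+1$ petals of size $2^{2k+2} + 1$, repeat with period $4^{k+2} = 2^{2k+4}$. Since this period exceeds the supertile size, the supertile intersects at most two order $k$ two-dimensional cells in total (a case analysis on the centring of the supertile relative to the periodic lattice of cells yields this bound). By statement 1 applied with the worst-case value $r \le p-1$, each such cell contributes at most $2 \cdot 2^{p-1}(p+1)^q = 2^p(p+1)^q$ active columns, so the total is bounded by $2 \cdot 2^p(p+1)^q = 2^{p+1}(p+1)^q$. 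The bound for active lines is identical by the symmetric role of horizontal and vertical $p$-addressing.

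The principal obstacle is the bookkeeping in the inductive step of statement 1: at each intermediate level between $qp$ and $n$, I must distinguish the columns lying in the level-specific central cross from those inherited from the order $n-1$ sub-cells, and verify that the transformation rules of the activation sublayer propagate the purple colour consistently across each of the four possible sub-cell orientations and at the corner- and edge-coherence configurations described in Section~\ref{sec.active.functional.areas}. The counting of intersecting cells for statement 2 is by contrast essentially elementary once statement 1 is established, and the main subtlety there is merely to confirm that the worst-case centring of the supertile does not produce more than two intersecting order $k$ cells.
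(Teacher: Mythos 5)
Your treatment of statement 1 is essentially the paper's own argument, merely unrolled as an induction: the paper counts the same three factors directly (a factor $2$ for the two active columns of each order $0$ cell, a factor $2^r$ for the $2^r$ columns of order $qp$ cells sitting in the top $r$ levels where no selection occurs, and a factor $p+1$ at each of the $q$ levels where the $p$-counter returns to $\overline{0}$, coming from $|\Delta|=p+1$). Your bookkeeping concern about the ``central cross'' is a red herring — two-dimensional cells of consecutive orders are separated by much more than a cross — but the column count you extract is the same, so statement 1 is fine.

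Statement 2 is where there is a genuine gap. Your argument assumes, without justification, that every active column of the order $2k+2$ supertile passes through one of the order $k$ two-dimensional cells that the supertile meets. The supertile has side $2^{2k+3}-1$ while the order $k$ cell at its centre has side $4^{k+1}+1=2^{2k+2}+1$, so roughly half of the supertile's columns lie outside that cell; the order $0$ cells on those columns belong to order $mp$ cells whose borders (order $2mp+1>2k+1$ petals) are not visible inside the supertile, so their purple/gray status is not determined by the data you are counting, and in a worst-case completion some of those columns are active. Your intersection count is also off: since order $k$ cells repeat with period $4^{k+2}=2^{2k+4}$, which exceeds twice the supertile's width, a supertile centred on an order $k$ cell meets exactly \emph{one} such cell, so the factor $2$ you extract from ``two cells'' is spurious — if your premise were correct you would be proving the stronger bound $2^{p}(p+1)^q$, and the fact that your stated bound matches the lemma only via this artificial doubling signals that the argument is not tracking the right quantity. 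The paper instead resolves statement 2 by a dichotomy on the colour of the maximal-order petal of the supertile (the border of the central order $k$ cell): if it is gray, the transformation and coherence rules force \emph{every} column of the supertile to be inactive, and if it is purple the count of the first point applies; it is this dichotomy, not a cell-intersection count, that controls the columns lying outside the central cell.
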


\begin{proof} 
\begin{enumerate} 
\item \begin{enumerate}
\item The factor $2^r$ corresponds to the 
order $qp$ cells marked with 
$\begin{tikzpicture}[scale=0.3]
\fill[gray!90] (0,0) rectangle (1,1); 
\draw (0,0) rectangle (1,1);
\end{tikzpicture}$ in the addresses sublayer: 
there are $2^r$ columns (resp. lines) of them. 
\item Then the process of this layer selects $p+1$ 
addresses of order $p(q-1)$ cells relatively 
to the $pq$ cells. 
For these cells, it selects $p$ addresses 
of order $p(q-2)$ cells 
relatively to these order $p(q-1)$ cells, etc. 
Hence the factor $(p+1)^q$ in 
the formula. 
\item The factor $2$ comes from the fact 
that there are 
two active columns for all order $0$ cells.
\end{enumerate}
\item There are two possible cases for the color 
of the maximal order petal in the supertile: 
\begin{tikzpicture}[scale=0.3]
\fill[gray!90] (0,0) rectangle (1,1); 
\draw (0,0) rectangle (1,1);
\end{tikzpicture} or \begin{tikzpicture}[scale=0.3]
\fill[purple] (0,0) rectangle (1,1); 
\draw (0,0) rectangle (1,1);
\end{tikzpicture}. In the first case, there are no active columns or lines. In the second one the argument is similar to the first point.
\end{enumerate} 
\end{proof}

\begin{corollary} 
\label{cor.nb.active}
The number of active functional columns in a cell of 
order $qp$, for $q$ integer, is $2^{mq}$. 
\end{corollary}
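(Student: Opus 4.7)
The plan is to obtain the count as an immediate $q$-fold application of the selection step described in Section~\ref{sec.active.functional.areas}. First, I would recall from the global behavior of the addressing and activation sublayers that, inside an order $qp$ two-dimensional cell, each column of order-$0$ cells is uniquely identified by a concatenation $w_1 w_2 \cdots w_q$ of $q$ length-$p$ words over $\{0,1\}$, where $w_j$ is the horizontal $p$-address of the embedded order $(j-1)p$ subcell relative to its containing order $jp$ subcell. This decomposition is precisely the structure set up by the $p$-counter sublayer together with the vertical $p$-addressing.

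Next I would argue that the transformation rule of the active-functional-areas sublayer, applied at each level where the $p$-counter reads $\overline{0}$ (that is, at each $jp$ border), forces $w_j \in \Delta = \{0^k 1^{p-k} : 0 \le k \le p\}$ in order for the corresponding column to retain the color $\begin{tikzpicture}[scale=0.2]\fill[purple] (0,0) rectangle (1,1);\draw (0,0) rectangle (1,1);\end{tikzpicture}$; conversely, any address each of whose blocks lies in $\Delta$ yields an active column. Hence an active functional column corresponds bijectively to an element of $\Delta^{q}$.

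Since the $q$ block choices are independent and each admits exactly $|\Delta| = p+1$ valid outcomes, the number of active functional columns equals $(p+1)^{q}$. Using that $p = 2^m - 1$ is a Mersenne number, we get $(p+1)^q = (2^m)^q = 2^{mq}$, as claimed. Equivalently, this can be read off from part~(1) of Lemma~\ref{lem.number.active} by a straightforward induction on $q$: if $C_q$ denotes the count in an order $qp$ cell, then $C_q = (p+1)\,C_{q-1}$ (each of the $p+1$ selected order $(q-1)p$ columns contributes $C_{q-1}$ active columns by the induction hypothesis), with $C_0 = 1$.

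The only conceptual obstacle is justifying the independence of the $q$ block choices, i.e. checking that the selection rules at different levels $jp$ do not interact. This is built into the construction: the transformation rule is local to positions where a support petal meets the transmission petal just above in the hierarchy, and the periodic repetition of lower-order supertiles (Lemma~\ref{lem.repetition.supertiles}) guarantees that the length-$p$ address chosen at level $jp$ does not constrain the length-$p$ address chosen at any other level. Once this independence is noted, no further arithmetic is needed beyond the factorization $(p+1)^{q} = 2^{mq}$.
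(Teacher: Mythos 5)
Your proof is correct and follows essentially the same route as the paper: the corollary is read off from part (1) of Lemma~\ref{lem.number.active} with $r=0$, whose proof is exactly your level-by-level count of $p+1=2^m$ admissible length-$p$ addresses in $\Delta$ at each of the $q$ hierarchical levels, i.e.\ the factor $(p+1)^q=2^{mq}$. The only cosmetic difference is your base case $C_0=1$ versus the lemma's extra factor of $2$ (two active columns per order-$0$ cell), a discrepancy internal to the paper that is absorbed by the word ``functional'' in the corollary's statement and does not affect the conclusion $2^{mq}$.
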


\begin{lemma}
For all $k <n$, on the left of the first $k$ order cell in 
an order $n$ cell 
(from left to right), 
the column is active. 
Moreover, denote $p_k$ the 
number such that this column is the $p_k$th 
column amongst the active ones
from left to right.
There is an algorithm that 
computes $p_k$ given as input $k$.
\end{lemma}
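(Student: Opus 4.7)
The plan is to explicitly determine the virtual address of the column immediately to the left of the first order $k$ cell inside an order $n$ cell, verify that this address lies in $\Delta^*$ (so the column is active in the sense of Section~\ref{sec.active.functional.areas}), and then describe an algorithm that counts active columns to its left to yield $p_k$.

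First, I would decompose $k = qp + s$ with $0 \le s < p$. Using the hierarchical structure of two-dimensional cells from Section~\ref{sec.hierarchical.struct} (where each order $m+1$ cell is organised as a $2\times 2$ arrangement of order $m$ cells separated by a cross), the leftmost order $k$ cell inside the order $n$ cell is obtained by descending through the cell hierarchy and repeatedly selecting the west sub-cell at each level. Let $C$ denote the column immediately to the left of this leftmost order $k$ cell. Because $C$ sits between the left boundary of the enclosing cells and the leftmost order $k$ cell, it intersects order $0$ cells but no cell of order $j$ with $0 < j \le k$, hence it carries a well-defined vertical address under the $p$-addressing sublayer of Section~\ref{sec.p.addressing}.

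Second, I would compute the address of $C$ by induction on the hierarchical levels. At each level $jp$ with $q+1 \le jp \le n$, the column $C$ is leftmost inside the corresponding order $jp$ cell, so by the transformation rules of Section~\ref{sec.p.addressing} and the induced orientation from Section~\ref{sec.orientation}, the length-$p$ block appended at this level is $0^p$, which lies in $\Delta$. At the lowest non-trivial level, the block contributed by the partial residue $s$ has the form $0^{p-s}1^s$ (dictated by the fact that $C$ is leftmost inside the enclosing order $(q+1)p$ cell and by the west orientation of the enclosing support petals), which also lies in $\Delta = \{0^j 1^{p-j} : 0 \le j \le p\}$. Concatenating, the full virtual address of $C$ is a word in $\Delta^*$, so by the definition of the selection process $C$ is active.

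Third, to compute $p_k$, I would exploit the alphabetic ordering of addresses. Since the address of $C$ is explicitly computable from $k$ and $n$ by the procedure above, and since Corollary~\ref{cor.nb.active} controls the number of active columns at each level of the hierarchy, one counts at each level the number of active addresses in $\Delta$ strictly smaller than the corresponding length-$p$ block of $C$, multiplies by the number of active columns sitting in each such subcell, and sums over levels. The resulting finite recursion is manifestly algorithmic and terminates in time polynomial in $k$. The main obstacle will be the precise determination of the length-$p$ block at the lowest non-trivial level, where the interplay between the Robinson corner orientations, the value of the $p$-counter, and the transformation rules for the vertical addressing must be checked explicitly; once this base case is pinned down, the induction at higher levels is uniform and the counting of $p_k$ reduces to straightforward bookkeeping using $|\Delta| = p+1$.
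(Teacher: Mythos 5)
Your overall strategy -- identify the virtual address of the column immediately to the left of the leftmost order $k$ cell, check that each length-$p$ block lies in $\Delta$, and then count active columns to its left alphabetically -- is the same as the paper's. However, your determination of that address contains a genuine geometric error. You assert that the leftmost order $k$ cell is reached by ``repeatedly selecting the west sub-cell at each level'' and that consequently the column $C$ contributes the block $0^p$ at every level above the residue level. Neither claim holds in the Robinson hierarchy. First, order $k$ cells repeat with period $4^{k+2}$ while order $k+1$ cells repeat with period $4^{k+3}$, so most order $k$ cells sit in the gaps between order $k+1$ cells; the leftmost order $k$ cell of an order $n$ cell is generally \emph{not} contained in the leftmost order $k+1$ cell, and the ``always go west'' descent does not locate it. Second, and more importantly, the order $0$ cells lying in the column immediately to the \emph{left} of the order $k$ cell are the \emph{rightmost} order $0$ cells of the region to the left of that border, so the blocks they contribute at the levels below $k$ are $1^p$, not $0^p$. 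The paper's proof reflects exactly this: it starts from the order $q'p$ cell whose address relative to the enclosing order $(q'+1)p$ cell is the specific word representing $2^{k-(q'p+1)-2}-1$ (a word of the form $0^a1^b$), and then descends by repeatedly choosing the \emph{rightmost} sub-cell (address $1\cdots1$) until it reaches the order $1$ cell whose right-adjacent column is $C$.

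You get away with the wrong address for the first half of the statement only by luck: both $0^p$ and $1^p$ and all words $0^a1^b$ belong to $\Delta$, so ``the address lies in $\Delta^*$, hence $C$ is active'' remains true. But the second half of the lemma is not salvaged: your algorithm for $p_k$ counts, at each level, the active addresses alphabetically smaller than the (incorrect) address you computed, so it would output the wrong index. To repair the argument you must redo the address computation along the lines above -- in particular pin down the offset of the leftmost order $k$ cell inside the surrounding order $(q'+1)p$ cell, which is where the word $0^a1^b$ with $a,b$ depending on $k \bmod p$ actually enters -- and only then is the counting step (using $|\Delta|=p+1$ and Corollary~\ref{cor.nb.active}) correct.
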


\begin{proof}
\begin{enumerate}
\item \textbf{If $k$ is between $pq$ and $pq+r$,}
all the order $pq$ cells 
are selected by the process (colored purple). We pick the one which is just on the right 
of the leftmost $k$ order cell. The order $p(q-1)$ cell whose 
address is $1...1$ relatively 
to this order $pq$ cell is selected. 
We repeat this argument, considering a sequence of $p(q-k)$ order cells, the last one being the order $1$ cell 
just on the right of the $k$ order cell. The column on the right of 
the order $1$ cell is an active column 
which is just on the left 
of the $k$ order cell. 

\item \textbf{Other values of $k$:}

For the others $k$, the argument is similar.
Consider some $q'$ such that $q'p\le k < (q'+1)p$.
Pick an order $(q'+1)p$ order cell 
which is selected by the process, 
and then the order $q'p$ order cell 
whose address relatively to the 
first one represents the 
number $2^{k-(q'p+1)-2}-1$.
Then choose successively the corresponding 
order $(q'-1)p$, ... ,$0$ rightmost cells. 
The column on the right of the 
last one is on the left of the picked 
order $k$ cell.
\end{enumerate}
\end{proof}

\begin{remark}
The reason of the choice of 
$p$ this way is the 
factor $(p+1)^q$ in the 
first point of Lemma~\ref{lem.number.active}: 
we need that 
this number is a power of two, 
so that the linear counters 
have as period a Fermat number.
\end{remark}

\subsubsection{Propagation of the border information 
inside the colored faces}

The point of this section is to give 
access to all the functional positions in 
a face of a three-dimensional cell 
(specified by \begin{tikzpicture}[scale=0.3]
\fill[gray!65] (0,0) rectangle (1,1); 
\draw (0,0) rectangle (1,1);
\end{tikzpicture}) to the value of the $p$-counter 
of its border, 
and a border bit that specifies if a functional position 
is proper to the face as a two-dimensional cell. \bigskip

\noindent \textbf{\textit{Symbols:}} \bigskip

The elements of $\Z/p\Z \times \{0,1\}$ 
and a blank symbol. \bigskip

\noindent \textbf{\textit{Local rules:}} 

\begin{itemize}
\item non-blank symbols are superimposed on the faces of the 
three-dimensional cells. On these faces, 
they are superimposed
on and only on positions 
having a Robinson symbol (for 
the copy which is parallel to the face) which is 
not a two-dimensional cell 
border symbol. 
\item a symbol propagates 
to any position in the neighborhood 
positions if this position is not in 
the border of a two-dimensional cell. 
\item when on a position near the border 
of a two-dimensional cell from 
inside (the inside and 
outside are characterized by the position of the arrows: left or right for vertical 
arrows, and top or bottom for horizontal ones), 
then the symbol is equal to 
the value of the $p$-counter on this border.
The bit in $\{0,1\}$ (called \textbf{border bit}) is $1$ when the border is 
colored with \begin{tikzpicture}[scale=0.3]
\fill[gray!90] (0,0) rectangle (1,1); 
\draw (0,0) rectangle (1,1);
\end{tikzpicture} and $1$ if colored with 
\begin{tikzpicture}[scale=0.3]
\fill[gray!65] (0,0) rectangle (1,1); 
\draw (0,0) rectangle (1,1);
\end{tikzpicture} or \begin{tikzpicture}[scale=0.3]
\fill[gray!20] (0,0) rectangle (1,1); 
\draw (0,0) rectangle (1,1);
\end{tikzpicture}.
\end{itemize}

\noindent \textbf{\textit{Global behavior:}} \bigskip

All the functional positions of a face 
are colored with the value 
of the $p$-counter of its border.
The ones that are proper to this face 
as a two-dimensional cell are 
colored with the border bit $1$. The other 
ones with $0$. This is done using the diffusion 
of a signal on the face. The propagation is stopped 
by the cells walls and whose. The information 
transported by these signals is determined 
on the border of the diffusion 
areas.

\subsection{\label{sec.frqbits} Frequency 
bits layer}

The frequency bits will help to drive 
a process that, through signaling 
in the hierarchy, will generate the 
entropy dimension of the subshift. 
\bigskip

\noindent \textbf{\textit{Symbols:}} \bigskip

The elements of 
$\left(\{0,1\} \cup \{
\begin{tikzpicture}[scale=0.2]
\draw (0,0) rectangle (1,1);
\end{tikzpicture}\}\right)^3$. \bigskip

\noindent \textbf{\textit{Local rules:}}

\begin{itemize}
\item \textbf{First coordinate:} 
non blank symbols for the 
first coordinate of the couple 
are superimposed on and only on 
$np$ cell border positions 
in the copy of the Robinson 
subshift parallel to $\vec{e}^2$
 and $\vec{e}^3$.
\item \textbf{Second coordinate:} 
non blank symbols for the second coordinate of the couple 
are superimposed on and only on order $np$ cell border, $ n \ge 0$ petal positions 
in the copy of the Robinson subshift parallel to $\vec{e}^3$ and $\vec{e}^1$.
\item \textbf{Third coordinate:} non blank symbols for the third coordinate of the couple 
are superimposed on and only on order $np$ cell border positions 
in the copy of the Robinson subshift parallel to $\vec{e}^1$ and $\vec{e}^2$.
\item This means that on a position in $\Z^3$, the number of 
non-blank coordinates correspond to the color of this position in the structure layer: 
\begin{tikzpicture}[scale=0.3]
\fill[gray!90] (0,0) rectangle (1,1);
\draw (0,0) rectangle (1,1);
\end{tikzpicture} means three, 
\begin{tikzpicture}[scale=0.3]
\fill[gray!65] (0,0) rectangle (1,1);
\draw (0,0) rectangle (1,1);
\end{tikzpicture} means two, \begin{tikzpicture}[scale=0.3]
\fill[gray!20] (0,0) rectangle (1,1);
\draw (0,0) rectangle (1,1);
\end{tikzpicture} one and \begin{tikzpicture}[scale=0.3]
\draw (0,0) rectangle (1,1);
\end{tikzpicture} zero.
\item \textbf{Synchronization rule:} on positions 
colored with \begin{tikzpicture}[scale=0.3]
\fill[gray!90] (0,0) rectangle (1,1);
\draw (0,0) rectangle (1,1);
\end{tikzpicture}, the three symbols are equal.
\end{itemize}

\noindent \textbf{\textit{Global behavior:}} \bigskip

For all $n \ge 0$, the 
three dimensional cells having order $np$ are 
attached with the 
same bit called \textbf{frequency bit} and 
denoted $f_{bit} (n)$. Part of the work of 
the machine implemented in the following
will be to impose that $f_{bit} (n) = a_n$ for 
all $n$.

\subsection{\label{sec.grouping.bits} 
Grouping bits}

This layer supports bits, called 
\textbf{grouping bits}, attached to the 
two-dimensional cells of the hierarchy in each copies of the 
Robinson subshift, having order $qp$ for 
some $q$. 
They verify similar rules as the frequency bits, 
except that the Turing machines check that 
the $2^k$th (this corresponds to 
some sparse subset of the 
counter $\overline{0}$ two-dimensional cells)
bit of this sequence is $1$ for all $k$, 
and the other ones are $0$. \bigskip

The grouping bits serve to make 
group of random bits, described later, 
into values of the hierarchical counter. 

\subsection{\label{sec.lin.counter} Linear counter layer}

The construction model of M. Hochman and 
T. Meyerovitch~\cite{Hochman-Meyerovitch-2010} implies 
degenerated behaviors of the Turing machines. 
For this reason, in order to preserve the minimality property, 
we use a counter which alternates all the possible 
behaviors of these machines. We describe this 
counter here. We attribute to each face a type, 
as follows. Each type has 
specific alphabet and rules.

\begin{figure}[h!]
\[
\}$};

\node at (15,-14) {$2,2' : 
\mathcal{A} \times \mathcal{Q} \times 
\mathcal{D} \times \{\texttt{on},
\texttt{off}\}$};

\node at (15, -16) {$3,3' : \{\leftarrow,\rightarrow\} \times \{\texttt{on},
\texttt{off}\}$};

\node at (15,-18) {$5 : \mathcal{A}_c$};
\node at (15,-20) {$6,6',7,7' : 
\mathcal{Q} \times \{\texttt{on},
\texttt{off}\}$};
\end{tikzpicture}\]
\caption{\label{fig.loc.lin.counter} 
Localization of the linear counter 
area (colored yellow). In purple, 
the localization of non-blank symbols in these areas.
The demultiplexing line is represented as a thick 
dark line. The type 
of information supported 
by each face in this layer is listed under 
the picture.}
\end{figure}

\begin{itemize}
\item The top face of three-dimensional 
cells according 
to direction $\vec{e}^3$ 
(that we call \textbf{type $1$ face}) 
supports the following functions: 
\begin{itemize}
\item incrementation of 
the counter value in direction 
$\vec{e}^2$. This value is a word written on the 
intersection of active columns and the 
bottom line of the face 
(Section~\ref{sec.incrementation.linear}). 
\item Extraction of 
some information (Section~\ref{sec.demultiplexing.lines}
and Section~\ref{sec.demultiplexing}) from
this value.
\end{itemize}
\item the other faces serve 
for transporting some informations 
(Section~\ref{sec.information.transfer.lin}) 
to the machine face. This is 
the bottom face of the three-dimensional 
cells according to direction $\vec{e}^3$. 
I serve also for the synchronization 
of the counter values 
on three-dimensional cells having 
the same order and adjacent in directions 
$\vec{e}^1$ and $\vec{e}^3$. The definition 
of these face is as follows:
\begin{itemize}
\item 
The type $4$ (resp. 5) 
faces are defined as connecting 
type $1$ faces in direction $\vec{e}^2$ (resp.
$\vec{e}^1$). 
\item The type $2,3$ faces 
are defined as bottom (resp. top) faces
of three-dimensional cells according to 
direction $\vec{e}^2$.  
\item The type $6,7$ faces 
are defined as top (resp. bottom) faces
of three-dimensional cells according to 
direction $\vec{e}^1$. 
\item The type $2',3',6',7'$ faces 
are defined as connecting two 
type $2,3,6,7$ faces in direction $\vec{e}^3$.
\end{itemize}
\end{itemize}

See Figure~\ref{fig.loc.lin.counter} 
for an illustration. 

We describe the global behavior 
at the end of these sections, and 
the global behavior for the whole layer 
is written in 
Section~\ref{sec.global.behavior.linear}.

In 
Section~\ref{sec.notations.linear} 
we give some notations used in the construction 
of this layer.

\subsubsection{\label{sec.notations.linear} 
Alphabet of the linear counter}

Let $l \ge 1$ be some integer, 
and $\mathcal{A}$, $\mathcal{Q}$ and $\mathcal{D}$ some finite alphabets 
such that $|{\mathcal{A}}|= |{\mathcal{Q}}|
= 2^{2^l}$, and $|\mathcal{D}|=2^{4.2^l-2}$.  
Denote  $\mathcal{A}_c$ the alphabet
$\mathcal{A}  \times \mathcal{Q} ^3
\times \mathcal{D} \times \{\leftarrow,\rightarrow\} \times 
\{\texttt{on},\texttt{off}\}^2$. \bigskip

The alphabet $\mathcal{A}$ will correspond to the alphabet of the working 
tape of the Turing machine after completing it such that is has cardinality 
equal to $2^{2^l}$ (this is possible by adding letters that interact 
trivially with the machine heads, 
and taking $l$ great enough). 
The alphabet $\mathcal{Q}$ will correspond 
to the set of states of the machine (after similar 
completion). The arrows 
will give the direction of the propagation of 
the error signal. The elements of the set 
$\{\texttt{on},\texttt{off}\} ^2$ are a coefficients 
telling which ones of the lines and columns 
are active for computation (which 
has an influence on how each 
computation positions work), 
 and the alphabet 
$\mathcal{D}$ is an artifact so that the 
cardinality of $\mathcal{A}_c$ is $2^{2^{l+3}}$, 
in order for the counter to have a period 
equal to a Fermat number.
\bigskip

Let us fix $s$ some cyclic permutation 
of the set $\mathcal{A}_c$, and 
$\vec{c}_{\texttt{max}}$ some 
element of $\mathcal{A}_c$.

\subsubsection{\label{sec.incrementation.linear}
Incrementation}

\noindent \textbf{\textit{Symbols:}} \bigskip

The elements of 
$({\mathcal{A}_c} \times \{0,1\}) \times \left\{
 if and 
only if the south symbol in 
the tile is 
$\vec{c}_{\texttt{max}}$.
\item this color propagates towards the right while the south symbol 
in the tile is $\vec{c}_{\texttt{max}}$.
When this is not true, the color becomes white. 
\item the white symbol propagates to the left.
\end{enumerate}
\item \textbf{Other positions:} 

\begin{enumerate}
\item the color part of the 
symbol propagates on the gray area 
on Figure~\ref{fig.loc.lin.count.face1}.
\item on the bottom right position of this area (on the top 
of the position on the bottom right corner of the face), 
the color is white if the color of 
the position on bottom is white. 
When this color is salmon (meaning the value of the counter is maximal), 
then, if the top right position 
of the adjacent face $4$ is 
colored salmon, then the considered position is colored white. If not, 
then it is colored salmon. See Figure~\ref{fig.freezing.signal.configurations} 
for an illustration of possible freezing symbol 
configuration on face $1$.
\end{enumerate}

\end{itemize} 

\begin{figure}[ht]
\[\begin{tikzpicture}[scale=0.05]
\begin{scope}
\fill[Salmon] (60,-8) rectangle (64,-4);
\fill[Salmon] (0,0) rectangle (64,4);
\draw (0,0) rectangle (16,4);
\draw (0,0) rectangle (64,64);
\draw (0,0) rectangle (64,64);
\draw (60,-8) rectangle (64,-4);
\end{scope}

\begin{scope}[xshift=84cm]
\fill[Salmon] (0,0) rectangle (64,64);
\draw (0,0) rectangle (16,4);
\draw (0,0) rectangle (64,64);
\draw (0,0) rectangle (64,64);
\draw (60,-8) rectangle (64,-4);
\end{scope}

\begin{scope}[xshift=42cm,yshift=-84cm]
\fill[Salmon] (63,-8) rectangle (67,-4);
\fill[Salmon] (0,0) rectangle (16,4);
\draw (0,0) rectangle (16,4);
\draw (0,0) rectangle (64,64);
\draw (0,0) rectangle (64,64);
\draw (63,-8) rectangle (67,-4);
\draw (57,-8) rectangle (61,-4);
\end{scope}
\end{tikzpicture}\]
\caption{\label{fig.freezing.signal.configurations} Possible 
freezing symbol configurations on face $1$.}
\end{figure}
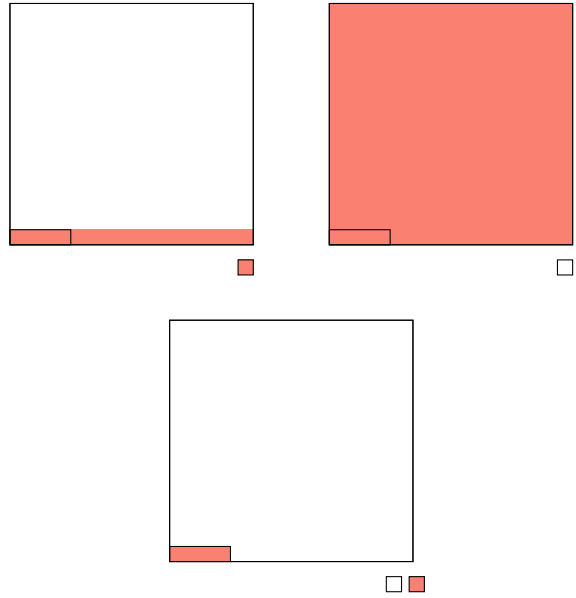

\item \textbf{Incrementation of the counter:} 

On the bottom line 
of the area: 
\begin{itemize}
\item on the leftmost position of the bottom line, if the freezing signal 
of the top left position of the face $4$ under is white, 
then the east 
symbol in the tile is $1$ (meaning that 
the counter value is incremented in the line). 
Else it is $0$ (meaning this is not incremented).
\item on a computation position of this line, 
the part in $\{0,1\}$ of symbol  
of the position on the right is the east 
symbol of the tile. 
The symbol on the left position is the west 
symbol of the tile. The symbol 
on the top position 
is the north symbol of the tile, 
and the symbol 
of the position in type $4$ face under 
is equal to the south symbol of the tile. 
\item between two computation positions, 
the symbol in $\{0,1\}$ is 
transported.
\end{itemize}

\item \textbf{Transfer of state and letter:}
in the active columns, while not in 
the bottom row, 
the coefficient is transported. 
\end{itemize}

\noindent \textbf{\textit{Global behavior:}} 
\bigskip

On type $1$ faces having order $qp$ for 
some $q \ge 0$, the counter value 
is incremented on the bottom line 
using an adding machine coded with local 
rules, except 
when the freezing signal on face $4$ 
under is 
 (we recall that each face 
has specific orientation).
See an illustration on 
Figure~\ref{fig.loc.lin.counter}.

\subsubsection{\label{sec.demultiplexing}
Demultiplexing}

\noindent \textbf{\textit{Symbols:}} \bigskip

The symbols of this sublayer are 
the elements of $\mathcal{A}_c$ and 
a blank symbol.

\noindent \textbf{\textit{Local rules:}} \bigskip

\begin{itemize}
\item \textbf{Localization rule:}
The non-blank symbols are superimposed on 
the active lines of type $1$ faces where
that the value of the $p$-counter is 
$\overline{0}$ and the border bit is $1$.
\item \textbf{Transmission rule:} 
the symbols are transmitted through the columns 
of the face.
\item \textbf{Demultiplexing rule:}
on symbol \begin{tikzpicture}[scale=0.3]
\draw (0,0) rectangle (1,1);
\draw (0,0) -- (1,1);
\end{tikzpicture} in the demultiplexing lines 
sublayer, the symbol in the present sublayer 
is equal to the symbol in the incrementation 
sublayer, except on the bottom 
line. In this case, the north symbol 
of the tile is copied.
\end{itemize}

\noindent \textbf{\textit{Global 
behavior:}} \bigskip

As a consequence of the local rules, 
the value of the counter is copied 
on the diagonal of type $1$ faces having 
order $qp$ and transmitted 
on the active lines of the faces, so 
that it can be transmitted 
to type $5,6,6',7,7'$
faces.

\subsubsection{\label{sec.information.transfer.lin} 
Information transfers}

\noindent \textbf{\textit{Symbols:}} \bigskip

The symbols of this sublayer are 
the elements of the following sets: 
$\mathcal{A}_c \times 
\{\begin{tikzpicture}[scale=0.3] 
\fill[Salmon] (0,0) rectangle (1,1);
\draw (0,0) rectangle (1,1);
\end{tikzpicture}, \begin{tikzpicture}[scale=0.3] 
\draw (0,0) rectangle (1,1);
\end{tikzpicture}\}, \{\begin{tikzpicture}[scale=0.3] 
\fill[Salmon] (0,0) rectangle (1,1);
\draw (0,0) rectangle (1,1);
\end{tikzpicture},\begin{tikzpicture}[scale=0.3] 
\draw (0,0) rectangle (1,1);
\end{tikzpicture}\}, \mathcal{A}_c, 
\mathcal{A} \times 
\mathcal{Q} \times \mathcal{D} 
\times \{\texttt{on},\texttt{off}\}, 
\{\leftarrow,\rightarrow\} \times 
\{\texttt{on},\texttt{off}\}, \mathcal{Q} \times
\{\texttt{on},\texttt{off}\}
$, and 
a blank symbol.

\bigskip

\noindent \textbf{\textit{Local rules:}} 

\begin{itemize}
\item \textbf{Localization rules:}
The non-blank symbols 
are superimposed on the positions 
of faces such 
that the value of the $p$-counter is 
$\overline{0}$ and the border bit is $1$.
According to the face type, the possible 
non-blank symbols and the location on the 
face are as follows: 
\begin{itemize} 
\item type $2,2'$: elements of 
$\mathcal{A} \times \mathcal{Q} 
\times \mathcal{D} \times 
\{\texttt{on},\texttt{off}\}$, 
appearing on active lines.
\item type $3,3'$: elements of 
$\{\leftarrow,\rightarrow\} \times
\{\texttt{on},\texttt{off}\}$, 
appearing on active lines.
\item type $5$: elements of $\mathcal{A}_c$, 
appearing on active lines.
\item type $6,6',7,7'$: elements of 
$\mathcal{Q} \times 
\{\texttt{on},\texttt{off}\}$, appearing on active columns.
\item type $4$: elements of 
$\mathcal{A}_c \times \{\begin{tikzpicture}[scale=0.3] 
\fill[Salmon] (0,0) rectangle (1,1);
\draw (0,0) rectangle (1,1);
\end{tikzpicture},\begin{tikzpicture}[scale=0.3] 
\draw (0,0) rectangle (1,1);
\end{tikzpicture}\}$, appearing on active 
columns, and elements of 
$\{\begin{tikzpicture}[scale=0.3] 
\fill[Salmon] (0,0) rectangle (1,1);
\draw (0,0) rectangle (1,1);
\end{tikzpicture},\begin{tikzpicture}[scale=0.3] 
\draw (0,0) rectangle (1,1);
\end{tikzpicture}\}$, appearing on the other positions of the face.
\end{itemize}

\item \textbf{Information transfer rules:}

\begin{itemize}
\item On type $2,2',3,3',5$ faces, 
the symbols are transmitted through the rows.
\item On type $6,6',7,7'$, the symbols 
are transmitted through columns.
\item On type $4$ faces, the symbols 
in $\mathcal{A}_c \times \{\begin{tikzpicture}[scale=0.3] 
\fill[Salmon] (0,0) rectangle (1,1);
\draw (0,0) rectangle (1,1);
\end{tikzpicture},\begin{tikzpicture}[scale=0.3] 
\draw (0,0) rectangle (1,1);
\end{tikzpicture}\}$ are transmitted through 
columns, and the ones in $\{\begin{tikzpicture}[scale=0.3] 
\fill[Salmon] (0,0) rectangle (1,1);
\draw (0,0) rectangle (1,1);
\end{tikzpicture},\begin{tikzpicture}[scale=0.3] 
\draw (0,0) rectangle (1,1);
\end{tikzpicture}\}$ are transmitted 
through all the face, and the part 
of the symbols in $\mathcal{A}_c \times \{\begin{tikzpicture}[scale=0.3] 
\fill[Salmon] (0,0) rectangle (1,1);
\draw (0,0) rectangle (1,1);
\end{tikzpicture},\begin{tikzpicture}[scale=0.3] 
\draw (0,0) rectangle (1,1);
\end{tikzpicture}\}$ in the set 
$\{\begin{tikzpicture}[scale=0.3] 
\fill[Salmon] (0,0) rectangle (1,1);
\draw (0,0) rectangle (1,1);
\end{tikzpicture},\begin{tikzpicture}[scale=0.3] 
\draw (0,0) rectangle (1,1);
\end{tikzpicture}\}$ is equal to the symbols 
outside the column.
\end{itemize}

\item \textbf{Connection rules:}

\begin{itemize}
\item Across the line connecting 
face $2,3,6,7$ to 
face $2',3',6',7'$, the symbols are equal.
\item Across the line connecting 
face $4$ to face $1$, on 
the active columns, 
the symbols in $\mathcal{A}_c$ on face $4$ 
is equal to the north symbol of the 
tile on face $1$.
\item Across the line connecting
faces $2$ and $2'$ 
(resp. $3$ and $3'$,$6$ and $6'$,$7$ and 
$7'$,$5$) to face $1$, 
if the symbol on the north of the tile 
on face $1$ in the incrementation 
sublayer (resp. incrementation, 
demultiplexing, demultiplexing, 
and demultiplexing layers) is written 
$$w=(a,q^1,q^2,q^3,d,f,o^1,o^2) 
\in \mathcal{A} 
\times \mathcal{Q}^3 \times \mathcal{D}
\times \{\rightarrow,\leftarrow\}
\times 
\{\texttt{on},\texttt{off}\}^2,$$ then the corresponding 
symbol on face $2$ is equal to $(a,q^1,d,o^1)$ 
(resp. $(f,o^1)$, $(q^2,o^2)$, $(q^3,o^2)$, 
$w$).
\end{itemize}

\end{itemize}

\noindent \textbf{\textit{Global behavior:}} \bigskip

In this sublayer, the information 
of the counter value supported by face $1$ 
having order $qp$, $q \ge 0$,
after its incrementation 
in the first row of this face, is splitted. 
The parts are transmitted to faces $2,3,6,7$ where 
they will be used by the computing machines.
Face $4$ transfers information for the 
incrementation mechanism, and faces $2',3',
6',7',5$ allow the synchronization of the counter 
of adjacent three-dimensional cells having 
the same order in directions $\vec{e}^1$ 
and $\vec{e}^2$.

\subsubsection{\label{sec.global.behavior.linear}Global behavior}

On the top faces of the order $qp$ three-dimensional cells, $q  \ge 0$, 
the counter value represents 
a length $mq$ word on 
alphabet  
$\mathcal{A}_c$. It is incremented 
when going from a three-dimensional cell 
to the adjacent one in direction $\vec{e}^2$, 
except once in a cycle, when the counter 
reaches its maximal value. This time, the 
value is incremented in two steps. 
Since the number of active columns is 
$2^{mq}$ (Corollary~\ref{cor.nb.active}),
the period of the counter (meaning 
the number of time one has to jump 
from a three-dimensional cell to the adjacent 
one in direction $\vec{e}^2$ to see the 
same value) is equal to 

$$(2^{8.2^l})^{2^{mq}} +1 
= 2^{2^{l+3+mq}}+1.$$

Since $m$ and $l$ are fixed, all these 
numbers, for $q \ge 0$, are two by two different.
The important fact about them is 
that there are all co-prime 
(Lemma~\ref{lm.fermat.numbers}).

Moreover, for two adjacent cells in 
directions $\vec{e}^1$ or $\vec{e}^3$, 
their counter values are equal.

\subsection{\label{sec.machine.dim.entropique} 
Machines layer}

In this section, we present the implementation of 
Turing machines.

The support of this layer is the 
bottom face of three-dimensional cells 
having order $qp$ for some $q \ge 0$, 
according to direction $\vec{e}^3$. 

In order 
to preserve minimality, simulate each possible 
degenerated behavior of the machines, we use 
an adaptation of 
the Turing machine model 
as follows.
The bottom line 
of the face is initialized with 
symbols in $\mathcal{A} \times \mathcal{Q}$
(we allow multiple heads). The 
sides of the face are ''initialized'' 
with elements of $\mathcal{Q}$ (we allow 
machine heads to enter at each step on 
the two sides).
As usual in this type of constructions, 
the tape is not connected. 
Between two computation positions, the information 
is transported. In our model, each 
computation position takes as 
input up to four symbols coming 
from bottom and the sides, and outputs 
up to two symbols to the top and sides.
Moreover, we add special states 
to the definition of Turing machine, 
in order to manage the presence 
of multiple machine heads.
We describe this model in 
Section~\ref{subsec.machines.min}, 
and then show how to implement 
it with local rules in Section~\ref{subsec.machines.local.rules}.

In Section~\ref{sec.computation.active.lines} 
we describe signals which 
activate or deactivate lines and columns 
of the computation areas. 
These lines and columns are used by the 
machine if and only if they are active.
These signals are determined by the 
value of the linear counter.

The machine has to take into 
account only computations starting 
on well initialized tape and no machine 
head entering during 
computation. For this purpose, 
we use error signals, described in 
Section~\ref{sec.error.signals.min}.

\subsubsection{\label{sec.computation.active.lines} Computation-active 
lines and columns}

In this section we describe the 
first sublayer.

\noindent \textbf{\textit{Symbols:}} \bigskip

Elements of $\{\texttt{on},\texttt{off}\}^2$, 
of $\{\texttt{on},\texttt{off}\}$ 
and a blank symbol. \bigskip

\noindent \textbf{\textit{Local rules:}} \bigskip

\begin{itemize}
\item \textbf{Localization rules:}

\begin{itemize}
\item 
the non-blank symbols are superimposed 
on active lines and active columns positions 
on a the bottom face according to direction 
$\vec{e}^3$, with $p$-counter equal 
to $\overline{0}$ 
and border bit equal to $1$.
\item the couples are superimposed on 
intersections of an active line 
and an active column, the simple 
symbols are superimposed on the other 
positions.
\end{itemize}
\item \textbf{Transmission rule:}
the symbol is transmitted 
along lines/columns. On 
the intersections the second symbol 
is equal to the symbol on the column. 
The first one is equal 
to the symbol on the line.
\item \textbf{Connection rule:}
Across the line 
connecting type $6,7$ (resp. 
$2,3$) face and the machine 
face, and on positions where the bottom 
line intersects with active columns, 
the symbol in $\{\texttt{on},
\texttt{off}\}$ is equal to the first (resp. second)
element of the couple in 
$\{\texttt{on},
\texttt{off}\}$ in this layer.
\end{itemize}

\noindent \textbf{\textit{Global 
behavior:}} \bigskip

On the machine face of any order $qp$ 
three-dimensional cell, the active 
columns and lines are colored 
with a symbol in $\{\texttt{on},\texttt{off}\}$
which is determined by the value of the 
counter on this cell. We call 
columns (resp.lines) colored 
with $\texttt{on}$ \textbf{computation-active}
columns (resp. lines). 

\subsubsection{\label{subsec.machines.min} Adaptation 
of computing machines 
model to minimality property}

In this section we present the 
way computing machines work in our construction. 
The model we use is adapted in order 
to have the minimality property, 
and is defined as follows: 

\begin{definition}
A \textbf{computing machine} $\mathcal{M}$
is some tuple $=(\mathcal{Q}, \mathcal{A}, 
\delta, q_0, q_e,q_s, \#)$, 
where $\mathcal{Q}$ is the state set, $\mathcal{A}$ 
the alphabet, $q_0$ the initial state, and $\#$ 
is the blank symbol, 
and $$\delta : \mathcal{A} \times \mathcal{Q} 
\rightarrow
\mathcal{A} \times \mathcal{Q}   
\times \{\leftarrow,\rightarrow,\uparrow\}.$$

The other elements $q_e,q_s$ are 
states in $\mathcal{Q}$, such that 
for all $q \in \{q_e,q_s\}$, 
and for all $a$ in $\mathcal{A}$, 
$\delta(a,q) = (a,q,\uparrow)$.

\end{definition}

The special states $q_e,q_s$ in this definition 
have the following meaning: 

\begin{itemize}
\item the error state $q_e$: a machine head 
enters this state when 
it detects an error, 
or when it collides with another 
machine head.

This state is not forbidden 
in the subshift, but this is replaced 
by the sending of an error signal, and forbidding the coexistence of the error 
signal with a well initialized tape. 
The machine stops moving when 
it enters this state.

\item shadow state $q_s$: 
because of multiple heads, we need to specify 
some state which does not act on the 
tape and does not interact with 
the other heads (acting thus as a blank 
symbol). The initial tape will 
have a head in initial state on 
the leftmost position and shadow 
states on the other ones.
\end{itemize}

Any Turing machine can be transformed 
in such a machine by adding some 
state $q_s$ verifying 
the corresponding properties 
listed above. 

Moreover, we add elements to the 
alphabet which interact 
trivially with the machine states. 
This means that for any 
added letter $a$ and any state $q$, 
$\delta(a,q) = (a,q,\uparrow)$,
and then machines states which interact 
trivially with the new alphabet, 
so that the cardinality of 
the state set and the alphabet 
are $2^{2^l}$.

When the machine is well initialized, 
none of these states and letters will be 
reached. Hence the computations 
are the ones of the initial machine.
As a consequence, one can consider that the 
machine we used has these properties.

In this section, we use a machine 
which does the following operations for 
all $n$ 
\begin{itemize}
\item write $1$ on position 
$p_n$ if $n= 2^k$ for some $k$ 
and $0$ if not.
\item write $a^{(n)}_k$ 
on positions $p_k$, $k = 1 ... n$.
\end{itemize}
The sequence $a$ is the $\Pi_1$-computable 
sequence defined at the beginning 
of the construction.
The sequence $(a^{(n)}_k)$ 
is a computable sequence such that 
for all $k$, $a_n = \inf_n a^{(n)}_k$. 
For all $n$, the position $p_n$ is 
defined to be the number of the first a
active
column from left to right which 
is just on the right of an order $n$ 
two dimensional cell on a face, amongst 
active columns.

\subsubsection{\label{subsec.machines.local.rules}Implementation 
of the machines}

In this section, we describe the second 
sublayer of this layer.

\noindent \textbf{\textit{Symbols:}}

The symbols are elements of the sets 
$\mathcal{A} \times \mathcal{Q}$,
in $\mathcal{A}$, 
$\mathcal{Q}^2$, 
and a blank symbol. \bigskip

\noindent \textbf{\textit{Local rules:}} \bigskip

\begin{itemize}
\item \textbf{Localization:}
the non-blank symbols are superimposed on 
the bottom faces of the three-dimensional cells, according to direction $\vec{e}^3$.
On these faces, they are superimposed 
on positions of 
computation-active columns and rows 
with $p$-counter value equal to $\overline{0}$ 
and border bit equal to $1$.
More precisely: 
\begin{itemize}
\item the possible symbols for 
computation active columns are elements 
of the sets $\mathcal{A}$, $\mathcal{A} \times \mathcal{Q}$
and elements of 
$\mathcal{A} \times \mathcal{Q}$ are on 
the intersection with computation-active 
rows.
\item other positions are 
superimposed with an element of 
$\mathcal{Q}^2$. See an illustration on 
Figure~\ref{fig.loc.machine}.
\end{itemize}
\item along the rows and columns, 
the symbol is transmitted while not 
on intersections of computation-active 
columns and rows.

\begin{figure}[ht]
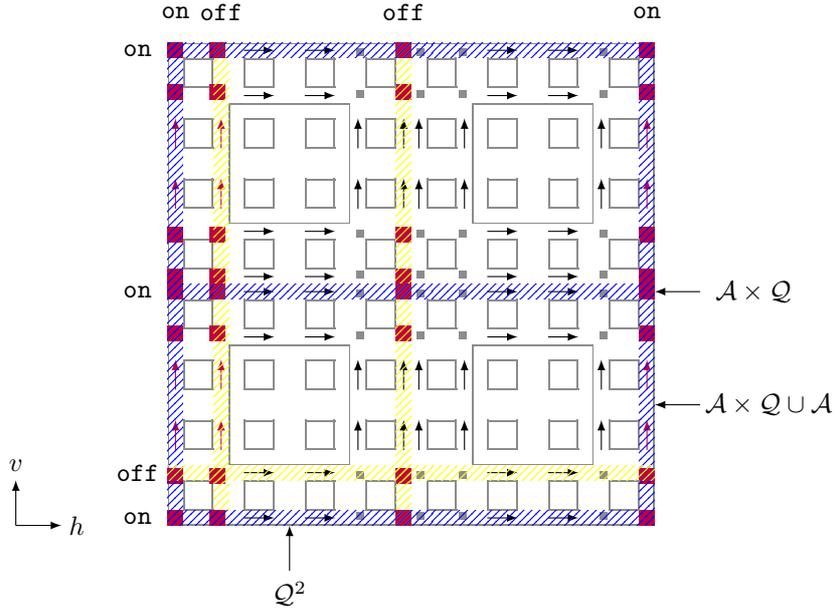

\[
\]
\caption{\label{fig.loc.machine} Localization 
of the machine symbols on the bottom faces 
of the cubes, according to the direction $\vec{e}^3$. Blue columns (resp. rows) symbolize 
computation-active columns (resp. rows).}
\end{figure}

\item 
\textbf{Connection with the counter:} 
On active computation positions that 
are in the bottom
line of this area, 
the symbols 
are equal to the corresponding 
subsymbol in $\mathcal{A} \times 
\mathcal{Q}$ on face 2 of the 
linear counter. On the leftmost 
(resp. rightmost) column of the area that 
are in a computation-active line, the symbol 
is equal to the corresponding 
symbol on face 7 (resp. 6) of the 
linear counter.

\item \textbf{Computation positions rules:}

Consider 
some computation position 
which is the intersection of a 
computation-active row and a 
computation-active column.

For such a position, 
the \textbf{inputs} include: 

\begin{enumerate}
\item the symbols written 
on the south position (or 
on the corresponding position on 
face $2$ when on the bottom line), 
\item 
the first symbol written on the west position 
(or the symbol on the corresponding 
position on face $7$ when on the west border
of the machine face), 
\item and the second symbol 
on the east position (or the 
symbol on the corresponding 
position on face $6$ when on the west border
of the machine face). 
\end{enumerate} 

The \textbf{outputs}
include: 

\begin{enumerate}
\item the symbols written on the 
north position (when not in 
the topmost row), 
\item the second symbol of the west position (when 
not in the leftmost column), 
\item and the first symbol on 
the east position (when not on the 
rightmost column).
\end{enumerate}

Moreover, on the bottom line, the inputs 
from inside the area are always the 
shadow state $q_s$.

See Figure~\ref{fig.schema.inputs.outputs} 
for an illustration.  

\begin{figure}[ht]
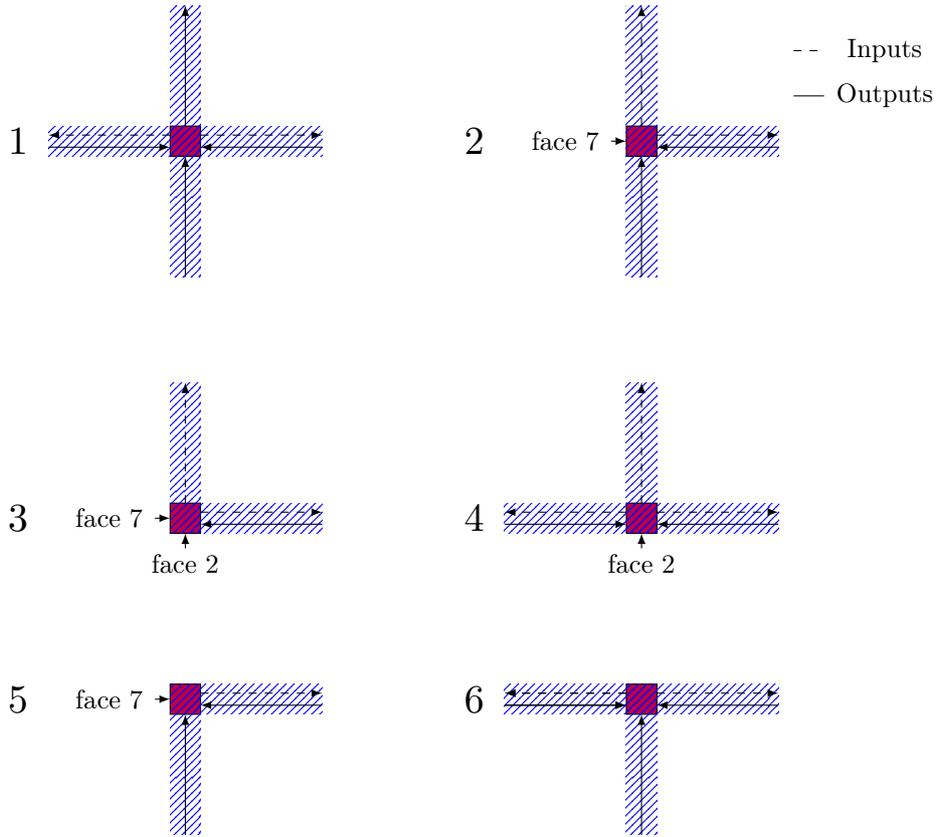

\[
\]
\caption{\label{fig.schema.inputs.outputs}
Schema of the inputs and outputs 
directions when inside the area (1) 
and on the border of the area (2,3,4,5,6).}
\end{figure}

On the first row, all the inputs 
are determined by the counter and 
by the above rule. Then, each 
computation-active row is 
determined from the adjacent one 
on the bottom and the value of the 
linear counter on faces $6$ and $7$, by 
the following rules. These rules determine, on 
each computation position, the 
outputs from the inputs: 

\begin{enumerate}
\item \textbf{Collision between machine heads:} 
if there are at least 
two elements of $\mathcal{Q} \backslash \{q_s\}$ 
in the inputs, then the 
computation position is superimposed 
with $(a,q_e)$. The output on the top 
(when this exists)
is $(a,q_e)$, where $a$ is the letter input below. 
The outputs on the sides are $q_s$.
When there is a unique symbol in 
$\mathcal{Q} \backslash \{q_s\}$ in the inputs, 
this symbol is called the machine head 
state (the symbol $q_s$ is not considered 
as representing a machine head).
\item \textbf{Standard rule:}
\begin{enumerate}
\item when the head input comes from a side, 
then the functional position is superimposed with 
$(a,q)$. The above output is the couple $(a,q)$, 
where $a$ is the letter input under, and $q$ the head input.
The other outputs are $q_s$.
See Figure~\ref{fig.standard.rule.0} for 
an illustration of this rule.

\begin{figure}[ht]
\[\begin{tikzpicture}[scale=0.2]
\fill[purple] (-1,-1) rectangle (1,1);
\draw (-1,-1) rectangle (1,1);
\fill[pattern = north east lines, pattern color = blue] 
(-9,-1) rectangle (9,1);
\fill[pattern = north east lines, pattern color = blue] 
(-1,-9) rectangle (1,9);
\draw[-latex] (-9,0) -- (-1,0);
\draw[-latex] (0,-9) -- (0,-1);
\draw[-latex] (0,1) -- (0,9);
\node at (-10,0) {$q$};
\node at (0,-10) {$a$};
\node at (0,10) {$(a,q)$};
\end{tikzpicture}\]
\caption{\label{fig.standard.rule.0} Illustration 
of the standard rules (1).}
\end{figure}
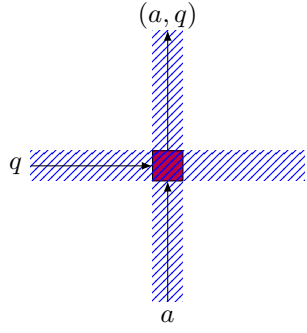

\item when the head input comes from under, 
the above output is:

\begin{itemize}
\item $\delta_1 (a,q)$ when 
$\delta_3 (a,q)$ is in 
$\{\rightarrow,\leftarrow\}$ 
\item and 
$(\delta_1(a,q),\delta_2 (a,q))$ when 
$\delta_3 (a,q) = \uparrow$. 
\end{itemize}

The head output is in the 
direction of $\delta_3 (a,q)$ (when 
this output direction exists) and equal to 
$\delta_2 (a,q)$
when it is in $\{\rightarrow,\leftarrow\}$. 
The other outputs are $q_s$.
See Figure~\ref{fig.standard.rule.1} 
for an illustration.

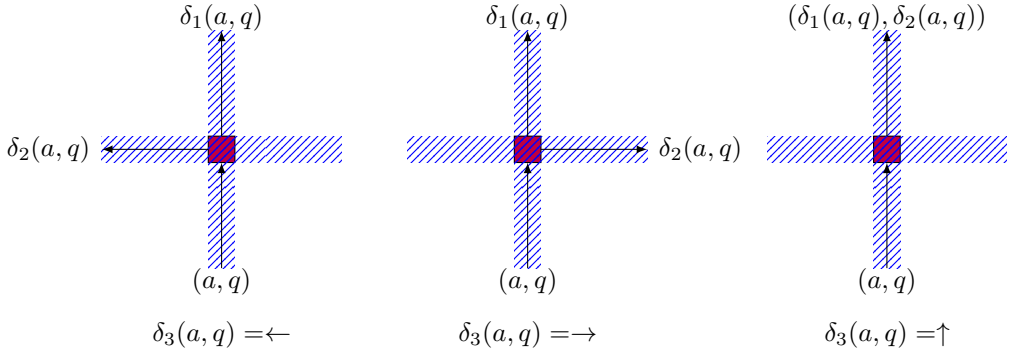
\begin{figure}[ht]
\[\begin{tikzpicture}[scale=0.175]
\begin{scope}
\fill[purple] (-1,-1) rectangle (1,1);
\draw (-1,-1) rectangle (1,1);
\fill[pattern = north east lines, pattern color = blue] 
(-9,-1) rectangle (9,1);
\fill[pattern = north east lines, pattern color = blue] 
(-1,-9) rectangle (1,9);
\draw[-latex] (-1,0) -- (-9,0);
\draw[-latex] (0,-9) -- (0,-1);
\draw[-latex] (0,1) -- (0,9);
\node at (-13,0) {$\delta_2 (a,q)$};
\node at (0,-10) {$(a,q)$};
\node at (0,10) {$\delta_1 (a,q)$};
\node at (0,-14) {$\delta_3 (a,q) = \leftarrow$};
\end{scope}

\begin{scope}[xshift=23cm]
\fill[purple] (-1,-1) rectangle (1,1);
\draw (-1,-1) rectangle (1,1);
\fill[pattern = north east lines, pattern color = blue] 
(-9,-1) rectangle (9,1);
\fill[pattern = north east lines, pattern color = blue] 
(-1,-9) rectangle (1,9);
\draw[-latex] (1,0) -- (9,0);
\draw[-latex] (0,-9) -- (0,-1);
\draw[-latex] (0,1) -- (0,9);
\node at (13,0) {$\delta_2 (a,q)$};
\node at (0,-10) {$(a,q)$};
\node at (0,10) {$\delta_1 (a,q)$};
\node at (0,-14) {$\delta_3 (a,q) = \rightarrow$};
\end{scope}

\begin{scope}[xshift=50cm]
\fill[purple] (-1,-1) rectangle (1,1);
\draw (-1,-1) rectangle (1,1);
\fill[pattern = north east lines, pattern color = blue] 
(-9,-1) rectangle (9,1);
\fill[pattern = north east lines, pattern color = blue] 
(-1,-9) rectangle (1,9);
\draw[-latex] (0,-9) -- (0,-1);
\draw[-latex] (0,1) -- (0,9);
\node at (0,10) {$(\delta_1(a,q),\delta_2 (a,q))$};
\node at (0,-10) {$(a,q)$};
\node at (0,-14) {$\delta_3 (a,q) = \uparrow$};
\end{scope}
\end{tikzpicture}\]
\caption{\label{fig.standard.rule.1}
Illustration of the standard rules (2).}
\end{figure}
\end{enumerate}
\item \textbf{Collision with border:} 
When the output 
direction does not exist, the output 
is $(a,q_e)$ on the top, and the outputs 
on the side is $q_s$.
The computation position is 
superimposed with $(a,q)$.  
\item \textbf{No machine head:} when all 
the inputs in $\mathcal{Q}$ are 
$q_s$, and the above output is 
in $\mathcal{A}$ and equal to its input $a$.
\end{enumerate} 

\end{itemize}

\noindent {\textbf{\textit{Global behavior:}} \bigskip

On the bottom faces according to $\vec{e}^3$ 
of order $qp$ three-dimensional cells, we implemented some computations using our
modified Turing machine model. This model 
allows multiple 
machine heads on the initial tape and 
entering in each row. When there 
is a unique machine head on the leftmost 
position of the bottom line and only blank 
letters on the initial tape, and 
all the lines and columns are computation-active, then the computations 
are as intended. This means that a the 
machine write successively the 
bits $a^{(n)}_k$ 
on the $p_k$th column of its tape (in 
order to impose the value of the frequency bits),
 and moreover 
writes $1$ if $k$ is a power of two, and $0$ 
if not, in order to impose the value of 
the grouping bits. It enters in the error 
state $q_e$ when it detects an error.

When this is not the case, the computations 
are determined by the rules 
giving the outputs on computation positions 
from the inputs. When there is a collision 
of a machine head with the border, 
it enters in state $q_e$. When 
heads collide, they fusion into a unique 
head in state $q_e$.
In Section~\ref{sec.error.signals.min}, 
we describe signal errors that helps us
to take into account the computations 
only when the initial tape is 
empty, all the lines and columns are computation-active, and there is no machine 
head entering 
on the sides of the area.

\subsection{\label{sec.error.signals.min} 
Error signals}

In order to simulate any behavior that happens in infinite areas in 
finite ones, we need error signals. This means that when 
the machine detects an error (enters a halting state), it sends a signal 
to the initialization line to verify it was well initialized: that 
the tape was empty, that 
no machine head enters on the sides, 
and that the machine was initially in the leftmost position 
of the line, and in initialization state. Moreover, for the reason 
that we need to compute precisely the number of possible 
initial tape contents, we allow initialization of multiple heads. 
The first error signal will detect the first position from left 
to right in the top row of the area 
where there is a machine head in error state 
or the active column is $\texttt{off}$.
This position only will trigger an 
error signal (described 
in Section~\ref{subsec.error.signal}, according 
to the direction specified just above when it in the top line of the area 
(the word of arrows specifying the direction is a part of the counter). The empty 
tape signal detects if the 
initial tape was empty, and that there 
was a unique machine head on the leftmost 
position in initialization state $q_0$. 
The empty tape and first error
signals are described in Section~\ref{subsec.empty.tape.first.machine}. 
The empty sides signal, described in 
Section~\ref{subsec.empty.sides} detects if 
there is no machine head entering 
on the sides, and that the $\texttt{on}/
\texttt{off}$ signals on the sides 
are all equal to $\texttt{on}$.
The error signal is taken into account 
(meaning forbidden) when the empty tape, 
and empty sides signals are detecting 
an error.

\subsubsection{ \label{subsec.empty.tape.first.machine}
Empty tape, first error signals}

\noindent \textbf{\textit{Symbols:}} \bigskip

The first sublayer has the 
following symbols: 

$\begin{tikzpicture}[scale=0.3]
\fill[Salmon] (0,0) rectangle (1,1);
\draw (0,0) rectangle (1,1);
\end{tikzpicture}, \begin{tikzpicture}[scale=0.3]
\fill[YellowGreen] (0,0) rectangle (1,1);
\draw (0,0) rectangle (1,1);
\end{tikzpicture}$, symbols in 
$\left\{\begin{tikzpicture}[scale=0.3]
\fill[Salmon] (0,0) rectangle (1,1);
\draw (0,0) rectangle (1,1);
\end{tikzpicture}, \begin{tikzpicture}[scale=0.3]
\fill[YellowGreen] (0,0) rectangle (1,1);
\draw (0,0) rectangle (1,1);
\end{tikzpicture}\right\} ^2 $, and 
a blank symbol $
\begin{tikzpicture}[scale=0.3]
\draw (0,0) rectangle (1,1);
\end{tikzpicture}$. \bigskip

\noindent \textbf{\textit{Local rules:}}  

\begin{itemize}
\item \textbf{Localization:} 
non blank symbols are superimposed on the 
top line and bottom line of the
border of the machine face as a two-dimensional cell.
\item \textbf{First error signal:} this signal 
detects the first error on the top 
of the functional area, from 
the left to the right, 
where an error means a symbol 
$\texttt{off}$ or $q_e$. The rules are: 
\begin{itemize}
\item the topmost leftmost position of the top 
line of the cell is 
marked with $\begin{tikzpicture}[scale=0.3]
\fill[YellowGreen] (0,0) rectangle (1,1);
\draw (0,0) rectangle (1,1);
\end{tikzpicture}$. 
\item the symbol $\begin{tikzpicture}[scale=0.3]
\fill[YellowGreen] (0,0) rectangle (1,1);
\draw (0,0) rectangle (1,1);
\end{tikzpicture}$ propagates the the left, 
and propagates to the right
while the position under is not in error 
state $q_e$ and the symbol in $\{\texttt{on},
\texttt{off}\}$ is $\texttt{on}$.
\item when on position in the top row with an 
error, 
the position on the top right is colored 
$\begin{tikzpicture}[scale=0.3]
\fill[Salmon] (0,0) rectangle (1,1);
\draw (0,0) rectangle (1,1);
\end{tikzpicture}$.
\item the symbol $\begin{tikzpicture}[scale=0.3]
\fill[Salmon] (0,0) rectangle (1,1);
\draw (0,0) rectangle (1,1);
\end{tikzpicture}$ propagates to the right, 
and propagates to the left while 
the positions under is not in error state.
\end{itemize} 
\textbf{Empty tape signal:} this signal
detects if the initial tape of the machine is empty. 
This means that it is 
filled with the symbol $(\#,q_s)$ except 
on the leftmost position 
where it has to be $(\#,q_0)$. The signal detects 
the first symbol which is different from $(\#,q_s)$ or $(\#,q_0)$ 
when on the left, 
from left to right (first color), and from left to right 
(second color).
Concerning the first color: 
\begin{itemize}
\item on the bottom row, the leftmost position is colored with 
$
$. The bottom row is colored 
with a couple of color. The first one 
separates the row in two parts. The limit 
between the two parts is the first occurrence 
\textit{from left to right} 
of a symbol different from $(\#,q_s)$ or $(\#,q_0)$ when on 
the leftmost computation position above. The 
second color of the couple separates 
two similar parts \textit{from 
right to left}.

See Figure~\ref{fig.error.signal.dim.entropique}
for an illustration.

\subsubsection{\label{subsec.empty.sides}
Empty sides signals}

This second sublayer has the same 
symbols as the first sublayer.
The principle of the local rules 
is similar: the leftmost (resp. rightmost) 
column is splitted 
in two parts, the top one colored 
$\begin{tikzpicture}[scale=0.3]
\fill[YellowGreen] (0,0) rectangle (1,1);
\draw (0,0) rectangle (1,1);
\end{tikzpicture}$ and 
the bottom one colored 
$\begin{tikzpicture}[scale=0.3]
\fill[Salmon] (0,0) rectangle (1,1);
\draw (0,0) rectangle (1,1);
\end{tikzpicture}$.
The limit is the first position 
from top to bottom where 
the corresponding symbol across 
the limit with face $7$ (resp. face $6$) 
is $(q_s,\texttt{on})$ (resp. $q_s$).
Moreover, the bottom row
of the machine face is colored with 
a couple of colors. This couple 
is constant over the row. 
The first one of the colors is equal to the color 
at the bottom of the leftmost column. 
The second one is the color at the bottom 
of the rightmost one. 

See an illustration on Figure~\ref{fig.error.signal.dim.entropique}.

\subsubsection{\label{subsec.error.signal} 
Error signals}

\noindent \textbf{\textit{Symbols:}} \bigskip

$
$ on the same position.
\end{itemize}

\noindent \textbf{\textit{Global behavior:}} \bigskip

When there is a machine head in 
error state in the top row, the first one 
(from left to right) 
sends an error signal to the bottom row (see 
Figure~\ref{fig.error.signal.dim.entropique}) 
in the direction indicated by the arrow 
on the corresponding position on face $3$.
This signal is forbidden 
if the machine is well initialized. This means 
that the 
working tape of the machine is 
empty in the bottom row. Moreover, there is 
a unique machine in state $q_0$ in the leftmost position of the bottom row, 
all the lines and columns are $\texttt{on}$ 
and there is no machine entering on the sides. 
This means that the error signal is taken 
into account only when the computations 
have the intended behavior.

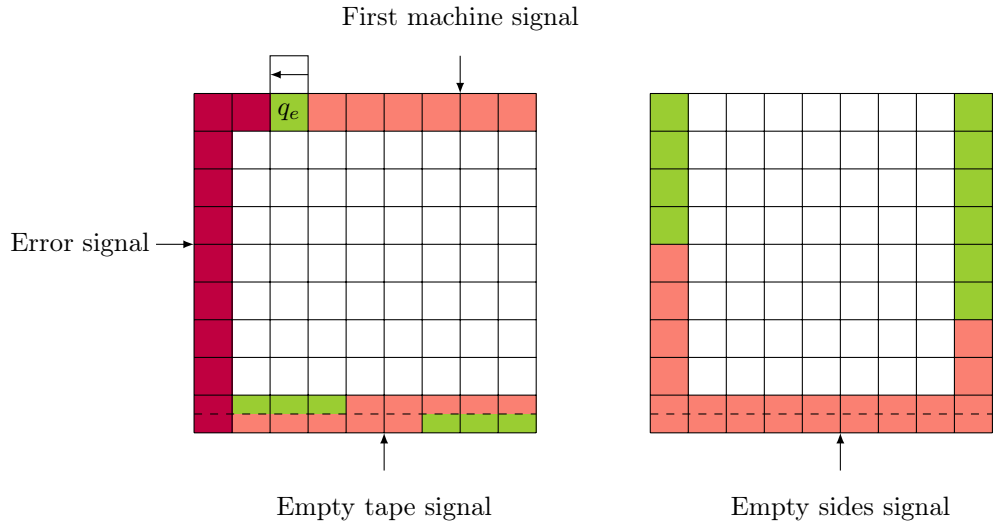
\begin{figure}[ht]
\[\begin{tikzpicture}[scale=0.5]
\begin{scope}
\fill[YellowGreen] (0,8) rectangle (3,9);
\fill[Salmon] (3,8) rectangle (9,9);
\fill[Salmon] (0,0) rectangle (6,0.5);
\fill[YellowGreen] (6,0) rectangle (9,0.5);
\fill[YellowGreen] (0,0.5) rectangle (4,1);
\fill[Salmon] (4,0.5) rectangle (9,1);
\fill[purple] (0,0) rectangle (1,9);
\fill[purple] (0,8) rectangle (2,9);
\draw (2,9) rectangle (3,10);
\draw[-latex] (3,9.5) -- (2,9.5);
\node at (2.5,8.5) {$q_e$};
\draw (0,0) grid (9,9); 
\draw[dashed] (0,0.5) -- (9,0.5);
\draw[-latex] (-1,5) -- (0,5);
\node at (-3,5) {Error signal};
\draw[-latex] (5,-1) -- (5,0);
\node at (5,-2) {Empty tape signal};
\draw[-latex] (7,10) -- (7,9) ;
\node at (7,11) {First machine signal};
\end{scope}

\begin{scope}[xshift=12cm]
\fill[Salmon] (0,0) rectangle (9,1);
\fill[Salmon] (0,0) rectangle (1,5);
\fill[YellowGreen] (0,5) rectangle (1,9);

\fill[Salmon] (8,0) rectangle (9,3);
\fill[YellowGreen] (8,3) rectangle (9,9);
\draw (0,0) grid (9,9); 

\draw[dashed] (0,0.5) -- (9,0.5);

\draw[-latex] (5,-1) -- (5,0);
\node at (5,-2) {Empty sides signal};
\end{scope}
\end{tikzpicture}\]
\caption{\label{fig.error.signal.dim.entropique} Illustration 
of the propagation of an error signal, 
where are represented the empty tape, 
first machine and empty sides signals.}
\end{figure}

Because for any $n$ and any configuration, there 
exists some three-dimensional cell in which 
the machine is well initialized 
(because of the presence of counters). 
For any $k$, there 
exists some $n$ such that 
the machine has enough time to 
check the $k$th frequency bit and grouping 
bit.
This means that in any configuration 
of the subshift, 
the $k$th frequency bit is equal to $a_k$, 
and the $2^k$th grouping bit is $1$, 
and the other ones are $0$.

\subsection{\label{sec.hier.counter} Hierarchical counter layer}

This layer has two sublayers.

\subsubsection{Grouping border bits}

This sublayer supports bits on the top faces of 
the three-dimensional cells, according to $\vec{e}^1$, on positions that 
are not in the border of a two-dimensional cell with grouping bit
bit $1$. The symbol are $0,1$ 
and propagate to neighbors while these neighbors have not grouping bit equal to $1$.
When near the border of a cell with grouping 
bit equal to $1$, the bit is $1$ if it is the border of the face 
of the three-dimensional cell. 
Else, this bit is $0$. 

This means that the only functional positions which have bit $1$ 
are the functional position on a face of an order $2^k p$ 
three-dimensional cell which are not in an order $2^j p$ two-dimensional 
cell with $j<k$. \bigskip

We use a similar mechanism as the 
one presented in Section~\ref{sec.orientation} so 
that \textbf{the order $n$ two dimensional cells have access to the 
grouping bit and the frequency bit of the two-dimensional order $n+1$
cell just above in the hierarchy, as well 
as its $p$-addresses} (for 
the addresses there is not border rule or 
$\epsilon$ symbol).

\subsubsection{Hierarchy bits}

\noindent \textbf{\textit{Symbols:}} \bigskip

\begin{tikzpicture}[scale=0.3]
\fill[purple] (0,0) rectangle (1,1);
\draw (0,0) rectangle (1,1);
\end{tikzpicture}, \begin{tikzpicture}[scale=0.3]
\fill[gray!90] (0,0) rectangle (1,1);
\draw (0,0) rectangle (1,1);
\end{tikzpicture}, elements of $\left\{\begin{tikzpicture}[scale=0.3]
\fill[purple] (0,0) rectangle (1,1);
\draw (0,0) rectangle (1,1);
\end{tikzpicture}, \begin{tikzpicture}[scale=0.3]
\fill[gray!90] (0,0) rectangle (1,1);
\draw (0,0) rectangle (1,1);
\end{tikzpicture}\right\}^2$ and a blank symbol.
\bigskip

\noindent \textbf{\textit{Local rules:}} 

\begin{itemize}
\item \textbf{Localization:} the non-blank symbols 
of this layer are localized on the petal positions 
of the copy of the Robinson subshift which is parallel to $\vec{e}^2$ and $\vec{e}^3$.
\item the couples are superimposed on and only on transformation positions.
\item the symbol is transmitted through arrows 
and intersection of order $n$ and $n+1$ petals such that 
the order $n$ one has counter $0$ or the value of the $p$-counter is not $\overline{p-1}$.
The other intersection positions are \textbf{transformation positions}.
\item \textbf{Transformation:} 
on a transformation position, we have 
the following rules: 
\begin{itemize}
\item if the grouping bit of the above cell 
in the hierarchy is $1$, then we have the following: 
\begin{itemize}
\item in the following cases, the second color is 
$};
\end{scope}

\begin{scope}[xshift=84cm]
\fill[gray!90] (16,16) rectangle (48,48); 
\fill[white] (16.5,16.5) rectangle (47.5,47.5);
\fill[gray!20] (32,0) rectangle (32.5,32.5);
\fill[gray!20] (32,32.5) rectangle (64,32);

\fill[gray!20] (8,8) rectangle (8.5,24); 
\fill[gray!20] (8,8) rectangle (24,8.5);
\fill[gray!20] (8,23.5) rectangle (24,24);  
\fill[gray!20] (23.5,8) rectangle (24,24);

\fill[gray!20] (40,8) rectangle (40.5,24); 
\fill[gray!20] (40,8) rectangle (56,8.5);
\fill[gray!20] (40,23.5) rectangle (56,24);  
\fill[gray!20] (55.5,8) rectangle (56,24);

\fill[gray!20] (8,40) rectangle (8.5,56); 
\fill[gray!20] (8,40) rectangle (24,40.5);
\fill[gray!20] (8,55.5) rectangle (24,56);  
\fill[gray!20] (23.5,40) rectangle (24,56);

\fill[gray!20] (40,40) rectangle (40.5,56); 
\fill[gray!20] (40,40) rectangle (56,40.5);
\fill[gray!20] (40,55.5) rectangle (56,56);  
\fill[gray!20] (55.5,40) rectangle (56,56);

\fill[gray!90] (4,4) rectangle (4.5,12); 
\fill[gray!90] (4,4) rectangle (12,4.5); 
\fill[gray!90] (4.5,11.5) rectangle (12,12); 
\fill[gray!90] (11.5,4.5) rectangle (12,12); 

\fill[gray!90] (4,20) rectangle (4.5,28); 
\fill[gray!90] (4,20) rectangle (12,20.5); 
\fill[gray!90] (4.5,27.5) rectangle (12,28); 
\fill[gray!90] (11.5,20.5) rectangle (12,28); 

\fill[gray!90] (4,36) rectangle (4.5,44); 
\fill[gray!90] (4,36) rectangle (12,36.5); 
\fill[gray!90] (4.5,43.5) rectangle (12,44); 
\fill[gray!90] (11.5,36.5) rectangle (12,44);

\fill[gray!90] (4,52) rectangle (4.5,60); 
\fill[gray!90] (4,52) rectangle (12,52.5); 
\fill[gray!90] (4.5,59.5) rectangle (12,60); 
\fill[gray!90] (11.5,52.5) rectangle (12,60);

\fill[gray!90] (20,4) rectangle (20.5,12); 
\fill[gray!90] (20,4) rectangle (28,4.5); 
\fill[gray!90] (20.5,11.5) rectangle (28,12); 
\fill[gray!90] (27.5,4.5) rectangle (28,12); 

\fill[gray!90] (20,20) rectangle (20.5,28); 
\fill[gray!90] (20,20) rectangle (28,20.5); 
\fill[gray!90] (20.5,27.5) rectangle (28,28); 
\fill[gray!90] (27.5,20.5) rectangle (28,28); 

\fill[gray!90] (20,36) rectangle (20.5,44); 
\fill[gray!90] (20,36) rectangle (28,36.5); 
\fill[gray!90] (20.5,43.5) rectangle (28,44); 
\fill[gray!90] (27.5,36.5) rectangle (28,44);

\fill[gray!90] (20,52) rectangle (20.5,60); 
\fill[gray!90] (20,52) rectangle (28,52.5); 
\fill[gray!90] (20.5,59.5) rectangle (28,60); 
\fill[gray!90] (27.5,52.5) rectangle (28,60);

\fill[gray!90] (36,4) rectangle (36.5,12); 
\fill[gray!90] (36,4) rectangle (44,4.5); 
\fill[gray!90] (36.5,11.5) rectangle (44,12); 
\fill[gray!90] (43.5,4.5) rectangle (44,12); 

\fill[gray!90] (36,20) rectangle (36.5,28); 
\fill[gray!90] (36,20) rectangle (44,20.5); 
\fill[gray!90] (36.5,27.5) rectangle (44,28); 
\fill[gray!90] (43.5,20.5) rectangle (44,28); 

\fill[gray!90] (36,36) rectangle (36.5,44); 
\fill[gray!90] (36,36) rectangle (44,36.5); 
\fill[gray!90] (36.5,43.5) rectangle (44,44); 
\fill[gray!90] (43.5,36.5) rectangle (44,44);

\fill[gray!90] (36,52) rectangle (36.5,60); 
\fill[gray!90] (36,52) rectangle (44,52.5); 
\fill[gray!90] (36.5,59.5) rectangle (44,60); 
\fill[gray!90] (43.5,52.5) rectangle (44,60);

\fill[gray!90] (52,4) rectangle (52.5,12); 
\fill[gray!90] (52,4) rectangle (60,4.5); 
\fill[gray!90] (52.5,11.5) rectangle (60,12); 
\fill[gray!90] (59.5,4.5) rectangle (60,12); 

\fill[gray!90] (52,20) rectangle (52.5,28); 
\fill[gray!90] (52,20) rectangle (60,20.5); 
\fill[gray!90] (52.5,27.5) rectangle (60,28); 
\fill[gray!90] (59.5,20.5) rectangle (60,28); 

\fill[gray!90] (52,36) rectangle (52.5,44); 
\fill[gray!90] (52,36) rectangle (60,36.5); 
\fill[gray!90] (52.5,43.5) rectangle (60,44); 
\fill[gray!90] (59.5,36.5) rectangle (60,44);

\fill[gray!90] (52,52) rectangle (52.5,60); 
\fill[gray!90] (52,52) rectangle (60,52.5); 
\fill[gray!90] (52.5,59.5) rectangle (60,60); 
\fill[gray!90] (59.5,52.5) rectangle (60,60);

\node at (8,8) {$


\item when: 
\begin{itemize}
\item the grouping bit of 
the cell above in the hierarchy 
is $0$, 
\item the value of the $p$-counter is $\overline{0}$ and the frequency bit is $1$, 
\item 
or the value of the $p$-counter is 
not $\overline{0}$,
\end{itemize} 
then the second color is equal 
to the first one. 

\item \textbf{Coherence rules:} since the 
hierarchy bits are superimposed 
on the copy of the Robinson subshift parallel to $\vec{e}^2$ and $\vec{e}^3$, 
the degenerated behaviors of this layer correspond to 
the ones of the Robinson subshift. In this list we consider patterns intersecting 
two-dimensional cells having order greater or equal to $p$.
\begin{enumerate}
\item when near a \textbf{corner}, on a pattern
\[\begin{tikzpicture}[scale=0.3]
\robibluehautgauchek{4}{0}
\robibluehautdroitek{0}{0}
\robibluebasdroitek{0}{4}
\robibluebasgauchek{4}{4}
\robiredbasgauche{2}{2}
\robithreehaut{2}{4}
\robithreedroite{4}{2}
\robionehaut{2}{0}
\robionegauche{0}{2}
\draw[step=2] (0,0) grid (6,6);
\end{tikzpicture}\]
the pattern in this layer has to be amongst the following ones: 
\[\begin{tikzpicture}[scale=0.3]
\fill[gray!90] (0,4) rectangle (2,6);
\fill[gray!90] (4,0) rectangle (6,2);
\fill[gray!90] (0,0) rectangle (2,2);
\fill[gray!90] (2,2) rectangle (6,6);
\draw[step=2] (0,0) grid (6,6);
\end{tikzpicture}, \ \begin{tikzpicture}[scale=0.3]
\fill[gray!90] (0,4) rectangle (2,6);
\fill[gray!90] (4,0) rectangle (6,2);
\fill[purple] (0,0) rectangle (2,2);
\fill[purple] (2,2) rectangle (6,6);
\draw[step=2] (0,0) grid (6,6);
\end{tikzpicture}, \ \begin{tikzpicture}[scale=0.3]
\fill[purple] (0,4) rectangle (2,6);
\fill[purple] (4,0) rectangle (6,2);
\fill[purple] (0,0) rectangle (2,2);
\fill[purple] (2,2) rectangle (6,6);
\draw[step=2] (0,0) grid (6,6);
\end{tikzpicture}, \ \begin{tikzpicture}[scale=0.3]
\fill[gray!90] (0,4) rectangle (2,6);
\fill[gray!90] (4,0) rectangle (6,2);
\fill[gray!90] (0,0) rectangle (2,2);
\fill[purple] (2,2) rectangle (6,6);
\fill[gray!90] (4,4) rectangle (6,6);
\draw[step=2] (0,0) grid (6,6);
\end{tikzpicture}\]
respectively when 
\begin{enumerate}
\item the central corner is colored gray, 
\item this corner is purple and the grouping bit is $1$.
\item this corner is purple, 
the grouping bit is $0$ and 
else the $p$-counter has not value $\overline{0}$, or this value is $\overline{0}$ 
and
the frequency bit is $1$, or the addresses are in 
\[\{(0^p,0^p),(0^p,1^p),(1^p,0^p),(1^p,1^p)\}.\]
\item this corner is purple, the grouping bit is $0$,
the $p$-counter has value $\overline{0}$,
the frequency bit is $0$, and the addresses are not in 
\[\{(0^p,0^p),(0^p,1^p),(1^p,0^p),(1^p,1^p)\}.\]
\end{enumerate}
There are similar rules for the other corners. 
This allows
degenerated behaviors which do not happen around finite cells to be avoided. The other rules in this list 
have the same function. 
\item near a the middle or the quarter of an edge of a cell 
or near a corner of a petal with $0,1$-counter value equal to $0$, the symbols on the 
blue corners are gray if the border of the cell is gray. If this 
is purple, the grouping bit is $0$,
the $p$-counter has value $\overline{0}$ and
the frequency bit is $0$, and the addresses are not in 
$\{(0^p,0^p),(0^p,1^p),(1^p,0^p),(1^p,1^p)\}$, the blue corners are colored gray.
In the other cases, they are colored purple.
For instance, on the pattern 
\[
.
\item \textbf{Transformation:} for any position $\vec{u} \in \Z^3$ 
which is not on the top (according to $\vec{e}^1$) face of a three-dimensional cell, 
the random bit on position $\vec{u}$ is equal to the random bit on 
position $\vec{u}+\vec{e}^1$. The transformation rule on these faces 
will be stated in Section~\ref{sec.incr.signal}.
\end{itemize}

The next sections describe the set of random bits as a value 
(similarly 
to the linear counter) 
incremented by $1$ going through the top 
face of a three-dimensional cell according to direction $\vec{e}^1$. 

\subsubsection{Convolutions}

In this sublayer is generated a path 
on the top faces of the three-dimensional cells 
according to direction $\vec{e}^1$. \bigskip

\noindent \textbf{\textit{Symbols:}}

\[
\]

\noindent \textbf{\textit{Local rules:}} 

\begin{itemize}
\item \textbf{Localization:} the non-blank 
symbols are superimposed only on the top faces of the 
three-dimensional cells according to direction $\vec{e}^1$.
\item \textbf{Transmission:} the arrows and blank 
symbols
propagate through five or four arrows symbols of the Robinson copy 
parallel to $\vec{e}^2$ and $\vec{e}^3$. 
\item \textbf{Triggering positions:} 

The position 
on the top or bottom line of the 
border of the face, with 
six arrows symbol in the Robinson copy 
parallel to this face is called \textbf{triggering 
position}. These positions 
are superimposed with 
$\begin{tikzpicture}[scale=0.3]
\fill[purple] (0,0) rectangle (1,1);
\draw (0,0) rectangle (1,1); 
\end{tikzpicture}$, and this symbol 
can be only on this position. 
The presence of this symbol implies a right arrow 
symbol on the position on the right, a left arrow on the left and a blank symbol above
if on the top line. When on the bottom line, the direction of the arrows is 
reversed, and the blank symbol is forced upwards.

\item \textbf{Correspondence 
for other types of positions:}

The following tables give a correspondence 
between other types of positions and 
the symbol superimposed on them in 
this layer. Table~\ref{table.border.rules} 
is related to positions on the border
of a face. In this table, induction 
refers to the fact that the position 
type imposes the presence of a symbol 
on a position nearby. The symbol 
and the position are specified in the table.
Table~\ref{table.inside.rules} refers to 
positions inside a face.

\item \textbf{Determination 
of symbols on the border:}

A cross symbol can not be present on the border.
\end{itemize}

\begin{table}[h]
{\renewcommand{\arraystretch}{1.2}

  }
  \caption{\label{table.inside.rules}
	Correspondence table for 
	positions \textbf{inside} a face.}
\end{table}

\noindent \textbf{\textit{Global behavior:}} \bigskip

The global behavior is the 
drawing of a curve starting and ending 
at some position at the center of the top or bottom 
row of the border (depending 
on the orientation of the three-dimensional cell) 
of the top face according to $\vec{e}^1$ of three-dimensional cells.
This position is specified by purple color, 
and the path is going through 
every position having a random bit on it. 
This serves as a circuit
for the incrementation signal of the hierarchical 
counter.
See Figure~\ref{fig.circonvolutions}.

\subsubsection{\label{sec.incr.signal} Incrementation signal}

\noindent \textbf{\textit{Symbols:}} \bigskip

This sublayer has symbols in $ \{\begin{tikzpicture}[scale=0.3]
\fill[Salmon] (0,0) rectangle (1,1);
\draw (0,0) rectangle (1,1);
\end{tikzpicture}, 
\begin{tikzpicture}[scale=0.3]
\fill[YellowGreen] (0,0) rectangle (1,1);
\draw (0,0) rectangle (1,1);
\end{tikzpicture}\}$ or 
$\{\begin{tikzpicture}[scale=0.3]
\fill[Salmon] (0,0) rectangle (1,1);
\draw (0,0) rectangle (1,1);
\end{tikzpicture}, 
\begin{tikzpicture}[scale=0.3]
\fill[YellowGreen] (0,0) rectangle (1,1);
\draw (0,0) rectangle (1,1);
\end{tikzpicture}\} ^2$ and a blank symbol. \bigskip

\noindent \textbf{\textit{Local rules:}} 

\begin{itemize}
\item {\bf{Localization :}} 
the non blank symbols are superimposed only on 
non blank convolution symbols when on the top face according to $\vec{e}^1$ of 
a three-dimensional cell. 
\item when outside these faces, 
the non-blank symbols 
are superimposed on the positions that don't have 
a petal symbol, and the symbol on a position $\vec{u} \in \Z^3$ 
is transmitted to next positions $\vec{u} \pm \vec{e}^2$ and $\vec{u} \pm \vec{e}^3$
when these positions are not in the border of a 
two-dimensional cell with grouping bit equal to $1$.
These positions have a simple symbol.
\item the two bits symbols 
can be superimposed only on 
positions with a cross symbol 
in the convolution sub-sublayer, and on top faces of three-dimensional cells according 
to $\vec{e}^1$.
\item \textbf{Initialization:} 
The position with purple symbol in the convolution sublayer is 
superimposed with 
.\]
\end{itemize}

\noindent \textbf{\textit{Global behavior:}} 
\bigskip

When going through an order $n \neq 2^k p$ 
three-dimensional cell for 
any $k$, the random bits are not changed. 
When going through the top face 
according to $\vec{e}^1$ 
of an order $2^k p$ order three-dimensional cell, 
the random bits that are in the 
corresponding two-dimensional cell and 
are not in a $2^j p$ cell 
with $j<k$ are grouped (using 
the grouping bits and the grouping border bits). 
The sequence of these bits is incremented by $1$ -
in a similar way as is incremented the linear 
counter, using the convolutions -
when the freezing signal is \begin{tikzpicture}[scale=0.3]
\fill[YellowGreen] (0,0) rectangle (1,1);
\draw (0,0) rectangle (1,1);
\end{tikzpicture}. It is not 
incremented when it is \begin{tikzpicture}[scale=0.3]
\fill[Salmon] (0,0) rectangle (1,1);
\draw (0,0) rectangle (1,1);
\end{tikzpicture}. This last case 
happens only once when 
this set of bits is in maximal position. 
Because of this, and the 
formula proved in the 
first point of Lemma~\ref{lem.count.random.2}, 
the period of this counter is $2^{2^{l_k}}+1$, where 
\[l_k = 4[(k+1)(p-1)+1] +2(2^k-1 - c_k )+4p c_{k}\]

The functions $k \mapsto 2^k-1 - c_k$ 
and $k \mapsto c_k$ 
are non-decreasing, and the 
function $k \mapsto (k+1)(p-1)$ 
is strictly increasing. Hence all the 
numbers $l_k$ are different. 
As a consequence, all these numbers are 
different Fermat numbers.

\subsection{\label{sec.synchr.dim.entropique} Synchronization layer}

We use this layer to synchronize 
the hierarchical counters in the orthogonal directions 
of their incrementation direction $\vec{e}^1$
(we recall that from the way we coded the 
linear counters, they are already 
synchronized in the 
orthogonal directions of their 
incrementation direction 
$\vec{e}^2$) for three-dimensional cells having 
the same order in case of the linear counter. 
We synchronize only $2^k p$ cells 
for the same $k \ge 0$ (this is the purpose 
of grouping bits).

This layer has two sub-layers.

\subsubsection{Synchronization areas}

The aim of this first sublayer is to localize places 
where synchronization of hierarchical counters occurs.

\begin{figure}[ht]
\[\begin{tikzpicture}[scale=0.3]

\draw[-latex] (-5,5) -- (-5,7);
\node at (-4,8) {$\vec{e}^3$};
\draw[-latex] (-5,5) -- (-3,5);
\node at (-3,4) {$\vec{e}^2$};
\draw[-latex] (-5,5) -- (-6.7,5-1.7*5/14);
\node at (-6.7,4-1.7*5/14) {$\vec{e}^1$};
\begin{scope}
\fill[gray!40] (0,10) -- (10,10) -- (17,2.5) -- (7,2.5) -- (0,10);
\draw[line width=0.4mm] (0,0) -- (10,0) -- (10,10) -- (0,10) -- (0,0);
\draw[line width=0.4mm] (0,10) -- (7,12.5) -- (17,12.5) -- (10,10); 
\draw[line width=0.4mm] (17,12.5) -- (17,2.5) -- (10,0);
\draw[line width=0.4mm,dashed] (0,0) -- (7,2.5) -- (17,2.5);
\draw[line width=0.4mm,dashed] (7,2.5) -- (7,12.5) ;
\draw[dashed] (0,10) -- (7,2.5);
\draw (10,10) -- (17,2.5);
\end{scope}

\begin{scope}[xshift=20cm]
\fill[gray!40] (10,0) -- (10,10) -- (7,12.5) -- (7,2.5) -- (10,0);
\draw[line width=0.4mm] (0,0) -- (10,0) -- (10,10) -- (0,10) -- (0,0);
\draw[line width=0.4mm] (0,10) -- (7,12.5) -- (17,12.5) -- (10,10); 
\draw[line width=0.4mm] (17,12.5) -- (17,2.5) -- (10,0);
\draw[line width=0.4mm,dashed] (0,0) -- (7,2.5) -- (17,2.5);
\draw[line width=0.4mm,dashed] (7,2.5) -- (7,12.5) ;
\draw[dashed] (10,0) -- (7,2.5);
\draw (10,10) -- (7,12.5);
\end{scope}
\end{tikzpicture}\]
\caption{\label{fig.synchr.planes} Illustration of the synchronization areas rules.
The plane generated by the first symbol of the couple 
is on the left, and the other one on the right.}
\end{figure}
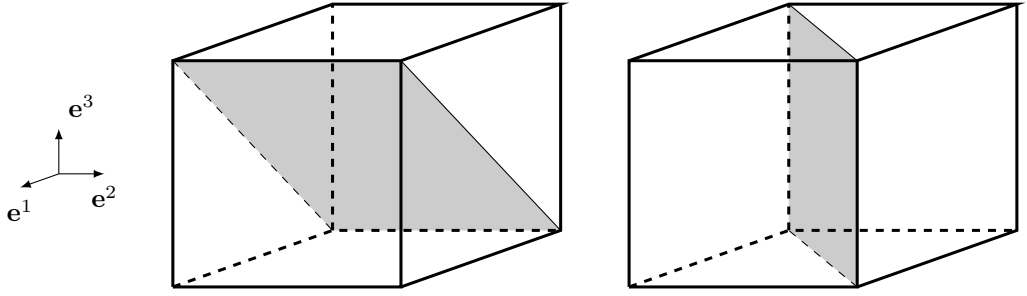

\noindent \textbf{\textit{Symbols:}} \bigskip

The symbols are 
$\left(\begin{tikzpicture}[scale=0.3]
\draw (0,0) -- (1,1); 
\draw (0,0) rectangle (1,1);
\end{tikzpicture}, \begin{tikzpicture}[scale=0.3]
\draw (0,0) rectangle (1,1);
\end{tikzpicture}\right)^2$. \bigskip

\noindent \textbf{\textit{Local rules:}} 

\begin{itemize}
\item the first symbol of the couple is transmitted in the directions 
$\pm \vec{e}^2$, and 
the second one in the directions $\pm \vec{e}^3$.
\item on a position $\vec{u} \in \Z^3$ which is not superimposed with 
a corner in the copy of the Robinson subshift 
parallel to $\vec{e}^3$ and $\vec{e}^1$, the first symbol 
of the couple is transmitted to positions 
$\vec{u} + \vec{e}^3+\vec{e}^1$ and $\vec{u}-\vec{e}^3-\vec{e}^1$.
\item on a position $\vec{u} \in \Z^3$ which is not superimposed with 
a corner in the copy of the Robinson subshift 
parallel to $\vec{e}^1$ and $\vec{e}^2$, the first symbol 
of the couple is transmitted to positions 
$\vec{u} + \vec{e}^1+\vec{e}^2$ and $\vec{u}-\vec{e}^1-\vec{e}^2$.
\item the symbol 
 on positions: 
\begin{enumerate}
\item if a north east corner, the symbol is forced on position 
$\vec{u} + \vec{e}^1 + \vec{e}^2$
\item if a south west corner, it is forced on 
$\vec{u} - \vec{e}^1 - \vec{e}^2$.
\end{enumerate}
\end{itemize}

\noindent \textbf{\textit{Global behavior:}} \bigskip

The global behavior induced by these rules is 
the drawing, for 
any three-dimensional cell having order $2^k p$ for some $k \ge 0$, of two planes crossing it. 
One of them links the north east edge parallel to $\vec{e}^2$ to the south west one. The other one 
links the north east edge parallel to $\vec{e}^3$ to the south west one, as 
on Figure~\ref{fig.synchr.planes}. 
The first plane is drawn by the first symbol 
and generated by the vectors 
 $\vec{e}^3+\vec{e}^1$ and $\vec{e}^2$. 
The other one is drawn by the second symbol 
and is generated by the vectors $\vec{e}^1+\vec{e}^2$ and $\vec{e}^3$.

For all $k$, these planes cross order $j$ cells for all $j<2^k p$ that are inside 
the $qp$ order three-dimensional cell. However, an order $j \neq 2^k p$ cell for any $k$, 
one of these planes can cross this cell if and only if it is on the diagonal 
plane of an order $2^k p$ cell.

\subsubsection{Synchronization of the 
hierarchical counters}

This sub-sublayer consists of two copies of the random bit sublayer, 
the hierarchy bit sublayer and the incrementation signal sublayer, except 
that the first set of copies is parallel to $\vec{e}^1$ and $\vec{e}^2$ 
and the second one to $\vec{e}^1$ and $\vec{e}^3$ (instead of $\vec{e}^2$ and 
$\vec{e}^3$ for the hierarchical counter). Moreover, 
in these copies, there is no incrementation signal. This means that the rules on 
the faces of three-dimensional cells are the same as outside.
Furthermore, when the first symbol of the synchronization areas sublayer is 
\begin{tikzpicture}[scale=0.3]
\draw (0,0) -- (1,1); 
\draw (0,0) rectangle (1,1);
\end{tikzpicture}, then the random bit 
in the first copy is 
equal to the random bit of the hierarchical counter. 
When the second symbol of the 
synchronization areas sublayer 
is \begin{tikzpicture}[scale=0.3]
\draw (0,0) -- (1,1); 
\draw (0,0) rectangle (1,1);
\end{tikzpicture}, then the random 
bit in the second copy is equal to 
the random bit of the hierarchical counter.

The \textit{global behavior} induced is that 
the values of the hierarchical counter 
in two adjacent, in 
else direction $\vec{e}^2$ or 
$\vec{e}^3$, order $2^k p$ 
three-dimensional cells, are equal.  

As a consequence, one can state the following lemma: 

\begin{lemma} \label{lem.count.random.2}
For all $q \neq 2^k$ for any $k$, 
the number $r_q$ of possible colorings by 
hierarchy bits and random bits 
on a $2qp+2$ order supertile 
(centered on an order $qp$ cell) verifies 
the following 
inequalities: 
\[ \left[\alpha (p).q^{\lambda(p)} \right] 4^{c^0_q} . 16^{p c^1_q} 
\ge \log_2(r_q) \ge 4^{c^0_q} . 16^{p c^1_q}\]
where $c_{q,1}$ is the number of $i \le q-1$ such that 
$a_i = 1$ and $c_{q,0}$ is the number of $i \le q-1$ such that $a_i = 0$, 
and $\alpha(p),\lambda(p)>0$ depend only on $p$.
\end{lemma}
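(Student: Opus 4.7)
The plan is to identify $\log_2 r_q$ with the number of independent random bits that may be superimposed on purple blue corners inside the order $2qp+2$ supertile, and then to compute this count via the level-by-level transformation rule of the hierarchy-bit sublayer. The independence of the random bits (modulo the synchronization layer from Section~\ref{sec.synchr.dim.entropique}, which identifies copies across adjacent order $2^kp$ three-dimensional cells) will reduce the counting problem to a purely combinatorial one on a single two-dimensional face.

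First I would count purple blue corners inside the central order $qp$ two-dimensional cell, whose border is forced purple by the corner/edge coherence rules of Section~\ref{sec.hier.counter}. The count is by recursion on $j = q-1, q-2, \ldots, 0$. Between the $p$-counter-$\overline{0}$ levels at order $(j+1)p$ and order $jp$ there are $p-1$ intermediate petal levels on which the color is transmitted freely (each such level refines every purple cell into four purple sub-cells), and at the critical level $jp$ itself, since $j \ne 2^k$, the grouping bit equals $0$, so the outcome is dictated by the frequency bit $a_j$. When $a_j=1$ every sub-cell remains purple, contributing a total factor $16^p$ in blue corners across the $p$-level drop; when $a_j=0$ only the four address pairs in $\{(0^p,0^p),(0^p,1^p),(1^p,0^p),(1^p,1^p)\}$ survive, contributing a total factor $4$. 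Multiplying these factors as $j$ ranges over $0, \ldots, q-1$ gives exactly $4^{c^0_q} \cdot 16^{pc^1_q}$ purple blue corners inside the central cell, which yields the lower bound $\log_2 r_q \geq 4^{c^0_q} \cdot 16^{pc^1_q}$ on summing over the independent random bits placed on these corners.

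For the upper bound, I would observe that the order $2qp+2$ supertile decomposes into $O(1)$ translates of the central order $qp$ cell together with boundary pieces cut by intermediate-order cells. Each piece contributes its own purple-corner count, obtained by the same recursion but starting from one of the possibly-smaller boundary cells. Three sources of extra multiplicative slack arise: the $j=2^k$ levels (where the grouping-bit-$1$ rule replaces the address rule, producing a per-level correction of the order of a constant power of $4^p$), the boundary cells of intermediate order, and the coherence/degeneration patches near corners and edges described in Section~\ref{sec.hier.counter}. All three can be bounded by counting their contributions directly in terms of the number of levels and appealing to the size formulae for supertiles ($2^{2qp+4}-1$) versus cells ($4^{qp+1}+1$); the ratios produce a factor polynomial in $q$ with exponent and constant depending only on $p$, which I would absorb into $\alpha(p)\,q^{\lambda(p)}$.

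The main obstacle will be the bookkeeping for the grouping-bit-$1$ levels $j = 2^k$ and for the boundary pieces of the supertile: one must check that the structural perturbations introduced by these special levels (which give factor $16^p$ where the clean recursion would give $4$) accumulate to at most $4^{O(\log q \cdot p)} = q^{O(p)}$, and that the coherence rules around edges, corners, and internal faces of three-dimensional cells do not introduce further dependencies that would reduce the bit count below the claimed lower bound. The Robinson-repetition property from Lemma~\ref{lem.repetition.supertiles} and the periodicity of supertiles combine with the synchronization layer to guarantee that the only identifications among purple blue corners are those already built into the count, so that the random bits on distinct non-identified corners remain fully independent.
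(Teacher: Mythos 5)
Your counting strategy is essentially the paper's (level-by-level factors $16^p$ for $a_j=1$ and $4$ for $a_j=0$, with synchronization collapsing all translates of a given level to one representative), but two steps in the lower-bound half are wrong as stated. First, the border of the central order $qp$ cell is \emph{not} forced purple by the coherence rules: its hierarchy bit is inherited from a cell higher in the hierarchy, outside the supertile, and both values occur. The paper therefore splits $r_q = 2^{\lambda_1(q)} + 2^{\lambda_2(q)}$ according to whether that border is gray (only the corners inside the forced-purple order $2^jp$ sub-cells count) or purple (the central-cell term is added). Your lower bound survives only because the purple-border colorings alone already number at least $2^{4^{c^0_q}16^{pc^1_q}}$, but the "forced purple" claim misreads the construction. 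Second, "since $j\neq 2^k$, the grouping bit equals $0$" misapplies the hypothesis: it is $q$, not every level $j\le q-1$, that is assumed not to be a power of two. The levels $j=2^i\le q-1$ do occur, carry grouping bit $1$, and each contributes a factor $4\cdot 16^{p-1}$ (neither $4$ nor $16^p$); together with the initial factor and the factor $4$ for the blue corners of each order-$0$ cell, this makes your "exactly $4^{c^0_q}\cdot 16^{pc^1_q}$" false. The true count is only bounded below by that quantity, which is what the lemma needs but not what you wrote.

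The upper bound is asserted rather than proved, and the proposed decomposition into "boundary pieces" is not the relevant mechanism: synchronization already identifies all copies of each order $2^jp$ cell, so the only quantities to control are the per-level counts $d_j$ for $j\le\lfloor\log_2 q\rfloor$ plus the central-cell term. The decisive estimate, which is missing from your sketch, is that $\log_2 r_q$ is a sum of at most $\lfloor\log_2 q\rfloor+2$ such terms, each bounded by $\mathrm{const}(p)\cdot 16^{\log_2(q)(p-1)}\cdot 4^{c^0_q}16^{pc^1_q}$ using the monotonicity of $k\mapsto c_k$ and $k\mapsto 2^k-1-c_k$ and $\lfloor\log_2 q\rfloor\le\log_2 q$; multiplying the number of terms by this per-term bound is exactly what produces $\alpha(p)q^{\lambda(p)}$. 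Without carrying this out, the polynomial factor is not justified.
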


\begin{proof}

\begin{itemize}

\item \textbf{A formula on 
the number of proper 
blue corner positions 
in a cell 
having hierarchy bit \begin{tikzpicture}[scale=0.3]
\fill[purple] (0,0) rectangle (1,1);
\draw (0,0) rectangle (1,1);
\end{tikzpicture}:} 

For all $k \ge 0$, in an order $2^k p$ cell, 
the number of blue corners having hierarchy bit 
\begin{tikzpicture}[scale=0.3]
\fill[purple] (0,0) rectangle (1,1);
\draw (0,0) rectangle (1,1);
\end{tikzpicture} that  
are not in an order $2^j p$ cell, $j < k$, is
\[d_k = 4.4^{k}.16^{k(p-1)}.4^{2^k-1 - c_k - k} . 4.16^{p-1}. 16^{p c_{k}},\]
where $c_{k}$ is the number of $i \le 2^k -1$ 
such that the $i$th frequency bit $a_i$ is $1$.

Indeed: 
\begin{enumerate}
\item the process that rules the coloring with hierarchy bits 
of this cell starts on $4$ order $2^k p -1$ cells, hence the factor $4.16^{p-1}$.
\item 
The factor $4^{k}.16^{k(p-1)}$ comes from the 
transitions occurring when the grouping bit is $1$ (there are $k$ such transitions).
\item The factor $4^{2^k-1 -c_k-k}$ comes from the transitions 
occurring when the grouping bit is $0$ and 
the frequency bit is $0$. 
\item The factor $4.16^{pc_k}$ comes from the transitions 
when the grouping bit is $0$ and the frequency bit is $1$. 
\item The factor 
$4$ comes from the fact that there are four blue corners 
for each order $0$ cell. 
\end{enumerate}

\item \textbf{On the hierarchy bits 
of a supertile:}

An order $2qp+2$ supertile is centered on an 
order $qp$ cell. 
There are two possibilities 
for the hierarchy bits,as follows.

\begin{enumerate}
\item \textbf{The border of this cell is colored 
\begin{tikzpicture}[scale=0.3]
\fill[gray!90] (0,0) rectangle (1,1);
\draw (0,0) rectangle (1,1);
\end{tikzpicture}}. 
This means that all the blue corners in 
the supertiles colored 
with \begin{tikzpicture}[scale=0.3]
\fill[gray!90] (0,0) rectangle (1,1);
\draw (0,0) rectangle (1,1);
\end{tikzpicture}, except 
the ones that are in an order $2^j p$ cell 
with $2^j \le q$. This is 
equivalent to $j \le \lfloor \log_2 (q) \rfloor$.
Since the $j$th hierarchical counter for all $j\le \lfloor \log_2 (q) \rfloor$ 
in this supertile
are synchronized, the number of possible colorings 
of this supertile by random bits is given 
by the product for $j \le  \lfloor \log_2 (q) \rfloor$ 
of the numbers of possible colorings 
by random bits of the set of blue corners 
that are in an order $2^j p$ cell and not in an order $2^i p$ cell 
with $i <j$. These numbers are $2^{d_j}$, so the total number 
of random bits displays is $2^{\lambda_1 (q)}$, 
where 
\[\lambda_1 (q) = \sum_{j=0}^{\lfloor \log_2 (q) \rfloor} d_j.\]
\item \textbf{The border is colored \begin{tikzpicture}[scale=0.3]
\fill[purple] (0,0) rectangle (1,1);
\draw (0,0) rectangle (1,1);
\end{tikzpicture}}. 
In this case, we also have to count 
the number of hierarchy bits that 
are in the supertile but 
not in an order $2^j p$ cell with $j\le \lfloor \log_2 (q) \rfloor$. 
This case 
is similar to the first point 
of this proof, and the number of 
random bits displays in this case is 
$2^{\lambda_2 (q)}$,where: 
\[\lambda_2 (q) = \sum_{j=0}^{\lfloor \log_2(q) \rfloor} d_j + 
4. 4^{\lfloor \log_2(q) \rfloor+1}.16^{\lfloor \log_2(q) \rfloor (p-1)} 
.4^{c^0_q - \lfloor \log_2(q) \rfloor} . 16^{pc^1_q}.\]
\end{enumerate}

As a consequence, the total 
number of possibilities for the hierarchy 
bits and random bits is 
\[2^{\lambda_1 (q)} + 2^{\lambda_2 (q)}
\le 2.2^{\lambda_2 (q)},\]
since $\lambda_1 (q) \le \lambda_2 (q)$. 

\item \textbf{Inequalities:}

The lower bound is clear. 
For the upper bound, we have, 
following the last point:
\[\log_2 (r_q) \le \left(
4. 4^{\lfloor \log_2(q) \rfloor+1}.16^{\lfloor \log_2(q) \rfloor (p-1)} 
.4^{c^0_q - \lfloor \log_2(q) \rfloor} . 16^{pc^1_q} + \sum_{j=0}^{\lfloor \log_2(q) \rfloor} d_j + \right)+1\]
Hence from the fact that the functions $k \mapsto c_k$ and $k \mapsto 2^k-1-c_k$ 
are non-decreasing and the inequality 
$\log_2 (q) \ge \lfloor \log_2 (q)\rfloor$: 
\[\begin{array}{ccl}
\log_2 (r_q) &\le & 2(\log_2(q) +1). 
4. 4^{\log_2(q)+1}.16^{\log_2(q) (p-1)} 
.4^{c^0_q - \log_2(q) + 1} . 16^{pc^1_q}\\
& = & 2(\log_2(q) +1). 
4^2 .16^{\log_2(q) (p-1)} 
.4^{c^0_q} . 16^{pc^1_q}
\end{array}.\]
Since $2(\log_2(q) +1). 
4^2 .16^{\log_2(q) (p-1)}$
can be bounded by $\alpha(p) q ^{\lambda(p)}$ 
for some functions $\alpha$ and $\lambda$,
this provides the upper bound.
\end{itemize}
\end{proof}

\section{Properties of the subshifts \texorpdfstring{$X_{z}$}{Xz}:}

\subsection{Pattern completion}

In this section, we prove the following proposition, 
which will serve to prove that $X_z$ is minimal.
We assume the reader has some familiarity with the properties of the 
Robinson subshift.

\begin{proposition} \label{prop.completion}
Let $P$ some $n$-block in the language of $X_z$. Then $p$ 
can be completed into an admissible pattern over a three-dimensional cell.
\end{proposition}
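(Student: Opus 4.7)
The plan is to complete $P$ layer by layer, in the order the layers were introduced in the construction. First, project $P$ onto the structure layer. Since this projection lies in the language of the three-dimensional rigid Robinson subshift of Section~\ref{sec.structure2}, the three-dimensional analogue of Lemma~\ref{prop.complete.rob} proved in Section~\ref{sec.hierarchical.struct} allows this projection to be extended to a pattern whose support is a three-dimensional cell. I would tune the order $K$ of this cell to be of the form $K = qp$ with $q$ large enough that the support of $P$ sits comfortably in the interior of a single order $K$ three-dimensional cell, with enough room on every side to host the signals, counters, and computations implemented on the six faces (in particular, enough room for the machine on the bottom face to complete the verification stages that are already partially constrained by $P$).

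Once the structure layer is fixed, I would fill in the auxiliary sublayers that are entirely determined by the petal hierarchy of the cell: the orientation sublayer (Section~\ref{sec.orientation}), the functional-area sublayer (Section~\ref{sec.functional.areas.min}), the $p$-counter (Section~\ref{sec.p.counter}), the $p$-addressing (Section~\ref{sec.p.addressing}), the active-functional-area selection (Section~\ref{sec.active.functional.areas}), and the border-information propagation. Each of these is a deterministic substitution-style process emanating from the (unique) root red-corner triple of the order $K$ cell, so the partial contents of $P$ in these sublayers must agree with the unique admissible completion. The frequency and grouping bits are forced in the same way to be $a_n$ and $\mathbf{1}_{n=2^k}$ respectively, determined solely by the hierarchy.

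The genuine freedom lies in the linear counter (Section~\ref{sec.lin.counter}) and in the hierarchical counter (Section~\ref{sec.hier.counter}). On the type $1$ face of the order $K$ cell I would pick any admissible value for the linear counter on the bottom row, agreeing with the finitely many symbols that $P$ may already fix there; since we are completing only a single cell (not matching neighbours), the $+1$ consistency used between adjacent cells places no extra obstruction. The remaining faces of the linear counter and the information transfers of Section~\ref{sec.information.transfer.lin} are then determined by this choice, as is the entire machine computation on the bottom face (Section~\ref{sec.machine.dim.entropique}), which reads its initial tape, its side states, and its computation-active lines and columns from the linear counter. Similarly, a choice of hierarchical counter value at the top of the hierarchy, compatible with the constraints $P$ imposes, propagates consistently across the cell via the rules of Section~\ref{sec.hier.counter} and synchronises trivially within a single cell, so the synchronization sublayer of Section~\ref{sec.synchr.dim.entropique} adds no obstruction either.

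The main obstacle is the error-signal mechanism of Section~\ref{sec.error.signals.min}: the completed machine computation must not produce the forbidden superposition of an error signal with the empty-tape, empty-sides and first-machine signals. This is exactly what forces the choice of $K$ to be large. Since $(a_n)$ is $\Pi_1$-computable, for each finite prefix of frequency bits that appears in $P$ there exists a stage $n$ after which the approximations $a^{(n)}_k$ equal $a_k$, so by taking $q$ large enough the machine has enough time and tape to verify all bits constrained by $P$ without triggering a forbidden error signal on a well-initialised tape. For poorly-initialised tapes the error signal is allowed, and the computation/tile rules of Section~\ref{subsec.machines.local.rules} are designed precisely so that every local configuration extends. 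The crux of the proof is thus checking that the two sources of freedom, the linear and hierarchical counter values, together with the freedom of the random bits, are rich enough to cover the symbols that $P$ fixes on the machine tape and the counters; here the geometric separation of the counter faces from the machine face, emphasised throughout the construction, is what makes the argument go through.
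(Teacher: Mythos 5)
Your outline follows the same layer-by-layer order as the paper, but it glosses over the two points where the actual work lies. First, you treat the auxiliary sublayers (orientation, functional areas, addressing, active areas) as ``entirely determined'' by a top-down substitution from the root red corner of the completed cell, so that the symbols already present in $P$ ``must agree with the unique admissible completion.'' This begs the question: $P$ appears in a configuration $x$ that may contain infinite supertiles, and $P$ may straddle a degenerated corner, edge, or internal face of an \emph{infinite} three-dimensional cell, where these sublayers are not governed by any finite root corner. The content of the paper's proof is precisely a case analysis showing, via the coherence rules, that every such degenerate local coloring also occurs around a finite cell of suitable order and orientation (e.g.\ the central corner of a supertile in the functional-areas sublayer may be blue, light gray, or gray-with-arrow, and each case must be matched to a finite configuration); uniqueness fails and consistency is exactly what has to be proved.

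Second, your treatment of the machine face asserts that the local rules ``are designed precisely so that every local configuration extends.'' This is false as stated: if one knows, say, the top-right corner of a space-time diagram and completes it outward, two machine-head trajectories can be forced to collide \emph{backwards in time}, which the tile set does not permit. The paper resolves this by completing all undetermined $\texttt{on}/\texttt{off}$ signals with $\texttt{off}$, turning the offending computation positions into pure transport positions; without this device (or an equivalent one) your completion of the machine layer does not go through. You also do not address how to realize a prescribed triple of hierarchy-bit colors of the completed cell inside a larger cell (the three-case argument of the paper's hierarchical-counter completion), and your appeal to the $\Pi_1$-computability of $(a_n)$ concerns the global correctness of the frequency bits rather than the local extendability asserted by the proposition. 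These are genuine gaps, not presentational shortcuts.
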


Let $P$ some $n$ block in the language of $X_z$ which appears in some 
configuration $x$.
Let us prove that it can be completed into a three-dimensional supertile.
We follow some order in the layers for the completion. 
First we complete the pattern in the structure layer, 
then in the functional areas layer, the linear counter and machine layers 
and then the hierarchy bits and the hierarchical counter.
After this we prove how this supertile can be completed into 
a three-dimensional cell.

\subsubsection{Completion of the structure}

When the pattern $P$ is a sub-pattern of an infinite supertile of $x$, 
this is clear that the projection of $P$ into 
the structure layer can be completed into a finite supertile in 
the configuration $x$.

When the support of $P$ crosses the separating area 
between the supports of the infinite supertiles, as 
in the proof of Proposition~\ref{prop.complete.rob}, 
we can still complete 
the projection of $P$ over the structure layer into an order $k$ three-dimensional supertile,
with $k$ great enough.

\subsubsection{Completion of the functional areas}

The completion in the functional areas layer is more subtle and depends 
on how the support of the pattern crosses the area between the supports 
of infinite supertiles. We will choose $k$ great enough in each of the cases.
We tell how to complete the border of the greatest three-dimensional cell included 
in the supertile, since the smaller cells can be completed according 
to the configuration $x$.
Here is a list of all the possible cases: 

\begin{itemize}
\item When the pattern intersects the corner of an infinite three-dimensional cell, 
we complete the projection of $P$ on the structure layer
on the inside of each of the faces into the inside of a two-dimensional cell (without 
the border). The size of the two-dimensional cell is chosen 
according to the value of the $p$-counter and the grouping bit 
of the three-dimensional cell corner. The coherence rules 
allow the coloring 
of the petals in this two-dimensional 
cell to be determined.
Indeed, since this is the corner of a three-dimensional cell, 
the border rules imply that the borders of the faces are colored with 
\begin{tikzpicture}[scale=0.3]
\fill[blue!40] (0,0) rectangle (1,1);
\draw (0,0) rectangle (1,1);
\end{tikzpicture} in the functional areas sublayer, and 
\begin{tikzpicture}[scale=0.3]
\fill[purple] (0,0) rectangle (1,1);
\draw (0,0) rectangle (1,1);
\end{tikzpicture} in the active areas sublayer.
The coherence rules allow
the colors of the blue corners 
around the corner of the three-dimensional infinite cell
to be determined. The transformation rules imply that 
there is a unique possibility 
for the coloring of the border of the order $0$ two-dimensional cells around, 
then the order $1$ two-dimensional cells, etc.
This thus determines the color of the other petals in the inside of the two dimensional cells.
See an illustration on Figure~\ref{fig.completion.coherence} 
for the functional areas sublayer.

\begin{figure}[ht]
\[\begin{tikzpicture}[scale=0.4]
\draw[color=blue!40, line width = 0.5mm] (0,0) -- (10,0); 
\draw[color=blue!40, line width = 0.5mm] (0,0) -- (0,10); 
\draw[color=blue!40, line width = 0.5mm] (0.5,0.5) rectangle (1,1);
\draw[color=blue!40, line width = 0.5mm] (1.25,1.25) rectangle (3.25,3.25); 
\draw[color=blue!40, line width = 0.5mm] (3.5,3.5) -- (10,3.5);
\draw[color=blue!40, line width = 0.5mm] (3.5,3.5) -- (3.5,10);
\end{tikzpicture}\]
\caption{\label{fig.completion.coherence} Illustration of the 
sequence of implications of the coherence rule of the functional areas sublayer.}
\end{figure}
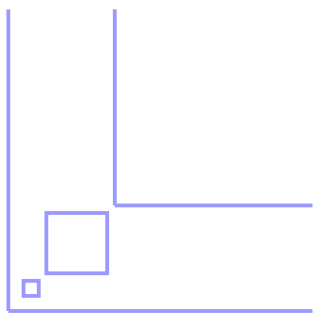

The insides of these two-dimensional cells can be considered as the quarter of a greater cell. 
This allows completing the structure 
of the pattern $P$ into 
a finite three-dimensional cell which is colored in the functional areas layer 
as any three-dimensional cell that appears in the subshift.

\item When the pattern intersects an infinite gray face, 
one can complete the pattern into the structure layer 
such that the projection over the copy of 
$X_{adR}$ parallel to the face 
is a supertile. We use Proposition~\ref{prop.complete.rob} in case that
the patterns intersect the 
area between infinite two-dimensional 
supertiles over 
the face. We chose 
the order of the cell according to the value of the $p$-counter 
on the border of this cell.
As we have coherence rules for the inside of the faces, this supertile  
can be colored as any finite supertile on the faces in the functional areas layer.
There are the following possibilities (in all these cases, 
the symbols on this supertile are determined): 
\begin{itemize}
\item the central corner of the supertile is colored 
, and there is no restriction 
in the active areas sublayer.
\end{itemize}
Concerning the functional areas sublayer, all 
these cases can be encountered in any arbitrary large 
colored faces with arbitrary 
value of the counter. Indeed, they 
are encountered on any supertile whose orientations 
with respect to its $n+1$, $n+2$ and $n+3$ supertiles lie 
in a particular set. This set corresponds 
to the rules of the 
functional areas sublayer, and 
to the condition 
that the  $n+3$ order supertile is colored blue. 
This happens for 
all the values of the $p$-counter in 
any two-dimensional cell. 
Because some columns or lines are 
active and others not, all the cases in the active areas sublayer are possible, 
with restrictions corresponding to the coherence rules.
\item The pattern intersects an edge between gray faces. 
This case is similar to the previous one, since the coherence rules impose 
that the symbols have to match on the sides of the edges.
\end{itemize}

\subsection{Completion of the linear counter and machines computations}

In this section, we tell 
how to complete the pattern over this supertile 
in the linear counter and machines layers. 
Since there are rules 
connecting the symbols of the two layers on the edges, 
we only have to tell 
how to complete separately these two layers 
when knowing only one part (near a corner, a edge, 
or inside) of the face corresponding to else the linear counter 
or the machine. 

\paragraph{The linear counter:} 

\begin{enumerate}
\item when knowing a part of face $1,2,3,4$ or $5$ 
which does not intersect the bottom line of face $1$, top line of faces $4,5$ and 
right column for faces $2$ (meaning 
places where there is incrementation of the counter), the only difficulty for completing 
comes from the freezing signal, which is \begin{tikzpicture}[scale=0.3]
\fill[Salmon] (0,0) rectangle (1,1);
\draw (0,0) rectangle (1,1);
\end{tikzpicture} only when all the letters are equal to $c_{\texttt{max}}$.
Thus when the supertile is colored with this signal, there is nothing 
to do but to complete the face adding only letters equal to $c_{\texttt{max}}$.
When the freezing signal is \begin{tikzpicture}[scale=0.3]
\draw (0,0) rectangle (1,1);
\end{tikzpicture}, then we have to add letters that are different from 
$c_{\texttt{max}}$.
\item when knowing some part of the incrementation positions, the completion depends 
on if we known the right 
or left part of the line 
(looking in the direction of face $1$): 
\begin{itemize}
\item when knowing the right part, the freezing signal on the top face and bottom face 
are determined. We complete the left part of the bottom line of face $1$ 
according to the freezing signal on the right part. If at some point 
the freezing signal becomes  \begin{tikzpicture}[scale=0.3]
\draw (0,0) rectangle (1,1);
\end{tikzpicture} after being  \begin{tikzpicture}[scale=0.3]
\fill[Salmon] (0,0) rectangle (1,1);
\draw (0,0) rectangle (1,1);
\end{tikzpicture} (from left to right), this means that we can complete 
the letters on the right equal to $c_{\texttt{max}}$. When the freezing signal is all blank, 
we can complete by adding at least some letter different from $c_{\texttt{max}}$, 
and when it is all \begin{tikzpicture}[scale=0.3]
\fill[Salmon] (0,0) rectangle (1,1);
\draw (0,0) rectangle (1,1);
\end{tikzpicture}, we can complete by adding only letters equal to $c_{\texttt{max}}$.
\end{itemize}
\end{enumerate} 

\paragraph{The machines.} When completing the machine face, 
there are two types of difficulties. 
The first 
one is managing the various signals : machine signals, 
first error, empty tape, empty side 
signals, and error signals. 
The second one is 
managing the space-time diagram 
of the machine. When 
the machine face is all known, there 
is no completion to make. Hence 
we describe the completion 
according to the parts of the 
face that are known (meaning that 
they appear in the initial pattern).
Since there is strictly less difficulty 
to complete knowing only a part 
inside the face than on the border 
(since the difficulties come from 
completing the space-time diagram 
of the machine, in a similar way than 
for the border), we describe 
the completion only when a part 
of the border is known.

\begin{itemize}
\item When knowing the top right corner of the machine face: 

\begin{figure}[ht]
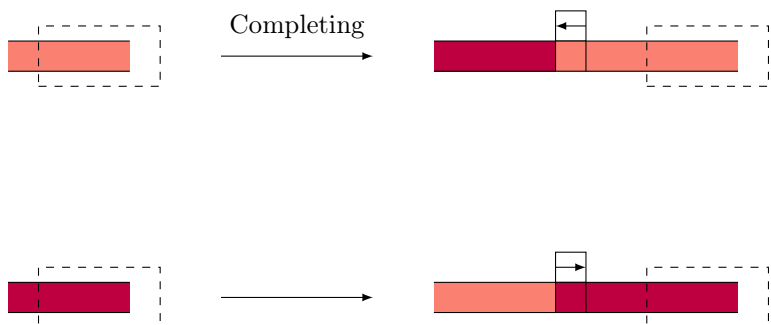

\[
), then 
we already know the propagation 
direction of 
the error signal. Then we complete 
the first machine signal and the error 
signal according to what is known.
For completing the 
space-time diagram of the machine, 
the difficulty comes from 
the fact that this is possible that when 
completing the trajectory of two machine 
heads according to the local rules, 
they have to collide reversely in time. 
This is not possible in our model.
This is where we use the 
$\texttt{on}/\texttt{off}$ signals.
We complete first the non already determined 
signals using only the symbol $\texttt{off}$, 
as illustrated on 
Figure~\ref{fig.completing.off}.

Then the space-time diagram is completed 
by only transporting the information.

In the end, we completed with any symbols 
in $\mathcal{Q}$ or $\mathcal{A} \times 
\mathcal{Q}$ where the symbols are not 
determined. We can do this since 
they do not interact with the known 
part of the space-time diagram. 
Then we complete empty tape and empty side 
signals according to the determined 
symbols.

\item if the signal is all 
\]
\caption{\label{fig.completion.signal.direction}
Illustration of the completion 
of the arrows according to the error 
signal in the known part of the area, 
designated by a dashed rectangle.}
\end{figure}

\end{enumerate}
\item when knowing the top left corner, the difficulty comes from the direction of 
propagation of the error signal. This is ruled in a similar way as point 3 
of the last case. 
\item when knowing the bottom right corner or bottom left corner: the completion 
is similar as in the last points, except 
that we have to manage the empty 
tape and sides signals. The 
difficult point comes from the signal, 
whose propagation direction 
is towards the known corner. 
If this signal detects an error before entering 
in the known area, we complete 
so that the added symbols in 
$\{\texttt{on},\texttt{off}\}$
are all $\texttt{off}$: this induces the error. 
When this signal enters without detecting an error, 
we complete all the symbols so that 
they do not introduce any error.
A particular difficulty comes from the case when 
the bottom right corner is known. Indeed, 
when the signal enters without having detecting 
an error, this means we have to complete the 
pattern so that a machine head in initial state 
is initialized in the leftmost position 
of the bottom row. Since the pattern 
is completed in the structure layer into 
a great enough cell, this head can not enter 
in the known pattern.
\item when the pattern crosses only a edge or 
the center of the face, 
the completion is similar (but easier since 
these parts have less information, 
then we need to add less to the pattern).
\end{itemize}

What is left to describe for 
the completion into a cubic supertile 
is the completion of the hierarchical counter layer.

\subsubsection{\label{hierarchy.completion} Completion of the hierarchical counter layer}

We can complete the three copies of the hierarchy bits and random bits
on the completed supertile in a coherent way with the hierarchy bits and random bits 
on the pattern $P$. The colors in this layer are determined by a triple of 
colors in $\left\{\begin{tikzpicture}[scale=0.3]
\fill[gray!90] (0,0) rectangle (1,1);
\draw (0,0) rectangle (1,1);
\end{tikzpicture},\begin{tikzpicture}[scale=0.3]
\fill[purple] (0,0) rectangle (1,1);
\draw (0,0) rectangle (1,1);
\end{tikzpicture}\right\}$, corresponding to petal having maximal order in 
the three supertiles defining the cubic supertile. We have to prove how 
to view a three-dimensional cell having order $n+1$ with these 
colors in a greater three-dimensional cell, since as a consequence we can 
see the supertile around this order $n+1$ cell in the orientation 
corresponding to orientation marked on the supertile.
It is sufficient to view this order $n+1$ three-dimensional cell as a part of an 
order $2^k p -1$ cell with $k$ great enough that is inside a greater cell 
and nearest to one of the corner (the colors of its borders are 
) in the column of order $n+1$ three-dimensional cell under 
the picked two-dimensional cell. 
\end{enumerate}

\subsection{Computation of the entropy dimension}

Let us give upper and lower bounds on the limsup 
and liminf of 
\[\frac{\log_2 (\log_2 (N_n(X_z)))}{\log_2 (n)}.\] 

Let $P$ some $n$-block in the language of $X_z$. 

\subsubsection{Upper bound}

One can complete $P$ into an
an order $2pq_n+2$ supertile, where 
\[q_n = \lceil \log_2 (n) /2p \rceil +1.\]

The number of patterns over 
some $k$ order supertile
is less than $K$ times the number of random bits layout over a $k$ supertile 
times the number of possible values of the linear counter inside a supertile, where $K>0$ 
is a constant (indeed, all the other layers are determined by a symbol in a finite set).

As a consequence of 
Lemma~\ref{lem.count.random.2}, and Lemma~\ref{lem.number.active}, 
the number of patterns over a $2pq_n+2$-order supertile is smaller than 
\[K.{|\mathcal{A}_c|}^{2^{p+1}.(p+1)^{q_n}} . 2^{\alpha(p).{q_n}^{\lambda(p)}.
4^{c^0_{q_n}}.16^{pc^1_{q_n}}}.\]
This implies that 
\[\log_2(N_n (X_z)) \le \log_2 (K) + 2.2^p(p+1)^{q_n} \log_2 (|\mathcal{A}_c|) 
+ \alpha(p).{q_n}^{\lambda(p)}.
4^{c^0_{q_n}}.16^{pc^1_{q_n}},\]
and then 
\[ \begin{array}{ccl} \frac{\log_2 \circ \log_2(N_n (X_z))}{\log_2(n)} 
& \le & \frac{\log_2 (\alpha(p).{q_n}^{\lambda(p)})}{\log_2(n)}
+ 2 \frac{c^0_{q_n}}{\log_2(n)} + 4 \frac{pc^1_{q_n}}{\log_2(n)} \\
& & + \frac{1}{\log_2(n)} \log_2 \left(1+ \frac{\log_2 (K) + 2.2^p(p+1)^{q_n} 
\log_2 (|\mathcal{A}_c|)}{2^{pc^1_{q_n}}} \right)\end{array}\]
The first term of this sum 
tends towards zero, since 
\[\frac{\log_2 (\alpha(p).{q_n}^{\lambda(p)})}{\log_2(n)} = O\left( 
\frac{\log_ 2 \circ \log_2 (n)}{\log_2 (n)} \right)\]
The fact that the third term tends towards zero comes from the choice of $p$.
Indeed, we have 
\[\log_2 (K) + 2.2^p(p+1)^{q_n} 
\log_2 (|\mathcal{A}_c|) = O(2^{\log_2(n) \log_2 (p+1) /2p}) 
= O(n^{m/2(2^m-1)}).\] 
Moreover, by definition 
\[\frac{c^1_{q_n}}{q_n} \rightarrow z/2\]
and as a consequence 
\[2^{pc^1_{q_n}} \ge n^{z/2}\]
Since $m/p<z/2$, this means the third term tends towards zero.
Thus we have 
\[\limsup_n \frac{\log_2 \circ \log_2(N_n (X_z))}{\log_2(n)} \le 
\limsup_n 2 \frac{c^0_{q_n}}{\log_2(n)} + 4 \frac{pc^1_{q_n}}{\log_2(n)}
\le 2 \frac{(1-z/2)}{2p} + 4 p\frac{z/2}{2p} 
= 1/p + z(1-1/2p)\]

\subsubsection{Lower bound}

For the lower bound, we do the reverse inclusion: 
if $m$ is chosen great, any $n$-block 
$P$ in the language of the subshift $X_z$ contains some 
order $2pq'_n+2$ supertile, 
\[q'_n = \lfloor \log_2 (n) /2p \rfloor - 2,\]
since these supertiles are repeated with period $2^{2pq'_n+4} \le \frac{n}{2^{4p-4}}$, 
and have size $2^{2pq'_n+3} \le \frac{n}{2^{4p-3}}$.

Then the number of $n$-blocks in the language of $X_z$ is greater than 
\[2^{
4^{c^0_{q'_n}}.16^{pc^1_{q'_n}}}.\]
Similar computation as done for the upper bound 
leads to 
\[\liminf_n \frac{\log_2 \circ \log_2(N_n (X_z))}{\log_2(n)} \ge 1/p + z(1-1/2p),\]
which means $X_z$ has an entropy dimension and this dimension is equal to 
\[D_h (X_z) = 1/p + z(1-1/2p).\]

\subsection{Minimality}

In this section we prove that the subshift $X_z$ is minimal. 
See~\ref{fig.minimality.proof} for a schema of the proof.
This proof relies on the following lemma: 

\begin{lemma}[Globach's theorem] \label{lm.fermat.numbers}
The numbers $F_n = 2^{2^n}+1$, $n \ge 0$ are coprime. 
\end{lemma}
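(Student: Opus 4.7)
The plan is to use the classical product identity
\[
\prod_{k=0}^{n-1} F_k \;=\; F_n - 2 \qquad (n \ge 1),
\]
which immediately gives coprimality. First I would establish this identity by induction on $n$. The base case $n=1$ reads $F_0 = 3 = 5 - 2 = F_1 - 2$, which holds. For the inductive step, assuming the identity at level $n$, I would compute
\[
\prod_{k=0}^{n} F_k \;=\; (F_n - 2) \cdot F_n \;=\; (2^{2^n} - 1)(2^{2^n} + 1) \;=\; 2^{2^{n+1}} - 1 \;=\; F_{n+1} - 2,
\]
using that $F_n - 2 = 2^{2^n} - 1$ and the difference-of-squares factorization.

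Next I would deduce coprimality. Fix $m < n$ and suppose $d$ is a common divisor of $F_m$ and $F_n$. Since $F_m$ appears as a factor in $\prod_{k=0}^{n-1} F_k = F_n - 2$, the integer $d$ divides $F_n - 2$. Combined with $d \mid F_n$, this forces $d \mid 2$. But $F_n = 2^{2^n} + 1$ is odd for every $n \ge 0$, so $d$ cannot be $2$, leaving $d = 1$. Hence $\gcd(F_m, F_n) = 1$ whenever $m \ne n$, which is precisely the coprimality claim.

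There is no real obstacle here; the argument is a one-line consequence of the product identity, and the only ``work'' is checking the inductive step via the difference-of-squares collapse. The reason this fits the paper's needs is that the linear counter periods and the hierarchical counter periods have been arranged to be distinct numbers of the form $2^{2^k} + 1$, so Goldbach's lemma directly yields the pairwise coprimality used in the minimality argument (where one must independently choose counter values at different hierarchical levels via the Chinese remainder theorem).
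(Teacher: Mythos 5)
Your proof is correct, and it takes a genuinely different route from the paper's. Both arguments reduce coprimality to the same final step: a common divisor of two distinct Fermat numbers must divide $2$, and since every $F_n$ is odd the divisor is $1$. The difference lies in how the key congruence is obtained. You derive it from the telescoping product identity $\prod_{k=0}^{n-1} F_k = F_n - 2$, proved by induction via the difference-of-squares collapse $(2^{2^n}-1)(2^{2^n}+1) = 2^{2^{n+1}}-1$; this gives the slightly stronger fact that $F_n \equiv 2$ modulo the product of \emph{all} earlier Fermat numbers at once. The paper instead fixes $m > n$ and expands $F_m = \bigl((F_n - 1)\bigr)^{2^{m-n}} + 1$ by the binomial theorem, extracting the constant term to conclude $F_m \equiv 2 \pmod{F_n}$ directly, with no induction but at the cost of manipulating a binomial sum. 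Both are standard; your version is arguably the cleaner one to verify, and either suffices for the paper's purpose of guaranteeing pairwise coprime counter periods in the minimality argument.
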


\begin{proof}
Let $m > n \ge 0$. Then 
\[F_m = 2^{2^m}+1 = (2^{2^n} + 1 - 1)^{2^{m-n}} +1 = \sum_{k=0}^{2^{m-n}} \binom{2^{m-n}}{k} 
(-1)^{-k} F_n^k + 1 \]
\[= F_n \sum_{k=1}^{2^{m-n}} \binom{2^{m-n}}{k} (-1)^{k} F_n^{k-1} + 2 \]
This means that a common divisor of $F_n$ and $F_m$ divide $2$, but $2$ does not divide $F_n$, 
so $F_n$ and $F_m$ are coprime. 
\end{proof}

Consider some block $P$ in the language of 
$X$, and complete it into a pattern $P'$
over an order $2^k p$ three-dimensional cell. Pick some configuration $x \in X$, 
and consider the restriction of $x$ on the hierarchy bits 
layer (over the copy of the Robinson subshift parallel to $\vec{e}^2$ and $\vec{e}^3$).
On can find some $\vec{u}$ such that the projection of the three dimensional cell 
over the hierarchy bits layer appear in position $\vec{u}$ (using 
the same arguments as in Section~\ref{hierarchy.completion}).

For all $i$, a three-dimensional cell 
appears on position  $(2i4^{2^k p})\vec{e}^1 + \vec{u}$.
Comparing the cell in position  $(v+2i4^{2^k p})\vec{e}^1 + \vec{u}$
and  $(v+2(i+1)4^{2^k p})\vec{e}^1 + \vec{u}$, the second one 
has the same linear counters as the first one, and the $j$th hierarchical counters 
values are incremented $4^{(2^k-2^j)p}$ times for all $j$ in the the second one, 
with respect to the first.

Let $t$ be the following application: 
\[\begin{array}{ccccc} t & : & \Z/p_1 \Z \times ... \times \Z/p_k \Z & \rightarrow & \Z/p_1 \Z \times ... \times \Z/p_k \Z\\
& & (i_1 , ... , i_k) & \mapsto & (i_1 + 4^{(2^k-1)p} , ... , i_k +1) 
\end{array},\] where $p_1$, ... , $p_k$ are 
the periods of $k$ first hierarchical counters. This is a minimal application, 
meaning that for all $\vec{i},\vec{j} \in \Z/p_1 \Z \times ... \times \Z/p_k \Z$, 
there exists some $n$ such that $t^n (\vec{i}) = \vec{j}$.
Indeed, considering some $\vec{i}$, denote $n_0$ the smallest 
positive integer such that $t^{n_0} (\vec{i}) = \vec{i}$. 
This means that $p_j$ divides $n_0 4^{(2^k-2^j)p}$ for all $j$, 
and because $p_j$ is a Fermat number, it is odd, and 
this implies that $p_j$ divides $n_0$. 
For the numbers $p_j$ are coprime (Lemma~\ref{lm.fermat.numbers}), 
this implies that $p_1 \times ... \times p_j$  
divides $n_0$. Because this number is a period of the 
application $t$, this means that $p_1 \times ... \times p_j$ is the smallest 
period of every element of $\Z/p_1 \Z \times ... \times \Z/p_k \Z$ under the application $t$.

As a consequence, for all $\vec{i}$, the finite sequence $(t^n (\vec{i}))$, for $n$ going 
from $0$ to $p_1 \times ... \times p_j-1$, takes all the possible values 
in $\Z/p_1 \Z \times ... \times \Z/p_k \Z$. \bigskip

\begin{figure}[ht]
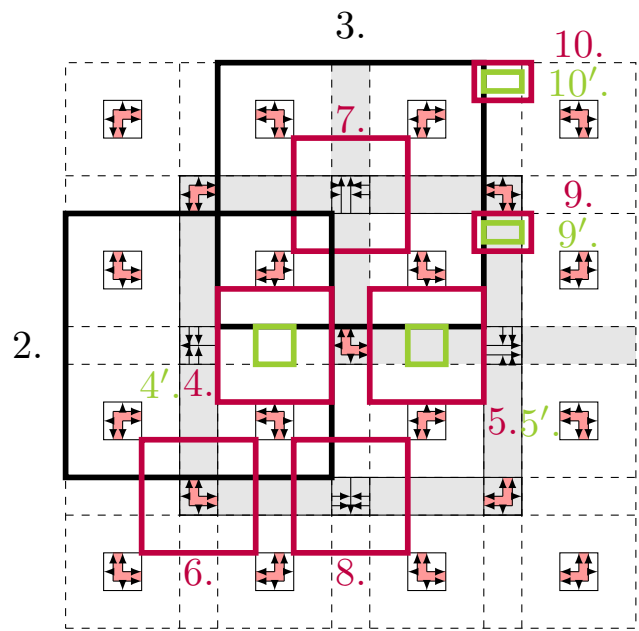

\[
,\] where $p'_1$, ... , $p'_k$ are 
the periods of $k$ first linear counters, 
one can find some $i$ such that on position $2i 4^{2^kp} \vec{e}^2 + \vec{u}'$, 
the cell supports the same values of linear counters than $P'$, 
iterating $n_1$ times this shift.
Moreover, since the hierarchical counters are synchronized in the direction $\vec{e}^2$, 
this cells has also the same hierarchical counter values as $P'$.

For the pattern on the cell is determined by the values of the counters, this cell 
supports the pattern $P'$.

Hence $X_z$ is a minimal SFT.

\begin{remark}
It is trivial to find some minimal $\Z^3$-SFT having entropy dimension equal to zero. 
However, it is not to find one having entropy 
dimension equal to two, and we don't know a simpler 
proof than the construction presented in this text. 
\end{remark}

\begin{figure}[h!]
\begin{center}
\]
\caption{\label{fig.completing.supertile} Illustration 
of the correspondance between patterns of 
Figure~\ref{fig.orientation.supertiles1}
and parts of a supertile.}
\end{figure}

\end{document}